\numberwithin{equation}{section}
\def\th@remark{%
  \thm@headfont{\bfseries}%
  \normalfont 
  \thm@preskip \thm@preskip 
  \thm@postskip\thm@preskip
}
\def\imod#1{\allowbreak\mkern5mu({\operator@font mod}\,\,#1)}
\numberwithin{equation}{section}
\title{Higher Period Integrals and Derivatives of $L$-functions}
\author{Shurui Liu}
\address{Stanford University, Department of Mathematics, Building 380, Stanford, CA 94305, USA}
\email{srliu@stanford.edu}
\author{Zeyu Wang}
\address{Massachusetts Institute of Technology, Department of Mathematics, 77 Massachusetts Avenue, Cambridge, MA 02139, USA}
\email{wangzeyu@mit.edu}
\begin{document}

\begin{abstract}

  We propose a geometric framework to produce a formula relating higher period integrals to higher central derivatives of $L$-functions over function fields, extending the framework \cite{BZSV} to higher derivatives. For a strongly tempered affine smooth $G$-variety $X$, we give a geometric construction of the action of $L$-observables on the geometric period integral $\int_{X}\mathbb{L}_\s$ of a Hecke
  eigensheaf $\mathbb{L}_\s$ on $\Bun_G$. By taking a suitable version of Frobenius trace of this action, we recover higher central derivatives of the $L$-function attached to the dual symplectic representation. As an application, in the Rankin--Selberg case $(\GL_n\times\GL_{n-1},\GL_{n-1})$, we obtain a formula for higher derivatives of the Rankin--Selberg $L$-function. This provides a conceptual generalization of the higher Gross--Zagier formula of \cite{YZ1} to higher-dimensional spherical varieties.

\end{abstract}

\maketitle

\tableofcontents

\section{Introduction}

\subsection{Background and motivation}

\subsubsection{Period integrals and special values of $L$-functions}
A central theme in modern number theory and the Langlands program is the relationship between \emph{period integrals} of automorphic forms and special values of $L$-functions. Given a reductive group $G$ over a global field $F$, a subgroup $H\sub G$, and a cuspidal automorphic form $\ph$ on $G$, the period integral
\begin{equation}\label{numperiod}
\cP_H(\ph)=\int_{[H]}\ph(h)\,dh
\end{equation}
is expected to encode special values of $L$-functions attached to the automorphic representation generated by $\ph$. Prototypical examples include the Waldspurger formula \cite{waldspurger1985valeurs}, relating toric periods on $\PGL_2$ to $L(\pi,1/2)$, and the Rankin--Selberg integral, relating periods for $\GL_{n-1}\sub \GL_n\times\GL_{n-1}$ to $L(\pi_n\times\pi_{n-1},1/2)$.

A conceptual framework for such relations has emerged in recent years. Sakellaridis and Venkatesh \cite{sakellaridis2017periods} relate periods for spherical varieties to Plancherel decompositions and $L$-functions. The recent work of Ben-Zvi, Sakellaridis, and Venkatesh \cite{BZSV} further develops this formalism into \emph{relative Langlands duality}: to a hyperspherical Hamiltonian $G$-variety $M$ one associates a dual Hamiltonian $\Gc$-variety $\Mc$, and the period integral for $M$ is expected to be related to an $L$-value read off from $\Mc$. This framework unifies many known results, including the Waldspurger and Rankin--Selberg cases above, and predicts new ones.

\subsubsection{Special cycles and higher derivatives of $L$-functions}
While period integrals capture \emph{values} of $L$-functions, deeper arithmetic information lies in higher \emph{derivatives} of $L$-functions. A prototypical example is the Gross--Zagier formula \cite{gross1986heegner}, which relates the N\'eron--Tate height of Heegner points (special cycles) on modular curves to $L'(E,1)$ for an elliptic curve $E$. Combined with Kolyvagin’s Euler system method, this yields the Birch and Swinnerton-Dyer conjecture for elliptic curves of analytic rank at most one. More generally, the Beilinson--Bloch--Kato conjectures predict that the order of vanishing of $L(\pi,s)$ at the central point equals the rank of a Selmer group, and that the leading Taylor coefficient is governed by arithmetic invariants. These conjectures remain widely open over number fields.\par

Over number fields, the geometric meaning of higher derivatives of $L$-functions remains mysterious even for elliptic curves, and is far out of reach in higher rank. By contrast, the function field setting provides a fertile testing ground, largely due to the availability of geometric methods, in particular the existence of moduli of Shtukas (with multiple legs), which serve as function field analogues of Shimura varieties.

A first breakthrough came in the work of Yun and Zhang \cite{YZ1}\cite{YZ2}, who established a \emph{higher Gross--Zagier formula} generalizing the Waldspurger formula: for certain cuspidal automorphic representations $\pi$ of $\PGL_2$ over $\mathbb{F}_q(C)$, the self-intersection number of the $\pi$-isotypic component of the Heegner--Drinfeld cycle on the moduli of $\PGL_2$-Shtukas with $r$ legs is related to the $r$-th central derivative of $L(\pi,s)$. This result has since inspired further developments, including higher theta series and higher Siegel--Weil formulas \cite{FYZ1,FYZ2,FYZ3}, as well as arithmetic volume computations \cite{feng2026arithmeticvolumesmodulistacks}.

However, their approach relies on the compactness of Heegner--Drinfeld cycles, a feature specific to toric periods that rarely holds in higher-dimensional settings. Moreover, their argument is intrinsically tied to the relative trace formula and does not yield a conceptual mechanism applicable in general. In particular, for a general period attached to a $G$-variety $X$, where no relative trace formula is available, it remains unclear why higher derivatives should arise.

\subsubsection{A conceptual and uniform framework in the strongly tempered case}

In this article, we develop a framework addressing these questions in broad generality. The term ``higher'' in the title has a four-fold meaning: higher derivatives of $L$-functions, higher rank groups, higher categorical structures, and a higher vantage point that unifies recent advances. We extend the relative Langlands framework of \cite{BZSV} in the strongly tempered case to higher derivatives and also the work of Yun--Zhang to higher rank.

The idea can be summarized as follows. We replace the period integral \eqref{numperiod} by its sheaf-theoretic counterpart \eqref{geoperiodintro}, which recovers the classical period integral upon taking the trace of Frobenius. These geometric period integrals admit additional natural endomorphisms given by \emph{$L$-observables}. Taking the trace of these endomorphisms composed with Frobenius then produces special cycle classes and higher derivatives of $L$-functions. 

This perspective connects several developments in the (geometric) Langlands program. The study of geometric period integrals was initiated by Lysenko \cite{lysenko2002local}; the action of $L$-observables was proposed in \cite{BZSV}; and the idea of recovering Shtuka-type information via traces is motivated by \cite{arinkin2022automorphicfunctionstracefrobenius}. 
Combining these ingredients yields a uniform and conceptual way to generalize the work of Yun--Zhang within geometric Langlands theory and also extends relative Langlands framework.

\subsection{Main result: a general formulation}\label{mainresultgen}
In this section, we formulate our main result under a general setup.

\subsubsection{Geometric result}
We fix a smooth, projective, geometrically connected curve $C$ over $\FF_q$ of genus $g \neq 1$, and a split reductive group $G$ over $\FF_q$. Let $X$ be a \emph{strongly tempered} affine smooth $G$-variety, and consider the natural map
\[
\pi:\Bun_G^X \to \Bun_G.
\]

\begin{example}
If $X$ is $G$-homogeneous, so that $X=H\bs G$ for some subgroup $H\sub G$, we refer to $X$ by the group pair $(G,H)$. Then $\Bun_G^X=\Bun_H$.
\end{example}

Let $\Gc$ be the Langlands dual group over a finite extension $k/\Ql$. Given a $\Gc$-local system $\s$ and a Hecke eigensheaf $\LL_{\s}\in \Shv(\Bun_G)$ with eigenvalue $\s$, define the \emph{geometric period integral}
\begin{equation}\label{geoperiodintro}
\int_X \LL_{\s}:=H^*_{c}(\Bun_G^X,\pi^*\LL_{\s})\in \Vect.
\end{equation} This recovers the numerical period \eqref{numperiod} by taking the trace of Frobenius.

Let $K$ be the dual $\Gc$-Hamiltonian space following \cite{BZSV} (denoted $\Mc$ in \textit{loc.\,cit.}). The strongly tempered condition means that $K$ is a symplectic $\Gc$-representation. Let $K_{\s}$ be the local system on $C$ associated to $K$ and $\s$. The symplectic form $\omega_K$ on $K$ induces an orthogonal form $\om_M$ on
$
M=H^1(K_{\s}):=H^1(C, K_{\s}),
$
and hence gives a Clifford algebra $\Cl(M):=\Cl(M,\om_M)$.

We now state the main geometric result (see \thref{eigenkolyvagin} for a precise formulation), asserting that the geometric period integral \eqref{geoperiodintro} carries a Clifford module structure:
\begin{thm}\thlabel{clifgen}
    Assuming \cite[Conjecture 8.5.1]{BZSV}, there exists an action of $\Cl(M)$ on $\int_X \LL_{\s}$.
\end{thm}

\begin{remark}
	The Clifford algebra action in \thref{clifgen} is predicted in \cite[\S18]{BZSV} as an action by \emph{$L$-observables}, arising from the conjectural description of the factorizable Plancherel algebra, which is widely open. However, here we only assume \cite[Conjecture\,8.5.1]{BZSV}, which only involves $\mathrm{PL}_{X,\hbar}$ the (non-factorizable) Plancherel algebra with loop rotation and is known in several important cases, including the Rankin--Selberg case $(G,H)=(\GL_n\times\GL_{n-1},\GL_{n-1})$ \cite{BFGT}, and the Gross--Prasad case $(G,H)=(\SO_n\times\SO_{n-1},\SO_{n-1})$ in \cite{Braverman_2022}. In particular, \thref{clifgen} is unconditional in these cases of central interest in the relative Langlands program.
\end{remark}

\subsubsection{Arithmetic result}

We now state our formula for the derivatives of $L$-functions. Let $F\in \End(\int_X\LL_{\s})$ denote the Frobenius endomorphism, and retain the assumptions of \thref{clifgen}. Let
\[
a:\Cl(M)\to \End(\int_X\LL_{\s})
\]
be the action map.  Define elements $z_{\LL_{\s},r}\in (M^{\otimes r})^*$ by
\begin{equation}\label{kolygenintro}
z_{\LL_{\s},r}(m_1\otimes\cdots\otimes m_r)
=\tr\!\big(a(m_1)\circ\cdots\circ a(m_r)\circ F,\;\int_X\LL_{\s}\big).
\end{equation}
We assume that $\int_X\LL_{\s}$ is finite-dimensional so that the trace is well-defined, and that $H^0(K_{\s})=0$.

We state a rough form of the main arithmetic theorem (see \thref{higherformulageneral} for a precise and more canonical version), which follows from the geometric result and Kolyvagin system formalism in \autoref{linearalgebra}.

\begin{thm}\thlabel{higherformulagen}
Under the above assumptions,
\begin{equation}\label{higherformulagenformula}
\om_{r}(z_{\LL_{\s},r},z_{\LL_{\s},r})
=\b_{\s,X}(\log q)^{-r}\left(\frac{d}{ds}\right)^{r}\Big|_{s=0}
\big(q^{(g-1)\dim(K)s}L(K_{\s},s+1/2)\big),
\end{equation}
where $\b_{\s,X}$ is a constant depending only on $X$ and $\s$, and $\om_{r}$ is the bilinear form on $(M^{\otimes r})^*$ induced by $\om_M$.
\end{thm}

\begin{remark}
The elements $z_{\LL_{\s},r}$ are expected to correspond to the $\s$-isotypic components of special cycle classes in the cohomology of Shtukas. This relation is made precise in the companion work \cite{trace}, under suitable assumptions.
\end{remark}

This main arithmetic theorem establishes a new framework to understand the connection between special cycle (classes) and higher derivatives of $L$-functions, and the explication of the constants boils down to explication of the Clifford module structure (i.e. understanding of $L$-observables). As an example to show the power of this framework on higher rank case, we will work out explicitly Rankin--Selberg case.

\subsection{Example: Rankin--Selberg case}\label{mainresultrs}

We specialize the results of \autoref{mainresultgen} to the Rankin--Selberg case $(G,H)=(\GL_n\times\GL_{n-1},\GL_{n-1})$ and explicate the constants. In this setting, one has
\[
K=T^*(\std_{n}\boxtimes\std_{n-1})\in \Rep(\Gc).
\]

\subsubsection{Geometric result}

Fix a geometrically irreducible $\Gc$-local system $\s=(\s_n,\s_{n-1})$, where $\s_n$ and $\s_{n-1}$ have ranks $n$ and $n-1$, respectively. By construction in \cite{frenkel2002geometric}, we have the Hecke eigensheaf
\[
\LL_{\s}^{\FGV}=\LL_{\s_n}^{\FGV}\boxtimes\LL_{\s_{n-1}}^{\FGV}\in \Shv(\Bun_G)
\]
 with eigenvalue $\s_n\boxtimes\s_{n-1}$.
By \thref{clifgen}, there is a canonical action of $\Cl(M)$ on $\int_X\LL_{\s}^{\FGV}$. In this case, we can explicate this Clifford module structure completely:
\begin{thm}[\thref{higherrsbzsvglobal}]\thlabel{cliffirst}
The Clifford module $\int_X\LL_{\s}^{\FGV}$ is the unique irreducible representation of $\Cl(M)$.
\end{thm}

\subsubsection{Arithmetic result}

We now turn to derivatives of $L$-functions. In this case,
\[
L(K_{\s},s)=L(\s_{n}\otimes\s_{n-1},s)\,L(\s_{n}^*\otimes\s_{n-1}^*,s).
\]

\begin{thm}[\thref{higherformulagln}]\thlabel{higherformulaglnfirst}
We have
\begin{equation}\label{higherformulaglnformula}
\om_{r}(z_{\LL_{\s}^{\FGV},r},z_{\LL_{\s}^{\FGV},r})
=\b_{\s}(\log q)^{-r}\left(\frac{d}{ds}\right)^{r}\Big|_{s=0}
\big(q^{2n(n-1)(g-1)s}L(\s_{n}\otimes\s_{n-1},s+1/2)L(\s_{n}^*\otimes\s_{n-1}^*,s+1/2)\big),
\end{equation}
where $\om_{r}$ is the bilinear form on $M^{\otimes r}$ induced by $\om_M$, and
\[
\b_{\s}=(-1)^{r/2}q^{-n^2(g-1)}\chi^{-n}_{\det\s_{n-1}}(\Om)\chi^{-n+1}_{\det\s_n}(\Om)\e(\s_{n}\otimes\s_{n-1}).
\]
Here $\chi_{\det\s_{n-1}},\chi_{\det\s_n}:\Pic(\F_q)\to k^{\times}$ are the Hecke characters attached to $\det\s_{n-1}$ and $\det\s_n$, $\Om\in\Pic(\F_q)$ is the canonical line bundle on $C$, and $\e(\s_n\otimes\s_{n-1})=\det(\Frob, H^1(C,\s_n\otimes\s_{n-1})(1/2))$ is the root number.
\end{thm}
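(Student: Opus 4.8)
The strategy is to reduce the higher-derivative formula to the combination of two inputs: (i) the geometric Clifford-module structure of \thref{cliffirst}, which identifies $(\int_X\LL_\s^{\FGV})^*$ as the unique irreducible $\Cl(M)$-module, and (ii) the abstract ``Kolyvagin system'' formalism recalled in \autoref{kolycharchapter}, which computes the $L^2$-norms $\om_r(z_r,z_r)$ of such a system in terms of the characteristic polynomial of a distinguished operator. The first step is therefore to verify that the sequence $\{z_{\LL_\s^{\FGV},r}\}_{r\ge 0}$ defined by $z_{\LL_\s^{\FGV},r}(m_1\otimes\cdots\otimes m_r)=\tr(a(m_1)\circ\cdots\circ a(m_r)\circ F)$ actually forms a Kolyvagin system for the pair $(M,\om)$ equipped with the Frobenius-twisted trace form. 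This amounts to checking the elementary compatibility relations (Frobenius-equivariance of $a$, the Clifford relation $a(m)a(m')+a(m')a(m)=\om(m,m')$, and the behavior under the operations relating $M^{\otimes r}$ and $M^{\otimes r+1}$); all of these are formal consequences of \thref{cliffirst} together with the construction of $a$ in \autoref{heckecoactionsection}, and the hypothesis $g\neq 1$ ensures that $\int_X\LL_\s^{\FGV}$ is nonzero so that the Clifford module is genuinely irreducible.

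Next I would invoke the abstract computation of the $L^2$-norm of a Kolyvagin system. By the results of \autoref{kolycharchapter} (following \cite{YZ3}), for a Kolyvagin system arising from a Clifford module one has
\begin{equation}\label{abstractkoly}
\om_r(z_{\LL_\s^{\FGV},r},z_{\LL_\s^{\FGV},r})=c\cdot(\ln q)^{-r}\left(\frac{d}{ds}\right)^r\Big|_{s=0}\det\left(1-q^s F\mid M\right)^{\pm 1}
\end{equation}
for an explicit constant $c$ depending only on $\dim M$, $\tr F$ on the Clifford module, and sign conventions; here $F$ acts on $M=\Gamma(C,K_\s)\langle 1\rangle$ and the characteristic polynomial $\det(1-q^sF\mid M)$ is, up to the Tate twist bookkeeping contained in $\langle 1\rangle$, exactly the $L$-factor. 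The third step is then purely a matter of identifying $\det(1-q^sF\mid\Gamma(C,K_\s))$ with the Rankin--Selberg $L$-function. Since $K=T^*(\std_n\boxtimes\std_{n-1})=(\std_n\boxtimes\std_{n-1})\oplus(\std_n^*\boxtimes\std_{n-1}^*)$, the Grothendieck--Lefschetz trace formula gives $L(\s_n\times\s_{n-1},s)L(\s_n^*\times\s_{n-1}^*,s)$ as an alternating product of $\det(1-q^{-s}F\mid H^i(C,K_\s))$, and because $\s_n,\s_{n-1}$ are irreducible of coprime-ish ranks $n,n-1$ the tensor product carries no trivial constituent, so $H^0$ and $H^2$ vanish and only the $H^1$-term survives, turning the alternating product into a single determinant. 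The shift and half-Tate-twist in $M=\Gamma(C,K_\s)\langle 1\rangle$ account for the recentering $s\mapsto s+1/2$ and the prefactor $q^{2n(n-1)(g-1)s}$, the exponent $2n(n-1)(g-1)=\dim H^1(C,K_\s)$ being computed from Euler characteristic / Riemann--Roch for the rank-$2n(n-1)$ local system $K_\s$ on a curve of genus $g$.

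The last step is to pin down the constant $\b_\s$. The sign $(-1)^{r/2}$ and the factor $q^{-n^2(g-1)}$ come from the normalization of the symplectic form $\om$ (hence of the trace form on the Clifford module) and from $\tr(F\mid(\int_X\LL_\s^{\FGV})^*)$, which by \thref{cliffirst} is the Frobenius trace on the Clifford module and can be evaluated as $\pm q^{(\dim M)/4}$-type quantity times a discriminant; the Hecke-character factors $\chi_{\det\s_{n-1}}^{-n}(\Om)\chi_{\det\s_n}^{-n+1}(\Om)$ are precisely the global epsilon/discriminant contributions measuring how $\det\Gamma(C,K_\s)$ differs from the naive expectation, and these are exactly the constants that appear when one compares the FGV normalization of $\LL_{\s_n}^{\FGV}$ (footnote to \thref{fgvinput}) with the geometric one --- this is where the determinant-of-cohomology twist by powers of the canonical bundle $\Om$ enters. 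I expect \textbf{this constant-chasing to be the main obstacle}: the Kolyvagin-system formalism delivers the derivative identity cleanly, and the $L$-function identification is standard, but matching $\b_\s$ on the nose requires carefully tracking three separate normalization choices (the FGV eigensheaf normalization, the $\langle 1\rangle$-shift in the definition of $M$, and the orientation of $\om$) through the trace computation, and getting the sign $(-1)^{r/2}$ and every power of $q$ and of $\chi_{\det\s_\bullet}(\Om)$ consistent. Once those bookkeeping factors are reconciled, \eqref{higherformulaglnformula} follows by combining \eqref{abstractkoly} with the $L$-factor identification.
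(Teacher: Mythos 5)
Your architecture coincides with the paper's: exhibit $(\int_X\LL_{\s}^{\FGV})^*$ as an $F$-equivariant $\Cl(M)$-module and feed it into the abstract linear-algebra machine of \autoref{linearalgebra}. But two of your steps hide genuine gaps. First, the norm formula you invoke is not delivered by the characterizing properties of \autoref{kolycharchapter} (which the paper explicitly never uses); it is \thref{kolylfun}, entering through \thref{kolyrep=koly} and \thref{higherformulageneral}, and its constant is governed by the invariant $\l(T)=\l(\tr_{\Koly}(F_T))$ of the $F$-equivariant module $T=(\int_X\LL_{\s}^{\FGV})^*$ — not by $\tr(F\mid T)$, which is $z_0$ and a different quantity. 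Second, and more seriously, the determination of $\b_{\s}$ is not deferred bookkeeping but requires two concrete inputs missing from your plan: (i) the evaluation of $\l(T)$, which the paper performs via \thref{identifystandard} by noting that $T$ is $\ZZ$-graded by the degree $d$ of the bundle, that $\Gamma(C,V_{\s})\langle 1\rangle$ and $\Gamma(C,V_{\s}^*)\langle 1\rangle$ shift this grading in opposite directions, so the extreme one-dimensional piece $(\int_{X,(n-1)\deg(\Om^{-n/2})}\LL_{\s}^{\FGV})^*$ is an $F$-eigen highest-weight vector whose eigenvalue is read off from the explicit computation \thref{higherrsbzsvglobal} — this, together with $\e=1$ and $\l(T)=\a^2\e(L,F)$, is where $q^{-n^2(g-1)}$ and the $\chi_{\det\s_{\bullet}}(\Om)$ factors come from, rather than from abstract epsilon-factor comparisons; and (ii) the relation between the symplectic form $\om$ (induced from the local pairing $b_{\l,-\l}$ of \thref{middimassum}) and the geometric pairing $b_{\geo}$, namely the local constant $\kappa_{\l,-\l}=(-1)^n$ of \thref{geometricscalargln}, which is an essential computation you never mention.

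There is also a numerical slip worth flagging: $2n(n-1)(g-1)=(g-1)\dim(K)$ is \emph{half} of $\dim H^1(C,K_{\s})=2(g-1)\dim(K)$, consistent with \thref{kolylfun} where the prefactor is $q^{ns}$ for $\dim M=(0|2n)$; your identification of the exponent with $\dim H^1(C,K_{\s})$ is off by a factor of two. It does not affect the formula you copied, but it confirms that the bookkeeping you defer is not yet under control.
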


\begin{remark}
\thref{higherformulaglnfirst} depends on the choices of Weil structures on $\LL^{\FGV}_{\s}$, the symplectic form on $K$, and the map $M\to \End(\int_X\LL_{\s}^{\FGV})$, all taken to be canonical. Choosing a Hecke eigensheaf should be viewed as the geometric analogue of choosing a test vector in classical period formulas, with $\LL_{\s}^{\FGV}$ corresponding to a Whittaker-normalized choice.
\end{remark}

\subsection{Outline of the proof}

We now outline the proof of the main theorems. It proceeds in five steps, corresponding to \autoref{linearalgebra}, \autoref{commutatorrelations}, \autoref{automorphiccliffordrelations}, \autoref{higherperiodintegrals}, and \autoref{higherrs}, which we briefly summarize below.

\subsubsection{Linear algebra: Kolyvagin systems}
The first step (\autoref{linearalgebra}) is to introduce a general mechanism to deduce arithmetic results (\thref{higherformulagen}) from geometric results (\thref{clifgen}). The abstract situation can be summarized as follows: Given a Clifford algebra $\Cl(M)$ and a finite-dimensional module $T\in\Mod(\Cl(M))$ equipped with compatible automorphisms $F$, one can construct a sequence of elements $\{z_{r}\in M^{*\otimes r}\}_{r\in\ZZ}$ called \emph{Kolyvagin system (over function fields)} as in \eqref{kolygenintro}, and get a formula similar to \eqref{higherformulagenformula}. The Kolyvagin system over function fields is first considered in the work \cite{YZ3} under a different formulation. Compared to their original formulation, our construction uses the theory of Clifford algebra, which simplifies and conceptualizes the entire picture. 


The most important result in this part is \thref{kolylfun}, which is a prototype of \eqref{higherformulagenformula}.

\subsubsection{Commutator relations on cohomological correspondences: local-to-global}
Section \autoref{commutatorrelations} provides the first step toward constructing an action of $\Cl(M)$ on $\int_X \LL_{\s}$: define the action of generators and verify the relations by a local-to-global argument. The generators act via \emph{special cohomological correspondences} $c_V$ on the period sheaf $\cP_X=\pi_!\uk$, which underlie special cycles. We construct $c_V$ and verify relations by studying their local counterparts $c_V^{\loc}\in\Hom(V,\PL_{X,\hbar})$.

This local-to-global procedure is the focus of \autoref{commutatorrelations}. The main result, \thref{geokolycorthm}, establishes the \emph{commutator relation for special cohomological correspondences} (\thref{geokolycor}) under certain technical assumptions (\thref{techassum}).

\subsubsection{Automorphic Clifford relations}
Section \autoref{automorphiccliffordrelations} provides the second step toward constructing an action of $\Cl(M)$ on $\int_X \LL_{\s}$. We single out the minuscule and \emph{Poisson-pure case} (\thref{minassum} and \thref{middimassum}), which includes most situations where a Clifford action is expected. \thref{minassum} allows us to construct special cohomological correspondences $c_V$ via derived fundamental classes. Then the local-to-global procedure of the previous section applies, and the resulting commutator relation, called the \emph{automorphic Clifford relation} (\thref{geokolycormidmin}), gives a universal form of the Clifford algebra relations. This section provides a geometric origin of the Clifford action, which is crucial for relating $c_V$ to special cycle classes in \cite{trace}.

\subsubsection{Higher period integrals and derivatives of $L$-functions}
In this section \autoref{higherperiodintegrals}, we combine all results in previous sections to finally construct an action of $\Cl(M)$ on $\int_X\LL_{\s}$ and apply \autoref{linearalgebra} to deduce the main arithmetic result \thref{higherformulageneral}, which is the precise formulation of \thref{higherformulagen}.

\subsubsection{Example: higher Rankin--Selberg convolutions}

In this section \autoref{higherrs}, we specialize to the Rankin--Selberg case and explicate all constants. This requires a refined analysis of the Plancherel algebra (see \autoref{localinput}) and a computation of the geometric period integral (see \autoref{globalinput}). Combining these ingredients, we complete the proofs of \thref{cliffirst} (\thref{higherrsbzsvglobal}) and \thref{higherformulaglnfirst} (\thref{higherformulagln}).

\subsection*{Acknowledgment}
Z.\,Wang would like to thank his advisor Zhiwei Yun for suggesting this topic, generously sharing his thoughts and ideas. This work would never have come out without his help. Also, he would like to thank Guoquan Gao, Weixiao Lu, Liang Xiao, Xiangqian Yang, Wei Zhang, Zhiyu Zhang, and Daming Zhou for helpful discussions. S.\,Liu would like to thank his advisor Xinwen Zhu for his inspiring suggestions. He would also like to thank Brian Conrad and Dongryul Kim for valuable discussions.

\section{Notations and conventions}
In this section, we summarize the notations and conventions used throughout the article.
\subsection{General}\label{gen}
For each $n\in\Z_{\geq 1}$, define $[n]=\{1,2,\dots,n\}$. We use $S_n$ to denote the symmetric group of the set $[n]$.

\subsection{Categories}\label{cat}
By a category, we mean an $(\infty,1)$-category. For a category $\sC$, we use $\sC^{\om}$ to denote its full subcategory of compact objects. For a stable $\infty$-category (or a triangulated category) with t-structure, we use $\sC^{\he}$ to denote its heart. We use $\sC^{>-\infty}$ to denote the full subcategory of objects bounded on the left.

We use $\rrr{Cat}^{\rrr{Idem}}_k$ to denote the $\infty$-category of small idempotent complete $k$-linear categories. We use $\LinCat_k$ to denote the $\infty$-category of presentable $k$-linear categories with continuous morphisms (i.e., morphisms preserving colimit).

\subsection{Algebraic geometry}\label{ag}
In this article, we by default work with schemes/(\'etale) stacks over $\F_q$ on the automorphic side, and algebraic stacks over $k=\Ql$ or a sufficiently large finite extension of $\Ql$ on the spectral side. To $X$ a stack over $\F_q$, we will attach in \autoref{sheaftheory} a category of (ind-constructible) \'etale sheaves on $X_{\overline{\F}_q}$ which we denote by $\Shv(X_{\overline{\F}_q})$ or sometimes we write it as $\Shv(X)$. For $\cF,\cG\in\Shv(X)$, we write $\Hom(\cF,\cG):=\Hom_{\Shv(X)}(\cF,\cG)$.

\subsection{Super linear algebra}\label{superlinear}
In this paper, we use $k$ to denote a sufficiently large field with $\mathrm{char}(k)=0$, which is usually taken to be $\Ql$ (or a finite extension of $\Ql$) for $l\neq p$. In this section, we introduce our conventions on super vector spaces. Although these conventions are standard, the sign involved can be subtle, so we fix the sign rule here.

We use $\Vect$ to denote the rigid symmetric monoidal stable $\infty$-category of (complexes of) super vector spaces over $k$, and any linear category would be linear over $\Vect$. We use $\sw:V\otimes W\isom W\otimes V$ to denote the commutative constraint of $\Vect$. We use $\Vect^0\sub\Vect$ to denote the full subcategory of even vector spaces. Regarding $\Vect$ as $\Z/2\Z$-graded objects in $\Vect^0$, we get a natural forgetful functor $\oblv:\Vect\to\Vect^0$. There is another functor $(\bullet)_0:\Vect\to \Vect^0$ defined as taking the even part. Moreover, the category $\Vect$ is equipped with an endomorphism $\Pi:\Vect\to \Vect$ given by reversing the parity, which gives rise to a symmetric monoidal endomorphism $\langle 1\rangle=\Pi[1]:\Vect\to \Vect$. We obtain another functor $(\bullet)_1=(\bullet)_0\circ\Pi:\Vect\to \Vect^0$ that takes the odd part of a super vector space.

For any linear category $\sC$ and objects $c,d\in\sC$, we use $\Hom_{\sC}(c,d)\in \Vect$ to denote the Hom space enriched over $\Vect$. We write $\Hom^0_{\sC}(c,d):=H^0(\Hom_{\sC}(c,d))\in\Vect$. We often drop $\sC$ if $\sC$ is clear from the context.

The subcategory of compact objects $\Vect^{\om}$ consists of bounded complexes of finite-dimensional super vector spaces and hence, coincides with dualizable objects in $\Vect$. For each $V\in\Vect^{\om}$, its (right) dual is denoted by $V^*$. There are natural unit and counit maps \[\ev:V\otimes V^*\to k\] \[\coev:k\to V^*\otimes V.\]

For every morphism $f:V\to W$ between $V,W\in\Vect^{\om}$, the dual homomorphism $f^*:W^*\to V^*$ is defined as \[f^*:W^*\xrightarrow{\coev_V\otimes \id}V^*\otimes V\otimes W^*\xrightarrow{\id\otimes f\otimes \id}V^*\otimes W\otimes W^*\xrightarrow{\id\otimes \ev_W} V^*.\]

Note that we have \[W^*\otimes V^*\isom (V\otimes W)^*\] where the unit and counit maps are given by \[\ev_{V\otimes W}:(V\otimes W)\otimes (W^*\otimes V^*)\xrightarrow{\id\otimes\ev_W\otimes\id}V\otimes V^*\xrightarrow{\ev_V} k\] \[\coev_{V\otimes W}:k\isom k\otimes k \xrightarrow {\coev_W\otimes\coev_V}W^*\otimes W\otimes V^*\otimes V\xrightarrow{\id\otimes\sw\otimes\id}(W^*\otimes V^*)\otimes (V\otimes W).\]

From now on, we restrict our discussion to $\Vect^{\he}\sub\Vect$.

For $V\in \Vect^{\he}$, an \emph{(even) element} in $V$ is a linear map $k\to V$, and an \emph{odd element} in $V$ is a linear map $\Pi k\to V$. By an element $v\in V$, we mean an element $v\in\oblv(V)$. It is obvious that an even (resp. odd) element of $V$ is an element of $V_0$ (resp. $V_1$) and hence can be regarded as an element of $V$. We call an even or odd element of $V$ a pure element. For each pure element $v\in V$, we define $|v|$ to be its parity, which is zero if $v$ is even and one if $v$ is odd. Similarly, one defines (even, odd, pure) \emph{coelements} as functionals on $\oblv(V)$.

For any $F\in\End(V)$, we use $\tr(F)\in k$ to denote the (super)trace
\begin{equation}\label{tracedef}
    \tr(F):k\xrightarrow{\coev}V^*\otimes V\xrightarrow{\id\otimes F} V^*\otimes V\xrightarrow{\sw} V\otimes V^*\xrightarrow{\ev} k.
\end{equation}
In concrete terms, the (super)trace is related to the usual trace of endomorphisms of vector spaces by $\tr(F)=\tr(F|_{V_0})-\tr(F|_{V_1})$.

For $F\in\Aut(V)$, we define the (super)determinant of $F$ (also known as Berezinian in the literature) to be\begin{equation}\label{detdef}
    \det(F):=\det(F|_{V_0})\det(F|_{V_1})^{-1}\in k.
\end{equation}

By a bilinear form between $V, W\in\Vect^{\he}$, we mean an even coelement of $V\otimes W$. Note that we have a $S_n$-action on $V^{\otimes n}$ induced by $\sw$. We use \begin{equation}\label{swap} \sw_{i,i+1}\in\End(V^{\otimes n})\end{equation} to denote the action of the permutation $(i,i+1)\in S_n$. The action of $S_n$ on $V^{\otimes n}$ gives a direct sum decomposition \[V^{\otimes n}=\bigoplus_{\x\in\Irr(S_n)} (V^{\otimes n})_{\x}.\] We define $\Sym^n V:=(V^{\otimes n})_{\triv}$ and $\bigwedge^n V:=(V^{\otimes n})_{\sgn}$. A bilinear form on $V$ is called symplectic (resp. orthogonal) if it comes from a coelement of $\bigwedge^2V$ (resp. $\Sym^2 V$). For a bilinear form $b: V\otimes W\to k$, we write $b(v,w)=b(v\otimes w)$ for $v\in V$ and $w\in W$, which are pure elements (get zero if they do not have the same parity). 

For $v_1,\dots,v_n\in V$, we write
\[v_1v_2\cdots v_n:= \frac{1}{n!}\sum_{g\in S_n}g(v_1\otimes v_2\otimes\cdots\otimes v_n)\in \Sym^n V\sub V^{\otimes n}\] and \[v_1\wedge v_2\wedge\cdots\wedge v_n:=\frac{1}{n!}\sum_{g\in S_n}\sgn(g)g(v_1\otimes v_2\otimes\cdots\otimes v_n)\in \Sym^n V\sub V^{\otimes n}.\] Under this convention, the natural quotient map $V^{\otimes n}\to \Sym^n V$ is $v_1\otimes\cdots\otimes v_n\mapsto v_1\cdots v_n$, and the natural quotient map $V^{\otimes n}\to \bigwedge^n V$ is $v_1\otimes\cdots\otimes v_n\mapsto v_1\wedge\cdots\wedge v_n$.

For each $n\in \Z_{\geq 0}$, We have $\Sym^n V^*\isom (\Sym^n V)^*$ where the evaluation map is given by \begin{equation}\label{symev}\ev_{\Sym^n V}: \Sym^n V\otimes \Sym^n V^*\xrightarrow{\id\otimes n!} (V^{\otimes n})\otimes (V^{*\otimes n})\isom(V^{\otimes n})\otimes (V^{\otimes n})^*\xrightarrow{\ev_{V^{\otimes n}}} k.\end{equation} There is a similar construction for $\bigwedge^n V^*\isom (\bigwedge^n V)^*$. One should pay attention to the factor $n!$ here.

For any $V\in\Vect^{\he}$, we have a canonical symplectic pairing $\om_{\can}$ on $V\oplus V^*$ defined by 
\begin{equation}\label{StdSymp}
    \om_{\can}:(V\oplus V^*)\otimes (V\oplus V^*) \to V\otimes V^*\oplus V^*\otimes V \xrightarrow{\ev - \ev\circ\sw} k.
\end{equation}

We introduce the temporary notation $k_i:=\Pi^i k\in\Vect$.

For each pure element $v^*\in V^*$, we sometimes identify it with the pure coelement $V\to k_{|v^*|}$ which is the composition \[v^*:V\xrightarrow{\otimes v^*} V\otimes \Pi^{|v^*|}V^*\xrightarrow{\ev} k_{|v^*|}.\] Conversely, given a pure coelement $v^*:V\to k_{|v^*|}$, we get an element $v^*\in V^*$ via \[v^*:k_{|v^*|}\xrightarrow{\coev} V^*\otimes \Pi^{|v^*|}V\xrightarrow{\id\otimes v^*} V^*.\] It is easy to check that these two operations are inverse to each other.

For a bilinear form which is a linear map $b:V\otimes W\to k$, we sometimes identify it with the linear map \begin{equation}\label{pairingchange}b:W\xrightarrow{\coev\otimes\id} V^*\otimes V\otimes W\xrightarrow{\id\otimes b} V^*.\end{equation} Conversely, suppose we are given a linear map $b:W\to V^*$, we obtain a bilinear form via \[b:V\otimes W\xrightarrow{\id\otimes b} V\otimes V^*\xrightarrow{\ev} k.\] It is easy to check that these two operations are inverse to each other.

From a non-degenerate bilinear form $b:V\otimes W\to k$, we can also identify it with an element $\coev_b:k\to W^*\otimes V^*$ by \begin{equation}\label{coevb} \coev_b:k\xrightarrow{\coev_W} W^*\otimes W\xrightarrow{\id\otimes b} W^*\otimes V^*,\end{equation} which is also the dual map of $b$.

Given two bilinear forms $b_V:V^{\otimes 2}\to k$ and $b_W:W^{\otimes 2}\to k$, we get a bilinear form  $b_{V\otimes W}:(V\otimes W)^{\otimes 2}\to k$ via \begin{equation}\label{tensorb} b_{V\otimes W}:(V\otimes W)\otimes(V\otimes W)\xrightarrow{\id\otimes\sw\otimes\id}V\otimes V\otimes W\otimes W\xrightarrow{b_V\otimes b_W} k\otimes k\isom k.\end{equation} 

We also get bilinear forms \begin{equation}\label{symb} b_{\Sym^n V}:(\Sym^n V)^{\otimes 2}\to k\end{equation} and $b_{\bigwedge^n V}:(\bigwedge^n V)^{\otimes 2}\ \to k$ by restricting $n!\cdot b_{V^{\otimes n}}$ to the corresponding subspace. One should pay attention to the factor $n!$ here.

When $V$ is an odd vector space, in concrete terms, we have 
\begin{equation}\label{tensorbformula}
    b_{V^{\otimes n}}(v_1\cdots v_n,v'_1\cdots v'_n)=(-1)^{n(n-1)/2}b_V(v_1,v_{1}')\cdots b_V(v_n,v_{n}')
\end{equation} and
\begin{equation}\label{symbformula}
    b_{\Sym^n V}(v_1\cdots v_n,v'_1\cdots v'_n)=\sum_{g\in S_n}(-1)^{n(n-1)/2}\sgn(g)b_V(v_1,v_{g(1)}')\cdots b_V(v_n,v_{g(n)}')
\end{equation} for $v_1,\cdots,v_n,v_1',\cdots,v_n'\in V$.

Given a bilinear form $b:V\otimes W\to k$, there is a canonical symplectic form $\om_{\can}:(V\oplus W)^{\otimes 2}\to k$ defined by 
\begin{equation}\label{symcanb}
    \om_{\can}(v,w)=b(v,w), \om_{\can}(w,v)=(-1)^{1+|v||w|}b(w,v), \om_{\can}(V,V)=\om_{\can}(W,W)=0,
\end{equation}
for $v\in V$ and $w\in W$.

\subsection{Lie theory}\label{lie}
We use $G$ to denote a connected split reductive group over $\F_q$. We use $T\sub B$ to denote the maximal torus and Borel subgroup. We get a sequence of Lie algebras $\frt\sub\frb\sub\frg$. We use $X_*(T), X^*(T)$ to denote the cocharacter lattice and character lattice, and $X_*(T)^+$, $X^*(T)^+$ to denote the subset of dominant elements. We use $W_G$ to denote the Weyl group of $G$. We use $\r_G\in X^*(T)$ to denote the half sum of all positive roots of $G$.

For each $\l\in X_*(T)$, we use $\l^+$ to denote the unique element in $W_G\l\cap X_*(T)^+$ and similarly for elements in $X^*(T)$. For $\l\in X_*(T)$, we denote $d_{G,\l}:=\langle 2\r_G,\l\rangle$. We use $d\l\in \frt$ to denote the differential of $\l$.

We sometimes use $H\sub G$ to denote a connected split reductive subgroup. We use $T_H$ to denote a maximal torus of $H$ such that $T_H\sub T$. We regard $X_*(T_H)\sub X_*(T)$. For each $\l\in X_*(T)$, we always use $\l^+$ to denote the $G$-dominant $W_G$-conjugate of $\l$ (rather than the $H$-dominant conjugate).

We use $\Gc$ to denote the Langlands dual of $G$ defined over $k$.

When we talk about representations of $G$, we always mean \emph{right} $G$-modules. When we talk about representations of $\Gc$, we always mean \emph{left} $\Gc$-modules.

\subsection{Varieties with group action}\label{Gvar}
We always use $X$ to denote a variety with \emph{right} $G$-action. We are mostly interested in the case that $T^*X$ is a \emph{hyperspherical} $G$-Hamiltonian space. For its complete definition and properties, we refer to \cite[\S3]{BZSV}. We merely summarize some points that we will use in our article.

In this article, we only care about polarized hyperspherical $G$-varieties on the automorphic side (and denote it by $T^*X$). When we talk about a hyperspherical $G$-variety $T^*X$, we mean an affine smooth $G\times \Ggr$-variety $X$ such that the $G\times\Ggr$-variety $T^*X$ is hyperspherical in the sense of \cite[\S3.5.1]{BZSV} (See below for the $\Ggr$-action on $T^*X$). Here $\Ggr$ is a copy of $\Gm$ and the action of $G\times\Ggr$ is on the right.

Over an algebraically closed field in characteristic zero, the hypersphericity of $T^*X$ implies that $X$ is spherical \cite[Proposition\,3.7.4]{BZSV} such that the generic $B$-stabilizer is connected, and can be written as $X=Y\times^HG$ for some connected reductive subgroup $H\sub G$ and an $H$-representation $Y$. The requirement that $T^*X$ is \emph{neutral} for $T^*X$ to be hyperspherical in \textit{loc.cit} implies that the $\Ggr$-action on $X$ is trivial on $H\bs G$ and of weight one on the fibers of $X\to H\backslash G$ (i.e scaling action on $Y$). Taking this normalization, the $\Ggr$-action on $T^*X=(T^*Y\oplus (\frg/\frh)^*)\times^H G$ is given by weight one action on $T^*Y$ and weight two action on $(\frg/\frh)^*$, trivial on the base $H\bs G$. In particular, if $X$ is $G$-homogeneous, then the $\Ggr$-action on $X$ is trivial.

When we talk about hyperspherical $T^*X$, as in \cite[\S3.8]{BZSV}, we always assume there exists and fix a $G$-eigenmeasure $\om$ on $X$ with eigencharacter $\eta:G\to\Gm$. Such an eigenmeasure is automatically an eigenmeasure for $G\times\Ggr$ such that \begin{equation}\label{gammax}
    (g,\l)^*\om=\eta(g)\l^{\gamma_X}\om
\end{equation}
for $g\in G$ and $\l\in\Ggr$. Here $\gamma_X\in \ZZ$ is an invariant of the variety $X$ (independent of $\om$) and one can easily computes that \begin{equation}\label{gammavalue}
    \gamma_X=\dim(Y).
\end{equation}
We define \begin{equation}\label{betavalue}
    \b_X:=\dim(G)+\gamma_X-\dim(X)=\dim(H).
\end{equation}

\subsection{Hyperspherical duality}\label{hdual}
When $T^*X$ is hyperspherical, we use $\Mc$ to denote the dual $\Gc$-hyperspherical variety for $T^*X$ constructed in \cite[\S3]{BZSV}. We use $\Gc_X$ to denote the dual group of $X$. Let $P(X)$ be the stabilizer of the open $B$-orbit in $X$.
We say $T^*X$ is \emph{strongly tempered} if $\Mc$ is a $\Gc$-linear representation (or equivalently, the dual group $\Gc_X=\Gc$ and $P(X)=B$). We say that $T^*X$ is \emph{tempered} if $\Mc$ is of the form $\Mc=(S\oplus (\frgc/\frgc_X)^*)\times^{\Gc_X}\Gc$ (or equivalently, the parabolic $P(X)=B$) where $S$ is a symplectic linear $\Gc_X$-representation with weight one $\Ggr$-action.

\subsection{Intersection complex}\label{icsheaf}
On a variety $X$, we use $\IC_X$ to denote its intersection complex, which is normalized in the following way: For a smooth variety $X$, we have $\IC_X:=\uk_X\langle \dim(X)\rangle$. That is to say: the complex $\IC_X$ is perverse and has the same parity as $\dim(X)$ and is pure of weight zero whenever weight makes sense.

\subsection{Super geometric Langlands}\label{supergeometriclanglands}
To normalize the (super) Geometric Langlands equivalence \begin{equation}\label{glc}
    \Shv_{\Nilp}(\Bun_G)\cong \IndCoh_{\Nilp}(\Loc_{\Gc}^{\res}),
\end{equation} we fix \begin{equation}\label{halfspin}
    \Om^{1/2}\in\Pic(C)
\end{equation} and an isomorphism $(\Om^{1/2})^{\otimes 2}\isom \Om$. We also need to fix $\psi:\F_q\to k^{\times}$, which gives us an Artin-Schreier sheaf \begin{equation}\label{assheaf}\AS\in\Shv(\A^1)\end{equation} lying in (naive) cohomological degree 0 and corresponds to $\psi$ under the sheaf-function correspondence.              

We define the normalized Whittaker sheaf $\cP_{\Whit}^{\norm}$ as follows. Consider diagram \begin{equation}\label{bunndiagram}
    \begin{tikzcd}
        \Bun_{N,\rc(\Om)} \ar[r, "q"] \ar[d, "\pi"]  & \A^1 \\
        \Bun_{G} & 
    \end{tikzcd}
\end{equation} 
where $\Bun_{N,\rc(\Om)}$ is defined via the Cartesian diagram
\begin{equation}
\begin{tikzcd}
    \Bun_{N,\rc(\Om)} \ar[r]\ar[d] & \Bun_B \ar[d] \\
    \pt \ar[r, "\rc(\Om)"] & \Bun_T\rcart.
    \end{tikzcd}
\end{equation}
We define
\begin{equation}\label{whitsheaf}
    \cP_{\Whit}^{\norm}:=\pi_!q^*\AS\langle \dim(\Bun_{N,\rc(\om)}) \rangle
\end{equation}
where $\dim(\Bun_{N,\rc(\Om)})=(g-1)(\dim(N)-\langle2\r_G,2\rc_{G}\rangle)$.

We adopt the normalization of the (conjectural) Geometric Langlands equivalence in \cite[\S12.2.1]{BZSV} such that under the equivalence \eqref{glc}, the object $\mathrm{P}_{\Nilp}*\cP_{\Whit}^{\norm}\in \Shv_{\Nilp}(\Bun_G)$ corresponds to the (shifted) dualizing sheaf $\om_{\Loc_{\Gc}^{\res}}\langle -(g-1)\dim(G) \rangle\in\QCoh(\Loc_{\Gc}^{\res})\sub\IndCoh(\Loc_{\Gc}^{\res})$, where $\mathrm{P}_{\Nilp}*:\Shv(\Bun_G)\to\Shv_{\Nilp}(\Bun_G)$ is the Beilinson's spectral projector defined in \cite[\S13.4.1]{arinkin2022stacklocalsystemsrestricted}, which is the \emph{right} adjoint of the natural inclusion $\Shv_{\Nilp}(\Bun_G)\to\Shv(\Bun_G)$.

\subsection{Geometric class field theory}\label{gcft}
Consider the map \begin{equation}\AJ_d:C^d\to\Pic^d\end{equation} defined by \begin{equation} \AJ_d(c_1,\dots,c_d)=\cO(c_1+\cdots+c_d)\end{equation} and the map \begin{equation} \lh:C\times \Pic\to\Pic\end{equation} such that \begin{equation}\lh(c,\cL)=\cL(c).\end{equation} For each $\s\in\Loc_{\Gm}^{\res}(k)$, we define $\LL_{\s}$ to be the unique perverse sheaf on $\Pic$ such that \begin{equation}
    \AJ^*_d\LL_{\s}\isom \s^{\boxtimes d}\langle g-1\rangle
\end{equation} and \begin{equation}
    \lh^*\LL_{\s}\isom \s\boxtimes \LL_{\s}.
\end{equation}

\subsubsection{Unramified classical class field theory}\label{cft}
For each $\s\in\Loc_{\mathbb{G}_m}^{\arith}(k)$, we define the associated Hecke character to be 
\begin{equation}
    \chi_{\s}:=\tr(\Frob, \LL_{\s}\langle-(g-1)\rangle))\in\Hom(\Pic(\F_q),k^{\times})
\end{equation}

\subsection{Cohomology of classifying stacks}\label{cohbg}
We choose $\cO(-1)\in\Pic(B\G_m)$ to be the tautological line bundle on $B\Gm$ such that for any line bundle $\cL\in\Pic(X)$ defining a map $f_{\cL}:X\to B\Gm$, we have $f_{\cL}^*\cO(-1)=\cL$. This choice gives us an isomorphism $\Gamma(B\Gm,\uk)\isom k[\hbar]$ where $\hbar=c_1(\cO(-1))$.

\subsection{Shearing}\label{shear}
For a vector space $V$ equipped with a $\Ggr=\Gm$-action, we get a weight decomposition $V=\bigoplus_{i\in\ZZ} V_i$. We define $V^{\shear}:=\bigoplus_{i\in\ZZ} V_i\langle i\rangle$.

\section{Linear algebra: Kolyvagin system}\label{linearalgebra}
In this section, we introduce the linear algebra machinery used to study higher derivatives of $L$-functions of a symplectic local system over the curve $C$.
\begin{itemize}
    \item In \autoref{clifalgsection}, we recall the Clifford algebra and fix conventions.
    \item In \autoref{pinstructuresection}, we introduce the Pin structure, which serves as a preparation for defining the Kolyvagin system.
    \item  In \autoref{koly}, we introduce the Kolyvagin system associated to a Pin structure, and prove the key result \thref{kolylfun}.
    \item   In \autoref{modulekoly}, we introduce and study the Kolyvagin system associated to a module of the Clifford algebra, which will be the Kolyvagin system occurring in \thref{higherformulaglnfirst}, \thref{higherformulageneral}.
\end{itemize}

Throughout the section, we work over $k=\Qlbar$ or a finite extension of $\Ql$ such that all the groups and polynomials split. Assume we are in the following setting:
\begin{setting}\thlabel{symp}
We fix $(M, \om)$ to be an odd symplectic vector space of dimension $(0|2n)$ for $n\in\Z_{\geq 0}$, where the symplecticity of the pairing means that it is non-degenerate and for any $m_1,m_2\in M$ we have \[\om(m_1,m_2)=\om(m_2,m_1).\] 
\end{setting}

For our conventions on super vector spaces, see \autoref{superlinear}.

\subsection{Conventions on the Clifford algebra}\label{clifalgsection}
We start by recalling some basic notions on the Clifford algebra.

\subsubsection{Basic structures}\label{linearalgebrasetup}
\begin{defn}\thlabel{clif}
    We define the \emph{Clifford algebra} associated to $(M,\om)$ to be \[\Cl(M):=\Cl(M,\om):=M^{\otimes}/I\] where $M^{\otimes}$ is the free tensor algebra on $M$ and the two-sided ideal $I\sub M^{\otimes}$ is generated by elements of the form \[m_1\otimes m_2+m_2\otimes m_1-\om(m_1,m_2)\] for all $m_1,m_2\in M$.\footnote{It is more common to take $I$ to be generated by $m_1\otimes m_2+m_2\otimes m_1-2\om(m_1,m_2)$. We discard the factor $2$ to get a cleaner formula in \thref{kolylfun}. Moreover, the Clifford algebras we are going to consider (as in \thref{eigenkolyvagin}) are closer to our convention.} 
 \end{defn}

Since $\Cl(M)\in\Vect$, we have its parity decomposition $\Cl(M)=\Cl(M)^{\even}\oplus \Cl(M)^{\odd}$.

There is an increasing filtration $\{G_i\Cl(M)\}_{i\in\Z_{\geq 0}}$ defined by \begin{equation} \label{cliffil}G_i\Cl(M):=\sum_{0\leq j\leq i}\im(M^{\otimes j}\to\Cl(M)).\end{equation} 
The graded pieces with respect to the filtration $G_{\bullet}$ is $\Gr^G_{\bullet} \Cl(M)\isom \Sym^{\bullet}M$. In particular, the top graded piece $\Gr^G_{\top} \Cl(M)=\Sym^{\top}M$ is an one-dimensional even vector space. From $\om:M^{\otimes 2}\to k$, we get an induced orthogonal pairing $\om_{\top}:(\Sym^{\top} M)^{\otimes 2}\to k$ as in \eqref{symb}.

\begin{defn}\thlabel{parity}
    A \emph{parity} of $(M,\om)$ is a linear map $\om_{\top}^{1/2}:\Sym^{\top}M\to k$ such that \[(\om_{\top}^{1/2})^{\otimes 2}=\om_{\top}.\]
\end{defn}

\begin{remark}
    The name parity is justified by \thref{whyparity}.

\end{remark}

A choice of parity $\om^{1/2}_{\top}$ gives a linear map \[\tr:\Cl(M)\to k\] defined as the composition \begin{equation}\label{tr}\tr:\Cl(M)\to \Gr^G_{\top} \Cl(M)\isom \Sym^{\top}M\xrightarrow{\om^{1/2}_{\top}} k.\end{equation}

Note that for each $M$ as above, there are precisely two choices of parity $\om_{\top}^{1/2}$ differing by a sign.

\subsubsection{Pin groups}
We use $\Cl(M)^{\times}$ to denote the group of invertible elements in $\Cl(M)$. There are several important subgroups  \begin{equation}\label{pingroup}
    \begin{tikzcd}
        \Spin(M) \ar[r,hook] \ar[d,hook] & \Pin(M) \ar[d, hook] & \\
        \GSpin(M) \ar[r, hook] & \GPin(M) \ar[r,hook] & \Cl(M)^{\times}
    \end{tikzcd}.
\end{equation}

The group $\GPin(M)$ is defined as \[\GPin(M)=\{x\in \Cl(M)^{\times}|xMx^{-1}\sub M\},\] and the group $\GSpin(M)$ is defined by \[\GSpin(M):=\GPin(M)\cap \Cl(M)^{\even}.\]

The group $\Pin(M)$ can be defined as \[\Pin(M):=\{v_1v_2\cdots v_k\in\Cl(M)^{\times}|v_1,v_2,\dots,v_k\in M~ \textup{such that}~ \om(v_i,v_i)=2\}\] and the group $\Spin(M)$ is defined by \[\Spin(M):=\Pin(M)\cap \Cl(M)^{\even}.\]

One can easily check that $\Pin(M)$ and $\Spin(M)$ are subgroups by noting that \[v^{-1}=\frac{2v}{\om(v,v)}\] for every $v\in M\sub\Cl(M)$ such that $\om(v,v)\neq 0$. Moreover, it is well-known that one has
\[\GPin(M) = \Pin(M)\cdot k^{\times}\] and \[\GSpin(M) = \Spin(M)\cdot k^{\times}.\]

We have a natural group homomorphism \begin{equation}\label{pindet} \det:\GPin(M)\to\{\pm 1\}\end{equation} defined by \[\det(g)=(-1)^{|g|},\]where $|g|$ is the parity of $g\in\Cl(M)$.\footnote{Every element in $\GPin(M)$ is pure.} Moreover, there is a canonical map \begin{equation}\label{pincover}
    R:\GPin(M)\to \Or(M)
\end{equation}
\[g\mapsto R_g\]
defined as 
\begin{equation}
    R_g(m) = \det(g)gmg^{-1}\in M\sub \Cl(M) ~\textup{for $g\in\GPin(M)$ and $m\in M\sub\Cl(M)$}.
\end{equation}
We also have a map \begin{equation}\label{spinornorm}
    \l:\GPin(M)\to \GPin(M)/\Pin(M)\isom k^{\times}
\end{equation}
which satisfies $\l(cg)=c^2$ for $c\in k^{\times}\sub\GPin(M)$ and $g\in\Pin(M)$.

For each element $v\in M$ satisfying $\om(v,v)\neq 0$, it is easy to see that for every $m\in M$ we have
\begin{equation}
R_v(m)=m-\frac{2\om(m,v)}{\om(v,v)}v
\end{equation} which coincides with the reflection along $v$.

\subsubsection{Clifford module}
It is well-known that there is a unique up-to-parity (super)representation $S$ of $\Cl(M)$ such that the representation map $\Cl(M)\to \End(S)$ is an isomorphism. The representation $S$ is called the \emph{Clifford module}. We now construct a specific model of the Clifford module.
\begin{const}\thlabel{cliffordmodule}
     Choose a polarization $M=L\oplus L^*$ such that $\om=\om_{\can}$ as defined in \eqref{StdSymp}.
\begin{itemize}
    \item Define $S=\Sym^{\bullet} L^*$.
    \item The action of $L^*\sub\Cl(M)$ on $S$ is given by the linear map $L^*\otimes \Sym^{\bullet} L^*\to \Sym^{\bullet} L^*$ defined as multiplication on the left;
    \item The action of $L\sub\Cl(M)$ on $S$ is given by the linear map $L\otimes \Sym^{\bullet} L^*\to \Sym^{\bullet} L^*$ defined via \[v\otimes v_1^*v_2^*\cdots v_n^*\mapsto  \sum_{i=1}^n (-1)^{i+1}\om(v,v_i^*) v_1^*\cdots \widehat{v_i^*}\cdots v_n^*.\]
    \item The actions above uniquely extends to an action of $\Cl(M)$ on $S$.
\end{itemize}
\end{const}

Note that the polarization $M=L\oplus L^*$ induces a parity $\om^{1/2}_{\top,L}$ given by \[\om^{1/2}_{\top,L}:\Sym^{\top}M\isom \Sym^{\top}L\otimes \Sym^{\top} L^*\isom \Sym^{\top}L\otimes (\Sym^{\top}L)^*\xrightarrow{\ev} k\] where the second isomorphism is defined in \eqref{symev}.

An easy computation shows the following:
\begin{prop}\thlabel{tr=cliffordmodule}
    The map $\tr:\Cl(M)\to k$ induced by $\om^{1/2}_{\top,L}$ as in \eqref{tr} coincides with the map $\Cl(M)\to\End(S)\xrightarrow{\tr} k$, where the (super)trace $\tr:\End(S)\to k$ is defined in \eqref{tracedef}.
\end{prop}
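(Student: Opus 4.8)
The plan is to reduce both sides of the claimed identity to explicit formulas in a chosen polarization $M = L \oplus L^*$, and then match them term by term. The statement to prove is that the map $\tr \colon \Cl(M) \to k$ of \eqref{tr}, built from the parity $\om^{1/2}_{\top,L}$, agrees with the composite $\Cl(M) \xrightarrow{\sim} \End(S) \xrightarrow{\tr} k$, where $S = \Sym^\bullet L^*$ is the model of the Clifford module from \thref{cliffordmodule}. Since the representation map $\Cl(M) \to \End(S)$ is a linear isomorphism, both sides are $k$-linear functionals on the finite-dimensional space $\Cl(M)$, so it suffices to check equality on a spanning set of elements.

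First I would use the PBW-type basis of $\Cl(M)$: pick a symplectic (here: symmetric, since $M$ is odd) basis $e_1,\dots,e_n,f_1,\dots,f_n$ of $M$ with $\om(e_i,f_j)=\delta_{ij}$ adapted to the polarization $L = \langle e_i\rangle$, $L^* = \langle f_j\rangle$, and consider the monomials obtained from ordered products of distinct basis vectors. Such monomials span $\Cl(M)$ and are compatible with the filtration $G_\bullet$ in \eqref{cliffil}: a monomial involving $j$ basis vectors lies in $G_j$, and its image in $\Gr^G_j \Cl(M) \cong \Sym^j M$ is the corresponding symmetric monomial. The left-hand side $\tr$ of \eqref{tr} is, by construction, the composite of projection to $\Gr^G_{\top}\Cl(M) \cong \Sym^{\top} M$ followed by $\om^{1/2}_{\top,L}$. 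Hence $\tr(x) = 0$ unless $x$ has a component in top filtration degree $2n$, and the unique (up to scalar) top monomial is $e_1 f_1 e_2 f_2 \cdots e_n f_n$ (any ordering, up to sign and lower-order terms). So the left-hand side is determined by: it kills all monomials of length $< 2n$, and on the top monomial it returns $\om^{1/2}_{\top,L}$ applied to $e_1 f_1 \cdots e_n f_n$ viewed in $\Sym^{\top}M \cong \Sym^{\top}L \otimes \Sym^{\top}L^*$, which by the definition of $\om^{1/2}_{\top,L}$ via \eqref{symev} and the evaluation pairing equals a sign times $1$ (the precise sign being an elementary bookkeeping of the $n!$ factors and the reordering of $e$'s past $f$'s).

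Next I would compute the right-hand side, the supertrace on $\End(S)$ with $S = \Sym^\bullet L^*$. Here the key input is the explicit action in \thref{cliffordmodule}: $f \in L^*$ acts by left multiplication (raising the $\Sym$-degree by one) and $e \in L$ acts by the contraction/derivation operator (lowering the degree by one). For a monomial $x = v_1 \cdots v_r$ in the $e$'s and $f$'s, the operator $a(x) \in \End(S)$ is a composite of raising and lowering operators; its supertrace vanishes unless the operator is "degree-preserving" on $\Sym^\bullet L^*$ and, more to the point, unless each $e_i$ is paired with its dual $f_i$ — i.e.\ unless $x$ is (up to reordering and scalar) the top monomial $e_1 f_1 \cdots e_n f_n$, since only then does the composite of contractions and multiplications send each basis vector of $S = \Sym^\bullet L^*$ back to itself up to scalar. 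For all shorter monomials, or monomials missing some index, the supertrace is $0$, matching the left-hand side. For the top monomial itself I would compute the supertrace directly by summing over the monomial basis $\{f_I := \prod_{i \in I} f_i : I \subseteq [n]\}$ of $S$, tracking the sign $(-1)^{|f_I|}$ from the superstructure and the signs produced by the contraction formula $v \otimes v_1^* \cdots v_n^* \mapsto \sum_i (-1)^{i+1}\om(v, v_i^*) v_1^*\cdots \widehat{v_i^*}\cdots v_n^*$; this is the only genuine computation, and I expect the alternating signs to telescope/cancel so that the result is again $\pm 1$ with the same sign as the left-hand side.

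The main obstacle is purely the sign bookkeeping: reconciling (a) the factors of $n!$ built into the definitions \eqref{symev} and \eqref{symb} of the pairing $\om_{\top}$ and of $\om^{1/2}_{\top,L}$, (b) the Koszul signs from $\Vect$ entering the supertrace \eqref{tracedef}, and (c) the explicit signs $(-1)^{i+1}$ in the contraction action of \thref{cliffordmodule} and the sign $(-1)^{n(n-1)/2}$ that appears when one reorders a length-$2n$ monomial or passes through \eqref{symbformula}. A clean way to organize this, and the route I would actually take, is to avoid computing both sides "absolutely" and instead argue as follows. Both functionals are supported (in the sense above) on the top filtration piece $\Gr^G_{2n}\Cl(M) \cong \Sym^{\top}M$, so each is given by \emph{some} linear functional on the one-dimensional space $\Sym^{\top}M$; that is, the right-hand supertrace, precomposed with the PBW identification, equals $c \cdot \om^{1/2}_{\top,L}$ for a scalar $c$ depending only on $n$. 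It then remains to pin down $c = 1$, which I would do by evaluating on the single element $e_1 f_1 \cdots e_n f_n$ and, to fix the sign cleanly, by checking the base case $n=1$ (a single hyperbolic plane, where $\Cl(M)$ is a $2\times 2$ matrix algebra, $S$ is $2$-dimensional, and everything is a short hand computation) together with the multiplicativity of all the structures involved under orthogonal direct sums $M = M' \oplus M''$ — both $\tr$ of \eqref{tr} and the supertrace on $\End(S' \otimes S'')$ are multiplicative for such decompositions, so $c$ is multiplicative in $n$ and $c=1$ for $n=1$ forces $c=1$ in general.
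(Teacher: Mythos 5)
The paper offers no argument here (it is labelled ``an easy computation''), so there is nothing to compare against; your proposal supplies a complete and correct strategy, and the overall structure — both functionals kill $G_{2n-1}\Cl(M)$, hence are proportional as functionals on $\Sym^{\top}M$, and the constant is pinned down by $n=1$ plus multiplicativity under orthogonal direct sums — is exactly the right way to organize the computation. Two points deserve tightening. First, your stated reason for the vanishing of the supertrace on sub-top monomials (``only the top monomial sends each basis vector back to itself up to scalar'') is not the correct dichotomy: for $n\ge 2$ the operator $a(e_1f_1)$ \emph{is} diagonal in the monomial basis of $S$, yet its supertrace is $0$. The clean argument is that $S\cong\bigotimes_i(k\oplus k\cdot f_i)$, the supertrace is multiplicative over this factorization, and any index $i$ not appearing in the monomial contributes the factor $\tr(\id_{(1|1)})=1-1=0$, while any index appearing with unequal multiplicities of $e_i$ and $f_i$ makes the operator strictly shift the $f_i$-degree and hence contributes $0$; this simultaneously handles short monomials and monomials missing an index. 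Second, the multiplicativity of $\om^{1/2}_{\top,L}$ under $L=L'\oplus L''$ is not automatic from the definitions: the factor $n!$ in \eqref{symev} must be reconciled with the binomial coefficient $\binom{n}{n'}$ that appears when one expands $e_1\cdots e_n\in\Sym^{\top}L$ in terms of $\Sym^{\top}L'\otimes\Sym^{\top}L''$. The factors do cancel, so the conclusion $c(n)=c(1)^n=1$ stands, but this is precisely the bookkeeping your argument leans on and should be spelled out rather than deferred.
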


\begin{remark}\thlabel{whyparity}
    From \thref{tr=cliffordmodule}, one sees that a choice of parity is equivalent to a choice of parity of the Clifford module.
\end{remark}

We also note the following obvious property of $\tr$:
\begin{prop}
    For pure elements $x,y\in \Cl(M)$, we have $\tr(xy)=(-1)^{|x||y|}\tr(yx)$. 
\end{prop}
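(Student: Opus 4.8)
The plan is to reduce the statement to the standard graded cyclicity of the supertrace on endomorphisms of a finite-dimensional super vector space, using \thref{tr=cliffordmodule} to transport the question from $\Cl(M)$ to $\End(S)$. First I would fix a polarization $M = L\oplus L^*$ (such a Lagrangian exists over $k$), so that \thref{cliffordmodule} provides an isomorphism of super-algebras $\Cl(M)\isom\End(S)$ with $S=\Sym^{\bullet}L^*$; this isomorphism is parity-preserving, since $M\sub\Cl(M)^{\odd}$ and, by construction, elements of $M$ act on $S$ by odd endomorphisms. By \thref{tr=cliffordmodule}, the map $\tr\colon\Cl(M)\to k$ attached to the parity $\om^{1/2}_{\top,L}$ is precisely the composite $\Cl(M)\isom\End(S)\xrightarrow{\tr}k$ with the categorical supertrace of \eqref{tracedef}. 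Since replacing $\om^{1/2}_{\top}$ by $-\om^{1/2}_{\top}$ merely rescales $\tr$ by $-1$, the identity to be proved is insensitive to the choice of parity, so it is harmless to work with the parity coming from a polarization. Thus it suffices to prove: for a finite-dimensional super vector space $V$ and homogeneous $F,G\in\End(V)$, one has $\tr(F\circ G)=(-1)^{|F||G|}\tr(G\circ F)$.

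For this I would argue with a homogeneous basis $(e_i)$ of $V$ with parities $\epsilon_i$, writing $F(e_j)=\sum_i F_{ij}e_i$ and $G(e_j)=\sum_i G_{ij}e_i$ with $F_{ij},G_{ij}\in k$; homogeneity means $F_{ij}=0$ unless $\epsilon_i+\epsilon_j\equiv|F|\pmod 2$, and likewise for $G$. Using the concrete description $\tr(H)=\sum_i(-1)^{\epsilon_i}H_{ii}$ from \eqref{tracedef}, and that even scalars commute past everything, one gets $\tr(F\circ G)=\sum_{i,j}(-1)^{\epsilon_i}F_{ij}G_{ji}$ and $\tr(G\circ F)=\sum_{i,j}(-1)^{\epsilon_j}F_{ij}G_{ji}$. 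In any term with $F_{ij}G_{ji}\neq 0$ we have $\epsilon_i\equiv\epsilon_j+|F|$, hence $(-1)^{\epsilon_i}=(-1)^{|F|}(-1)^{\epsilon_j}$, so $\tr(F\circ G)=(-1)^{|F|}\tr(G\circ F)$; the same nonzero term also forces $\epsilon_i\equiv\epsilon_j+|G|$, whence $|F|\equiv|G|$ and therefore $(-1)^{|F|}=(-1)^{|F||G|}$. If no such nonzero term exists, then $\tr(F\circ G)=\tr(G\circ F)=0$ and the identity is trivial. This yields the claim for $\End(S)$, hence the proposition.

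I do not expect a genuine obstacle here; the only thing requiring care is the Koszul sign bookkeeping — verifying that the identification $\Cl(M)\cong\End(S)$ is parity-preserving, and that it is $(-1)^{\epsilon_i}$ (not $(-1)^{\epsilon_j}$) that appears in $\tr(F\circ G)$ — together with observing that the sign in the statement could equivalently be written $(-1)^{|F|}$ on the (only non-trivial) even part. As a conceptual alternative I would mention the string-diagram proof: $\tr(F\circ G)$ and $\tr(G\circ F)$ are the two ways of closing up the composite of $\coev$, the action of $F$, the action of $G$, $\sw$, and $\ev$ into a loop, and sliding $G$ once around the loop past $F$ produces exactly the Koszul sign $(-1)^{|F||G|}$ by naturality of the symmetry $\sw$ and the zig-zag identities; but I would present the basis computation above as the actual proof since it is entirely transparent.
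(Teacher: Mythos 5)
The paper states this proposition without proof (it is offered as an "obvious property" of $\tr$), and your argument correctly and completely fills that in: using \thref{tr=cliffordmodule} to identify $\tr$ with the supertrace on $\End(S)$ and then verifying graded cyclicity of the supertrace by the basis computation is exactly the route suggested by the immediately preceding material, and your sign bookkeeping (including the observation that nonzero terms force $|F|\equiv|G|$, so the sign may be written $(-1)^{|F|}$) is correct. No gaps.
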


\subsection{Pin structures}\label{pinstructuresection}
From now on, we assume we are in the following setting:
\begin{setting}\thlabel{Fsymp}
    Let $(M,\om)$ be as in \thref{symp}. Moreover, we fix $F\in\Or(M)$ and get a triple $(M,\om,F)$.
\end{setting}

\begin{defn}\thlabel{pinstructure}
    A \emph{Pin structure} (resp. \emph{GPin structure}) of $(M,\om,F)$ is an element $\tilF\in\Pin(M)$ (resp. $\tilF\in\GPin(M)$ such that \[R_{\tilF}=F,\] where $R:\GPin(M)\to \Or(M)$ is defined in \eqref{pincover}.
\end{defn}

Note that the lifting $\tilF$ in a Pin structure (resp. GPin structure) is unique up to $\pm 1$ (resp. $k^{\times}$). We use \begin{equation}\label{kolyspace}\Koly_F\sub\Cl(M)\end{equation} to denote the one-dimensional subspace spanned by all GPin structures of $(M,\om, F)$. We make the following definition
\begin{defn}\thlabel{kolystructure}
    A \emph{Kolyvagin structure} of $(M,\om,F)$ is an element $\tilF\in\Koly_F$.
\end{defn}

Note that the map $\l:\Koly_F\cap \GSpin(M)\to k$ defined in \eqref{spinornorm} can be extended to a map \begin{equation}\label{kolynorm}
    \l:\Koly_F\to k
\end{equation}
by setting $\l(0)=0$. The map $\l$ is quadratic.

The name Kolyvagin will be justified in  \autoref{koly}.

\subsubsection{Semisimple case}\label{clifexp}
We are now going to give explicit descriptions of the Pin structure $\tilF$ in \thref{pinstructure} when $F\in\Or(M)$ is semisimple. This section is only used in the proof of \thref{kolylfun} and \thref{kolynormstandard}. It can be safely skipped during a first reading.

We first consider the case $\det(F)=1$. In this case, there exists a polarization $M=L\oplus L^*$ such that $\om=\om_{\can}$ as in \eqref{StdSymp} and both $L$ and $L^*$ are $F$-stable. We choose $F$-eigenbasis $(e_1,e_2,\cdots,e_n)$ of $L$ and get dual basis $(e_1^*,e_2^*,\cdots,e_n^*)$ of $L^*$ satisfying $\om(e_i,e_j^*)=\d_{ij}$. Suppose we have $Fe_i=\a_ie_i$ for each $i$, we get $Fe_i^*=\a_i^{-1}e_i^*$. Note the following description of $\tilF$:

\begin{prop}\thlabel{despin0}
    The Pin structure $\tilF$ is given by \begin{equation}\label{pineven}\tilF=\pm \prod_{i=1}^n(\a_i^{1/2}e_ie_i^*+\a_i^{-1/2}e_i^*e_i)\in\Spin(M).\end{equation}
\end{prop}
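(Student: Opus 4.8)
The plan is to verify directly that the element
\[
\tilF = \prod_{i=1}^n\bigl(\a_i^{1/2}e_ie_i^*+\a_i^{-1/2}e_i^*e_i\bigr)
\]
lies in $\Spin(M)$ and satisfies $R_{\tilF}=F$; uniqueness up to $\pm 1$ then follows from the remark after \thref{pinstructure}. First I would record the basic Clifford relations in this basis coming from \thref{clif} and $\om = \om_{\can}$ of \eqref{StdSymp}: since $\om(e_i,e_j)=\om(e_i^*,e_j^*)=0$ and $\om(e_i,e_j^*)=\d_{ij}$, we get $e_ie_j=-e_je_i$, $e_i^*e_j^*=-e_j^*e_i^*$ for all $i,j$, while $e_ie_j^*=-e_j^*e_i$ for $i\neq j$ and $e_ie_i^*+e_i^*e_i=1$. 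In particular each $e_i$ and each $e_i^*$ squares to zero (as $\om(e_i,e_i)=0$). A convenient device is to set $p_i:=e_ie_i^*$ and $q_i:=e_i^*e_i$, so that $p_i+q_i=1$, $p_iq_i=q_ip_i=0$, $p_i^2=p_i$, $q_i^2=q_i$ (idempotents), and the pairs $(p_i,q_i)$ for distinct $i$ commute with each other (each individual factor $\a_i^{1/2}p_i+\a_i^{-1/2}q_i$ is even, hence the sign rule causes no trouble, and the underlying algebra elements genuinely commute because they involve disjoint index sets and an even number of generators).

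With these idempotents the factor $f_i:=\a_i^{1/2}p_i+\a_i^{-1/2}q_i$ is manifestly invertible with $f_i^{-1}=\a_i^{-1/2}p_i+\a_i^{1/2}q_i$, and $\tilF=\prod_i f_i$ is an even invertible element, so $\tilF\in\Cl(M)^{\even}{}^{\times}$. To see $\tilF\in\Spin(M)$ I would exhibit it as a product of elements of $M$ with $\om(v,v)=2$: a direct computation gives, for instance,
\[
\tfrac12\bigl((e_i+e_i^*)(\a_i^{1/2}e_i+\a_i^{-1/2}e_i^*)\bigr)
= \tfrac12\bigl(\a_i^{-1/2}e_ie_i^* + \a_i^{1/2}e_i^*e_i\bigr),
\]
and after rescaling the vectors so their $\om$-norm is $2$ one recovers $f_i$ up to the stated scaling; since $f_i\in\Pin(M)$ lies in $\Cl(M)^{\even}$ it is in $\Spin(M)$, and $\Spin(M)$ is a group, so $\tilF\in\Spin(M)$. (I would double-check the norm bookkeeping here, since \thref{clif} uses the unnormalized convention $m_1m_2+m_2m_1=\om(m_1,m_2)$, which is exactly why the square condition reads $\om(v,v)=2$ rather than $1$.)

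Finally, to check $R_{\tilF}=F$, note $\det(\tilF)=(-1)^{|\tilF|}=1$ since $\tilF$ is even, so $R_{\tilF}(m)=\tilF m\tilF^{-1}$. Because the factors $f_i$ commute and $f_i$ involves only $e_i,e_i^*$, conjugation by $\tilF$ acts on $e_j,e_j^*$ through $f_j$ alone. So it suffices to compute $f_ie_if_i^{-1}$ and $f_ie_i^*f_i^{-1}$. Using $p_ie_i = e_ie_i^*e_i=e_i q_i$... more efficiently: $e_ip_i=0$, $e_iq_i = e_ie_i^*e_i = e_i(1-e_ie_i^*)=e_i = e_i$ wait — better to use $p_ie_i=e_ie_i^*e_i = e_i(1-e_i^*e_i)\cdot$; I would carry out the short identities $p_ie_i = 0$ is false, rather $q_ie_i=e_i^*e_ie_i=0$ and $p_ie_i=e_ie_i^*e_i=e_i$, similarly $p_ie_i^*=0$, $q_ie_i^*=e_i^*$, giving $f_ie_i = \a_i^{1/2}e_i$ and $f_i^{-1}$ contributing another $\a_i^{1/2}$ from the right... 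I would just grind these four one-line identities to conclude $\tilF e_i\tilF^{-1}=\a_i e_i=Fe_i$ and $\tilF e_i^*\tilF^{-1}=\a_i^{-1}e_i^*=Fe_i^*$, hence $R_{\tilF}=F$ on a basis of $M$. The main obstacle — really the only place care is needed — is the consistent tracking of the factor-of-$2$ normalization and the Koszul signs when reorganizing products of the odd generators $e_i,e_i^*$; once the idempotent calculus for $p_i,q_i$ is set up cleanly the rest is mechanical. The case $\det F=1$ handled here is the one asserted in \thref{despin0}; the $\det F=-1$ case is presumably treated in a companion statement.
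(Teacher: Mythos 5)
Your proposal is correct and follows essentially the same route as the paper: both factor each term $\a_i^{1/2}e_ie_i^*+\a_i^{-1/2}e_i^*e_i$ as a product of two vectors of $\om$-norm $2$ to get membership in $\Pin(M)$ (hence $\Spin(M)$ by evenness), and then verify the action on the basis $e_j,e_j^*$ --- the paper via the reflection formula for $R_v$, you via conjugation using the idempotents $e_ie_i^*,e_i^*e_i$, which is only a cosmetic difference. One small slip: the product $(e_i+e_i^*)(\a_i^{1/2}e_i+\a_i^{-1/2}e_i^*)$ you display equals $\a_i^{-1/2}e_ie_i^*+\a_i^{1/2}e_i^*e_i$, i.e.\ the \emph{inverse} of your factor $f_i$ (and no factor of $\tfrac12$ or rescaling is needed, since both vectors already have norm $2$); the paper's choice $(e_i+e_i^*)(\a_i^{-1/2}e_i+\a_i^{1/2}e_i^*)$ gives $f_i$ itself, though your version still suffices because $\Pin(M)$ is a group.
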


\begin{proof}
    Note that \[\a_i^{1/2}e_ie_i^*+\a_i^{-1/2}e_i^*e_i=(e_i+e_i^*)\cdot(\a_i^{-1/2}e_i+\a_i^{1/2}e_i^*)\] and we have \[\om(e_i+e_i^*,e_i+e_i^*)=\om(\a_i^{-1/2}e_i+\a_i^{1/2}e_i^*,\a_i^{-1/2}e_i+\a_i^{1/2}e_i^*)=2,\] therefore we know that right hand side of \eqref{pineven} lies in $\Pin(M)$. To prove the identity, it suffices to check that $R_{e_i+e_i^*}\circ R_{\a_i^{-1/2}e_i+\a_i^{1/2}e_i^*}$ acts by identity on vectors $e_j,e_j^*$ for $j\neq i$ and has eigenvalue $\a_i$ on $e_i$ and $\a_i^{-1}$ on $e_i^*$. The first claim is obvious. The second claim follows from the following direct computation:
    \[\begin{split}
        R_{e_i+e_i^*}\circ R_{\a_i^{-1/2}e_i+\a_i^{1/2}e_i^*}(e_i)&= R_{e_i+e_i^*}(e_i-\om(e_i,\a_i^{-1/2}e_i+\a_i^{1/2}e_i^*)(\a_i^{-1/2}e_i+\a_i^{1/2}e_i^*))\\
        &=R_{e_i+e_i^*}(-\a_ie_i^*)\\
        &=-\a_ie_i^*+\om(\a_ie_i^*,e_i+e_i^*)(e_i+e_i^*)\\
        &=\a_ie_i
    \end{split}.\]
The computation for $e_i^*$ is similar.

\end{proof}
Let us write $F_L=F|_L$, $F_{L^*}=F|_{L^*}$ and $F_{\Sym^{\bullet}L^*}$, $F_{\Sym^{\bullet}L}$ to be the automorphisms on $\Sym^{\bullet}L^*$, $\Sym^{\bullet}L$. We have the following immediate observations:

\begin{cor}
\thlabel{actpin0}
    Let us take \[\tilF=\prod_{i=1}^n(\a_i^{1/2}e_ie_i^*+\a_i^{-1/2}e_i^*e_i)\in\Spin(M),\] then the action of $\tilF$ on $S=\Sym^{\bullet}L^*$ is given by \[\tilF\cdot e_{i_1}^*\cdots e_{i_k}^*=\e(L,F)^{1/2}\a_{i_1}^{-1}\cdots\a_{i_k}^{-1}e_{i_1}^*\cdots e_{i_k}^*,\] where $\e(L,F):=\det(F_L)^{-1}=\a_1\cdots\a_n$ is the root number. In other words, the action of $\tilF$ on $S=\Sym^{\bullet}L^*$ equals $\e(L,F)^{1/2}F_{\Sym^{\bullet}L^*}$.
\end{cor}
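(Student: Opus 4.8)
The plan is to deduce \thref{actpin0} directly from \thref{despin0} together with the explicit model of the Clifford module in \thref{cliffordmodule}, so that this becomes a routine (if sign-sensitive) computation rather than anything genuinely new. First I would fix the polarization $M = L \oplus L^*$ with $\om = \om_{\can}$ and the $F$-eigenbases $(e_i)$, $(e_i^*)$ as in the setup, and take $S = \Sym^\bullet L^*$ with the action described in \thref{cliffordmodule}: $e_i^*$ acts by left multiplication and $e_i$ acts by the contraction $v_1^* \cdots v_m^* \mapsto \sum_j (-1)^{j+1}\om(e_i, v_j^*) v_1^* \cdots \widehat{v_j^*} \cdots v_m^*$.

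The core computation is to evaluate the action of a single factor $A_i := \a_i^{1/2}e_ie_i^* + \a_i^{-1/2}e_i^*e_i$ on a monomial $e_{i_1}^* \cdots e_{i_k}^*$. I would split into the two cases according to whether the index $i$ appears among the $i_1, \dots, i_k$. If $i$ does not occur, then $e_ie_i^*$ acting on the monomial: $e_i^*$ multiplies on the left, then $e_i$ contracts against that new leading factor $e_i^*$ (all other $\om(e_i, e_{i_j}^*) = 0$), returning the monomial with coefficient $\om(e_i,e_i^*) = 1$; meanwhile $e_i^*e_i$ kills the monomial since $e_i$ contracts to zero. So $A_i$ acts by the scalar $\a_i^{1/2}$ on monomials not involving $e_i^*$. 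If $i$ does occur, say $e_{i_j}^* = e_i^*$ for exactly one $j$ (the generic eigenvalue case; the standard reduction handles repeated factors since $\Sym$ is spanned by squarefree monomials when all $\a_i$ distinct, and one passes to the general case by density or direct bookkeeping), then $e_i^*e_i$ contributes: $e_i$ contracts the monomial, moving $e_i^*$ to the front up to the sign $(-1)^{j+1}$, producing $(-1)^{j+1} e_{i_1}^* \cdots \widehat{e_i^*} \cdots e_{i_k}^*$ with a factor $\om(e_i, e_i^*) = 1$, and then $e_i^*$ multiplies it back on the left — but on the \emph{left}, so reassembling it into its original slot costs another $(-1)^{j+1}$, giving back the monomial with coefficient $1$; meanwhile $e_ie_i^*$ kills it because left-multiplying by $e_i^*$ creates a repeated $e_i^*$ which vanishes in $\Sym$ — wait, it does not vanish in $\Sym$, so here $e_ie_i^*$ contributes $\a_i^{1/2}$ times: left-multiply by $e_i^*$ (creating a square), then contract by $e_i$ against \emph{each} $e_i^*$ present, and the two contributions cancel in pairs by the sign, so the net is again just the $\a_i^{-1/2}$ term. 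Thus $A_i$ acts by $\a_i^{-1/2}$ on monomials containing $e_i^*$. Keeping careful track I would arrive at: $A_i$ scales $e_{i_1}^* \cdots e_{i_k}^*$ by $\a_i^{1/2}$ if $i \notin \{i_1,\dots,i_k\}$ and by $\a_i^{-1/2}$ if $i \in \{i_1,\dots,i_k\}$.

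Multiplying over all $i$ (the factors $A_i$ commute on $S$ since they act diagonally on the monomial basis), the action of $\tilF = \prod_i A_i$ on $e_{i_1}^* \cdots e_{i_k}^*$ is $\big(\prod_{i} \a_i^{1/2}\big) \cdot \big(\prod_{j=1}^k \a_{i_j}^{-1/2}\big) \cdot \big(\prod_{j=1}^k \a_{i_j}^{-1/2}\big) = (\a_1 \cdots \a_n)^{1/2} \a_{i_1}^{-1} \cdots \a_{i_k}^{-1}$, which is exactly $\e(L,F)^{1/2} \a_{i_1}^{-1}\cdots \a_{i_k}^{-1}$ since $\e(L,F) = \det(F_L)^{-1} = \prod_i \a_i$ ($F_L$ has eigenvalues $\a_i$, so $\det F_L = \prod \a_i$, and its inverse is indeed $\prod \a_i$ when... actually $\det(F_L)^{-1} = \prod \a_i^{-1}$, so I would double-check the stated normalization $\e(L,F) = \a_1 \cdots \a_n$ against $\det(F_L)^{-1}$ — this forces $\det(F_L) = \prod \a_i^{-1}$, i.e. the $\a_i$ are the eigenvalues of $F_{L^*}$, or there is a convention that $F$ acts on $L$ by the \emph{inverse} transpose; I would reconcile this with the line $Fe_i = \a_i e_i$ in the text, most likely concluding the displayed formula is internally consistent with their eigenvalue convention). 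Finally, since $F_{\Sym^\bullet L^*}$ sends $e_{i_1}^* \cdots e_{i_k}^* \mapsto \a_{i_1}^{-1} \cdots \a_{i_k}^{-1} e_{i_1}^* \cdots e_{i_k}^*$ (as $Fe_i^* = \a_i^{-1}e_i^*$), the action of $\tilF$ equals $\e(L,F)^{1/2} F_{\Sym^\bullet L^*}$, as claimed. The only real obstacle is bookkeeping the signs in the two contraction-then-multiplication compositions and making sure the square-monomial subtleties in $\Sym^\bullet L^*$ are handled — I would organize this by first treating the squarefree monomials (which suffices when the $\a_i$ are distinct, the generic case) and then invoking a specialization/continuity argument, or alternatively checking the general monomial case head-on with the observation that all cross-terms cancel pairwise.
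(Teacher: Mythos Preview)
Your approach is correct and is essentially what the paper has in mind (the paper gives no explicit proof, labeling it an ``immediate observation'' following \thref{despin0} and \thref{cliffordmodule}). However, you are needlessly complicating the bookkeeping because you have lost track of the parity: recall from \thref{symp} that $M$ has dimension $(0|2n)$, so $L^*$ is purely \emph{odd}, and hence $\Sym^\bullet L^*$ is the classical exterior algebra. In particular $e_i^* e_i^* = 0$, so your first instinct (``left-multiplying by $e_i^*$ creates a repeated $e_i^*$ which vanishes'') was right, and the subsequent retraction and the ``cancellation in pairs'' argument are unnecessary. Likewise the squarefree/density manoeuvre is moot: every monomial is already squarefree. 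Finally, your confusion about $\e(L,F) = \det(F_L)^{-1} = \a_1\cdots\a_n$ is resolved by the same parity point: $\det$ here is the superdeterminant \eqref{detdef}, and since $L$ is purely odd, $\det(F_L) = \det(F|_{L_1})^{-1} = (\prod_i \a_i)^{-1}$, so its inverse is indeed $\prod_i \a_i$. With these clarifications the computation collapses to exactly the two clean cases you outlined first.
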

\begin{remark}
    The above choice of $\tilF$ depends on a choice of square root $\e(L, F)^{1/2}$ (rather than choices of square roots of individual $\a_i$). The last claim in \thref{actpin0} is also valid when $F$ is not semisimple but admits the $F$-stable polarization $M=L\oplus L^*$.
\end{remark}

Now we turn to the case $\det(F)=-1$. In this case, we always have $F$-stable decomposition $M=W\oplus W^*\oplus k\cdot x\oplus k\cdot y$ such that $W\oplus W^*$, $k\cdot x$, $k\cdot y$
are orthogonal to each other (recall that we are assuming $F$ semisimple), in which $\om|_{W\oplus W^*}$ is the canonical symplectic pairing, and $\om(x,x)=\om(y,y)=2$, $Fx=x$, $Fy=-y$. Let us set $a=(x+y\sqrt{-1})/2$ and $a^*=(x-y\sqrt{-1})/2$. Take $L^*=W^*\oplus k\cdot a^*$ and $L=W\oplus k\cdot a$. This gives a polarization $W=L\oplus L^*$, hence we can use $S=\Sym^{\bullet}L^*$ as the model for the Clifford module. We further pick $F$-eigenbasis $e_1,\cdots,e_{n-1}$ of $W$ as before such that $Fe_i=\a_i e_i$, and obtain dual basis $e_1^*,\cdots,e_{n-1}^*$. We have the following proposition parallel to \thref{despin0}:

\begin{prop}
    \thlabel{despin1}
    We have \[\tilF=\pm (\prod_{i=1}^{n-1}(\a_i^{1/2}e_ie_i^*+\a_i^{-1/2}e_i^*e_i))y\in\Pin(M).\]
\end{prop}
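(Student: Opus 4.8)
The plan is to mimic the proof of \thref{despin0} for the case $\det(F)=1$, now incorporating the extra reflection coming from the sign-minus-one eigenspace. First I would verify that the proposed element indeed lies in $\Pin(M)$: by construction each factor $\a_i^{1/2}e_ie_i^*+\a_i^{-1/2}e_i^*e_i$ equals $(e_i+e_i^*)\cdot(\a_i^{-1/2}e_i+\a_i^{1/2}e_i^*)$ with both vectors having $\om$-square equal to $2$, exactly as in the proof of \thref{despin0}; and the final factor $y$ satisfies $\om(y,y)=2$ by hypothesis. Hence the displayed element is a product of vectors in $M$ of square $2$, so it lies in $\Pin(M)$. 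Since there are $2(n-1)+1$ such factors, the element is odd, consistent with $\det(\tilF)=-1=\det(F)$ under the homomorphism \eqref{pindet}.

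Next I would check the covering relation $R_{\tilF}=F$. By multiplicativity of $R$ (it is a group homomorphism \eqref{pincover}), it suffices to compute $R$ on each factor. The computation $R_{e_i+e_i^*}\circ R_{\a_i^{-1/2}e_i+\a_i^{1/2}e_i^*}$ acts as the identity on $e_j,e_j^*$ for $j\neq i$, scales $e_i$ by $\a_i$ and $e_i^*$ by $\a_i^{-1}$, and — this is the new point — one must also check it acts trivially on $a,a^*$ (equivalently on $x,y$), which is immediate since $W\oplus W^*$ is orthogonal to $k\cdot x\oplus k\cdot y$, so all the $e_i,e_i^*$ are $\om$-orthogonal to $x$ and $y$. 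For the last factor, $R_y$ is the reflection along $y$, which by the displayed reflection formula fixes everything orthogonal to $y$ (namely $W$, $W^*$, and $x$) and sends $y\mapsto -y$. Reassembling: $R_{\tilF}$ fixes $x$, negates $y$, scales each $e_i$ by $\a_i$ and each $e_i^*$ by $\a_i^{-1}$ — this is exactly $F$ on the decomposition $M=W\oplus W^*\oplus k\cdot x\oplus k\cdot y$. Therefore $R_{\tilF}=F$, and combined with membership in $\Pin(M)$ this shows the displayed element is a Pin structure; since the Pin structure is unique up to $\pm 1$ (the remark after \thref{pinstructure}), we get the stated identity with the $\pm$.

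The main obstacle, such as it is, is purely bookkeeping: one must be careful that the orthogonal direct-sum decomposition is genuinely orthogonal for $\om$ (not merely a vector-space direct sum), so that the reflections in the $W\oplus W^*$ block and the reflection $R_y$ commute appropriately and their $R$-images multiply to $F$ with no cross terms; the choice $a=(x+y\sqrt{-1})/2$, $a^*=(x-y\sqrt{-1})/2$ is only needed to exhibit $S=\Sym^{\bullet}L^*$ as a model for the Clifford module and plays no role in the identity itself, so I would not belabor it. I expect the proof to be a direct transcription of the proof of \thref{despin0} with the single additional reflection $R_y$ appended, and thus quite short.
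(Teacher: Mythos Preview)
Your proposal is correct and follows exactly the approach the paper takes: the paper itself omits the proof, stating only that it is ``completely analogous to the proof of \thref{despin0},'' and your outline is precisely that analogy with the extra reflection $R_y$ appended. The bookkeeping remarks you make (orthogonality of the blocks, the role of $a,a^*$ being irrelevant here) are accurate and would constitute the full argument.
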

The proof is completely analogous to the proof of \thref{despin0}, so we omit it. We also have the following Corollary parallel to \thref{actpin0}:

\begin{cor}
    \thlabel{actpin1}
    Let us take \[\tilF=(\prod_{i=1}^{n-1}(\a_i^{1/2}e_ie_i^*+\a_i^{-1/2}e_i^*e_i))y\in\Pin(M),\] then the action of $\tilF$ on $S=\Sym^{\bullet}L^*$ is given by \[\tilF\cdot e_{i_1}^*\cdots e_{i_k}^*=(-1)^{k+1/2}\e(W,F)^{1/2}\a_{i_1}^{-1}\cdots\a_{i_k}^{-1}e_{i_1}^*\cdots e_{i_k}^*a^*\] and \[\tilF\cdot e_{i_1}^*\cdots e_{i_k}^*a^*=(-1)^{k-1/2}\e(W,F)^{1/2}\a_{i_1}^{-1}\cdots\a_{i_k}^{-1}e_{i_1}^*\cdots e_{i_k}^*.\]
\end{cor}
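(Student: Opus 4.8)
The plan is to follow the strategy behind \thref{actpin0}, splitting $\tilF$ into the even factor treated there and the single odd vector $y$. First I would write $\tilF=\tilF_W\cdot y$ with $\tilF_W:=\prod_{i=1}^{n-1}(\a_i^{1/2}e_ie_i^*+\a_i^{-1/2}e_i^*e_i)\in\Spin(W)\sub\Cl(M)$ (this is exactly the element produced by \thref{despin0} for the symplectic space $W$), and record that, since $L^*=W^*\oplus k\cdot a^*$ and $a^*$ is odd, the Clifford module splits as $S=\Sym^{\bullet}L^*\isom\Sym^{\bullet}W^*\otimes(k\oplus k\cdot a^*)$.

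Next I would collect the orthogonality relations coming from $\om(x,x)=\om(y,y)=2$, $\om(x,y)=0$: namely $\om(a,a^*)=1$ with $a,a^*$ isotropic (which is what makes $L=W\oplus k\cdot a$, $L^*=W^*\oplus k\cdot a^*$ a valid polarization for the model of \thref{cliffordmodule}), together with $\om(e_i,a)=\om(e_i,a^*)=\om(e_i^*,a)=\om(e_i^*,a^*)=0$. The vanishing relations force $a$ and $a^*$ to anticommute with every $e_i,e_i^*$ inside $\Cl(M)$, hence to commute with the even element $\tilF_W$; consequently the action of $\tilF_W$ on $S$ respects the tensor decomposition, and by \thref{actpin0} applied to the symplectic subspace $W$ (whose root number is $\e(W,F)=\a_1\cdots\a_{n-1}$) it multiplies both $e_{i_1}^*\cdots e_{i_k}^*$ and $e_{i_1}^*\cdots e_{i_k}^*a^*$ by the scalar $\e(W,F)^{1/2}\a_{i_1}^{-1}\cdots\a_{i_k}^{-1}$.

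Then I would compute the action of $y$ by hand. Solving $a=(x+y\sqrt{-1})/2$ and $a^*=(x-y\sqrt{-1})/2$ gives $y=\sqrt{-1}(a^*-a)$, where $\sqrt{-1}:=(-1)^{1/2}$ is the fixed square root; here $a^*\in L^*$ acts by left multiplication and $a\in L$ by the contraction of \thref{cliffordmodule}. Using $\om(a,a^*)=1$, $\om(a,e_i^*)=0$, together with the sign $(-1)^k$ incurred in moving $a^*$ past the $k$ odd generators $e_{i_j}^*$, I would obtain $y\cdot(e_{i_1}^*\cdots e_{i_k}^*)=(-1)^{k+1/2}e_{i_1}^*\cdots e_{i_k}^*a^*$ and $y\cdot(e_{i_1}^*\cdots e_{i_k}^*a^*)=(-1)^{k-1/2}e_{i_1}^*\cdots e_{i_k}^*$, reading $(-1)^{\pm 1/2}=\pm\sqrt{-1}$. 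Since $S$ is a left $\Cl(M)$-module we have $\tilF\cdot s=\tilF_W\cdot(y\cdot s)$, so composing these formulas with the scalar action of $\tilF_W$ from the previous paragraph yields the two claimed identities; as in the remark following \thref{actpin0}, the equalities are understood after fixing compatible choices of $\e(W,F)^{1/2}$ and $\sqrt{-1}$.

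The only genuinely delicate point, which I expect to be the main (and essentially the sole) obstacle, is the bookkeeping of super signs: the factor $(-1)^k$ from transposing $a^*$ with the $e_{i_j}^*$, the alternating $(-1)^{i+1}$ in the contraction formula of \thref{cliffordmodule}, and the consistent identification $(-1)^{\pm 1/2}=\pm\sqrt{-1}$. Once a single square root of $-1$ is chosen and these signs are tracked carefully, the statement follows formally from \thref{despin0}, \thref{actpin0}, and \thref{cliffordmodule}, in exact parallel with the case $\det(F)=1$.
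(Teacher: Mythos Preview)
Your proof is correct and follows exactly the approach the paper intends: it states the corollary ``parallel to \thref{actpin0}'' without proof, and the factorization $\tilF=\tilF_W\cdot y$ you exploit is precisely how $\tilF$ is presented in \thref{despin1}. Your sign bookkeeping for the action of $y=\sqrt{-1}(a^*-a)$ and the verification that $\tilF_W$ acts on $e_{i_1}^*\cdots e_{i_k}^*a^*$ by the same scalar as on $e_{i_1}^*\cdots e_{i_k}^*$ (via orthogonality of $a,a^*$ to $W\oplus W^*$) are both correct, so nothing further is needed.
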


\subsection{Kolyvagin systems}\label{koly}

In this section, we introduce a sequence of elements attached to $(M,\om, F)$ after choosing a Kolyvagin structure (see \thref{kolystructure}) and parity (see \thref{parity}). This sequence of elements was first considered in \cite{YZ3}. In comparison with the original definition, we provide a new construction of these elements that is considerably simpler and more conceptual. Since this sequence of elements shares some features with the Kolyvagin system over number fields defined in \cite{mazur2004kolyvagin}, it can be regarded as a function field analog of the Kolyvagin system.

\subsubsection{Definition}
\begin{defn}\thlabel{kolypolarization}
    Given $(M,\om,F)$ as in \thref{Fsymp}, a \emph{Kolyvagin polarization} of $(M,\om,F)$ is a quadruple \[(M,\om,\tilF,\om_{\top}^{1/2})\] where
    \begin{itemize}
        \item The element $\tilF\in \Koly_F$ is a Kolyvagin structure for $(M,\om,F)$ defined in \thref{kolystructure};
        \item The element $\om_{\top}^{1/2}$ is a parity for $(M,\om)$ defined in \thref{parity}.
    \end{itemize}
    We say a Kolyvagin polarization is GPin (or Pin, Spin, GSpin) if $\tilF\in\GPin(M)$ (or $\tilF$ lies in $\Pin(M)$, $\Spin(M)$, $\GSpin(M)$).
\end{defn}

\begin{defn}\thlabel{kolydefn}
We define the \emph{Kolyvagin system} attached to a Kolyvagin polarization $(M,\om,\tilF,\om_{\top}^{1/2})$ to be the sequence of elements \[\{z_{r}\in M^{*\otimes r}\}_{r\in\Z_{\geq 0}}\] where the element $z_r$ is defined as the coelement \[z_r:M^{\otimes r}\to k\] such that  
\begin{equation}\label{kolycons}
    z_r(m_1\otimes m_2\otimes\cdots\otimes m_r)=\tr( m_1m_2\cdots m_r\tilF)
\end{equation}
for $m_1,\dots,m_r\in M$, and the map $\tr$ is defined in \eqref{tr}.
    We say a Kolyvagin system is GPin (or Pin, Spin, GSpin) if the Kolyvagin polarization is GPin (or Pin, Spin, GSpin).
\end{defn}

\begin{remark}\thlabel{tensordualconvention}
    We remind the reader that by the convention in \autoref{superlinear}, for $m_r^*\otimes\cdots\otimes m_1^*\in M^{*\otimes r}$, the corresponding coelement of $m_r^*\otimes\cdots\otimes m_1^*:M^{\otimes r}\to k$ is \begin{equation}\label{tensordualformula}
        (m_r^*\otimes\cdots\otimes m_1^*)(m_1\otimes\cdots\otimes m_r)=m_1^*(m_1)\cdots m_r^*(m_r).
    \end{equation} for $m_1\otimes\cdots\otimes m_r\in M^{\otimes r}$. 
\end{remark}

\begin{remark}
    We sometimes extend the Kolyvagin system $\{z_{r}\in M^{*\otimes r}\}_{r\in\Z_{\geq 0}}$ to a sequence $\{z_{r}\in M^{*\otimes r}\}_{r\in\Z}$ by setting $z_{r}=0$ for $r<0$.
\end{remark}

\subsubsection{Characterizing properties}\label{kolycharchapter}
In this section, we describe a characterizing property of the Kolyvagin system $\{z_r\}$, which served as the original definition in \cite{YZ3}. While this perspective will not be used elsewhere in the present article, we include it here for completeness.

Consider the following morphisms:
\begin{itemize}
    \item For each $1\leq i\leq r+1$, we have a map $\coev_{i,i+1}:M^{*\otimes r}\to M^{*\otimes r+2}$ defined as the composition \[\coev_{i,i+1}:M^{*\otimes r}\isom M^{*\otimes i-1}\otimes k\otimes M^{*\otimes r-i+1}\xrightarrow{\id\otimes \coev_{\om}\otimes\id}M^{*\otimes r+2}\] where the map $\coev_{\om}:k\to M^*\otimes M^*$ is introduced in \eqref{coevb}.
    \item For each $1\leq i\leq r-1$, we have a map $\sw_{i,i+1}:M^{*\otimes r}\to M^{*\otimes r}$ introduced in  \eqref{swap}.
    \item We have a morphism $\pF:M^{*\otimes r}\to M^{*\otimes r}$ defined by \[\pF(m_1^*\otimes m_2^*\otimes\cdots\otimes m_r^*)=(-1)^{r-1}F(m_r^*)\otimes m_1^*\otimes\cdots\otimes m_{r-1}^*\] where $F=(F^*)^{-1}:M^*\to M^*$.
\end{itemize}

\begin{prop}\thlabel{kolychar}
    For the Kolyvagin system $\{z_{r}\in M^{*\otimes r}\}_{r\in\Z_{\geq 0}}$, we have the following facts:
    \begin{enumerate}
    \item \label{kolyiter}
            For each $r\in \Z_{\geq 0}$ and $1\leq i\leq r-1$, we have
            \begin{equation*}
                z_r-\sw_{i,i+1}(z_r)=\coev_{i,i+1}(z_{r-2}).
            \end{equation*}
    \item \label{kolypf} For each $r\in \Z_{\geq 0}$, we have
            \begin{equation*}
                \pF(z_r)=z_r.
            \end{equation*}
    \item \label{kolyuni} The properties in \eqref{kolyiter} and \eqref{kolypf} uniquely characterizes $\{z_{r}\in M^{*\otimes r}\}_{r\in\Z_{\geq 0}}$ up to scalar. Equivalently speaking, after fixing the parity in \thref{parity}, the construction \eqref{kolycons} defines an isomorphism between the one-dimensional vector space $\Koly_F$ defined in \thref{kolystructure} and the vector space of sequences of elements $\{z_{r}\in M^{*\otimes r}\}_{r\in\Z_{\geq 0}}$ satisfying \eqref{kolyiter} and \eqref{kolypf}.
    \end{enumerate}
\end{prop}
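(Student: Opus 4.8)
The plan is to deduce all three assertions from the defining formula $z_r(m_1\otimes\cdots\otimes m_r)=\tr(m_1\cdots m_r\tilF)$, together with two structural facts about the Clifford trace: its cyclicity $\tr(xy)=(-1)^{|x||y|}\tr(yx)$ for pure $x,y$ recorded in \autoref{clifalgsection}, and the non-degeneracy of the pairing $\Cl(M)\times\Cl(M)\to k$, $(x,y)\mapsto\tr(xy)$. The latter follows from \thref{tr=cliffordmodule}: under $\Cl(M)\xrightarrow{\sim}\End(S)$ the map $\tr$ of \eqref{tr} is the supertrace, and on a matrix superalgebra $\End(S)$ with $S_0,S_1\ne 0$ (which holds for $n\ge 1$; for $n=0$ everything is trivial) the supertrace pairing is perfect, its even and odd parts being orthogonal and each non-degenerate. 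I will package the construction as the linear map
\[
\Phi\colon\Cl(M)\longrightarrow\prod_{r\ge 0}M^{*\otimes r},\qquad \tilF\longmapsto (z_r)_{r\ge 0},
\]
which is injective: if all $z_r$ vanish then $\tr(x\tilF)=0$ for every monomial $x=m_1\cdots m_r$, hence for all $x\in\Cl(M)$, so $\tilF=0$. Everything then follows once we show that $\Phi$ restricts to a bijection from $\Koly_F$ onto the subspace $V$ of sequences satisfying \eqref{kolyiter} and \eqref{kolypf}.

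For \eqref{kolyiter} I would evaluate both sides at $m_1\otimes\cdots\otimes m_r$, transporting $\sw_{i,i+1}$ and $\coev_{i,i+1}$ to operations on multilinear functionals via $M^{*\otimes r}\cong(M^{\otimes r})^*$. Since $M$ is odd, the commutativity constraint carries a Koszul sign, so $\sw_{i,i+1}(z_r)$ evaluated at $m_1\otimes\cdots\otimes m_r$ is $-z_r$ of the same tuple with the relevant adjacent pair transposed; hence the left side of \eqref{kolyiter} becomes $\tr$ of the word $m_1\cdots m_r\,\tilF$ with an \emph{anticommutator} $m_jm_{j+1}+m_{j+1}m_j$ inserted at that pair, which by the Clifford relation of \thref{clif} equals $\om(m_j,m_{j+1})\,z_{r-2}$ of the word with $m_j,m_{j+1}$ deleted. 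On the right side, $\coev_{i,i+1}$ inserts $\coev_\om$ from \eqref{coevb} at the matching slots, and $\coev_\om$ paired against $m_j\otimes m_{j+1}$ returns $\om(m_j,m_{j+1})$; the symmetry $\om(m,m')=\om(m',m)$ of \thref{symp} removes the last sign, and the two sides agree. This is exactly where the oddness of $M$ is used: it turns the naive commutator into the anticommutator present in the Clifford relation. For \eqref{kolypf} I would cycle $m_1$ around the trace: with $Y=m_2\cdots m_r\tilF$, cyclicity gives $\tr(m_1Y)=(-1)^{|m_1||Y|}\tr(Ym_1)$, and then $\tilF m_1=\det(\tilF)F(m_1)\tilF$ — from $R_{\tilF}=F$ via \eqref{pincover} and \eqref{pindet}, with $\det(\tilF)^2=1$ — yields $z_r(m_1\otimes\cdots\otimes m_r)=(-1)^{r-1}z_r(m_2\otimes\cdots\otimes m_r\otimes F(m_1))$. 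Since this holds for every tuple, substituting $m_1=F^{-1}(u_r)$ rewrites it as $z_r(u_1\otimes\cdots\otimes u_r)=(-1)^{r-1}z_r(F^{-1}(u_r)\otimes u_1\otimes\cdots\otimes u_{r-1})$, which — unwinding $M^{*\otimes r}\cong(M^{\otimes r})^*$ and the definition of $\pF$ with $F=(F^*)^{-1}$ — is precisely $\pF(z_r)=z_r$.

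For \eqref{kolyuni}: parts (1) and (2) already give $\Phi(\Koly_F)\subseteq V$, and $\Phi$ is injective, so it suffices to lift an arbitrary element of $V$. Given $(z_r)\in V$, the evaluation computation above shows that \eqref{kolyiter} is literally the statement that the functional $m_1\otimes\cdots\otimes m_r\mapsto z_r(m_1\otimes\cdots\otimes m_r)$ on the tensor algebra $T(M)$ annihilates every defining relator $a\otimes(m\otimes m'+m'\otimes m-\om(m,m'))\otimes b$, hence descends to a linear functional $\phi\colon\Cl(M)\to k$; non-degeneracy of the trace form produces a unique $x\in\Cl(M)$ with $\phi(y)=\tr(yx)$, and then $\Phi(x)=(z_r)$. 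Plugging $\Phi(x)$ into \eqref{kolypf} and running the computation of part (2) backwards, once more via non-degeneracy, forces the even and odd parts of $x$ to satisfy $x_0m=F(m)x_0$ and $x_1m=-F(m)x_1$ for all $m\in M$; since $\Cl(M)\cong\End(S)$ is central simple and its parity automorphism is implemented only by even elements, $x$ must be homogeneous, so $xm=\det(x)F(m)x$. Extending multiplicatively, $x\,\Cl(M)=\Cl(M)\,x$ is a two-sided ideal, so if $x\ne 0$ then $x$ is invertible, $xMx^{-1}\subseteq M$, and $R_x=F$ by \eqref{pincover}; thus $x\in\GPin(M)$ is a GPin structure, i.e. $x\in\Koly_F$ (and $x=0$ corresponds to the zero sequence). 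Hence $\Phi$ restricts to a bijection $\Koly_F\xrightarrow{\sim}V$, and since $\dim_k\Koly_F=1$, the Kolyvagin system $\{z_r\}$ is pinned down up to a scalar, as claimed.

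The only genuine difficulty is sign bookkeeping: the data is simultaneously a trace on $\Cl(M)$, a multilinear functional paired against $M^{\otimes r}$ with the order-reversing convention of \eqref{tensordualformula}, and an element of $M^{*\otimes r}$ acted on by $\sw_{i,i+1}$, $\coev_{i,i+1}$, and $\pF$; the safe route is to fix once and for all the identification $M^{*\otimes r}\cong(M^{\otimes r})^*$, rephrase \eqref{kolyiter} and \eqref{kolypf} as identities among multilinear functionals, and only then compute. After that, part (1) is the Clifford relation, part (2) is cyclicity plus the GPin conjugation law, and part (3) is non-degeneracy of the trace form, with no residual sign ambiguity — precisely because $\om$ is symmetric and $\det(\tilF)^2=1$.
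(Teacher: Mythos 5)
Your proof is correct and follows essentially the same route as the paper's: the descent of the sequence to a functional on $\Cl(M)$ via the relator-annihilation argument (the paper's \thref{dualclif}), non-degeneracy of the trace pairing to produce $T$, cyclicity plus the GPin conjugation law for \eqref{kolypf}, and the intertwining relation $mT=(T_0-T_1)F^{-1}(m)$ to pin $T$ down up to scalar. The only (cosmetic) difference is the endgame of \eqref{kolyuni}: the paper divides by a chosen Pin structure and quotes that the supercenter of $\Cl(M)$ is $k$, whereas you directly show any solution of the intertwining relation is homogeneous and invertible (or zero) via simplicity of $\End(S)$, hence lies in $\Koly_F$ — both are fine.
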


\begin{proof}
    We first prove the following lemma:
    \begin{lemma}\thlabel{dualclif}
        A sequence $\{z_{r}\in M^{*\otimes r}\}_{r\in\Z_{\geq 0}}$ satisfies \itemref{kolychar}{kolyiter} if and only if there exists some element $T\in \Cl(M)$ such that the sequence $\{z_{r}\in M^{*\otimes r}\}_{r\in\Z_{\geq 0}}$ satisfies \begin{equation}\label{kolycons1}
    z_r(m_1\otimes m_2\otimes\cdots\otimes m_r)=\tr( m_1m_2\cdots m_rT).
\end{equation} for any $r\in\Z_{\geq 0}$ and $m_1,\dots,m_r\in M$.
    \end{lemma}
\begin{proof}[Proof of lemma]
Regard the sequence $\{z_{r}\in M^{*\otimes r}\}_{r\in\Z_{\geq 0}}$ as a single element  $z_{\bullet}\in \prod_{r\in \Z_{\geq 0}} M^{*\otimes r}\isom (M^{\otimes r})^*$. The condition in \itemref{kolychar}{kolyiter} is equivalent to say that the element $z_{\bullet}$ regarded as a functional $M^{\otimes}\to k$ vanishes on the linear subspace $I$ spanned by elements of the form \[m_1\otimes\cdots\otimes m_k\otimes (x\otimes y+y\otimes x -\om(x,y))\otimes m_{k+1}\otimes\cdots\otimes m_l.\] This is equivalent to say that $z_{\bullet}$ comes from a function $\Cl(M)=M^{\otimes}/I\to k$. Since the bilinear form $\Cl(M)\otimes \Cl(M)\to \Cl(M)\xrightarrow{\tr} k$ is non-degenerate, we know that there exists $F\in\Cl(M)$ satisfying the desired property.
    
\end{proof}
    Note that \itemref{kolychar}{kolyiter} follows directly from the lemma above.
    
    To prove \itemref{kolychar}{kolypf}, one notes for every $m_1\otimes m_2\otimes\cdots\otimes m_r\in M^{\otimes r}$, we have 
    \begin{equation*}
    \begin{split}
        z_r(m_1\otimes m_2\otimes\cdots\otimes m_r) &=\tr(m_1m_2\cdots m_r\tilF)\\
        &=(-\det(F))^{r-1}\tr(m_r\tilF m_1m_2\cdots m_{r-1} )\\
        &=(-1)^{r-1}\det(F)^{r-1}\tr(\tilF\tilF^{-1}m_r\tilF m_1m_2\cdots m_{r-1} )\\
        &=(-1)^{r-1}\tr(F^{-1}(m_r)m_1m_2\cdots m_{r-1}\tilF)\\
        &=(-1)^{r-1}z_r(F^{-1}(m_r)\otimes m_1\otimes m_2\otimes\cdots\otimes m_{r-1})\\
        &=\pF(z_r)(m_1\otimes m_2\otimes\cdots\otimes m_r).
    \end{split}
    \end{equation*}
    This concludes the proof of \itemref{kolychar}{kolypf}.
    
    To prove \itemref{kolychar}{kolyuni}, we choose a Pin structure $\tilF$ defined in \thref{pinstructure} and suppose we have a system $\{z'_{r}\in M^{*\otimes r}\}_{r\in\Z_{\geq 0}}$ satisfying conditions in \iitemref{kolychar}{kolyiter}{kolypf}. By \thref{dualclif}, there exists an element $T\in\Cl(M)$ such that the system $\{z'_{r}\in M^{*\otimes r}\}_{r\in\Z_{\geq 0}}$ is constructed from $T$ via \eqref{kolycons1}. Then the condition \itemref{kolychar}{kolypf} tells us that for every $m_1\otimes m_2\otimes\cdots\otimes m_r\in M^{\otimes r}$, one has \[\tr(m_1m_2\cdots m_{r-1}m_r T)=\tr(m_1m_2\cdots m_{r-1}(T_0-T_1)F^{-1}(m_r))\] where $T_0,T_1$ are the even and odd part of $T$, respectively.
    Since this identity holds for any $r$, we know that for any $m\in M$ one has $mT=(T_0-T_1)F^{-1}(m)$, which implies that $T\tilF^{-1}$ lies in the (super)center of $\Cl(M)$ which is well-known to be $k\sub\Cl(M)$. This implies that $T$ and $\tilF$ differ by a scalar, and we finish the proof of \itemref{kolychar}{kolyuni}.
    
\end{proof}

\subsubsection{Relation to the Selmer rank}
This section is not used elsewhere in the article. It discusses an interesting property of the Kolyvagin system, first observed in \cite{YZ3}, which also explains its name. We include it here to emphasize this property and to provide an independent proof.

Fix a GPin Kolyvagin polarization $(M,\om,\tilF,\om^{1/2}_{\top})$ as in \thref{kolypolarization} and construct the associated Kolyvagin system $\{z_{r}\in M^{*\otimes r}\}_{r\in\Z_{\geq 0}}$ as in \thref{kolydefn}. We have the following facts:
\begin{prop}\thlabel{ord=selmer}
For the GPin Kolyvagin system $\{z_{r}\in M^{*\otimes r}\}_{r\in\Z_{\geq 0}}$, define its \emph{order} to be \begin{equation}
        \ord(z_{\bullet}):=\min_{z_r\neq 0}r \in\Z_{\geq 0}.
    \end{equation}
    Then there is an equality \[\ord(z_{\bullet})=\dim M^{F=1}.\]
\end{prop}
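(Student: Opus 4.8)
The plan is to compute $\ord(z_\bullet)$ directly by finding the smallest $r$ for which the functional $z_r$ on $M^{\otimes r}$, equivalently the element $m_1\cdots m_r\mapsto \tr(m_1\cdots m_r\tilF)$ of $(G_r\Cl(M)/G_{r-1}\Cl(M))^*\cong(\Sym^r M)^*$ paired against the image of $\tilF$, fails to vanish. Since the pairing $\Cl(M)\otimes\Cl(M)\to k$, $x\otimes y\mapsto\tr(xy)$, is non-degenerate and identifies $\tr$-orthogonal to $G_{r-1}\Cl(M)$ with $G_{\top-r}\Cl(M)$ (a standard property of the Clifford trace filtration), $z_r\ne 0$ is equivalent to $\tilF\notin G_{\top-r-1}\Cl(M)\cdot(\text{something})$; more precisely $z_r=0$ for all $r<r_0$ exactly when $\tilF\in G_{\top-r_0}\Cl(M)$ is the precise statement to extract. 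So $\ord(z_\bullet)=\top-\max\{i:\tilF\in G_i\Cl(M)\}=2n-\deg_G(\tilF)$, where $\deg_G$ denotes the filtration degree (the ``$G$-codegree'' of $\tilF$). Thus the claim reduces to: $\deg_G(\tilF)=2n-\dim M^{F=1}$.

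To compute $\deg_G(\tilF)$ I would reduce to the semisimple case, which is where the explicit formulas \thref{despin0} and \thref{despin1} do the work. First I would argue that the filtration degree of a GPin lift is insensitive to scaling (it lands in $\Koly_F$, a line, and all nonzero elements have the same degree) and is a conjugation-invariant of $F$ under $\Or(M)$, hence depends only on the $\Or(M)$-conjugacy class; for a general $F$ one should be able to reduce to the semisimple part since the unipotent part of $F$ lifts to $\Spin(M)$-elements in $1+G_{\ge 2}$ (unipotent elements of $\Spin$ sit in the ``radical'' of the filtration) which do not change the top filtration degree of the product. Alternatively, and perhaps more cleanly, work with the associated graded: $\Gr^G\Cl(M)\cong\Sym^\bullet M$ carries the induced $F$-action, and the leading term of $\tilF$ in $\Sym^{\deg_G(\tilF)}M$ is $F$-invariant; one then checks it is the ``top wedge of the $(-1)$-eigenspace'' up to the identity contribution. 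Concretely, in the $\det(F)=1$ case, \eqref{pineven} gives $\tilF=\pm\prod_i(\alpha_i^{1/2}e_ie_i^*+\alpha_i^{-1/2}e_i^*e_i)$; expanding, each factor contributes either a scalar (the Clifford relation $e_ie_i^*+e_i^*e_i=\om(e_i,e_i^*)=1$ part) or a degree-two term, and the factor $i$ contributes a genuine degree-two term to the leading part precisely when $\alpha_i\ne1$, i.e. when $e_i,e_i^*$ are not fixed; the fixed directions ($\alpha_i=1$) are where the factor degenerates to the scalar $1$. Hence $\deg_G(\tilF)=2\#\{i:\alpha_i\ne1\}=\dim M-\dim M^{F=1}$, giving $\ord(z_\bullet)=2n-(2n-\dim M^{F=1})=\dim M^{F=1}$. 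The $\det(F)=-1$ case is handled the same way using \thref{despin1}: the extra factor $y$ with $Fy=-y$ contributes one to the ``non-fixed'' count and the $x$-direction with $Fx=x$ is absorbed, and one again gets $\deg_G(\tilF)=\dim M-\dim M^{F=1}$.

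I would then assemble: combine the reduction $\ord(z_\bullet)=2n-\deg_G(\tilF)$ with the computation $\deg_G(\tilF)=\dim M-\dim M^{F=1}=2n-\dim M^{F=1}$ to conclude $\ord(z_\bullet)=\dim M^{F=1}$. One subtlety to be careful about: the semisimplification argument must verify that passing to $F_{ss}$ does not change $\dim M^{F=1}$ either (true: $M^{F=1}=M^{F_{ss}=1}$ as unipotent-by-semisimple Jordan decomposition in $\Or(M)$ gives $\ker(F-1)=\ker(F_{ss}-1)$ on the generalized $1$-eigenspace only if the unipotent part acts trivially there, which it does not in general — so actually one must be slightly more careful and note $\dim\ker(F-1)\le\dim\ker(F_{ss}-1)$, with equality failing when $F$ has a nontrivial unipotent block at eigenvalue $1$). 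So the genuinely delicate point, and the one I expect to be the main obstacle, is the non-semisimple case: when $F$ has a nontrivial Jordan block at eigenvalue $1$ the naive ``reduce to $F_{ss}$'' breaks both sides, and one needs the more robust associated-graded argument — showing that the leading symbol of any Pin lift $\tilF$ in $\Sym^\bullet M$ equals (a scalar multiple of) $\bigwedge^{\mathrm{top}}$ of $\mathrm{im}(F-1)$, equivalently that the filtration degree equals $\mathrm{rank}(F-1)=\dim M-\dim M^{F=1}$. Establishing that leading-symbol identity in full generality, rather than just on $F$-eigenbases, is the technical heart of the proof.
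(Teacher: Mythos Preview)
Your first reduction---that $\ord(z_\bullet)=2n-k$ where $k$ is the filtration degree of $\tilF$ in $G_\bullet\Cl(M)$---is exactly the paper's first step. The divergence is in computing $k$. You try to compute it by explicit formulas in the semisimple case and then worry (correctly) that the semisimplification argument breaks when $F$ has nontrivial Jordan blocks at eigenvalue $1$, since $\dim M^{F=1}\ne\dim M^{F_{ss}=1}$ there. Your proposed fix via the leading symbol in $\Sym^\bullet M$ would work, but it is more laborious than necessary.

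The paper bypasses this entirely by recalling the \emph{definition} of $\Pin(M)$: every element is a product $v_1\cdots v_k$ of anisotropic vectors, and $R_{v_1\cdots v_k}$ is the composition of the $k$ reflections $R_{v_i}$. Thus the filtration degree of $\tilF$ equals the minimal number of reflections needed to express $F\in\Or(M)$. By the Cartan--Dieudonn\'e theorem this minimal number is $\operatorname{rank}(F-1)=2n-\dim M^{F=1}$, valid for \emph{arbitrary} $F$ with no semisimplicity hypothesis. (The implicit check that a minimal product $v_1\cdots v_k$ has nonzero image in $\Gr^G_k$ is immediate: minimality forces the $v_i$ to span $\operatorname{im}(F-1)$, hence to be linearly independent, hence their symmetric product---which is really an exterior product since $M$ is odd---is nonzero.) This is both shorter and uniform in $F$; your approach would eventually reprove the relevant part of Cartan--Dieudonn\'e by hand.
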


\begin{proof}
    Note that $z_r\neq 0$ is equivalent to $\tr((G_r\Cl(M))\tilF)\neq0$, where the filtration $G_{\bullet}\Cl(M)$ is defined in \eqref{cliffil}. Therefore, we know that $\ord(z_{\bullet})$ is the minimal integer $r$ such that $\tr(G_r\Cl(M)\tilF)\neq 0$. Suppose $\tilF\in G_k\Cl(M)\bs G_{k-1}\Cl(M)$, we know that \begin{equation}\label{selmer1}\ord(z_{\bullet})=2n-k.\end{equation} On the other hand, the number $k$ is also the minimal integer $k$ such that $F\in \Or(M)$ can be written as a composition of $k$ reflections. This description gives  \begin{equation}\label{selmer2}k=2n-\dim M^{F=1}.\end{equation} Combining \eqref{selmer1}\eqref{selmer2}, we get $\ord(z_{\bullet})=\dim M^{F=1}$.
\end{proof}

\subsubsection{Relation to $L$-functions}\label{kolylfunsection}
Fix a Kolyvagin polarization $(M,\om,\tilF,\om^{1/2}_{\top})$ as in \thref{kolypolarization} and construct the associated Kolyvagin system $\{z_{r}\in M^{*\otimes r}\}_{r\in\Z_{\geq 0}}$ as in \thref{kolydefn}. We are now going to give an expression of central derivatives of $L$-functions via the Kolyvagin system. To do this, we first recall the definition of $L$-functions and root number in this setting.

\begin{defn}\thlabel{lfuncrootnumber}
Suppose we are given $(V,F)$ where $V$ is a finite dimensional (super) vector space and $F\in\GL(V)$,
   \begin{enumerate}
       \item \label{lfunc} We define the \emph{$L$-function} of the pair $(V,F)$ to be \[L(V,F,s):=\det(1-q^{-s}F)^{-1};\] 
       \item \label{rootnumber} We define the \emph{root number} of $(V,F)$ to be \[\e(V,F)=\det(F)^{-1}.\]
   \end{enumerate} where the (super)determinant $\det$ is defined in \eqref{detdef}.
\end{defn}

Recall that $\om$ is a symplectic pairing on $M$, which can be regarded as an isomorphism $\om: M\isom M^*$ as in \eqref{pairingchange}, hence gives us a symplectic form on $M^*$ by transporting $\om$ via the isomorphism. This further induces bilinear forms $\om_r$ on $M^{*\otimes r}$.

\begin{remark}\thlabel{pairingconvention}
    We recall our convention for the bilinear form $\om_r$ given in \eqref{tensorb}. Under the isomorphism $\om:M\isom M^*$, the bilinear forms $\om_r$ on $M^{*\otimes r}\isom M^{\otimes r}$ is given by \begin{equation}\label{tensorbilienarformula} \om_r(m_1\otimes\cdots\otimes m_r,m_1'\otimes\cdots\otimes m_r')=(-1)^{r(r-1)/2}\om(m_1,m_1')\cdots\om(m_r,m_r').\end{equation} 
\end{remark}

\begin{warning}
    One should pay attention to the difference in ordering elements between formula \eqref{tensorbilienarformula} and the formula \eqref{tensordualformula}. This difference is \emph{essential} for the following \thref{kolylfun} to be true.
\end{warning}

The main result in this section is the following:
\begin{thm}\thlabel{kolylfun}
For the Kolyvagin system $\{z_{r}\in M^{*\otimes r}\}_{r\in\Z_{\geq 0}}$ associated to a Kolyvagin polarization $(M,\om,\tilF,\om^{1/2}_{\top})$ introduced in \thref{kolydefn}, the following identity holds for any $r\in \Z_{\geq 0}$:
\begin{equation}\label{kolylfunformula}
    \om_r(z_r,z_r)=\l(\tilF)\e_{n,r}(\log q)^{-r}\left(\frac{d}{ds}\right)^{r}\Big|_{s=0}(q^{ns}L(M,F,s))
\end{equation} where $\e_{n,r}=(-1)^{r(r-1)/2+n}$.
Here, the map $\l:\Koly_F\to k$ is defined in \eqref{kolynorm}.
\end{thm}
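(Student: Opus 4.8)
The identity \eqref{kolylfunformula} is a statement purely in terms of the triple $(M,\om,F)$ and the chosen Kolyvagin structure $\tilF$, so both sides depend polynomially (in fact, $\l(\tilF)$ depends quadratically) on $\tilF$ and continuously on $F$. My first move is therefore a \emph{density/continuity reduction}: it suffices to prove \eqref{kolylfunformula} for $F\in\Or(M)$ semisimple, since the semisimple elements are dense in each connected component $\Or(M)^{\det=\pm1}$ and both sides of \eqref{kolylfunformula} are regular functions of $(F,\tilF)$ on the variety $\{(F,\tilF)\in\Or(M)\times\GPin(M): R_{\tilF}=F\}$. (One has to be slightly careful that the root-number sign $\e(L,F)^{1/2}$ implicit in the choice of $\tilF$ varies continuously; this is exactly what $\l(\tilF)$ bookkeeps, so the two sides match up consistently on each component.) This is why \autoref{clifexp} was set up.

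\textbf{The semisimple computation.} Assume $F$ semisimple and split into the two cases of \thref{despin0}/\thref{despin1} according to $\det(F)=\pm1$. Take the $F$-stable polarization $M=L\oplus L^*$ (in the $\det F=-1$ case, $L=W\oplus k\cdot a$ as in the excerpt) with $F$-eigenbasis $e_1,\dots$ and eigenvalues $\a_i$, and use the explicit Clifford module $S=\Sym^\bullet L^*$ of \thref{cliffordmodule}. By \thref{tr=cliffordmodule} the abstract trace $\tr:\Cl(M)\to k$ is the supertrace on $\End(S)$, and by \thref{actpin0} (resp. \thref{actpin1}) the operator $\tilF$ acts on $S$ as $\e(L,F)^{1/2}F_{\Sym^\bullet L^*}$ (resp. its evident analogue). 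Hence
\[
z_r(m_1\otimes\cdots\otimes m_r)=\tr\bigl(m_1\cdots m_r\,\tilF\bigr)=\e(L,F)^{1/2}\,\tr_S\bigl(m_1\cdots m_r\,F_{\Sym^\bullet L^*}\bigr),
\]
and now everything is a concrete bosonic-Fock-space supertrace: expand $m_i$ in the basis $e_j,e_j^*$, recall that $e_j^*$ acts by multiplication and $e_j$ by the (signed) contraction, and compute $\om_r(z_r,z_r)$ by pairing two such expressions via \eqref{tensorbilienarformula}. Because the supertrace of a monomial in the $e_j,e_j^*$ weighted by $F_{\Sym^\bullet L^*}$ forces the creation and annihilation operators to cancel in pairs, only "matched" terms survive; the ordering discrepancy flagged in the Warning (the reversal in \eqref{tensorbilienarformula} versus \eqref{tensordualformula}) is precisely what is needed for these matched terms to combine with the correct sign.

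\textbf{Assembling the $L$-function.} The key closed-form input is the standard identity for the supertrace of a group element acting on a symmetric/exterior algebra: $\tr_{S}\bigl(F_{\Sym^\bullet L^*}\cdot(\text{grading weight }u)\bigr)$ is a ratio of $\det(1-uF|_{L^*})^{\mp1}$ factors, which after substituting $u=q^{-s}$ produces exactly $L(M,F,s)=\det(1-q^{-s}F)^{-1}$ together with the normalizing factor $q^{ns}$ coming from the half-density shift $\langle 1\rangle$ built into $M=\Gamma(C,K_\s)\langle1\rangle$ (this is where $n=\tfrac12\dim M$ enters $q^{ns}$). Differentiating $r$ times at $s=0$ and matching the combinatorial coefficient $(\ln q)^{-r}$ against the $r!/n!$-type factors from the $\Sym^r$ conventions of \eqref{symev}, \eqref{symb} yields the constant $\e_{n,r}=(-1)^{r(r-1)/2+n}$: the $(-1)^{r(r-1)/2}$ is the ordering sign from \eqref{tensorbilienarformula}, and the $(-1)^n$ is the parity sign from $M$ being purely odd together with $\e(L,F)^{1/2}$ combining with its mirror copy to give $\e(L,F)=\det(F)^{-1}$, i.e.\ exactly $\l(\tilF)$ on the nose (here one uses $\l(cg)=c^2$ and $\l|_{\Pin}=1$ from \eqref{spinornorm}).

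\textbf{Main obstacle.} The genuine difficulty is bookkeeping of \emph{signs and normalization factors}, not any hard structural input: reconciling the $n!$'s in the $\Sym$-pairings \eqref{symev}--\eqref{symb}, the reversal sign $(-1)^{r(r-1)/2}$ in \eqref{tensorbilienarformula}, the parities coming from $M$ being odd, and the square-root ambiguity in $\e(L,F)^{1/2}$ all at once, and checking that they conspire to give precisely $\l(\tilF)\e_{n,r}$ with no stray sign depending on the auxiliary choices. I would handle this by first verifying the formula completely in the smallest cases $n=1$, $r=0,1,2$ (and $\det F=\pm1$) to pin down every sign, then running the general Fock-space computation with those cases as a consistency check.
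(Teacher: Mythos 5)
Your overall route coincides with the paper's: after normalizing $\l(\tilF)=1$, both sides of \eqref{kolylfunformula} become regular functions on $\Or(M)$, so one reduces to semisimple $F$ and then computes on the explicit Clifford module $S=\Sym^{\bullet}L^*$ using \thref{tr=cliffordmodule}, \thref{actpin0} and \thref{actpin1}. That reduction and the setup of the Fock-space computation are fine and are exactly what the paper does.

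The gap is in the step you call ``assembling the $L$-function''. The mechanism you invoke --- a Molien-type identity expressing a graded supertrace on $\Sym^{\bullet}L^*$ as $\det(1-uF|_{L^*})^{\mp1}$ with $u=q^{-s}$ --- does not apply here. Since $M$ is purely odd, $L(M,F,s)=\det(1-q^{-s}F)^{-1}$ is the \emph{finite} product $\prod_{i}(1-\a_i q^{-s})(1-\a_i^{-1}q^{-s})$, i.e.\ the characteristic polynomial of $F$ on $M$, not a Fock-space generating function; the grading on $\Sym^{\bullet}L^*$ never enters as a variable, and $q^{ns}$ is merely the factor symmetrizing the functional equation (the theorem is pure linear algebra; $M$ need not be $\Gamma(C,K_\s)\langle 1\rangle$, so there is no half-density shift to invoke). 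What actually has to be proved is a combinatorial identity: expanding $\om_r(z_r,z_r)$ in the $F$-eigenbasis, the surviving monomials are counted (partition by partition) by $T(r,p)=|\{(x_1,\dots,x_r)\in[p]^r:\ \text{the prescribed symbols occur an even number of times}\}|$, while the $r$-th derivative of $q^{ns}L(M,F,s)$ at $s=0$ produces moment sums of the form $2^{-p}\sum_{k}\binom{p}{k}(p-2k)^r$. The identity $T(r,p)=2^{-p}\sum_{k}\binom{p}{k}(p-2k)^r$ (and its odd-$r$ variant) is the crux; the paper proves it by induction on $p$, and your proposal asserts its conclusion as ``matching the combinatorial coefficient'' without an argument. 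Your generating-function instinct can be repaired, but in a different variable: it follows from $r!\,[t^r](\cosh t)^p=T(r,p)$ together with $(\cosh t)^p=2^{-p}\sum_k\binom{p}{k}e^{(p-2k)t}$, where $t=s\ln q$ is the moment variable of the derivative, not a grading parameter on the Clifford module. Until that identity (or an equivalent) is stated and proved, the decisive step of the proof is missing.
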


\begin{remark}
    It is easy to see that when the Kolyvagin system in \thref{kolylfun} is Pin as defined in \thref{kolydefn}, both sides of \eqref{kolylfunformula} are independent of the choice of Pin Kolyvagin polarization (i.e., changing $\tilF$ by a sign changes neither side of \eqref{kolylfunformula}).
\end{remark}

\begin{remark}
    When the Kolyvagin system is GPin (i.e., non-zero), the minimal number $r$ such that the right-hand-side of \eqref{kolylfunformula} is non-zero is $r=\dim(M^{(F-1)^{\infty}=0})$ (i.e., the dimension of the \emph{generalized} eigenspace of $M$ with eigenvalue $1$). Comparing with \thref{ord=selmer}, one sees that if the Kolyvagin system satisfies $\om_r(z_r,z_r)\neq 0$ whenever $z_r\neq 0$ (for example, this holds if the pairing $\om_r$ has some positivity), then $M^{(F-1)^{\infty}=0}=M^{F=1}$. This is used in \cite{YZ3}, in which the positivity of the pairing is provided by the Hodge index theorem.
\end{remark}

\begin{proof}
    For each $r\in\Z_{\geq 0}$, the formula \eqref{kolylfunformula} regarded as an equation for $F\in\Or(M)$ defines a Zariski closed subset of $\Or(M)$. Therefore, we only need to prove \eqref{kolylfunformula} for semisimple $F$. 

    We first show that the identity \eqref{kolylfunformula} is stable under direct sum in $M$. Indeed, suppose $M=M_1\oplus M_2$ is an $F$-stable (orthogonal) direct sum decomposition as symplectic odd vector spaces. We define $\om_{M_i}=\om|_{M_i}$. We can assume that the Kolyvagin system $\{z_r\in M^{*\otimes r}\}_{r\in\ZZ_{\geq 0}}$ comes from an $F$-equivariant $\Cl(M)$-module $S$ (see \autoref{modulekoly} for the definition), which admits an $F$-equivariant decomposition $S=S_1\otimes S_2$ where $S_i$ is an $F$-equivariant $\Cl(M_i)$-module. We use $\{z_{M_i,r}\in M_i^{*\otimes r}\}_{r\in\ZZ_{\geq 0}}$ to denote the associated Kolyvagin system for $S_i$. Note that we have a decomposition $M^{\otimes r}=\bigoplus_{0\leq r_1\leq r} (M_1^{\otimes r_1}\otimes M_2^{\otimes r-r_1})^{\oplus \binom{r}{r_1}}$ such that $z_{r}|_{M_1^{\otimes r_1}\otimes M_2^{\otimes r-r_1}}=\pm z_{M_1,r_1}\otimes z_{M_2,r_2}$ for each direct summand $M_1^{\otimes r_1}\otimes M_2^{\otimes r-r_1}\sub M^{\otimes r}$. Also note that $(-1)^{r(r-1)/2}\om_{r}|_{M^{*\otimes r}}=(-1)^{r_1(r_1-1)/2+(r-r_1)(r-r_1-1)/2}\om_{M_1,r_1}\otimes \om_{M_2,r_2}$. By Leibnitz's rule, a similar formula holds for the right-hand side of \eqref{kolylfunformula}. Therefore, the identity \eqref{kolylfunformula} holds for $M$ if it holds for $M_1$ and $M_2$.

    Now, we can assume $\dim M=2$ and apply the computational result in \autoref{clifexp}. We only do the case $\det(F)=1$ and leave the case $\det(F)=-1$ to the readers. We can assume $\lambda(\widetilde{F})=1$. In this case, we can do the computation in the setting of \thref{actpin0}. Note that $S=k\cdot 1\oplus k\cdot e_1^*$ in this case. The element $\widetilde{F}$ acts on $S$ by $\widetilde{F}\cdot 1=\alpha_1^{1/2}\cdot 1$ and $\widetilde{F}\cdot e_1^*=\alpha_1^{-1/2} e_1^*$. Since both sides of \eqref{kolylfunformula} are zero if $r$ is odd, we can assume $r$ is even.

    We first compute the left-hand side of \eqref{kolylfunformula}. When $r=0$, we have $z_0=\alpha_1^{1/2}-\alpha_1^{-1/2}$, hence, $\om_0(z_0,z_0)=(\alpha_1^{1/2}-\alpha_1^{-1/2})^2=-(2-\alpha_1-\alpha_1^{-1})$. When $r>0$, since $(e_1\otimes e_1^*)^{\otimes r/2}\cdot 1=1$ and $(e_1\otimes e_1^*)^{\otimes r/2}\cdot e_1^*=0$, we have $z_{r}((e_1\otimes e_1^*)^{\otimes r/2})=\alpha_1^{1/2}$. Similarly, we have $z_{r}((e_1^*\otimes e_1)^{\otimes r/2})=-\alpha_1^{-1/2}$. Since $z_{r}$ takes zero value on other basis vectors of $M^{\otimes r}$, we know $\om_r(z_r,z_r)=(-1)^{r(r-1)/2+1}2$. 

    Now we compute the right-hand side of \eqref{kolylfunformula}. Note that $q^{-s}L(M,F,s)=q^s+q^{-s}-\alpha_1-\alpha^{-1}_1$. Therefore, we have \[\e_{n,r}(\log q)^{-r}\left(\frac{d}{ds}\right)^{r}\Big|_{s=0}(q^{ns}L(M,F,s))= \left\{\begin{aligned}
        -(2-\alpha_1-\alpha_1^{-1}) & ,r=0 \\
        (-1)^{r(r-1)/2+1}(1+(-1)^r) & ,r>0
    \end{aligned} \right.\] Comparing the computational results above, the identity \eqref{kolylfunformula} is proved.
\end{proof}

\begin{remark}
    One can give an alternative proof of \thref{kolylfun} by interpreting the pairing value $\omega_r(z_r,z_r)$ as a single trace: Consider the canonical element $\mathrm{coev}\in M\otimes M$ determined by the form $\omega$. It is easy to write $\omega_r(z_r,z_r)=\tr_2((\coev)^r\widetilde{F}_2)$ where $\tr_2:\Cl(M\oplus M)\to k$ is the map \eqref{tr} for $M\oplus M$ and $\widetilde{F}_2\in \Cl(M\oplus M)$ is some lifting of $(F,F)\in \mathrm{O}(M\oplus M)$. The rest of the proof is again an explicit computation. This alternative proof highlights a similarity between the pairing $\omega_r(z_r,z_r)$ and the arithmetic volume of Shtukas, defined as a trace in \cite{feng2026arithmeticvolumesmodulistacks}. This perspective also extends beyond the strongly tempered cases considered here, and we plan to return to it in future work.
\end{remark}

\subsection{Kolyvagin systems from modules}\label{modulekoly}
In  \autoref{koly}, we introduced the notion of the Kolyvagin system associated to a Kolyvagin polarization $(M,\om,\tilF,\om_{\top}^{1/2})$ defined in \thref{kolypolarization}. In this section, for each $(M,\om,F)$ in \thref{Fsymp}, we introduce the Kolyvagin system associated to a finite-dimensional $F$-equivariant module $T\in \Mod(\Cl(M))^{\om,F}$.

For $F\in \Or(M)$, we denote its induced automorphism on $\Cl(M)$ by $F_{\Cl(M)}$.
\begin{defn}\thlabel{kolyrep}
    Fix a triple $(M,\om,F)$ in \thref{Fsymp}. We define the category of $F$-equivariant modules of $\Cl(M)$ to be \[\Mod(\Cl(M))^F:=\Mod(\Cl(M))^{F_{\Cl(M)}^*}\] in which the perfect complexes form the full subcategory \[\Mod(\Cl(M))^{\om,F}\sub\Mod(\Cl(M))^{F}.\] In concrete terms, given an object in $\Mod(\Cl(M))^F$ is equivalent to give an object $T\in\Mod(\Cl(M))$ together with an automorphism of perfect complexes $F_T:T\to T$ such that \begin{equation}\label{Fcompatible} F(x)\cdot F_T(t) = F_T(x\cdot t)\end{equation} for any $x\in \Cl(M)$ and $t\in T$.
\end{defn}

\begin{defn}\thlabel{kolydefrep}
    For each $T\in\Mod(\Cl(M))^{\om,F}$, we get a map of algebras $a:\Cl(M)\to\End_{\Vect}(T)$. We define the Kolyvagin system associated to $T$ to be sequence of elements \[\{z_{T,r}\in M^{*\otimes r}\}_{r\in\Z_{\geq 0}}\] where the element $z_r$ is defined as the coelement \[z_{T,r}:M^{\otimes r}\to k\] such that   
\begin{equation}\label{kolyconsrep}
    z_{T,r}(m_1\otimes m_2\otimes\cdots\otimes m_r)=\tr( a(m_1)\circ\cdots\circ a(m_r)\circ F_T).
\end{equation}
\end{defn}

Now we fix a Pin Kolyvagin polarization as defined in \thref{kolypolarization}. The parity $\om^{1/2}_{\top}$ gives a choice of parity on the Clifford module $S$. The Pin structure $\tilF$ defines an $F$-equivariant structure on $S$. From now on, we assume $\det(F)=1$. In this case, we are going to construct a Kolyvagin structure $\tr_{\Koly}(T)\in\Koly_F$ (see \thref{kolystructure} for definition) such that the system in \thref{kolydefrep} is the Kolyvagin system attached to the Kolyvagin polarization $(M,\om,\tr_{\Koly}(T),\om_{\top}^{1/2})$.

Since $\Cl(M)\isom \End(S)$, we have Morita equivalence \begin{equation}
\Mod(\Cl(M)) \isom \Vect \end{equation} where the equivalence is given by \[T\leftrightarrow \Hom_{\Mod(\Cl(M))}(S,T) \] \[V\otimes S\leftrightarrow V\]

In particular, for each $T\in\Mod(\Cl(M))^{\om,F}$, we get $V_T:=\Hom_{\Mod(\Cl(M))}(S,T)\in\Vect$ such that $T=S\otimes V_T$. 

 Note the following lemma:
\begin{lemma}\thlabel{Fequivmodclassification}
    When $\det(F)=1$, the $F$-equivariant $\Cl(M)$-module automorphism $F_T\in\End_{\Vect}(T)\isom\Cl(M)\otimes \End_{\Vect}(V_T)$ lies in the subspace \[\Koly_F\otimes \End_{\Vect}(V_T)\sub\Cl(M)\otimes \End_{\Vect}(V_T).\]
\end{lemma}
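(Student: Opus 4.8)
The plan is to unwind the identification $\End_{\Vect}(T) \isom \Cl(M) \otimes \End_{\Vect}(V_T)$ coming from the Morita equivalence $T = S \otimes V_T$, and then to reinterpret the $F$-equivariance condition \eqref{Fcompatible} as a statement about the $\Cl(M)$-factor. Concretely, since $\End_{\Vect}(T) \isom \End_{\Vect}(S) \otimes \End_{\Vect}(V_T)$ and $\End_{\Vect}(S) \isom \Cl(M)$ (the defining property of the Clifford module $S$), we may write $F_T = \sum_j x_j \otimes \phi_j$ with $x_j \in \Cl(M)$ and $\phi_j \in \End_{\Vect}(V_T)$, where the $\phi_j$ can be taken linearly independent. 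The equivariance condition says that for every $m \in M \sub \Cl(M)$, conjugation by $F_T$ sends the operator $a(m) = m \otimes \id_{V_T}$ to $a(F(m)) = F(m) \otimes \id_{V_T}$; that is, $F_T \cdot (m \otimes \id) = (F(m) \otimes \id) \cdot F_T$ inside $\End_{\Vect}(T)$. Comparing the $\End_{\Vect}(V_T)$-components (using that the $\phi_j$ are independent), this forces $x_j m = F_{\Cl(M)}(m)\, x_j$ in $\Cl(M)$ for every $m \in M$ and every $j$, where I have used $\det(F) = 1$ so that the sign $\det(g)$ appearing in \eqref{pincover} is trivial and $F_{\Cl(M)}$ acts on $M \sub \Cl(M)$ simply by $F$.

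The next step is the purely Clifford-algebraic claim: if $x \in \Cl(M)$ satisfies $xm = F_{\Cl(M)}(m)x$ for all $m \in M$, then $x \in \Koly_F$. First one checks this holds whenever $x$ is itself a $\GPin$ structure $\tilF$ for $F$ — indeed by definition $R_{\tilF}(m) = \det(\tilF)\,\tilF m \tilF^{-1} = m'$ with $m' = F(m)$, and when $\det(F) = 1$ one has $\tilF \in \GSpin(M)$ so $\det(\tilF) = 1$ and $\tilF m = F(m)\tilF$. Conversely, fixing one such $\tilF$ (which exists since $R: \GPin(M) \to \Or(M)$ is surjective), the element $y := x \tilF^{-1}$ satisfies $y m = m y$ for all $m \in M$, hence $y$ is central in $\Cl(M)$; since the supercenter of $\Cl(M)$ (for $M$ even-dimensional, equivalently our $M$ of superdimension $(0|2n)$) is $k$, we get $y \in k$ and $x \in k \cdot \tilF = \Koly_F$. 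This shows each $x_j \in \Koly_F$, hence $F_T = \sum_j x_j \otimes \phi_j \in \Koly_F \otimes \End_{\Vect}(V_T)$ as claimed.

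The main obstacle, and the one point requiring care, is the supercenter computation and the bookkeeping of signs and parities in the equality $\End_{\Vect}(T) \isom \Cl(M) \otimes \End_{\Vect}(V_T)$ — since $S$ is a genuine super vector space and the isomorphism $\End(S) \isom \Cl(M)$ is one of superalgebras, the identity $xm = F(m)x$ must be read in the graded sense, and one must verify that the $\det(g) = (-1)^{|g|}$ twist in the definition \eqref{pincover} of $R$ is exactly accounted for by the hypothesis $\det(F) = 1$ (which is why the lemma is stated only in that case; in the $\det(F) = -1$ case one would land instead in the odd part of $\Koly_F \otimes \End(V_T)$ after a parity shift, but that is not needed here). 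Everything else is a routine consequence of the defining property of the Clifford module recalled before \thref{cliffordmodule} and the structure of $\GPin(M)$ recalled in \eqref{pingroup}.
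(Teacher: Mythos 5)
Your argument is correct and is essentially the paper's proof: both reduce the equivariance condition to $F_{\Cl(M)}(x)f = fx$, twist by a chosen Pin/GPin structure $\tilF$, and conclude from the fact that the (ungraded) center of $\Cl(M)$ is $k$ — the paper just works with the single element $f$ in the $\Cl(M)$-bimodule $\Cl(M)\otimes\End_{\Vect}(V_T)$ rather than decomposing it as $\sum_j x_j\otimes\phi_j$. The one terminological nit: the relevant fact is that the \emph{ordinary} center of $\oblv(\Cl(M))$ is $k$ (your relation $ym=my$ is ungraded commutation), not the supercenter, though for $M$ of dimension $(0|2n)$ both happen to equal $k$.
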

\begin{proof}[Proof of \thref{Fequivmodclassification}]
    By \eqref{Fcompatible}, an element $f\in\Cl(M)\otimes \End_{\Vect}(V_T)$ is $F$-equivariant if and only if \begin{equation}\label{Fcompatible1}
        F(x)f=fx
    \end{equation}
    for all $x\in\Cl(M)$. Here we are using the natural $\Cl(M)$-bimodule structure on $\Cl(M)\otimes \End_{\Vect}(V_T)$ (trivial on the second factor). By choosing a Pin structure $\tilF$ as in \thref{pinstructure}, we can rewrite \eqref{Fcompatible1} as \[\tilF x\tilF^{-1}f=fx,\] which further rewrites to \[x (\tilF^{-1}f)=(\tilF^{-1}f)x\] that holds for every $x\in\Cl(M)$. Then we are done since the center of $\oblv(\Cl(M))$ is $k\sub\Cl(M)$.
\end{proof}

\begin{remark}
    Note that \thref{Fequivmodclassification} gives a bijection between isomorphism classes of $F$-equivariant $\Cl(M)$-modules and matrices with elements in $\Koly_F$ when $\det(F)=1$.
\end{remark}

Then we have the following immediate corollary:
\begin{cor}\thlabel{kolyrep=koly}
    Assume $\det(F)=1$. For each $T\in\Mod(\Cl(M))^{\om,F}$, consider the map $\tr_{\Koly}:=\id\otimes\tr:\Koly_F\otimes \End_{\Vect}(V_T)\to\Koly_F$ which is taking trace on the second factor. Then we get \[\tr_{\Koly}(T):=\tr_{\Koly}(F_T)\in \Koly_F .\] The system \[\{z_{T,r}\in M^{*\otimes r}\}_{r\in\Z_{\geq 0}}\] in \thref{kolydefrep} is the Kolyvagin system attached to the Kolyvagin polarization $(M,\om,\tr_{\Koly}(F_T),\om_{\top}^{1/2})$ via \thref{kolydefn}.
\end{cor}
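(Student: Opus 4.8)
The plan is to reduce the statement to an unwinding of definitions, the substantive inputs being the Morita equivalence $\Mod(\Cl(M))\isom\Vect$ recorded just above, together with \thref{Fequivmodclassification} and \thref{tr=cliffordmodule}. First I would use the hypothesis $\det(F)=1$ to fix a Pin structure $\tilF\in\Spin(M)$ as in \thref{pinstructure}; it is an even element of $\Cl(M)$ and $\Koly_F=k\tilF$. Next I would choose the model $S$ of the Clifford module so that its parity is compatible with the fixed parity $\om^{1/2}_{\top}$, which by \thref{tr=cliffordmodule} guarantees that the composite $\Cl(M)\xrightarrow{a_S}\End_{\Vect}(S)\xrightarrow{\tr}k$ is exactly the trace map \eqref{tr} of $\Cl(M)$. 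Under the Morita equivalence $T$ becomes $S\otimes V_T$, the action $a$ becomes $a_S(\bullet)\otimes\id_{V_T}$, and \thref{Fequivmodclassification} identifies $F_T$ with $a_S(\tilF)\otimes g$ for a unique even endomorphism $g\in\End_{\Vect}(V_T)$; in particular $\tr_{\Koly}(F_T)=\tr(g)\cdot\tilF\in\Koly_F$, which makes sense of the left-hand side of the claim.

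Then I would compare the two sequences of elements term by term. For the left-hand side: since $a_S$ is an algebra homomorphism, the operator $a(m_1)\circ\cdots\circ a(m_r)\circ F_T$ on $S\otimes V_T$ equals $a_S(m_1m_2\cdots m_r\tilF)\otimes g$, the product $m_1m_2\cdots m_r\tilF$ taken in $\Cl(M)$ (there are no Koszul signs in assembling this tensor product of operators because $\id_{V_T}$ and $\tilF$ are even). Applying multiplicativity of the (super)trace over $\otimes$ and the compatibility from \thref{tr=cliffordmodule} yields $z_{T,r}(m_1\otimes\cdots\otimes m_r)=\tr(m_1m_2\cdots m_r\tilF)\cdot\tr(g)$, with the last $\tr$ being \eqref{tr} on $\Cl(M)$. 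For the right-hand side: \thref{kolydefn} applied to the Kolyvagin polarization $(M,\om,\tr_{\Koly}(F_T),\om_{\top}^{1/2})=(M,\om,\tr(g)\tilF,\om_{\top}^{1/2})$, together with $k$-linearity of $x\mapsto\tr(m_1m_2\cdots m_r\cdot x)$ on $\Cl(M)$, gives the value $\tr(g)\cdot\tr(m_1m_2\cdots m_r\tilF)$ as well. Since this holds for all $r\in\Z_{\geq 0}$ and all $m_1,\dots,m_r\in M$, the two Kolyvagin systems agree.

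I do not expect a genuine obstacle here — the corollary is essentially a formal consequence of the cited lemmas. The one point that needs care is the sign and parity bookkeeping: one must arrange the parity of $S$ to match $\om^{1/2}_{\top}$ so that \thref{tr=cliffordmodule} applies without a stray sign, and one must invoke multiplicativity of the \emph{super}trace (not the ordinary trace) under tensor products, which is where the evenness of $\tilF$ (forced by $\det(F)=1$, so that $\tilF\in\Spin(M)$) and of $F_T$ enters. Everything past that is a direct computation.
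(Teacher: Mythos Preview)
Your proposal is correct and is exactly the natural unwinding the paper has in mind: the paper states this as an ``immediate corollary'' of \thref{Fequivmodclassification} with no further proof, and your argument via Morita equivalence, \thref{tr=cliffordmodule}, and multiplicativity of the super-trace is precisely how one cashes this out. The only subtleties you flag (parity of $S$ matching $\om^{1/2}_{\top}$, evenness of $\tilF$ under $\det(F)=1$) are the right ones and you handle them correctly.
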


\begin{remark}
    When $T$ is irreducible (i.e., isomorphic to the Clifford module $S$ up to parity), the element $\tr_{\Koly}(T)$ is non-zero since $F_T$ is non-zero. This implies that the Kolyvagin system $\{z_{T,r}\in M^{*\otimes r}\}_{r\in\Z_{\geq 0}}$ is non-zero (i.e., is a GPin Kolyvagin system).
\end{remark}

Given \thref{kolyrep=koly}, one can apply \thref{kolylfun} to relate the system $\{z_{T,r}\in M^{*\otimes r}\}_{r\in\Z_{\geq 0}}$ with $L$-functions. For each $T\in\Mod(\Cl(M))^{\om,F}$, we can now define \begin{equation}\label{kolynormrep}
    \l(T):=\l(\tr_{\Koly}(T))
\end{equation} where $\l:\Koly_F\to k$ is defined in \eqref{kolynorm}. The number $\l(T)$ is the most important invariant of an $F$-equivariant module of $\Cl(M)$ for us.

\subsubsection{Computing $\l(T)$}
The result in this section will not be used until \thref{higherformulagln}.

We do not have a general way to compute the invariant $\l(T)$ unless we get very explicit knowledge of the $F$-equivariant module $T\in\Mod(\Cl(M))^{\om, F }$. Even if $T$ is isomorphic to the Clifford module as a $\Cl(M)$-module, we still need to know about its $F$-equivariant structure. In this section, we introduce a particular case where we can compute $\l(T)$ easily. For this, we assume we are in the following setting:
\begin{setting}\thlabel{Fstablepolar}
    Given $(M,\om, F)$ as in \thref{Fsymp}. Moreover, we fix an $F$-stable polarization $M=L\oplus L^*$.
\end{setting}
In this case, we have the following 

\begin{prop}\thlabel{kolynormstandard}
    Suppose we are in \thref{Fstablepolar}, and $S=\Sym^{\bullet}L^*$ is the Clifford module constructed in \thref{cliffordmodule}, equipped with its natural $F$-equivariant structure $F_{\Sym^{\bullet}L^*}$. Then we have \[\l(\Sym^{\bullet}L^*)=\e(L,F)^{-1}\] where $\e(L,F)$ is the root number defined in \itemref{lfuncrootnumber}{rootnumber}.
\end{prop}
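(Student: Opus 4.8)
The plan is to reduce the statement to the semisimple case (by the same Zariski-density argument used in the proof of \thref{kolylfun}) and then perform an explicit computation using the $F$-eigenbasis adapted to the $F$-stable polarization $M = L \oplus L^*$. First I would observe that both sides of the identity $\l(\Sym^{\bullet}L^*) = \e(L,F)$ are regular functions of $F$ on the Zariski-closed subgroup of $\Or(M)$ preserving the polarization (note $\det F = 1$ automatically here, since $F$ preserves both $L$ and $L^*$), so it suffices to verify the identity for semisimple such $F$. For semisimple $F$ preserving $L \oplus L^*$, we are exactly in the situation of \autoref{clifexp} with $\det(F)=1$: choose an $F$-eigenbasis $(e_1,\dots,e_n)$ of $L$ with $Fe_i = \a_i e_i$ and the dual basis $(e_1^*,\dots,e_n^*)$ of $L^*$ with $\om(e_i,e_j^*)=\d_{ij}$, so that $Fe_i^* = \a_i^{-1}e_i^*$.

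The key computational step is to identify $\tr_{\Koly}(\Sym^{\bullet}L^*)\in\Koly_F$ explicitly. By \thref{kolyrep=koly} and the definition \eqref{kolynormrep}, we have $\l(\Sym^{\bullet}L^*) = \l(\tr_{\Koly}(F_{\Sym^{\bullet}L^*}))$. Now apply \thref{actpin0}: with the choice $\tilF = \prod_{i=1}^n (\a_i^{1/2}e_ie_i^* + \a_i^{-1/2}e_i^*e_i) \in \Spin(M)$, the action of $\tilF$ on $S = \Sym^{\bullet}L^*$ equals $\e(L,F)^{1/2}F_{\Sym^{\bullet}L^*}$, where $\e(L,F) = \det(F_L)^{-1} = \a_1\cdots\a_n$. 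Hence $F_{\Sym^{\bullet}L^*} = \e(L,F)^{-1/2}\tilF$ acting on $S$; since here $V_T = k$ is one-dimensional, $\tr_{\Koly}$ is simply the identification $F_{\Sym^{\bullet}L^*}\mapsto \e(L,F)^{-1/2}\tilF$ under $\End(S)\isom\Cl(M)$, so $\tr_{\Koly}(\Sym^{\bullet}L^*) = \e(L,F)^{-1/2}\tilF$.

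It remains to evaluate $\l$ on this element. Since $\tilF \in \Spin(M) \sub \Pin(M)$, we have $\l(\tilF) = 1$ by the defining property of $\l$ in \eqref{spinornorm} (it sends $\Pin(M)$ to $1$). Because $\l$ is quadratic and $\l(c g) = c^2\l(g)$ for $c\in k^\times$, we get
\[
\l(\Sym^{\bullet}L^*) = \l\big(\e(L,F)^{-1/2}\tilF\big) = \big(\e(L,F)^{-1/2}\big)^2 \l(\tilF) = \e(L,F)^{-1}.
\]
Here I should pause: this gives $\e(L,F)^{-1}$, not $\e(L,F)$, so either the normalization of $\l$ in \eqref{spinornorm} makes $\l(cg) = c^{-2}$ on the relevant convention, or $\e(L,F)$ is defined as $\det(F_L)$ rather than $\det(F_L)^{-1}$ in this context (compare the two appearances of the root number in \thref{actpin0} and in \itemref{lfuncrootnumber}{rootnumber}), or the sign of the exponent in $\tilF$ versus $F_{\Sym^\bullet L^*}$ resolves it; tracking this sign convention carefully is the one genuinely delicate point, and the rest is bookkeeping. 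The main obstacle, then, is not conceptual but is precisely pinning down the normalization so that the exponent comes out with the correct sign; once the conventions of \eqref{spinornorm}, \thref{actpin0}, and \itemref{lfuncrootnumber}{rootnumber} are aligned, the identity $\l(\Sym^{\bullet}L^*) = \e(L,F)$ follows immediately, and by Zariski density it holds for all $F$ preserving the polarization.
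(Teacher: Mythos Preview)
Your approach is exactly the paper's: reduce to semisimple $F$ by Zariski density and then invoke \thref{actpin0}. The paper's proof is in fact just one line (``By a Zariski dense argument, we are reduced to the case that $F$ is semisimple. Then we are done by \thref{actpin0}''), so you have filled in strictly more detail than the authors did.

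Your sign concern is legitimate and well-spotted: with the conventions as written (\thref{actpin0} gives $\tilF = \e(L,F)^{1/2}F_{\Sym^\bullet L^*}$ on $S$, and \eqref{spinornorm} gives $\l(cg)=c^2$ for $g\in\Pin(M)$), one does obtain $\l(\Sym^\bullet L^*)=\e(L,F)^{-1}$ rather than $\e(L,F)$. Since $\e(L,F)^{-1}=\e(L^*,F)$ under the super-determinant convention \eqref{detdef}, this is precisely a question of which Lagrangian the root number is attached to; the paper's terse proof does not resolve this bookkeeping explicitly, and you are right that aligning the conventions of \eqref{spinornorm}, \thref{actpin0}, and \itemref{lfuncrootnumber}{rootnumber} is the only nontrivial content here. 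Note also that the Remark following \thref{actpin0} states that the identification $\tilF=\e(L,F)^{1/2}F_{\Sym^\bullet L^*}$ holds for any $F$ admitting the $F$-stable polarization (not just semisimple ones), so in fact the Zariski-density reduction can be bypassed entirely.
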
 
\begin{proof}
    By a Zariski dense argument, we are reduced to the case that $F$ is semisimple. Then we are done by \thref{actpin0}.
\end{proof}

Therefore, we can compute $\l(T)$ by relating $T$ to the module $S=\Sym^{\bullet}L^*$. For this purpose, we introduce the following lemma:

\begin{lemma}\thlabel{identifystandard}
    Suppose we are in \thref{Fstablepolar}. Given an $F$-equivariant module $T\in\Mod(\Cl(M))^{\om,F,\he}$ of dimension $(2^{n-1}|2^{n-1})$. Suppose there exists a $F_T$-eigen highest weight vector $t\in T$ in the sense that:
    \begin{enumerate}
        \item \label{vectorFstable} We have $F_T(t)=\a t$ for some $\a\in k^{\times}$;
        \item \label{vectorhighestweight}We have $L\cdot t=0$
    \end{enumerate}
    Then the natural action map \[a:S=\Sym^{\bullet}L^*\to\Sym^{\bullet}L^*\cdot t\sub T\] defines an isomorphism of $F$-equivariant modules after we replace the $F$-equivariant structure on $T$ by $F_T/\a$. In particular, we have \[\l(T)=\a^2\l(\Sym^{\bullet}L^*)=\a^2\e(L,F)^{-1}.\]
\end{lemma}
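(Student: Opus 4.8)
The plan is to verify directly that the action map $a\colon S = \Sym^{\bullet}L^*\to T$, $s\mapsto s\cdot t$, is an isomorphism of $F$-equivariant $\Cl(M)$-modules once we rescale the equivariant structure on $T$, and then read off the value of $\lambda(T)$ from \thref{kolynormstandard} together with the behavior of $\lambda$ under scaling. First I would check that $a$ is a map of $\Cl(M)$-modules: this is immediate from associativity of the module action, since $a(x\cdot s) = (x\cdot s)\cdot t = x\cdot(s\cdot t) = x\cdot a(s)$ for $x\in\Cl(M)$. Next, $a$ is surjective: by construction of the Clifford module in \thref{cliffordmodule}, the polarization $M=L\oplus L^*$ gives $\Cl(M)\cong S\otimes S^{\vee}$ as a $\Cl(M)$-bimodule (equivalently, $\Cl(M)\cong\End(S)$, and $S = \Cl(M)\cdot s_0$ for any vector $s_0$ annihilated by $L$); the highest-weight hypothesis \itemref{identifystandard}{vectorhighestweight} says $t$ is such a vector, so $\Cl(M)\cdot t = (\Sym^{\bullet}L^*)\cdot t$ is the image of $a$, and it is a nonzero $\Cl(M)$-submodule of $T$. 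Since $\dim S = (2^{n-1}|2^{n-1})$ equals $\dim T$ by hypothesis, and $S$ is the unique irreducible $\Cl(M)$-module (up to parity), any nonzero surjection $S\twoheadrightarrow a(S)\hookrightarrow T$ with source and target of equal dimension must be an isomorphism — here one uses that $a(S)\cong S$ since $S$ is simple and $a$ is nonzero, and then $a(S)\subseteq T$ is a subobject of the same dimension, hence all of $T$.

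For the $F$-equivariance, I would compare $F_T$ with the standard equivariant structure $F_{\Sym^{\bullet}L^*}$ transported through $a$. The key point is that the rescaled automorphism $F_T/\alpha$ fixes $t$ (by \itemref{identifystandard}{vectorFstable}), and both $F_T/\alpha$ and $a\circ F_{\Sym^{\bullet}L^*}\circ a^{-1}$ are $\Cl(M)$-module-compatible automorphisms of $T$ in the sense of \eqref{Fcompatible} (relative to the same $F$ on $\Cl(M)$) which agree on the cyclic generator $t = a(1)$. By the same cyclicity argument — an $F$-equivariant automorphism of an irreducible module is determined by its value on any nonzero vector, since two such automorphisms differ by a $\Cl(M)$-module automorphism, i.e.\ by a scalar by Schur / the center computation used in \thref{Fequivmodclassification}, and that scalar is pinned down on $t$ — the two automorphisms coincide. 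Hence $a$ is an isomorphism of $F$-equivariant modules $(\Sym^{\bullet}L^*, F_{\Sym^{\bullet}L^*}) \xrightarrow{\ \sim\ } (T, F_T/\alpha)$.

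Finally I would compute $\lambda(T)$. By \thref{kolyrep=koly}, $\lambda(T) = \lambda(\tr_{\Koly}(F_T))$, and under the Morita identification $\End_{\Vect}(T)\cong\Cl(M)\otimes\End_{\Vect}(V_T)$ with $T\cong S\otimes V_T$, replacing $F_T$ by $F_T/\alpha$ scales the $\End_{\Vect}(V_T)$-component by $\alpha^{-1}$ — but here $V_T$ is one-dimensional even (since $\dim T = \dim S$), so $F_T = \alpha\cdot(F_T/\alpha)$ as elements of $\Cl(M)^{\times} = \GSpin(M)\cdot(\text{scalars})$, and $\tr_{\Koly}(F_T) = \alpha\cdot\tr_{\Koly}(F_T/\alpha)$ in $\Koly_F$. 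Since $F_T/\alpha$ corresponds to the standard equivariant structure, $\tr_{\Koly}(F_T/\alpha)$ is the Pin structure lifting $F$ attached to $\Sym^{\bullet}L^*$, whose $\lambda$-value is $\lambda(\Sym^{\bullet}L^*) = \e(L,F)$ by \thref{kolynormstandard}. As $\lambda$ is quadratic in the sense of \eqref{kolynorm} (satisfying $\lambda(c g) = c^2\lambda(g)$), we get $\lambda(T) = \alpha^2\lambda(\Sym^{\bullet}L^*) = \alpha^2\,\e(L,F)$, as claimed. The main obstacle I anticipate is purely bookkeeping: making sure the dimension count $(2^{n-1}\mid 2^{n-1})$ together with the parity conventions of \autoref{superlinear} genuinely forces $V_T$ to be a one-dimensional \emph{even} line (rather than odd), since this is what makes the scaling $F_T\mapsto F_T/\alpha$ land back on the \emph{same} parity-normalized Clifford module and lets $\lambda$ see exactly the factor $\alpha^2$; everything else is a routine cyclicity/Schur argument.
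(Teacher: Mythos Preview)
Your approach is correct and matches the paper's: establish that $a$ is a $\Cl(M)$-module map, check $F$-equivariance after rescaling by $\alpha^{-1}$, conclude isomorphism by the dimension hypothesis, then read off $\lambda(T)=\alpha^2\e(L,F)$ via \thref{kolynormstandard} and the quadraticity of $\lambda$. One correction: the step $(x\cdot s)\cdot t = x\cdot(s\cdot t)$ is not mere ``associativity,'' since the Clifford-\emph{module} action $x\cdot s$ on $S=\Sym^\bullet L^*$ differs from the algebra product $xs\in\Cl(M)$ by elements of $\Cl(M)\cdot L$ --- so the highest-weight condition $L\cdot t=0$ is needed already at this step (as the paper indicates), not only for surjectivity.
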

\begin{proof}[Proof of \thref{identifystandard}]
    It is easy to see that the map $a: S\to T$ is compatible with $\Cl(M)$-module structures by \eqref{vectorhighestweight} and is $F$-equivariant after replacing $F_T$ by $F_T/\a$ given \eqref{vectorFstable}, therefore must be an isomorphism of $F$-equivariant $\Cl(M)$-modules since both sides have the same dimension.  The final statement is then a consequence of \thref{kolynormstandard}.
\end{proof}

\section{Commutator relations on cohomological correspondences}\label{commutatorrelations}
In this section, we introduce the local-to-global construction of special cohomological correspondences. In particular, relations between local special cohomological correspondences (commutator relations) would give relations on global special cohomological correspondences. These relations are what we call the commutator relations on global special cohomological correspondences. The main result in this section is \thref{geokolycor} (proved under technical assumptions in \thref{geokolycorthm}).
\begin{itemize}
    \item In \autoref{sheaftheory}, we clarify the \'etale sheaf theory we are using and establish some properties of it which are well-known to the experts but we cannot find in the literature.
    \item In \autoref{cc}, we review the notion of cohomological correspondences used in \cite{FYZ3} and make some enhancements in our setting.
    \item In \autoref{geometricsetup}, we introduce the moduli spaces and categories involved in the local-to-global procedure.
    \item In \autoref{conscc}, we make the local-to-global construction of special cohomological correspondences.
    \item In \autoref{mainresult3}, we state the commutator relation on global special cohomological correspondences, which is the main result of this section. 
    \item Finally, in \autoref{proofgeokolycorthm}, we prove the main theorem \thref{geokolycorthm}.
\end{itemize}

\subsection{Sheaf theory}\label{sheaftheory}
We now come to set up the \'etale sheaf theory that we will use throughout the article. Experienced readers can safely skip this part and come back when necessary.

The local-to-global construction of cohomological correspondences, which we are going to introduce in \autoref{conscc}, is essentially part of the automorphic factorizable $\Theta$-series construction discussed in \cite[\S16]{BZSV}. It has been observed in \textit{loc.cit} that this construction encounters some difficulties in sheaf theory, and has not been made rigorous at this stage as far as we know. Establishing the full version of this construction is stated as an open problem in \cite[Problem\,16.3.2]{BZSV}. We do not try to address this problem in its full generality. Instead, we only establish part of it, which is enough for our purpose (i.e., to see the Clifford relation in the RTF-algebra). 

For this purpose, we need a $*$-sheaf theory\footnote{By $*$-sheaf theory, we mean a theory in which the upper-$*$ functor is always defined.} (to study the $!$-period sheaf $\cP_X$) on non-Artin and non-placid stacks like $LX/L^+G$ (and its moving point variants like $(LX/L^+G)_C$). This is the sheaf theory $\Shv$ we are going to produce in this section. Note that although the entire category $\Shv$ can be very bad on general stacks, the six-functor functoriality should be mostly well-behaved, which is enough for our purpose.

\subsubsection{Six-functor formalism of ind-constructible sheaves}
We briefly review the six-functor formalism of \'etale sheaf theory with $k=\Q_\ell$-coefficients, which will be used later.

We use $\Sch$ to denote the category of (classical) schemes over $\overline{\F}_q$, and $\Stack$ to be the category of (\'etale) stacks over $\overline{\F}_q$. 

Let $X$ be a scheme of finite type over $\overline{\F}_q$, the (infinity-categorical enhancement of) derived category of constructible complexes $\rrr{Shv}_c(X, k)$ has been constructed in \cite{liu2017enhancedoperationsbasechange} (for torsion coefficients) and \cite{liu2017enhancedadicformalismperverse} for adic coefficients via right Kan extension along embedding torsion rings into adic rings,\footnote{Actually in \textit{loc. cit}, it is even constructed for Artin stacks locally of finite type.} which extends to a lax symmetric monoidal functor
\begin{equation}
    \rrr{Shv}_c: \rrr{Corr}(\rrr{Sch}_{\textup{ft}})_{E=\textup{fp}}\rightarrow \rrr{Cat}^{\rrr{Idem}}_{k}.
\end{equation}
where $ft$ is short for finite type schemes and $fp$ is short for of finite presentation morphisms. We use $qcqs$ short for quasi-compact and quasi-separated.

Using left Kan extension along $\rrr{Sch}_{\textup{ft}}^{\op}\sub \rrr{Sch}_{\qcqs}^{\op}$, and applying ind-completion $\rrr{Shv}(-):=\rrr{Ind}(\rrr{Shv}_c(-,))$, we obtain a six functor formalism (in the sense of \cite[Definition 3.2.1]{HM6fun}\footnote{Our $\rrr{Corr(\cC)_E}$ is the same as $\rrr{Corr}(\cC, E)^{\otimes}$ in \cite{HM6fun}.})
\begin{equation}
     \rrr{Shv}: \rrr{Corr}(\rrr{Sch}_{\textup{qcqs}})_{E=\textup{fp}}\rightarrow \LinCat_{k},
 \end{equation}

Now we extend the six-functor formalism to \'etale stacks. Let $P$ be a property of morphisms (of quasi-compact quasi-separated schemes) that is stable under base change. In this case, we say a morphism $f: X\rightarrow Y$ has property $P$ if $f$ is representable and any base change of $f$ along a map from a qcqs scheme $S\rightarrow Y$ has property $P$.

Then we use the right Kan extension along the full embedding $\Sch^{\op}\sub\Stack^{\op}$. By \cite[Theorem\,3.4.11]{HM6fun}, we obtain a six-functor formalism
 \begin{equation}
     \rrr{Shv}: \rrr{Corr}(\Stack)_{E=\textup{fp}}\rightarrow \LinCat_{k},
 \end{equation}
whose restriction to qcqs schemes is the same as the one we constructed before.

More concretely, given a qcqs scheme $S=\varprojlim_{i}S_i$ where $S_i$ are schemes of finite-type, we have $\Shv(S)\cong\Ind\varinjlim_i\Shv_c(S_i)$, where the transition maps are $*$-pull. For an \'etale stack $X=\varinjlim_{i}X_i$ where $X_i\in\Sch_{\qcqs}$, we have $\rrr{Shv}(X)\cong \varprojlim\rrr{Shv}(X_i)$, where the transition maps are $*$-pull.

For $f: X\rightarrow Y$ a morphism of stacks, we always have $f^*$, which admits a (not necessarily continuous) right adjoint $f_*$. If $f$ is fp, we have $f_!$, which admits a (not necessarily continuous) right adjoint $f^!$. We also have a right adjoint $\iHom$ to the tensor product $\otimes$ on $\Shv(X)$. In this case, these functors satisfy various compatibilities such as projection formulas, proper base change, etc. We refer to \cite[Proposition\,3.2.2]{HM6fun} for a complete list of such compatibilities. 

Moreover, for a morphism between stacks $f:X\to Y$:
\begin{enumerate}
    \item If $f$ is proper, then $f$ is $D$-proper in the sense of \cite[Definition\,4.6.1(a)]{HM6fun}. In particular, we have a canonical isomorphism $f_!\cong f_*$;
    \item If $f$ is \'etale, then $f$ is $D$-\'etale in the sense of \cite[Definition\,4.6.1(b)]{HM6fun}. In particular, we have a canonical isomorphism $f^*\cong f^!$.
\end{enumerate} 
These claims follow directly from \cite[Lemma\,4.5.7]{HM6fun} and the right cancellative property of proper and \'etale maps.

\begin{remark}
This is a $*$-sheaf theory in the sense of \cite[Remark 1.3.10]{HM6fun}.
\end{remark}

Suppose $X$ is an Artin stack locally of finite type, the following proposition shows that $\Shv(X)$ is the same as the sheaf theory defined in \cite[\S F.1.1]{arinkin2022stacklocalsystemsrestricted}. 

\begin{prop}\thlabel{openpresen}
    Let $X=\varinjlim_{\alpha}X_\alpha$ be a filtered colimit of stacks with transition maps $j_{\a\b}: X_\a\rightarrow X_\b$ quasi-compact open immersions. Then 
    \begin{equation}
        \varinjlim_{\alpha}\rrr{Shv}(X_\alpha)\xrightarrow{\cong}\rrr{Shv}(X)
    \end{equation}
    is an equivalence, where the transition maps are given by $!$-push.
\end{prop}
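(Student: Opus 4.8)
The plan is to reduce the statement to the case where each $X_\alpha$ is a quasi-compact quasi-separated scheme, where it can be checked against the explicit construction of $\Shv$ via ind-completion, and then bootstrap back to stacks using the fact that $\Shv$ on stacks is computed by a right Kan extension (a limit over schemes mapping in). First I would recall that for a qcqs scheme $S = \varprojlim_i S_i$ with $S_i$ of finite type, we have $\Shv(S) \cong \Ind \varinjlim_i \Shv_c(S_i)$ with $*$-pullback transition maps, and that for a quasi-compact open immersion $j_{\alpha\beta}\colon X_\alpha \hookrightarrow X_\beta$ the functor $(j_{\alpha\beta})_!$ is the left adjoint to $(j_{\alpha\beta})^!\cong (j_{\alpha\beta})^*$, which is the transition map in the right-Kan-extension description. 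So we are comparing a colimit (along left adjoints $j_!$) with the colimit description one gets by passing to left adjoints in the limit presentation $\Shv(X) \cong \varprojlim_\alpha \Shv(X_\alpha)$ (transition maps $j^*$). The content is precisely that this limit-of-right-adjoints equals the colimit-of-left-adjoints, i.e. an instance of the standard fact that $\varinjlim$ in $\LinCat_k$ along left adjoints agrees with $\varprojlim$ along the corresponding right adjoints (e.g. \cite[\S1.7]{arinkin2022stacklocalsystemsrestricted} or Lurie). The real work is to verify the hypotheses that make this abstract nonsense apply, namely that the counit $ (j_{\alpha\beta})_! (j_{\alpha\beta})^* \to \id$ exhibits $\Shv(X_\alpha)$ as the correct colimit, which amounts to: (i) the $j_{\alpha\beta}^*$ are jointly conservative, and (ii) each object of $\Shv(X)$ is a colimit of objects extended by $!$ from the $X_\alpha$.

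For step (i), joint conservativity of $\{j_\alpha^* \colon \Shv(X) \to \Shv(X_\alpha)\}$ follows because the $X_\alpha$ cover $X$: a map of sheaves on $X$ that is an isomorphism after restriction to each $X_\alpha$ is an isomorphism, since $X = \varinjlim X_\alpha$ means every scheme mapping to $X$ factors (after refining) through some $X_\alpha$, and $\Shv$ satisfies descent / is defined as a limit over such schemes. For step (ii), given $\cF \in \Shv(X)$, one checks that the natural map $\varinjlim_\alpha (j_\alpha)_! j_\alpha^* \cF \to \cF$ is an equivalence; here I would use that the $j_\alpha$ are open immersions so $(j_\alpha)_!$ is the extension-by-zero, and that because the indexing category is filtered, evaluating both sides on any qcqs test scheme $T \to X$ — which factors through some $X_\beta$ — reduces the claim to the identity $\varinjlim_{\alpha \geq \beta} (j_{\alpha\beta})_! (j_{\alpha\beta})^* \cG \xrightarrow{\sim} \cG$ on $X_\beta$, and then further, by the scheme-level presentation $\Shv(X_\beta) = \varprojlim \Shv(X_{\beta,i})$ over finite-type schemes, to the corresponding statement for finite-type schemes, where it is classical (an open cover of a finite-type scheme by a filtered system of opens, $\cG \cong \varinjlim j_! j^* \cG$).

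Assembling these: the pair of adjoint functors $((j_{\alpha\beta})_!, (j_{\alpha\beta})^*)$ together with (i) and (ii) verifies the criterion (see e.g. \cite[Lemma 1.3.3 / Corollary 4.7.2]{HM6fun} or the Barr–Beck–type statement in \cite{arinkin2022stacklocalsystemsrestricted}) for $\Shv(X)$ to be the colimit $\varinjlim_\alpha \Shv(X_\alpha)$ in $\LinCat_k$ along the $!$-pushforward transition maps, which is exactly the assertion. I expect the main obstacle to be the bookkeeping in step (ii) — reducing the colimit statement on a general stack $X$, level by level, first to qcqs schemes $X_\beta$ and then to finite-type schemes — making sure that all the compatibilities (filteredness of the index category, quasi-compactness of the open immersions guaranteeing $(j_{\alpha\beta})_!$ is well-behaved, and the interaction of $(-)_!$ with the $\Ind$-completion and with $*$-pullback base change) are used correctly; none of this is deep, but it requires care to phrase within the six-functor formalism of \cite{HM6fun} as set up above rather than appealing to ad hoc constructions.
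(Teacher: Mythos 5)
Your core argument is the paper's: identify $\Shv(X)$ with $\varprojlim_{\alpha}\Shv(X_\alpha)$ along $*$-pullbacks and then exchange this limit along right adjoints for the colimit along the left adjoints $j_{\alpha\beta,!}$. Where you diverge is in asserting that this exchange requires verifying your conditions (i) (joint conservativity of the $j_\alpha^*$) and (ii) (every $\cF$ is $\varinjlim_\alpha j_{\alpha,!}j_\alpha^*\cF$): it does not. The fact that a colimit in $\LinCat_k$ along continuous left adjoints is computed as the limit along the corresponding right adjoints (\cite[Corollary\,5.5.3.4, Theorem\,5.5.3.18]{luriehtt}) is unconditional; the only inputs are the limit presentation itself, which the paper obtains in one line from the right-Kan-extension definition of $\Shv$ on stacks via \cite[Proposition\,6.2.1.9]{luriesag} applied to the colimit $X=\varinjlim_\alpha X_\alpha$, and the observation that each $j_{\alpha\beta}^*$ admits a continuous left adjoint because $j_{\alpha\beta}$ is a quasi-compact open immersion. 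Your (i) and (ii) are consequences of the resulting equivalence, not prerequisites, so the bulk of your proposed ``real work'' (the reduction to finite-type schemes and the verification of $\varinjlim_\alpha j_{\alpha,!}j_\alpha^*\cF\simeq\cF$) can be discarded. If you insist on the hands-on route, it can be made to work, but it costs more than you indicate: beyond (i) and (ii) you would also need full faithfulness of the $j_{\alpha\beta,!}$ and an argument that the essential image of the comparison functor is closed under colimits, whereas the formal route sidesteps all of this.
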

\begin{proof}
    By \cite[Proposition\,6.2.1.9]{luriesag}, $\rrr{Shv}(X)\cong \varprojlim_{\alpha}\rrr{Shv}(X_\a)$, with the transition maps given by $*$-pull. Since each $j_\alpha: X_\alpha\rightarrow X_\b$ is a quasi-compact open immersion, $j_{\alpha\b}^*$ has continuous left adjoint $j_{\a\b,!}$. Therefore, by \cite[Corollary\,5.5.3.4, Theorem\,5.5.3.18]{luriehtt}, we have $$\varprojlim_{\alpha}\rrr{Shv}(X_\alpha)\cong \varinjlim_{\alpha}\rrr{Shv}(X_\alpha),$$ where the transition maps on the right-hand side are given by $j_{\a\b,!}$.
\end{proof}

\subsubsection{Pro-smooth base change}
\begin{defn}
    We say a morphism $f: X\rightarrow Y$ of quasi-compact quasi-separated schemes is \emph{elementary pro-smooth} (pro-smooth for short), if there exists a presentation $X\cong\varprojlim X_i$ as a cofiltered limit of qcqs schemes with smooth affine surjective transition maps, such that every map $X_i\rightarrow Y$ is smooth.
\end{defn}
The class of elementary pro-smooth is stable under base change, and hence this notion extends to stacks, i.e., a morphism of stacks is called elementary pro-smooth if it is representable and its base change to any scheme is elementary pro-smooth.
\begin{example}
If $X$ is an affine smooth scheme over $\overline{\F}_q$, then the jet scheme $L^+X$ (see \autoref{modulispaces}) is elementary pro-smooth. 
\end{example}
\begin{example}
Let $G$ be an affine smooth group scheme, and let $Z$ be a stack equipped with an $L^+G$-action. Then the natural map of stacks $Z\rightarrow Z/L^+G$ is elementary pro-smooth.
\end{example}
\begin{example}
    Let $D=\Spec \overline{\F}_q[[t]]$ be the formal disc and $\rrr{Aut}(D)$ the automorphism group of $D$ preserving the center of the disc. Then $\rrr{Aut}(D)$ is pro-smooth over $\overline{\F}_q$. In particular, the map of local coordinate $\rrr{Coor}: C\rightarrow B\rrr{Aut}(D)$ (see \autoref{modulispaces}) is pro-smooth.
\end{example}

\begin{lemma}\thlabel{conserv}
Let $f: X\rightarrow Y$ be a representable surjection of stacks. Then $f^*$ is conservative.
\end{lemma}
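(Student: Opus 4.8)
**Proof plan for Lemma (conservativity of $f^*$ for representable surjections).**

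The plan is to reduce to the schematic case by smooth (or atlas) descent, and there use the ordinary theory of étale sheaves. First I would recall what has to be shown: if $g\in\Shv(Y)$ is such that $f^*g\simeq 0$, then $g\simeq 0$. Since the question is about an object of $\Shv(Y)$ being zero, and since $\Shv(Y)$ for a stack $Y$ is computed as the limit $\varprojlim\Shv(S)$ over the (affine, say) schemes $S$ mapping to $Y$ with $*$-pullback transition maps, it suffices to check that $g_S := (h_S)^* g \simeq 0$ for every $h_S\colon S\to Y$ with $S$ a qcqs scheme. Fix such an $h_S$. Form the Cartesian square
\[
\begin{tikzcd}
X_S \ar[r, "h_S'"] \ar[d, "f_S"] & X \ar[d, "f"] \\
S \ar[r, "h_S"] & Y
\end{tikzcd}
\]
so that $f_S\colon X_S\to S$ is again a representable surjection (base change of the representable surjection $f$), and by base change $f_S^* g_S \simeq (h_S')^* f^* g \simeq 0$. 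Hence I have reduced to the case where the target $S$ is a qcqs scheme.

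Now, with $S$ a qcqs scheme and $f_S\colon X_S\to S$ a representable surjection of stacks, choose a smooth (or even étale) surjective atlas $a\colon U\to X_S$ with $U$ a scheme; then $f_S\circ a\colon U\to S$ is a surjective morphism of schemes which is a composite of a smooth surjection and the representable map $f_S$, hence is in particular surjective and smooth-locally-on-the-source; by standard descent (a surjective morphism of qcqs schemes can be refined by a surjective morphism admitting, fppf- or smooth-locally, sections) one reduces to showing that $p^*$ is conservative for $p\colon U\to S$ a surjective morphism of qcqs schemes. But for schemes of finite type this is the classical fact that $*$-pullback along a surjection is conservative on $\Shv_c$ (a constructible complex is zero iff all its stalks vanish, and every geometric point of $S$ lifts to $U$ by surjectivity), and it extends to $\Shv=\Ind(\Shv_c)$ and to qcqs $S=\varprojlim S_i$ since $\Shv(S)=\Ind\varinjlim\Shv_c(S_i)$ with $*$-pull transition maps and the stalk functors still detect $0$. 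Combining: $p^*g_S\simeq f_S^* g_S\simeq 0$ via $a^*$ (which is conservative, $a$ being a smooth surjective atlas, by the same stalk argument) forces $g_S\simeq 0$, and since this holds for all $h_S\colon S\to Y$, $g\simeq 0$.

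The main obstacle I anticipate is purely bookkeeping rather than conceptual: making precise that "$\Shv(Y)\simeq\varprojlim_S\Shv(S)$ with $*$-pull transition maps, so an object is zero iff all its $*$-restrictions to schemes are zero" is legitimate for the category $\Shv$ built in this section — this is exactly the defining colimit/limit presentation recalled above ($\Shv(X)\cong\varprojlim\Shv(X_i)$ for the atlas-type presentations, and more generally right Kan extension from $\Sch^{\op}\subset\Stack^{\op}$), so it should be immediate. A second, smaller point is to make sure base change $f_S^* g_S\simeq (h_S')^* f^* g$ is available; this is part of the compatibilities of the six-functor formalism (proper/smooth base change, and more basically the commutation of $*$-pullbacks in a Cartesian square), cited earlier from \cite[Proposition 3.2.2]{HM6fun}. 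Everything else is the classical stalk-wise argument, so I do not expect genuine difficulty.
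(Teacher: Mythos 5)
Your proposal is correct and follows essentially the same route as the paper: reduce to a qcqs scheme base via the limit presentation $\Shv(Y)\cong\varprojlim_{S\to Y}\Shv(S)$ with $*$-pullback transition maps, base change the Cartesian square, and invoke conservativity for surjections of qcqs schemes (which the paper simply cites from \cite{hemozhu}). The only difference is your extra atlas step $U\to X_S$, which is superfluous here since representability already makes $X_S=S\times_Y X$ a qcqs scheme.
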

\begin{proof}
When $X$ and $Y$ are qcqs schemes, the argument in \cite{zhu2025tamecategoricallocallanglands}[Lemma\,10.27] works. 
In the general case, $\rrr{Shv}(Y)\cong\varprojlim_{S\rightarrow Y}\rrr{Shv}(S)$ and $\rrr{Shv}(X)\cong\varprojlim_{S\rightarrow Y}\rrr{Shv}(S\times_Y X)$, where $S$ is a qcqs scheme and all transition maps in the limits above are given by $*$-pullbacks. Then it follows from the scheme case by passing to limits.
\end{proof}

\begin{lemma}\thlabel{pro1}
Given a Cartesian square of qcqs schemes 
    \ppull{W}{X}{Y}{Z}{g'}{f}{g}{f'}
with $g$ elementary pro-smooth.
\begin{enumerate}
    \item \label{pro1Comp} If $f$ is of finite presentation, then $g'^*f^!\cong f'^!g^*$.
    \item \label{pro1BC} We always have that $g^*f_*\cong f'_*g'^*$.
\end{enumerate} 
\end{lemma}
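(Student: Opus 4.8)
The plan is to reduce both assertions to the case where $g$ is smooth, where they are classical, and then pass to the limit. Fix a presentation $Y\isom\varprojlim_{i\in I}Y_i$ exhibiting $g$ as elementary pro-smooth: $I$ is cofiltered, each structure map $g_i:Y_i\to Z$ is smooth, and the transition maps $\phi_{ij}:Y_j\to Y_i$ ($j\geq i$) are smooth, affine and surjective. Base-changing the given Cartesian square along $g_i$ and along $\phi_{ij}$ produces Cartesian squares with $W_i:=X\times_Z Y_i$, smooth projections $g_i':W_i\to X$, a presentation $W\isom\varprojlim_i W_i$ with smooth affine surjective transitions $\psi_{ij}$, and, when $f$ is of finite presentation, finite-presentation maps $f_i:W_i\to Y_i$ with $f\circ g_i'=g_i\circ f_i$. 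By the construction of $\Shv$ on qcqs schemes recalled above, $\Shv(Y)\isom\varinjlim_i\Shv(Y_i)$ and $\Shv(W)\isom\varinjlim_i\Shv(W_i)$ in $\LinCat_k$, with all structure and transition functors given by $*$-pullback.

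The finite-level inputs are then routine. For the first assertion: since $g_i$ is smooth one has the relative-purity isomorphism identifying $g_i^!$ with a Tate twist and shift $\langle\bullet\rangle$ of $g_i^*$, and the same for its base change $g_i'$; combining this with the functoriality of $(-)^!$ for finite-presentation maps in the identity $f\circ g_i'=g_i\circ f_i$ gives $g_i'^*f^!\isom g_i'^!f^!\langle-\bullet\rangle\isom f_i^!g_i^!\langle-\bullet\rangle\isom f_i^!g_i^*$, the twist commuting with everything and cancelling. For the second assertion, the classical smooth base change theorem (valid for $g_i$ smooth and $f$ merely between qcqs schemes) gives $g_i^*f_*\isom f_{i*}g_i'^*$. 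Applying these same two inputs to the transition squares shows that the systems $\{f_i^!\}$ and $\{f_{i*}\}$ are each compatible with the $*$-pullback transition functors, hence assemble into functors $\Shv(Y)\to\Shv(W)$ and $\Shv(W)\to\Shv(Y)$ on the colimit categories.

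It then remains to identify these assembled functors with $f'^!$ and $f'_*$, after which taking the colimit over $i$ of the finite-level isomorphisms yields $g'^*f^!\isom f'^!g^*$ and $g^*f_*\isom f'_*g'^*$. For the pushforward: the functor assembled from $\{f_{i*}\}$ is right adjoint to the functor assembled from $\{f_i^*\}$, which is visibly $f'^*$, so it is $f'_*$. For the exceptional pullback: the functor assembled from $\{f_{i!}\}$ is left adjoint to the one assembled from $\{f_i^!\}$, and it equals $f'_!$ because $f'$ is of finite presentation, so base change along the (arbitrary) projections $a_i:Y\to Y_i$ and $b_i:W\to W_i$ gives $a_i^*f_{i!}\isom f'_!b_i^*$; since both $f'_!$ and the assembled functor are continuous and agree on the generating subcategories $b_i^*\Shv(W_i)$ they coincide, and passing to right adjoints identifies the assembled $\{f_i^!\}$-functor with $f'^!$.

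I expect the genuine obstacle to be precisely this last bookkeeping — the interaction of the right adjoints $f_*$ and $f^!$ with the presentation of $\Shv$ as a filtered colimit of categories, since a priori right adjoints need not commute with such colimits. A convenient device is that each $b_i:W\to W_i$ (and $a_i:Y\to Y_i$) is a pro-smooth surjection, so $b_i^*$ is conservative by \thref{conserv}; this lets one verify that the canonical base-change natural transformation $g'^*f^!\Rightarrow f'^!g^*$ (built in the usual way from proper base change together with the unit of $(f_!,f^!)$, the identification $g^*f_!\isom f'_!g'^*$, and the counit of $(f_!,f^!)$) is an isomorphism by testing it at a finite level $i$, where it reduces to the smooth case handled above. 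Apart from this, everything is either classical smooth base change or formal manipulation inside the six-functor formalism, and no new geometric input is required.
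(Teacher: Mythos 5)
Your route is genuinely different from the paper's. You keep the base $Z$ fixed, descend only $Y=\varprojlim_i Y_i$ with each $Y_i\to Z$ smooth, present $\Shv(Y)$ and $\Shv(W)$ as filtered colimits along $*$-pullbacks, assemble the systems $\{f_{i*}\}$ and $\{f_i^!\}$ into functors on the colimit categories, and identify them with $f'_*$ and $f'^!$ by passing to adjoints from the (always valid) proper base change $a_i^*f_{i!}\cong f'_!b_i^*$. The paper instead descends \emph{everything}, writing $Z=\varprojlim_i Z_i$ and $Y=\varprojlim_i Y_i$ with $Y_i,Z_i$ of finite type and $g_i:Y_i\to Z_i$ smooth, restricts to constructible $\cF$ by continuity of all functors involved, and uses the explicit colimit formula $f^!\cF\cong\varinjlim_i r_{X,i}^*f_i^!\cF_i$ for an fp morphism; the base change then reduces termwise to the classical statement for finite-type schemes, with no assembling of functors or adjoint-passing needed. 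Your approach is more conceptual but heavier; the paper's is a direct computation on constructible generators.

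Two soft spots in your write-up. First, your ``routine finite-level inputs'' — smooth base change $g_i^*f_*\cong f_{i*}g_i'^*$ and relative purity $g_i^!\cong g_i^*\langle 2d\rangle$ for a smooth map $g_i:Y_i\to Z$ with $Z$ merely qcqs — are not classical in this formalism: $\Shv$ on qcqs schemes is defined by Kan extension from finite type, $f_*$ need not be continuous, and these statements themselves require descending $g_i$ (and $f$) to a finite-type base. So you have not eliminated the reduction the paper performs; you have relocated it into a black box, and it should be made explicit. Second, the conservativity device at the end points the wrong way: $b_i^*:\Shv(W_i)\to\Shv(W)$ being conservative lets you descend an isomorphism \emph{from} $W$ \emph{to} $W_i$, whereas you want to verify that a map in $\Shv(W)$ is an isomorphism by exhibiting it as $b_i^*$ of a finite-level isomorphism — and identifying $f'^!g^*\cF$ with $b_i^*(\text{--})$ is precisely an instance of the base change being proved. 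The adjoint-passing identification of the assembled functors with $f'_*$ and $f'^!$ is the argument that actually closes the loop; the conservativity remark should be dropped or replaced by it.
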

\begin{proof}
This has been proved in \cite{zhu2025tamecategoricallocallanglands}[Proposition\,10.32, Corollary \,10.46] and we reproduce the proof here. Since all functors involved are continuous, we only need to verify $(g')^*f^!\cF\cong (f')^!g^*\cF\in\rrr{Shv}_c(Z)$. Since $g: Y\to Z$ is elementary pro-smooth, we can write $Z=\varprojlim_i Z_i$ and $Y=\varprojlim_i Y_i$ such that all $Y_i, Z_i$ are schemes of finite type and the map $g$ is induced from a compatible system of smooth morphisms $g_i: X_i\to Z_i$. Since $f$ is fp, we can assume that there exists $f_i:X_i\to Z_i$ such that $X=X_i\times_{Z_i} Z$ for each $i$. We obtain the following diagram

\begin{equation*}
\begin{tikzcd}
                             & W \arrow[ld, "f'"'] \arrow[rr, "{g'}"] \arrow[dd, dashed, "r_{W,i}"]    &              & X \arrow[ld, "f" '] \arrow[dd, "r_{X,i}"] \\
Y \arrow[dd, "r_{Y,i}"'] \arrow[rr, "g"'] &                                                          & Z \arrow[dd, "r_{Z,i}"] &                                     \\
                             & X_i\times_{Z_i}Y_i \arrow[ld, "f_i'"'] \arrow[rr, "g_i'"] &              & X_i \arrow[ld, "f_i"']               \\
Y_i \arrow[rr, "g_i"]        &                                                          & Z_i          &                                    
\end{tikzcd}
\end{equation*}
where all squares except the left and right faces are Cartesian. 

Assume $\cF$ comes from $\cF_0\in \Shv(Z_0)$, we use $\cF_i$ to denote the $*$-pull of $\cF_0$ to $Z_i$. Note we have \[f^!\cF\cong \varinjlim_i r_{X,i}^*f_i^!\cF_i\] and \[f'^!g^*\cF\cong \varinjlim_i r_{W,i}^*f_i'^!g_i^*\cF_i,\] which gives \[g'^*f^!\cF\cong \varinjlim_i g'^*r_{X,i}^*f_i^!\cF_i\cong \varinjlim_i r_{W,i}^*f_i'^!g_i^*\cF_i\cong f'^!g^*\cF.\]

Part \eqref{pro1BC} is proved \textit{mutatis mutandis}.
\end{proof}
\begin{lemma}\thlabel{pro2}
Given a Cartesian square of stacks
\ppull{X}{\sX}{Y}{\sY}{g'}{f}{g}{f'}
where $g$ and $g'$ are surjective pro-smooth, $X$ and $Y$ are qcqs schemes.
    \begin{enumerate}
    \item \label{pro2Comp} If $f$ is representable of finite presentation, then $g'^*f^!\cong f'^!g^*$;
    \item \label{pro2BC} We always have that $g^*f_*\cong f'_*g'^*$.
    \end{enumerate}
\end{lemma}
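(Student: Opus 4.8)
The plan is to reduce \thref{pro2} to the scheme-level statement \thref{pro1} by descent along the surjective pro-smooth atlases $g$ and $g'$. Since $X$ and $Y$ are qcqs schemes, the relevant categories $\Shv(X)$, $\Shv(Y)$ are the honest ind-constructible sheaf categories, so the content is to compare the two natural transformations $g'^*f^! \Rightarrow f'^! g^*$ (resp.\ $g^* f_* \Rightarrow f'_* g'^*$) after passing through $\sX$ and $\sY$. First I would set up the \v{C}ech nerves: write $\sY = |Y_\bullet|$ and $\sX = |X_\bullet|$ where $Y_\bullet$ is the simplicial scheme with $Y_n = Y \times_{\sY} \cdots \times_{\sY} Y$ ($n+1$ factors), and $X_n = X \times_{\sX} \cdots \times_{\sX} X = Y_n \times_{\sY} \sX$ via the Cartesian square; all face and degeneracy maps are pro-smooth since $g$, $g'$ are, and all the $Y_n$, $X_n$ are qcqs schemes because $X$, $Y$ are. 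By \thref{conserv}, $g^*$ is conservative, and more is true: since $\Shv$ satisfies descent, $\Shv(\sY) \cong \lim_{\Delta} \Shv(Y_\bullet)$ with transition maps given by $*$-pullback along the pro-smooth face maps, and likewise for $\sX$.

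Next, for part \eqref{pro2BC}, I would observe that the base-change map $g^* f_* \to f'_* g'^*$ is the totalization (limit over $\Delta$) of the base-change maps attached to the Cartesian squares obtained by pulling back $f: \sX \to \sY$ along $Y_n \to \sY$; each such square
\[
\begin{tikzcd}
X_n \ar[r] \ar[d, "f_n"'] & \sX \ar[d, "f"] \\
Y_n \ar[r] & \sY
\end{tikzcd}
\]
is a square of the type in \thref{pro1}\eqref{pro1BC} after we further pull back along a qcqs scheme mapping to $Y_n$ — but in fact $Y_n$ is already a qcqs scheme and $f_n$ is representable fp over it, and the maps $Y_n \to \sY$, $X_n \to \sX$ are pro-smooth surjections of the same form, so one applies \thref{pro1}\eqref{pro1BC} levelwise and then takes the limit over $\Delta$. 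Since $g^*$, $g'^*$ both arise as the canonical functors into the totalizations and since $f_*$, $f'_*$ commute with these totalizations (they are computed levelwise by \cite[Proposition\,6.2.1.9]{luriesag} applied to the descent presentations), the levelwise isomorphisms assemble to the desired $g^* f_* \cong f'_* g'^*$. For part \eqref{pro2Comp}, the argument is the same but using \thref{pro1}\eqref{pro1Comp}: here one needs $f$ representable of finite presentation so that each $f_n: X_n \to Y_n$ is an fp morphism of qcqs schemes and \thref{pro1}\eqref{pro1Comp} applies; again all functors in sight ($f^!$, $f'^!$, the $*$-pullbacks) are continuous and compatible with the descent limits, so the levelwise comparison $g_n'^* f_n^! \cong f_n'^! g_n^*$ promotes to the statement over the stacks.

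The main obstacle I anticipate is purely bookkeeping rather than conceptual: one must check that all the functors involved genuinely commute with the relevant (co)limits used in the descent presentation — specifically that $f_*$ (which is \emph{not} continuous in general) nonetheless commutes with the \emph{finite-limit} totalizations $\lim_\Delta$ coming from \v{C}ech descent, and that $f^!$ (which exists because $f$ is fp but whose behavior under limits needs care) does too. This is handled by the fact that the descent presentation $\Shv(\sY) = \lim_\Delta \Shv(Y_\bullet)$ is a limit in $\LinCat_k$ where the structure functors are the $*$-pullbacks, and the six-functor formalism of \cite{HM6fun} guarantees that $f_*$ and $f^!$ are morphisms of the relevant (co)sheaves of categories, hence computed levelwise; I would cite \cite[Proposition\,3.2.2]{HM6fun} and the compatibility of right adjoints with limits. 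A secondary point to be careful about is reducing to the case where $X$ and $Y$ are genuinely qcqs schemes as hypothesized — this is given, so there is no further Zariski-localization needed, but one should confirm that the fiber products $Y_n$, $X_n$ remain qcqs, which holds since $\sY$ (and hence the diagonal $\sY \to \sY \times \sY$) is such that $Y \times_{\sY} Y$ is a qcqs scheme when $Y$ is — this uses that $g$ is representable and $Y$ qcqs.
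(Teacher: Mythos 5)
Your overall strategy --- \v{C}ech descent along $g$ and $g'$, levelwise application of \thref{pro1}, and reassembly of the right adjoints over the limit --- is the same as the paper's. However, your argument leans on one false claim: you assert that \emph{all face and degeneracy maps} of the \v{C}ech nerves are pro-smooth. The face maps are (each is a base change of $g$, resp.\ $g'$), but the degeneracy maps are not: $s_0\colon Y\to Y\times_{\sY}Y$ is the relative diagonal of $g$, which is an immersion (a section of a face map), not a (pro-)smooth morphism. Since the descent presentation $\Shv(\sY)\cong\varprojlim_{\Delta}\Shv(Y_\bullet)$ is a limit over all of $\Delta$, your plan of applying \thref{pro1} to every transition square of the cosimplicial diagram breaks down exactly at the degeneracies. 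The paper repairs this by first replacing the simplicial diagram by its underlying semi-simplicial diagram (discarding the non-injective maps in $\Delta^{\op}$), which computes the same limit by \cite[Lemma 6.5.3.7]{luriehtt}; after that reduction only face maps remain and \thref{pro1} applies to every transition square.

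A second, smaller point: the squares you invoke, with $X_n\to\sX$ and $Y_n\to\sY$ as horizontal arrows, are not squares of qcqs schemes, so \thref{pro1} does not literally apply to them. What one actually applies \thref{pro1} to are the scheme-level squares joining consecutive levels of the (semi-simplicial) \v{C}ech nerves; this shows that the functors $f_n^!$ (resp.\ $f_{n,*}$) intertwine the $*$-pullback transition functors. The statement that then produces the induced functor on the limits, identifies it with $f^!$ (resp.\ $f_*$), and yields the base-change isomorphism is \cite[Proposition\,4.7.4.19]{lurieha}, not \cite[Proposition\,3.2.2]{HM6fun} (the latter only lists the axioms of the six-functor formalism and says nothing about adjoints in limits of categories). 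With these two repairs your argument coincides with the paper's proof.
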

\begin{proof}
    Let $X_\bullet$ denote the Cech nerve of $g'$, where $X_n$ are qcqs schemes and all face maps are surjective pro-smooth. Let $X_\bullet^+$ be the underlying semi-simplicial diagram (i.e., discarding all non-injective maps in $\Delta^{\textup{op}}$). We always have $\varprojlim\rrr{Shv}(X_\bullet)\cong\rrr{Shv}(X)$, with transition maps given by $*$-pullbacks. By \cite[Lemma 6.5.3.7]{luriehtt}, we also have that $\varprojlim\rrr{Shv}(X_\bullet)\cong \varprojlim\rrr{Shv}(X_\bullet^+)$. Similarly for $Y_\bullet^+$. Now by \thref{pro1}, all the vertical maps in the following diagrams
    \[\begin{tikzcd}
\rrr{Shv}(Y) \arrow[r, shift right] \arrow[r] \arrow[d, "f'^!"] & \rrr{Shv}(Y_2) \arrow[r] \arrow[r, shift left] \arrow[r, shift right] \arrow[d] & \rrr{Shv}(Y_3) \arrow[d] \arrow[r] & \cdots \\
\rrr{Shv}(X) \arrow[r, shift left] \arrow[r]                               & \rrr{Shv}(X_2) \arrow[r, shift left] \arrow[r, shift right] \arrow[r]           & \rrr{Shv}(X_3) \arrow[r]           & \cdots
\end{tikzcd}\]
intertwine with face maps, and hence \eqref{pro2Comp} holds by \cite[Proposition\,4.7.4.19]{lurieha}. Similarly, \eqref{pro2BC} is proved.
\end{proof}
For applications, we further generalize pro-smooth base change to the ``locally pro-smooth'' case.
\begin{prop}[Locally pro-smooth base change]\thlabel{lprosmBC}
Suppose that we have a Cartesian square of stacks \ppull{\sW}{\sX}{\sY}{\sZ}{g'}{f}{g}{f'} Assume that $\sY$ admits a pro-smooth surjection $\gamma:Y\rightarrow \sY$, such that $Y$ is a quasi-compact quasi-separated scheme, and the composition $\delta:Y\xrightarrow{\gamma}\sY\xrightarrow{g}\sZ$ is also a pro-smooth surjection. 
\begin{enumerate}
    \item \label{pro3} If $f$ is representable of finite presentation, then we have $(g')^*f^!\cong (f')^!g^*$.
    \item \label{pro4} We always have $g^*f_*\cong f'_*(g')^*$.
\end{enumerate}
\end{prop}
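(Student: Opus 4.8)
The plan is to deduce both parts from the scheme‑level results \thref{pro1} and \thref{pro2} by base‑changing the whole square along the pro‑smooth surjection $\gamma$ and exploiting conservativity. Set $X := Y \times_{\sY} \sW$. Since $\sW \cong \sY \times_{\sZ} \sX$ and $\delta = g\circ\gamma$, there is a canonical identification $X \cong Y \times_{\sZ} \sX$, and $X$ fits into a diagram with three structure maps: $p\colon X \to Y$, which is the base change of $f'$ along $\gamma$ and equally the base change of $f$ along $\delta$; $q\colon X \to \sX$, the base change of $\delta$ along $f$, hence pro‑smooth surjective; and $\tilde\gamma \colon X \to \sW$, the base change of $\gamma$ along $f'$, hence pro‑smooth surjective. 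Moreover $g'\circ\tilde\gamma = q$. By \thref{conserv}, the functors $\gamma^*$ and $\tilde\gamma^*$ are conservative, so it is enough to show that the base‑change transformation of \eqref{pro3} (resp. \eqref{pro4}) is an isomorphism after applying $\tilde\gamma^*$ (resp. $\gamma^*$); the compatibility of these transformations with the base‑change $2$‑cells of the auxiliary squares is routine and I would suppress it.

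For \eqref{pro3}, suppose $f$ is representable of finite presentation, so that $f'$ and $p$ are too and $X$ is a qcqs scheme. Applying $\tilde\gamma^*$ to the left‑hand side gives $\tilde\gamma^*(g')^*f^! = q^*f^!$ (using $g'\tilde\gamma = q$), which by \thref{pro2}\,\eqref{pro2Comp}, applied to the Cartesian square with corners $X \xrightarrow{q} \sX$, $X \xrightarrow{p} Y$, $Y \xrightarrow{\delta} \sZ$, $\sX \xrightarrow{f} \sZ$ (here $\delta, q$ are pro‑smooth surjective and $X, Y$ are qcqs schemes), is isomorphic to $p^!\delta^*$. Applying $\tilde\gamma^*$ to the right‑hand side and using \thref{pro2}\,\eqref{pro2Comp} for the Cartesian square with corners $X \xrightarrow{\tilde\gamma} \sW$, $X \xrightarrow{p} Y$, $Y \xrightarrow{\gamma} \sY$, $\sW \xrightarrow{f'} \sY$ gives $\tilde\gamma^*(f')^!g^* \cong p^!\gamma^*g^* = p^!(g\gamma)^* = p^!\delta^*$. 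Since $\tilde\gamma^*$ is conservative this proves \eqref{pro3}.

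For \eqref{pro4}, apply the conservative functor $\gamma^*$: on the one hand $\gamma^*g^*f_* = \delta^*f_*$, and on the other hand $\gamma^*f'_*g'^* \cong p_*\tilde\gamma^*g'^* = p_*q^*$, where $\gamma^*f'_* \cong p_*\tilde\gamma^*$ is pro‑smooth base change for $*$‑pushforward along the square with corners $X \xrightarrow{\tilde\gamma} \sW$, $X \xrightarrow{p} Y$, $Y \xrightarrow{\gamma} \sY$, $\sW \xrightarrow{f'} \sY$. Thus \eqref{pro4} is reduced to $\delta^*f_* \cong p_*q^*$, which is pro‑smooth base change for $*$‑pushforward along the square with corners $X \xrightarrow{q} \sX$, $X \xrightarrow{p} Y$, $Y \xrightarrow{\delta} \sZ$, $\sX \xrightarrow{f} \sZ$. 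These are of the form treated in \thref{pro2}\,\eqref{pro2BC} except that when $f$ is not representable the corner $X$ need not be a scheme. The one genuinely new point — and the step I expect to be the main obstacle — is therefore to check that the $*$‑pushforward statements \thref{pro1}\,\eqref{pro1BC} and \thref{pro2}\,\eqref{pro2BC} remain valid when the source corner is an arbitrary stack: their proofs write the pro‑smooth morphism as a cofiltered limit of smooth morphisms and combine ordinary smooth base change with Cech descent along the pro‑smooth covers, and neither ingredient uses that the source is a scheme, so the argument goes through verbatim. Granting this, conservativity of $\gamma^*$ finishes \eqref{pro4}.
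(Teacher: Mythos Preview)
Your argument is essentially identical to the paper's: both pull back the whole square along $\gamma$, introduce the auxiliary space $W=Y\times_{\sY}\sW$ (your $X$), use conservativity of $\tilde\gamma^*$ (resp.\ $\gamma^*$), and reduce each part to two applications of \thref{pro2} on the squares with corners $W,\sW,Y,\sY$ and $W,\sX,Y,\sZ$. The subtlety you flag in part~\eqref{pro4}---that the corner $W$ need not be a qcqs scheme when $f$ is not representable, so \thref{pro2}\,\eqref{pro2BC} does not literally apply---is one the paper's own proof passes over without comment; in the paper's sole application of part~\eqref{pro4} the map $f$ happens to be representable, so the issue does not arise there.
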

\begin{proof}
Part \eqref{pro3}. Let $\gamma': W:=Y\times_\sY \sW\rightarrow\sW$ be the base change of $\gamma$ along $\sW\rightarrow\sY$. Then $\gamma'$ is a pro-smooth surjection. Since $\gamma'$ is surjective, $(\gamma')^*$ is conservative by \thref{conserv}, and we can check $(g')^*f^!\cong (f')^!g^*$ by pullback along $\gamma'$, i.e. we only need to show that 
$$(\gamma')^*(g')^*f^!\cong (\gamma')^*(f')^!g^*.$$ 
The three squares in 
\[\begin{tikzcd}\label{prodiagmproof}
	& W \\
	Y \\
	& \sW & \sX \\
	\sY & \sZ
	\arrow["{\widetilde{f}}"', from=1-2, to=2-1]
	\arrow["{\gamma'}"', from=1-2, to=3-2]
	\arrow["{\delta'}"{pos=0.3}, from=1-2, to=3-3]
	\arrow["\gamma"', from=2-1, to=4-1]
	\arrow["\delta"{pos=0.2}, from=2-1, to=4-2]
	\arrow["{g'}", from=3-2, to=3-3]
	\arrow["{f'}"', from=3-2, to=4-1]
	\arrow["f", from=3-3, to=4-2]
	\arrow["g"', from=4-1, to=4-2]
\end{tikzcd}\]
are all Cartesian.\par 
On the one hand, $(\gamma')^*(g')^*f^!\cong (\delta')^*f^!$. On the other hand, by \itemref{pro2}{pro2Comp}, $(\gamma')^*(f')^!\cong \widetilde{f}^!\gamma^*$, and hence $(\gamma')^*(f')^!g^*\cong \widetilde{f}^!\delta^*$. \par 
Again by \itemref{pro2}{pro2Comp}, $\widetilde{f}^!\delta^*\cong (\delta')^*f^!$, and hence we obtain the desired base change isomorphism.\par 
Part \eqref{pro4}.  Consider diagram \eqref{prodiagmproof}. Since $\gamma$ is surjective, by \thref{conserv}, we only need to show that $\gamma^*g^*f_*\cong \gamma^*f'_*(g')^*.$ \par 
On the one hand, $\gamma^*g^*f_*\cong \delta^*f_*$.
On the other hand, $\gamma^*f'_*\cong \widetilde{f}_*(\gamma')^*$ by \itemref{pro2}{pro2BC}. Then 
\begin{equation*}
\begin{split}
\gamma^*f'_*(g')^* &\cong \widetilde{f}_*(\gamma')^*(g')^*,\\
&\cong \widetilde{f}_*(\delta')^*.
\end{split}
\end{equation*}
Finally, $\widetilde{f}_*(\delta')^*\cong \delta^*f_*$ again by \itemref{pro2}{pro2BC}.
\end{proof}
\begin{remark}
The results above are also easily generalized to the case where we have a pro-smooth surjection $Z\rightarrow \sZ$ from a qcqs scheme $Z$, and $\sY\times_\sZ Z\rightarrow Z$ is pro-smooth (and $f$ is representable of finite presentation).
\end{remark}

\subsubsection{Excision}
We need the following easy excision property of $\Shv$:
\begin{prop}\thlabel{excisionSHV}
Let $j: U\rightarrow X$ be a quasi-compact open embedding of stacks with a closed immersion of complement $i: Z\rightarrow X$ (i.e., $U, Z$ form a stratification of $X$ after a base change to qcqs schemes). Assume that $i: Z\to X$ is of finite presentation, we have the following:
\begin{enumerate}
    \item \label{p111} The functor $i_!:\Shv(Z)\to\Shv(X)$ is fully faithful, with essential image consisting of $\cF\in\rrr{Shv}(X)$, such that $j^*\cF\cong 0$;
    \item \label{p222} The sequence
    \begin{equation}\label{fibseq}
        j_!j^*\cF\rightarrow \cF\rightarrow i_!i^*\cF
    \end{equation}
    is a fiber sequence for every $\cF\in\rrr{Shv}(X)$.
\end{enumerate}
\end{prop}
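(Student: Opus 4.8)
The plan is to reduce the statement to the case where $X$ is a quasi-compact quasi-separated scheme, where it is the standard recollement attached to a complementary open/closed decomposition in the \'etale six-functor formalism, and then to bootstrap to arbitrary stacks using the presentation $\Shv(X)\isom\varprojlim_{S\to X}\Shv(S)$ over qcqs schemes $S$ with transition maps given by $*$-pullback.

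First I would settle the case where $X$ is a qcqs scheme. Here $U$ is a quasi-compact open, $Z$ its closed complement, and since $i$ is a finite-presentation closed immersion the whole decomposition descends to a finite level in a presentation $X=\varprojlim X_\alpha$ by finite-type schemes. On each $X_\alpha$ the assertions are classical: being a finite-presentation closed immersion, $i$ is proper, so $i_!\isom i_*$, and one has the support vanishing $j^*i_*\isom 0$. This vanishing produces the localization fiber sequence $j_!j^*\to\id\to i_*i^*$ and the full faithfulness of $i_*$ in the usual way; in particular the composite $j_!j^*\cF\to i_*i^*\cF$ is canonically nullhomotopic because $\Hom(j_!j^*\cF,i_*i^*\cF)\isom\Hom(j^*\cF,j^*i_*i^*\cF)=0$, while $j^*$ and $i^*$ are jointly conservative. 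All the recollement functors $j_!$, $j^*$, $i_*=i_!$, $i^*$ preserve constructibility and commute with the $*$-pullback transition maps (base change for $j_!$, proper base change for $i_*$), hence the statement passes to $\varinjlim_\alpha\Shv_c(X_\alpha)$ and, after $\Ind$-completion, to $\Shv(S)$ for every qcqs scheme $S$.

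Then I would deduce the general case. For an arbitrary stack $X$, an object $\cF\in\Shv(X)$ is a compatible family $\cF_S:=u_S^*\cF$ over qcqs schemes $u_S\colon S\to X$. Base-changing the decomposition along $u_S$ gives a quasi-compact open $j_S\colon U_S\to S$ with closed complement $i_S\colon Z_S\to S$, which is a stratification of $S$ by hypothesis. Since $j$ and $i$ are representable and of finite presentation ($j$ is even a quasi-compact open immersion), the functors $j_!$ and $i_!\isom i_*$ are defined on stacks, and the formalism of \cite[Proposition\,3.2.2]{HM6fun} supplies base-change isomorphisms $u_S^*j_!\isom j_{S,!}\,u^*$ and $u_S^*i_!\isom i_{S,!}\,u^*$ (for the evident induced maps from $U_S$, $Z_S$), while $j^*$ and $i^*$ are visibly compatible with $*$-pullback. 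Therefore $u_S^*$ carries the sequence \eqref{fibseq} to the scheme-level localization sequence of the previous step on each $S$; since fiber sequences in the limit category $\varprojlim_S\Shv(S)$ are detected after each $u_S^*$, this gives the second assertion (the composite $j_!j^*\cF\to i_!i^*\cF$ vanishing by the same adjunction computation as above). For the first assertion, full faithfulness of $i_!$ is the identity $i^*i_!\isom\id$, which holds because $u_S^*(i^*i_!\cG)\isom i_S^*i_{S,!}(u^*\cG)\isom u^*\cG$ compatibly in $S$ using $i_S^*i_{S,!}\isom\id$ from the scheme case; and the essential image is read off from the fiber sequence, since $\cF\isom i_!\cG$ forces $u_S^*j^*\cF\isom j_S^*i_{S,!}(u^*\cG)\isom 0$ and hence $j^*\cF\isom 0$, while conversely $j^*\cF\isom 0$ forces $\cF\isom i_!i^*\cF$.

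The only genuinely non-formal input here is the classical recollement for $\Shv_c$ of finite-type schemes, which I would import from the references underlying the construction of the six-functor formalism, together with the observation that the base-change isomorphisms for $j_!$ and $i_!$ against $*$-pullback invoked above are precisely the ones furnished by the Heyer--Mann formalism. Everything else is bookkeeping with the limit presentation of $\Shv$, so I do not anticipate any serious obstacle.
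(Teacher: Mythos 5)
Your proposal is correct and follows essentially the same route as the paper: reduce to qcqs schemes via the limit presentation $\Shv(X)\cong\varprojlim_{S\to X}\Shv(S)$, then to finite-type schemes using that the decomposition and the recollement functors descend to finite level, and invoke the classical excision/recollement there. The paper phrases the reductions slightly more tersely (via the Hom-space limit formula plus Yoneda, and continuity of $i^*i_*$), but the content is the same.
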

\begin{proof}
We first prove $i_!$ is fully faithful. This is equivalent to proving $i^*i_*\cong\id$. Using Yoneda lemma and the fact that $\Hom_{\sC}(x,y)=\varprojlim_i\Hom_{\sC_i}(x_i,y_i)$ whenever one has $\sC=\varprojlim_i\sC_i\in\LinCat_k$ and $x_i,y_i\in\sC_i$ are the images of $x,y\in \sC$, one can reduce to the case that $X$ is a qcqs scheme. By continuity of $i^*i_*$ in this case, one only needs to prove $i^*i_*\cF\cong\cF$ for $\cF\in\Shv_c(Z)$ coming from a finite type scheme, in which case the statement is well-known.

Then it is clear that \eqref{p222} implies \eqref{p111}. To prove \eqref{p222}, the same argument as above helps us reduce to the case that $X$ is a finite type scheme, in which the statement is well-known.

\end{proof}

\subsubsection{Cotruncated sheaves}
For a general stack $X$, $\rrr{Shv}(X)$ is too large. One main cause is that we use the right Kan extension to extend the sheaf theory from qcqs schemes to all stacks, while in applications, we only care about sheaves enjoying certain finiteness properties concerning a specific presentation of $X$. In this section, we introduce the cotruncated sheaves to single out the sheaves that we care about.\par
This section is not essentially used in this article. One can avoid introducing this notion by working with Schubert cells of Hecke stacks instead of the entire Hecke stack in \autoref{modulispaces}. We introduce this notion to compare with the sheaf theory in \cite{BZSV} (see \thref{COMPAREshvsafebzsv}) and simplify the notation in \autoref{modulispaces}.

\begin{defn}\thlabel{cotruncateddef}
Let $X$ be a stack. We define a \emph{cotruncated presentation} of $X$ to be the following data
\begin{enumerate}
    \item a $\ZZ$-indexed (for simplicity) ind-system $\{X_n\}_{n\in\ZZ}\in\Ind\!\Stack$,
    \item an equivalence $X\cong \varinjlim_{n} X_n$ in $\Stack$,
\end{enumerate}
such that each transition map $i_n: X_n\hookrightarrow X_{n+1}$ is a closed immersion of finite presentation.
\end{defn}
By \itemref{excisionSHV}{p111}, each $i_{n,!}: \rrr{Shv}(X_n)\rightarrow \rrr{Shv}(X)$ is fully faithful. 
\begin{remark}
    Note that $\rrr{Shv}(X)\cong\varprojlim_{n}\rrr{Shv}(X_n)$ with transition maps given by $i^*_n$, again by \cite[Proposition\,6.2.1.9]{luriesag}. Contrary to the case in \thref{openpresen}, the transition maps $i^*_n$ do not have a continuous left adjoint, so we can not rewrite $\Shv(X)$ as a colimit. Therefore, objects in $\rrr{Shv}(X)$ in general do not enjoy the finiteness property coming from $\rrr{Shv}(X_n)$, unlike its dual counterpart $!$-sheaf theory (considered for example in \cite[\S F]{arinkin2022stacklocalsystemsrestricted}).
\end{remark}
\begin{defn}\thlabel{shv*}
We define the category of \emph{cotruncated sheaves} on $X$ (associated to the cotruncated presentation $X=\varinjlim_{n} X_n$) to be \[\rrr{Shv}_*(X):=\varinjlim_{n}\rrr{Shv}(X_n)\] where the transition maps are given by $i_{n,*}$.
\end{defn}
\begin{remark}\hfill
\begin{enumerate}
    \item $\rrr{Shv}_*(X)$ depends on the choice of a cotruncated presentation of $X$. In applications, we choose specific cotruncated representations of the spaces we study.
    \item There is a fully faithful embedding $\rrr{Shv}_*(X)\hookrightarrow \rrr{Shv}(X)$, and we can view $\rrr{Shv}_*(X)$ as a subcategory of $\rrr{Shv}(X)$.
    \item If $X$ is an ind-scheme and $X\cong\varinjlim_{n}X_n$ with $X_n$ of finite type and transition maps that are closed embeddings (sometimes called strict ind-schemes), then $\rrr{Shv}_*(X)$ is isomorphic to $\rrr{Shv}^!(X):=\varprojlim_n\rrr{Shv}(X_n)$, where transition maps are given by $i_n^!$.
\end{enumerate}
\end{remark}
\begin{example}\thlabel{COMPAREshvsafebzsv}
     Let $G$ be a split reductive group acting on an affine smooth scheme $X$. Consider the loop stack (with cotruncated presentation) $LX/L^+G\rtimes\Aut(D)=\varinjlim_n L^{\geq -n}X/L^+G\rtimes\Aut(D)$ as defined in \autoref{modulispaces}. It is easy to see that all the transition maps $i_n:L^{\geq -n}X/L^+G\rtimes\Aut(D)\to L^{\geq-n-1}X/L^+G\rtimes\Aut(D)$ are of finite presentation. Then $\rrr{Shv}_*(LX/L^+G\rtimes\Aut(D))=\varinjlim_{n}\rrr{Shv}(L^{\geq -n}X/L^+G\rtimes\Aut(D))$ with transition maps $*$-push. We also have $\Shv_*(LX/L^+G)=\varinjlim_{n}\rrr{Shv}(L^{\geq -n}X/L^+G)$. Therefore, the full subcategory $\Shv_*(LX/L^+G)\sub\Shv(LX/L^+G)$ is the same as the large category of safe $*$-sheaves $\mathrm{SHV}_{s}^*(X_F/G_{\cO})$ defined in \cite[\S 7.2.3, \S B.7]{BZSV}.
\end{example}

\begin{defn}\thlabel{indp}
    Let $X$ be a stack with cotruncated presentation $X=\varinjlim_n X_n$ and $P$ be a property of morphisms (of qcqs schemes) stable under base change such that all closed immersions of finite presentation satisfy $P$. Let $Y$ be a stack. We say $f: X\rightarrow Y$ is \emph{ind-$P$}, if each induced morphism $f_n:X_n\to Y$ has property $P$.
\end{defn}
\begin{remark}
    Our definition of being ind-$P$ depends on a cotruncated presentation of the source stack.
\end{remark}
\begin{example}
    If $f: X=\varinjlim_n X_n \rightarrow Y$ is ind-fp, then the induced maps $f_n$ give $$f_{n,!}:\rrr{Shv}(X_n)\rightarrow \rrr{Shv}(Y)$$
    and we obtain a functor 
    $$f_!:\rrr{Shv}_*(X)\rightarrow \rrr{Shv}(Y)$$
    by passing to colimit. Moreover, if $f$ is ind-proper, we have a canonical isomorphism $f_!\cong f_*:\Shv_*(X)\to\Shv(Y)$.
\end{example}

\subsubsection{Derived fundamental class}
In this section, we temporarily work with derived algebraic geometry. Since the derived aspect will be used only in \autoref{dfc} in a mild way, we do not recall this theory and refer the readers to the literature (e.g. \cite{toen2014derived}, \cite{lurie2004derived}, etc.).

\begin{defn}[Derived fundamental class]\thlabel{dfcgeneral}
    Consider a square of derived stacks which is Cartesian on the classical truncation 
    \[ \begin{tikzcd}
    A \ar[r, "a"]\ar[d, "f"'] & B \ar[d, "g"] \\
    C \ar[r, "b"] & D
    \end{tikzcd}
    \]
    where $C,D$ are derived Artin stacks and $b$ is (representable) quasi-smooth of relative dimension $d$. Then there exists a natural map 
    \begin{equation}\label{dfcdefformula}
        [a]:\uk_{A} \to a^!\uk_{B} \langle -2d\rangle 
    \end{equation}
    defined as the composition
    \begin{equation}
        \uk_{A}\isom f^*\uk_{C}\xrightarrow{[b]} f^*b^!\uk_{D} \langle -2d\rangle \to a^!g^*\uk_{D} \langle -2d\rangle\isom   a^!\uk_{B} \langle -2d\rangle 
    \end{equation}
    Where $[b]:=[C/D]$ is the purity transformation for quasi-smooth maps between derived Artin stacks defined in \cite[Remark\,3.8]{khan2019virtualfundamentalclassesderived}. 
    
    In this case, we say that the map $a: A\to B$ is \emph{quasi-smooth}, and we call the map \eqref{dfcdefformula} the \emph{derived fundamental class} of the quasi-smooth map $a$. When $A$, $B$ themselves are derived Artin stacks, it follows from \cite[Theorem\,3.13]{khan2019virtualfundamentalclassesderived} that this notion of quasi-smoothness and derived fundamental class coincides with the usual one in \textit{loc.cit}.
\end{defn}

\begin{remark}
    The condition that $b$ is representable is not necessary.
\end{remark}

\begin{remark}
    We do not try to address the question concerning the dependence of the bottom map $b: C\to D$ in the construction of $[a]$ and the notion of quasi-smoothness. Whenever we use \thref{dfcgeneral} for non-Artin stacks, the choice of the bottom map $b$ will be specified or clear from the context.
\end{remark}

\begin{lemma}\thlabel{purity}
In the situation of \thref{dfcgeneral}, suppose we have a commutative diagram of Artin stacks
\[\begin{tikzcd}
    A\ar[dr, "p"'] \ar[rr, "a"] && B \ar[dl, "q"] \\
    & S &
\end{tikzcd}\]
where $p$, $q$ are smooth. Then the map \[[a]:\uk_{A} \to a^!\uk_{B} \langle -2d\rangle \] is an isomorphism. Here $d$ is the relative dimension of $a$. Moreover, for any $\cF\in\Shv(S)$, we have a canonical isomorphism
\[[a]:p^*\cF\isom a^!q^*\cF\langle -2d\rangle.\]
\end{lemma}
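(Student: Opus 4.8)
The plan is to reduce the assertion to the purity isomorphism for smooth morphisms, which is built into the six-functor formalism of \autoref{sheaftheory}, by comparing $a$ with the two smooth structure maps $p$ and $q$. First I would note that a morphism between Artin stacks that are both smooth over a common base is automatically quasi-smooth (locally a complete intersection), of relative dimension $d=n-m$, where $n$ and $m$ are the (locally constant, hence constant after restricting to connected components) relative dimensions of $p$ and $q$; this legitimizes the derived fundamental class $[a]$ of \thref{dfcgeneral}, which in this Artin-stack situation coincides with Khan's purity transformation $\mathrm{cl}_a$ by the last sentence of \thref{dfcgeneral}.

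The key step is then the compatibility of derived fundamental classes with composition, applied to $p=q\circ a$: the class $\mathrm{cl}_p\colon \uk_A\to p^!\uk_S\langle -2n\rangle$ factors as
\[
\uk_A \xrightarrow{\ [a]\ } a^!\uk_B\langle -2d\rangle \xrightarrow{\ a^!(\mathrm{cl}_q)\langle -2d\rangle\ } a^!q^!\uk_S\langle -2n\rangle = p^!\uk_S\langle -2n\rangle .
\]
Since $p$ and $q$ are smooth, $\mathrm{cl}_p$ and $\mathrm{cl}_q$ are isomorphisms (purity for smooth morphisms), and $a^!$ preserves isomorphisms; hence $[a]=\mathrm{cl}_a$ is an isomorphism, which is the first assertion. (Alternatively one may factor $a$ as the graph embedding $A\hookrightarrow A\times_S B$, which is a section of the smooth projection $A\times_S B\to A$ and therefore a regular immersion, followed by the smooth projection $A\times_S B\to B$, and run the same bootstrapping argument; this variant needs a preliminary localization so that $q$ is separated, which is harmless since the statement is smooth-local on $S$ and $B$.)

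For the ``moreover'' part, once $[a]\colon \uk_A\isom a^!\uk_B\langle -2d\rangle$ is known, the morphism $a$ is cohomologically smooth with invertible relative dualizing complex $\omega_a:=a^!\uk_B\cong \uk_A\langle 2d\rangle$, so the projection-formula map $a^!\uk_B\otimes a^*(-)\to a^!(-)$ is an isomorphism. Applying it to $q^*\cF$ and using $a^*q^*=p^*$ yields a canonical isomorphism $p^*\cF\otimes \uk_A\langle 2d\rangle\isom a^!q^*\cF$, i.e. $[a]\colon p^*\cF\isom a^!q^*\cF\langle -2d\rangle$, as claimed.

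The main obstacle is purely organizational: one must be careful that (i) $a$ is genuinely quasi-smooth between the (derived) Artin stacks in play, so that $[a]$ is defined and agrees with Khan's class via \cite{khan2019virtualfundamentalclassesderived}; (ii) the composition-compatibility of purity transformations is available in the needed generality for morphisms of Artin stacks; and (iii) the shifts and Tate twists bookkeep correctly against the convention $\langle\bullet\rangle$ of \autoref{shear} (here $\langle -2d\rangle\langle -2m\rangle=\langle -2n\rangle$). None of this is deep, and the only genuinely non-formal ingredient, purity for smooth morphisms, is part of the formalism set up in \autoref{sheaftheory}.
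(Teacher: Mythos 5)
Your argument for the first assertion is essentially the paper's: the paper also deduces everything from smooth purity for $p$ and $q$ together with $p^!=a^!q^!$ and the construction of $[a]$, exactly the factorization $\mathrm{cl}_p=a^!(\mathrm{cl}_q)\langle-2d\rangle\circ[a]$ you use. The one place you diverge is the ``moreover'' part, and there you introduce an unjustified step: you claim that because $a^!\uk_B$ is invertible, the projection-formula map $a^!\uk_B\otimes a^*(-)\to a^!(-)$ is an isomorphism. Invertibility of the relative dualizing object is necessary for cohomological smoothness but does not by itself imply that this map is an equivalence on all of $\Shv(B)$, and nothing in \autoref{sheaftheory} asserts that $a$ is cohomologically smooth. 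The fix is to avoid the projection formula altogether and argue as the paper does, applying smooth purity to $\cF$ directly:
\[
p^*\cF\;\isom\;p^!\cF\langle -2\dim(p)\rangle\;\isom\;a^!q^!\cF\langle -2\dim(p)\rangle\;\isom\;a^!q^*\cF\langle -2d\rangle,
\]
which only uses $p^*\cong p^!\langle-2\dim p\rangle$, $q^!\cong q^*\langle 2\dim q\rangle$, and $p^!=a^!q^!$, and then the identification of this composite with $[a]$ is the same compatibility-with-composition you already invoked for the constant sheaf. With that replacement your proof matches the paper's.
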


\begin{proof}
    This follows from 
    \[p^*\cF\isom p^!\cF\langle -2\dim(p)\rangle \isom a^!q^!\cF\langle -2\dim(p)\rangle \isom a^!q^*\cF\langle -2d\rangle\] and the construction of $[a]$.
\end{proof}

\subsubsection{Base change compatibilities}
    In this article, we are going to use many compatibilities between base change morphisms. These compatibilities are tautologically encoded in the six-functor formalism. We do not spell out all of them, but only give one typical example. The proof of other compatibilities will be the same as the following one:
    
\begin{lemma}\thlabel{!pushbcstar}
Consider a Cartesian square of stacks
    \begin{equation} \begin{tikzcd}
    A \ar[r, "a"]\ar[d, "f"'] & B \ar[d, "g"] \\
    C \ar[r, "b"] & D\rcart
    \end{tikzcd}.
    \end{equation}
    Suppose $f$, $g$ are representable of finite presentation, then we have the following commutative diagram
    \begin{equation}\label{bccompdiag}\begin{tikzcd}
        g_! \ar[d] \ar[r] & b_*b^*g_! \ar[d] \\
         g_!a_*a^*\ar[r] & b_*f_!a^* 
    \end{tikzcd}\end{equation}
\end{lemma}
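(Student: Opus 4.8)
The plan is to transpose both composites around the square \eqref{bccompdiag} across the $(b^{*},b_{*})$-adjunction and to check that each of the two natural transformations $g_{!}\Rightarrow b_{*}f_{!}a^{*}$ becomes, after transposing, the base-change isomorphism $\mathrm{bc}\colon b^{*}g_{!}\xrightarrow{\ \sim\ }f_{!}a^{*}$ that the six-functor formalism attaches to a Cartesian square with $g$ (hence $f$) representable of finite presentation. Since passing to the adjoint transpose is a bijection on natural transformations, equality of the two transposes is equivalent to the commutativity of \eqref{bccompdiag}. The only non-formal input is the existence of $\mathrm{bc}$ together with its naturality; everything else is the unit/counit calculus of adjunctions, and indeed this is the pattern by which all the analogous base-change compatibilities used in the article are checked.

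First I would name the arrows of \eqref{bccompdiag}. Write $\eta_{a}\colon\mathrm{id}\to a_{*}a^{*}$, $\eta_{b}\colon\mathrm{id}\to b_{*}b^{*}$ for the adjunction units and $\epsilon_{a}\colon a^{*}a_{*}\to\mathrm{id}$, $\epsilon_{b}\colon b^{*}b_{*}\to\mathrm{id}$ for the counits. Then the top arrow is $\eta_{b}$ whiskered by $g_{!}$, the left arrow is $g_{!}$ applied to $\eta_{a}$, the right arrow is $b_{*}$ applied to $\mathrm{bc}$, and the bottom arrow is $a^{*}$-whiskering of the dual base-change morphism $m\colon g_{!}a_{*}\to b_{*}f_{!}$, namely the composite $g_{!}a_{*}\xrightarrow{\eta_{b}}b_{*}b^{*}g_{!}a_{*}\xrightarrow{b_{*}(\mathrm{bc})}b_{*}f_{!}a^{*}a_{*}\xrightarrow{b_{*}f_{!}(\epsilon_{a})}b_{*}f_{!}$. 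All of these are the canonical maps of the formalism (see \cite[Proposition\,3.2.2]{HM6fun}), and $\mathrm{bc}$ exists because $g$ is representable of finite presentation.

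Next I would transpose each composite. For the top--right composite $b_{*}(\mathrm{bc})\circ(\eta_{b}$ whiskered by $g_{!})$, its adjoint transpose is $\mathrm{bc}$ itself: this is immediate from the defining property of the adjoint transpose, concretely by applying $b^{*}$, postcomposing with $\epsilon_{b}$, and using naturality of $\epsilon_{b}$ together with the triangle identity $(\epsilon_{b}b^{*})\circ(b^{*}\eta_{b})=\mathrm{id}_{b^{*}}$. For the left--bottom composite, I would transpose in two stages: applying $b^{*}$ and postcomposing with $\epsilon_{b}$, naturality of $\epsilon_{b}$ and the same triangle identity collapse $\epsilon_{b}\circ b^{*}m$ down to $f_{!}(\epsilon_{a})\circ\mathrm{bc}_{a_{*}(-)}$, so the transpose becomes $f_{!}(\epsilon_{a})\circ\mathrm{bc}_{a_{*}a^{*}}\circ b^{*}g_{!}(\eta_{a})$; then naturality of $\mathrm{bc}\colon b^{*}g_{!}\Rightarrow f_{!}a^{*}$ applied to the map $\eta_{a}$ rewrites $\mathrm{bc}_{a_{*}a^{*}}\circ b^{*}g_{!}(\eta_{a})$ as $f_{!}a^{*}(\eta_{a})\circ\mathrm{bc}$, whence the whole transpose equals $f_{!}\!\big(\epsilon_{a}\circ a^{*}(\eta_{a})\big)\circ\mathrm{bc}=\mathrm{bc}$ by the remaining triangle identity $(\epsilon_{a}a^{*})\circ(a^{*}\eta_{a})=\mathrm{id}_{a^{*}}$. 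Both transposes equal $\mathrm{bc}$, so \eqref{bccompdiag} commutes.

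The entire argument is a diagram chase, and the main (really the only) obstacle is bookkeeping: one must evaluate each occurrence of $\eta_{a},\eta_{b},\epsilon_{a},\epsilon_{b}$ and $\mathrm{bc}$ at the correct object so that the naturality squares and triangle identities apply verbatim. There is no geometric content beyond the existence and naturality of $\mathrm{bc}$, and the same ``transpose and reduce to $\mathrm{bc}$'' recipe handles the other base-change compatibilities invoked later in the article.
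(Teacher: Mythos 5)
Your argument is correct, and its content coincides with the paper's: both proofs reduce the square to naturality of the relevant units/counits, the triangle identities, and the naturality of the base-change isomorphism $\mathrm{bc}\colon b^*g_!\cong f_!a^*$. The only difference is presentational — the paper pastes one explicit commutative diagram directly, whereas you transpose both composites across the $(b^*,b_*)$-adjunction and check that each transpose equals $\mathrm{bc}$; these are equivalent organizations of the same diagram chase.
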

\begin{proof}
    Recall the natural base change map $g_!a_*\to b_*f_!$ used in the bottom of \eqref{bccompdiag} is the Beck-Chevalley base change map induced from the proper base change map $b^*g_!\cong f_!a^*$, which is defined as the composition \begin{equation}\label{bcdef}g_!a_* \to b_*b^*g_!a_*\cong b_*f_!a^*a_*\to b_*f_!.\end{equation} Then the commutativity of \eqref{bccompdiag} following from the tautological commutativity of the following diagram
    \[\begin{tikzcd}
        g_! \ar[d] \ar[r] & b_*b^*g_! \ar[r] \ar[d] & b_*f_!a^*  \ar[d] \ar[dr] & \\
        g_!a_*a^* \ar[r] & b_*b^*g_!a_*a^* \ar[r] & b_*f_!a^*a_*a^* \ar[r] & b_*f_!a^*
    \end{tikzcd}.\]
\end{proof}

\begin{remark}
    When $b$ is D-proper in the sense of \cite[Definition\,4.6.1(b)]{HM6fun} (hence $a$ is D-proper as well), besides the Beck-Chevalley base change map $g_!a_*\to b_*f_!$ defined in \eqref{bcdef}, we also have another natural map defined as the composition $g_!a_*\cong g_!a_!\cong b_!f_!\cong b_*f_!$. One can easily check that these two natural maps are homotopic to each other: Using the inductive definition of D-properness in \cite[Definition\,4.6.1(b)]{HM6fun} and the definition of natural isomorphisms $b_!\cong b_*$ and $a_!\cong a_*$ in \cite[Definition\,4.6.4(ii)]{HM6fun}, one can reduce to the case where $a$, $b$ are isomorphisms, and then the statement is trivial.
\end{remark}

\subsection{Cohomological correspondence with kernel}\label{cc}
In this section, we briefly review the notion of cohomological correspondence used in \cite{FYZ3} and many other places. Compared to \cite{FYZ3}, we only consider Cartesian pull-back functoriality, but we need to put a kernel in the cohomological correspondence and work over non-Artin stacks. This is a mild generalization given \autoref{sheaftheory}, and readers familiar with the cohomological correspondences in \cite{FYZ3} can safely skip this part.
\subsubsection{Pushable squares}\label{pushabledef}
In this section, we briefly review the notion of pushable squares introduced in \cite[\S3.1]{FYZ3}.

A square of stacks \begin{equation}\begin{tikzcd}
    A \ar[r, "g'"] \ar[d, "f'"']& B \ar[d, "f"]\\
    C \ar[r, "g"] & D
\end{tikzcd}\end{equation} is called \emph{pushable} if the map $a:A\to \tilB$ in the following diagram is proper 
 \begin{equation}
    \begin{tikzcd}
        A\ar[drr, "g'"] \ar[dr, "a"] \ar[ddr, "f'" '] && \\
        &\tilB \ar[r, "\tilg" '] \ar[d, "\tilf"] & B \ar[d, "f"] \\
        & C \ar[r, "g"] & D\rcart
    \end{tikzcd}.
\end{equation} Here $\tilB:=C\times_D B$.

A pushable diagram in which $f$ and $f'$ are fp induces a natural base change natural transformation \begin{equation}\label{pushablebc}
    g^*f_!\to f'_!g'^*
\end{equation} via \[g^*f_!\isom \tilf_!\tilg^* \to \tilf_!a_*a^*\tilg^*\isom f'_!g'^*.\]

The following basic lemma is well-known:
\begin{lemma}\thlabel{pushablebccomp}
    Consider a diagram of stacks \[\begin{tikzcd}
        A\ar[r, "p'"]\ar[d, "f"'] & B \ar[r, "q'"] \ar[d, "g"]  & C \ar[d, "h"] \\
        D \ar[r, "p"] & E \ar[r, "q"] & F
    \end{tikzcd}\] where both squares are pushable. Then the outer square is also pushable, and we have commutative diagrams
    \[\begin{tikzcd}
        h^*q_!p_! \ar[d, "\sim"'] \ar[r] & q'_!g^*p_! \ar[r] & q'_!p'_!f^* \ar[d, "\sim"] \\
        h^*(q\circ p)_! \ar[rr] && (q'\circ p')_!f^*
    \end{tikzcd}\]
    and 
    \[\begin{tikzcd}
        p^*q^*h_! \ar[d, "\sim"'] \ar[r]& p^*g_!q'^* \ar[r]& f_!p'^*q'^* \ar[d, "\sim"] \\
        (q\circ p)^*h_! \ar[rr]& & f_!(q'\circ p')^*
    \end{tikzcd}.\] Here we assume that all the functors involved above are defined.
\end{lemma}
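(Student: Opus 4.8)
\textbf{Proof plan for Lemma \ref{pushablebccomp}.}

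The plan is to reduce everything to the tautological compatibilities of the six-functor formalism, exactly as in the proof of \thref{!pushbcstar}. First I would record that the outer square is pushable: writing $\widetilde{B}=D\times_E B$, $\widetilde{C}=E\times_F C$, and $\widetilde{\widetilde{C}}=D\times_F C$, one uses that $A\to\widetilde{B}$ and $B\to\widetilde{C}$ are proper, that properness is stable under base change, and that a composite of proper maps is proper, to conclude $A\to\widetilde{\widetilde{C}}$ is proper; this is \cite[Lemma\,3.1.?]{FYZ3} but can be reproduced in two lines. So the base change natural transformation \eqref{pushablebc} exists for the outer square, and the two triangles in each displayed diagram are well-posed.

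Next I would unwind the definitions. The map $h^*(q\circ p)_!\to (q'\circ p')_!f^*$ is, by definition \eqref{pushablebc} applied to the outer rectangle, the composite
\[
h^*(q\circ p)_!\;\cong\;\widetilde{f}_{\mathrm{out},!}\,\widetilde{g}_{\mathrm{out}}^*\;\to\;\widetilde{f}_{\mathrm{out},!}\,a_{\mathrm{out},*}a_{\mathrm{out}}^*\widetilde{g}_{\mathrm{out}}^*\;\cong\;(q'\circ p')_!f^*,
\]
where $\widetilde{f}_{\mathrm{out}},\widetilde{g}_{\mathrm{out}}$ are the projections from $D\times_F C$ and $a_{\mathrm{out}}\colon A\to D\times_F C$. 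The clockwise composite through $q'_!g^*p_!$ is the concatenation of \eqref{pushablebc} for the right square (giving $h^*q_!\to g_!q'^*$) and for the left square (giving $g^*p_!\to f_!p'^*$, pulled/pushed appropriately), interleaved with the proper base change isomorphism $q'^*g_!\cong g'_! q''^*$ in the middle. The point is that every arrow in sight is either a unit $\mathrm{id}\to a_*a^*$, a counit $a^*a_*\to\mathrm{id}$, a proper base change iso $b^*g_!\cong f_!a^*$, or an associativity iso for $(-)_!$ or $(-)^*$; and all of these are part of the coherent data of the six-functor formalism. So the claimed commutativity follows by chasing a large diagram each of whose cells is one of: functoriality of a unit/counit, the triangle identity, compatibility of proper base change with composition (i.e. the pasting law for Beck–Chevalley squares, as in \cite[Proposition\,3.2.2]{HM6fun}), or naturality. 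I would write out this diagram with $A,\widetilde{B},\widetilde{C},\widetilde{\widetilde{C}}$ and the intermediate fiber products $B\times_F C$ etc.\ as vertices — it is the same bookkeeping as in \thref{!pushbcstar} but with one more layer — and observe that each region commutes for one of the reasons just listed. The second displayed diagram (the $*$-pull version) is proved \emph{mutatis mutandis}, using the dual base change $p^*q^*h_!\to f_!(q'\circ p')^*$ built from the counits instead; or, more slickly, it is literally the same statement applied after noting that $(-)^*$ of a composite and the $!$-pushforward base change for the transposed squares assemble the same way.

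The main obstacle is purely organizational: there is no hard mathematics, but one must be careful that the associativity isomorphisms $(q'\circ p')_!\cong q'_!p'_!$ and $(q\circ p)^*\cong p^*q^*$ are inserted in the right spots so that the two triangles in each diagram genuinely fit together, and that the middle proper base change isomorphism (the one attached to the square $B\times_F C$) is the correct one — i.e.\ that the factorization $A\to B\times_F C\to \widetilde{\widetilde C}$ of $a_{\mathrm{out}}$ through the middle fiber product is the one induced by the two given factorizations. Once the diagram is drawn with all fiber products made explicit, commutativity of each cell is immediate from \cite[Proposition\,3.2.2]{HM6fun}, so I would not belabor it; the statement is "well-known" precisely because it is a formal consequence of the axioms, and the proof is a one-line appeal to those axioms plus the observation that pushability is stable under the relevant compositions.
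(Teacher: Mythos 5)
Your proposal is correct and is essentially the paper's argument: the paper simply cites \cite[Propositions\,3.2.3\,\&\,3.2.4]{FYZ3} for the Artin-stack case and notes that the proof carries over to general stacks using the base-change compatibilities of the six-functor formalism (\thref{!pushbcstar}), which is precisely the diagram chase you outline. Your observation that pushability of the outer square follows from stability of properness under base change and composition, and your care about where the associativity and middle proper-base-change isomorphisms are inserted, match the content of the cited proof.
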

\begin{proof}
    When all the stacks involved are Artin stacks, this is \cite[Proposition\,3.2.3\&Proposition\,3.2.4]{FYZ3}. The proof in \textit{loc.cit} carries to general case given \thref{!pushbcstar}.
\end{proof}

\subsubsection{Definition}
Consider a correspondence between stacks
\[
\begin{tikzcd}
    &C\ar[dl, "g" ']\ar[dr, "f"]&\\
    Y&&X
\end{tikzcd}
\]
where $g$ is of finite presentation.
\begin{defn}
    For any $\cF\in \Shv(X)$, $\cG\in \Shv(Y)$ and $\cK\in\Shv(C)$, a \emph{cohomological correspondence (of degree $d$) between $\cF$ and $\cG$ (supported on $C$ with kernel $\cK$)} is an element \[c\in \Cor_{C,\cK}(\cF,\cG\langle d\rangle):=\Hom^0(g_!(f^*\cF\otimes\cK),\cG\langle d\rangle)\isom \Hom^0(f^*\cF\otimes \cK, g^!\cG\langle d\rangle) .\]
\end{defn}

\begin{remark}
    In our application, we also need the case where $C$ carries a cotruncated presentation (\thref{cotruncateddef}) such that $g$ is ind-fp and $\cK\in\Shv_*(C)$. All results in \autoref{cc} can be generalized to this case without difficulty.
\end{remark}

\begin{example}[Tautological correspondence]\thlabel{tautcor}
Consider the correspondence of stacks
\[\begin{tikzcd}
    &Y\ar[dl, "\id"']\ar[dr, "f"]& \\
    Y&&X
\end{tikzcd}.\]
Given a sheaf $\cF\in\Shv(X)$, we have a tautological element \[\taut_f\in\Cor_{Y,\uk_Y}(\cF,f^*\cF).\]
When $f$ has the form of a projection $f: Y=X\times Z\to X$, we also write $\taut_Z:=\taut_f$.
\end{example}

\begin{example}[Fundamental class]\thlabel{fundcor}
Consider correspondence
\[\begin{tikzcd}
    &X\ar[dl, "g"']\ar[dr, "\id"]& \\
    Y&&X
\end{tikzcd}\] where $g$ is smooth of dimension $d_g$.
Given a sheaf $\cG\in\Shv_*(Y)$, we have a element \[[g]\in \Cor_{X,\uk_X}(g^*\cG,\cG\langle -2d_g\rangle )\] given by the relative fundamental class $[g]:g^*\cF\to g^!\cF\langle -2d_g\rangle $ in \thref{purity}. When $g$ has the form of a projection $g:Y\times Z\to Y$, we also write $[Z]:=[g]$.
\end{example}

\subsubsection{Functoriality}\label{ccfunc}
We now introduce the pull-back and push-forward of cohomological correspondence. Consider a commutative diagram of correspondence between stacks (a map of correspondences) in which $g$ and $\tilg$ are of finite presentation:
\begin{equation}\label{functdiag}
\begin{tikzcd}
    \tilY\ar[d, "q_Y" '] &\tilC\ar[l,"\tilg" ']\ar[r,"\tilf"]\ar[d,"q"] & \tilX\ar[d, "q_X"] \\
    Y&C\ar[l, "g" ']\ar[r, "f"]&X
\end{tikzcd}.
\end{equation}
Moreover, assume we are given $\cK\in\Shv(C)$ and $\tilK\in\Shv(\tilC)$ equipped with a morphism \begin{equation}\label{cctrans}\a:q^*\cK\to\tilK.\end{equation}

We first introduce pull-back functoriality as defined in \cite[Section 4.4]{FYZ3}. While the definition in \cite{FYZ3} applies whenever the left square is \emph{pullable}, we only need the case that the special case that the left square is Cartesian.\footnote{While the pull-back functoriality along pullable maps of correspondences in \cite[Section 4.4]{FYZ3} is subtle, the Cartesian pull-back functoriality is trivial.}

\begin{defn}\thlabel{ccpull}
    When the left-square in \eqref{functdiag} is Cartesian and the map $\a$ in \eqref{cctrans} is an isomorphism, we say that $q$ is \emph{Cartesian pullable}. In this case, for each cohomological correspondence \[c\in\Cor_{C,\cK}(\cF,\cG\langle d\rangle )\] where $\cF\in \Shv(X), \cG\in \Shv(Y)$, we define its pull-back along $q$ to be the element \[q^*(c)\in\Cor_{\tilC,q^*\cK}(q_X^*\cF,q_Y^*\cG\langle d\rangle)\] as the image of $c$ along the map \[\begin{split}
\Cor_{C,\cK}(\cF,\cG\langle d\rangle )&= \Hom^0(g_!(f^*\cF\otimes\cK),\cG\langle d\rangle)\\
&\to \Hom^0(q_Y^*g_!(f^*\cF\otimes\cK),q_Y^*\cG\langle d\rangle)\\
&\isom\Hom^0(\tilg_!q^*(f^*\cF\otimes \cK),q_Y^*\cG\langle d\rangle)\\
&\isom \Hom^0(\tilg_!(\tilf^*q_X^*\cF\otimes q^*\cK),q_Y^*\cG\langle d\rangle)\\
&=\Cor_{\tilC,q^*\cK}(q_X^*\cF,q_Y^*\cG\langle d\rangle)
\end{split}.\]
\end{defn}

Now we introduce push-forward functoriality as defined in \cite[Section 4.3]{FYZ3}.

\begin{defn}\thlabel{ccpush}
    When the right square in \eqref{functdiag} is pushable in the sense of  \autoref{pushabledef} and the vertical maps are fp, we say that $q$ is pushable. In this case, we define the \emph{push-forward of cohomological correspondence} along $q$ as follows: For each cohomological correspondence \[c\in\Cor_{\tilC,\tilK}(\widetilde{\cF},\widetilde{\cG}\langle d \rangle)\] where $\widetilde{\cF}\in\Shv(\tilX)$ and $\widetilde{\cG}\in\Shv(\tilY)$, we define its push-forward along $q$ to be the element \[q_!(c)\in\Cor_{C,\cK}(q_{X,!}\widetilde{\cF},q_{Y,!}\widetilde{\cG}\langle d \rangle)\] defined as the image of $c$ along the map \[\begin{split}
    \Cor_{\tilC,\tilK}(\widetilde{\cF},\widetilde{\cG}\langle d\rangle)&=\Hom^0(\tilg_!(\tilf^*\widetilde{\cF}\otimes \tilK),\widetilde{\cG}\langle d\rangle) \\
    &\to \Hom^0(q_{Y,!}\tilg_!(\tilf^*\widetilde{\cF}\otimes q^*\cK),q_{Y,!}\widetilde{\cG}\langle d\rangle)\\
    &\isom \Hom^0(g_!q_!(\tilf^*\widetilde{\cF}\otimes q^*\cK),q_{Y,!}\widetilde{\cG}\langle d\rangle)\\
    &\isom \Hom^0(g_!(q_!\tilf^*\widetilde{\cF}\otimes \cK),q_{Y,!}\widetilde{\cG}\langle d\rangle)\\
    &\to \Hom^0(g_!(f^*q_{X,!}\widetilde{\cF}\otimes \cK),q_{Y,!}\widetilde{\cG}\langle d\rangle)\\
    &=\Cor_{C,\cK}(q_{X,!}\widetilde{\cF},q_{Y,!}\widetilde{\cG}\langle d \rangle) 
    \end{split}\]
\end{defn} where the fifth map uses the base change for pushable square \eqref{pushablebc}.

\subsubsection{Composition}\label{cccomp}
Consider the following diagram of correspondences of stacks with leftwards maps fp
\begin{equation}
    \begin{tikzcd}
    && E \ar[dr, "h'"] \ar[dl, "g'"'] && \\
         &D \ar[dr, "h"] \ar[dl, "i"'] && C \ar[dr, "f"] \ar[dl, "g"']& \\
        Z&& Y && X
    \end{tikzcd} 
\end{equation}
where the square is Cartesian. Given $\cK_C\in\Shv(C)$, $\cK_D\in\Shv(D)$, $\cF\in\Shv(X)$, $\cG\in\Shv(Y)$, $\cH\in\Shv(Z)$. Define $\cK_E:=g'^*\cK_D\otimes h'^*\cK_C$. For every $d_1,d_2\in\ZZ$, there is an obviously defined map 
\begin{equation}
    \Cor_{D,\cK_D}(\cG,\cH\langle d_2\rangle)\otimes\Cor_{C,\cK_C}(\cF,\cG\langle d_1\rangle) \to \Cor_{E,\cK_E}(\cF,\cH\langle d_1+d_2\rangle)
\end{equation} which would be denoted
\begin{equation}
    c_2\otimes c_1\mapsto c_2\circ c_1.
\end{equation} This is what we call the composition of cohomological correspondences.

\subsubsection{Compatibilities}\label{cccompatibilities}
In this section, we state all the compatibilities between the three operations on cohomological correspondences: push-forward \thref{ccpush}, pull-back \thref{ccpull}, and composition in \autoref{cccomp}. There is a compatibility for each pair of the three operations above, which gives us six different compatibilities. All of them except the base change compatibility between push and pull are easy to state and prove given \thref{pushablebccomp}. We only state the base change compatibility and leave the routine proof to the reader.

\begin{prop}\thlabel{ccbc}
Suppose that we have a commutative square of correspondences of stacks
\[\begin{tikzcd}
	& {Y_4} && {C_4} && {X_4} \\
	{Y_2} && {C_2} && {X_2} \\
	& {Y_3} && {C_3} && {X_3} \\
	{Y_1} && {C_1} && {X_1}
	\arrow["{p_{Y,42}}"', from=1-2, to=2-1]
	\arrow["{p_{Y,43}}"{pos=0.2}, dashed, from=1-2, to=3-2]
	\arrow["{g_4}"', from=1-4, to=1-2]
	\arrow["{f_4}", from=1-4, to=1-6]
	\arrow["{p_{C,42}}"'{pos=0.7}, from=1-4, to=2-3]
	\arrow["{p_{C,43}}"{pos=0.2}, dashed, from=1-4, to=3-4]
	\arrow["{p_{X,42}}"', from=1-6, to=2-5]
	\arrow["{p_{X,43}}", from=1-6, to=3-6]
	\arrow["{p_{Y,21}}"', from=2-1, to=4-1]
	\arrow["{g_2}"{pos=0.8}, from=2-3, to=2-1]
	\arrow["{f_2}"'{pos=0.2}, from=2-3, to=2-5]
	\arrow["{p_{C,21}}"{pos=0.2}, from=2-3, to=4-3]
	\arrow["{p_{X,21}}"{pos=0.2}, from=2-5, to=4-5]
	\arrow["{p_{Y,31}}"{pos=0}, dashed, from=3-2, to=4-1]
	\arrow["{g_3}"'{pos=0.8}, dashed, from=3-4, to=3-2]
	\arrow["{f_3}"{pos=0.2}, dashed, from=3-4, to=3-6]
	\arrow["{p_{C,31}}"{pos=0.1}, dashed, from=3-4, to=4-3]
	\arrow["{p_{X,31}}", from=3-6, to=4-5]
	\arrow["{g_1}", from=4-3, to=4-1]
	\arrow["{f_1}"', from=4-3, to=4-5]
\end{tikzcd}\]
such that all three left and right faces of the cubes are Cartesian squares and $g_i$ are of finite presentation for all $i$. Given any 
\begin{itemize}
    \item $\cF_3\in\Shv(X_3)$, $\cG_3\in\Shv(Y_3)$, $\cK_1\in\Shv(C_1)$ $\cK_3\in\Shv(C_3)$,
    \item $\a_{31}:p_{C,31}^* \cK_1\to \cK_3$,
    \item integer $d$,
\end{itemize} 
we set 
\begin{itemize}
    \item $\cF_1=p_{X,31,!}\cF_3$, $\cF_4=p_{X,43}^*\cF_3$, $\cF_2=p_{X,21}^*\cF_1\cong p_{X,42,!}\cF_4$ and similarly for $\cG_i$;
    \item $\cK_2=p_{C,21}^*\cK_1$ and $\cK_4=p_{C,43}^*\cK_3$;
    \item $\a_{42}=p_{C,43}^*\a_{31}:p_{C,42}^*\cK_{2}\to \cK_4$.
\end{itemize}
If $p_{C,31}$ and $p_{C,42}$ are pushable as in \thref{ccpush}, and $p_{C,21}$ and $p_{C,43}$ are Cartesian pullable as in \thref{ccpull}, then we have an identity 
\[p_{C,21}^*\circ p_{C,31,!}=p_{C,42,!}\circ p_{C,43}^*:\Cor_{C_3,\cK_3}(\cF_3,\cG_3\langle d\rangle)\to \Cor_{C_2,\cK_2}(\cF_2,\cG_2\langle d\rangle).\]
\end{prop}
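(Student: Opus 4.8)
The statement is a compatibility between the push-pull operations on cohomological correspondences along a cube of correspondence diagrams, all of whose ``fibre'' squares (the left and right faces relating the $X$, $Y$, and $C$ columns) are Cartesian. The plan is to unwind both sides into honest maps of $\Hom^0$-groups and check that the resulting diagram of functors and base-change natural transformations commutes, reducing everything to the naturality and coherence already packaged in the six-functor formalism and in \thref{pushablebccomp} and \thref{!pushbcstar}.

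First I would fix notation: write out $p_{C,31,!}$ (push-forward of correspondences along the pushable map $p_{C,31}$, \thref{ccpush}) and $p_{C,21}^*$ (Cartesian pull-back, \thref{ccpull}) as explicit composites of the five-step and three-step zig-zags in their definitions, applied to a correspondence $c\in\Cor_{C_3,\cK_3}(\cF_3,\cG_3\langle d\rangle)$; do the same for $p_{C,43}^*$ followed by $p_{C,42,!}$. Since the outer faces are Cartesian, all the ``pushability'' auxiliary stacks $\widetilde{B}$ appearing in \autoref{pushabledef} can be identified compatibly on both routes (here one uses that $p_{C,31}$ and $p_{C,42}$ are both pushable for the fixed correspondences $f_i$, $g_i$, and that Cartesianness of the side faces makes the two fibre products $C_2\times_{C_1}C_1\times_{C_3}$-type stacks agree with $C_4$). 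Then the identity $p_{C,21}^*\circ p_{C,31,!}=p_{C,42,!}\circ p_{C,43}^*$ becomes a statement that a certain hexagon (or larger polygon) of base-change $2$-cells built from $(\,)_!$, $(\,)^*$, $\otimes$ and the projection formula commutes.

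The key steps, in order, are: (i) reduce to the case $\cK_1=\uk_{C_1}$, $\cK_3=\uk_{C_3}$ and trivial $\a_{31}$ by tensoring — the kernels and the map $\a$ enter only through $\otimes$ and pull-back, which commute with everything by the projection formula, so the general case follows formally once the kernel-free case is done; (ii) peel off the outer columns: the $f_i$- and $g_i$-legs interact with the vertical maps $p_{X,\bullet}$, $p_{Y,\bullet}$ only through proper base change along Cartesian squares (\thref{!pushbcstar} and its analogue for $f^*$ vs.\ $q_!$), so the verification localizes to the middle column $C_\bullet$; (iii) in the middle column, express the push-pull commutation as the compatibility of the Beck--Chevalley $2$-cell $p_{C,21}^* (p_{C,31})_! \Rightarrow (p_{C,42})_! p_{C,43}^*$ for the Cartesian square of the $C_i$'s with the pushable-square base-change maps $g_i^*(f_i)_! \Rightarrow$ (etc.), which is exactly the kind of ``pasting of base-change squares'' handled by \thref{pushablebccomp}; (iv) assemble these into the final equality of maps on $\Hom^0$.

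The main obstacle I expect is step (iii): making the diagram chase genuinely rigorous requires keeping track of the coherence isomorphisms among $(\,)_!$, $(\,)^*$, and $\otimes$ (projection formula, proper base change, composition of pushforwards) as $2$-cells, not just on $\pi_0$ — i.e.\ one must check that the two ways of building the total base-change transformation around the cube are \emph{homotopic}, not merely equal after applying $H^0$. Fortunately, because we only ever assert an equality of elements of $\Hom^0$ (a set), it suffices to check commutativity of the underlying diagram of functors up to isomorphism and then that the two induced maps on $\pi_0\Hom$ agree; this is the pattern already used in the proof of \thref{!pushbcstar}, and the same ``tautological commutativity of nested adjunction triangles'' argument applies here. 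So the honest work is a (long but routine) diagram chase, organized exactly as in \cite[\S3.2]{FYZ3}, and I would present it by drawing the $2$-dimensional array of base-change squares and invoking \thref{pushablebccomp} square-by-square rather than writing every $2$-cell explicitly.
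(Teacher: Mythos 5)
Your proposal is correct and follows essentially the same route as the paper: the paper's own proof consists of the single remark that the statement is a routine unwinding of the definitions of \thref{ccpull} and \thref{ccpush}, with the only non-obvious compatibility supplied by \thref{pushablebccomp} (together with \thref{!pushbcstar}-type naturality), which is exactly the content of your steps (ii)--(iv). Your additional reduction to trivial kernels in step (i) is harmless but unnecessary, since the kernels and the map $\a_{31}$ simply ride along through the naturality of the base-change transformations.
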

\begin{proof}
    The only slightly non-obvious part is provided by \thref{pushablebccomp}.
\end{proof}

\subsection{Geometric setup}\label{geometricsetup}
We now introduce the geometric objects involved in this article. We remind the reader again that all the stacks in this section will be non-derived. Experienced readers can skip this part and come back for notations (see \autoref{modulinotations} for a summary of notations).

\begin{warning}\thlabel{warninghalfspin}
To simplify the notations, we do not include the half-spin twist as in \cite[\S10]{BZSV} in the definition of various moduli spaces. This makes all our constructions different from the normalization in \textit{loc.cit} unless the $\Ggr$-action on $X$ is trivial. One can easily adapt these normalizations.
\end{warning}

\subsubsection{Moduli spaces}\label{modulispaces}
Throughout this section, we fix an affine smooth $G$-variety $X=\Spec\!A$ together with a closed embedding $X\sub V$ where $V$ is a finite-dimensional representation of $G$ regarded as a $G$-variety. 

We start from the local theory. We use $L^+X\in\Sch_{\qcqs}$ to denote the jet space of $X$ which is defined as \[L^+X:\Aff^{\op}\to\Spc\]\[L^+X(R)=X(R[[t]]).\] We use $LX \in\Stack$ to denote the loop space defined as \[LX:\Aff^{\op}\to\Spc\] \[LX(R)=X(R(\!(t)\!)).\] The loop space $LX$ carries a natural cotruncated presentation $LX=\varinjlim_n L^{\geq-n}X$ where the closed subfunctor $L^{\geq-n}X\sub LX$ is defined by requiring the images of degree one generators of $\cO(V)$ under the map $\cO(V)\to A\to R(\!(t)\!)$ factor through $t^{-n}R[[t]]\sub R(\!(t)\!)$.

When we take $X=G$, the construction above gives us the jet group scheme $L^+G$ and loop group $LG$.

We use $\Aut(D)\in\Sch$ to denote the group scheme of automorphisms of the formal disk $D=\Spec\!\F_q[[t]]$ preserving the center of the disc, i.e., those automorphisms which are $\id$ mod $t$. We have $\Aut(D)=\Aut(D)^{\mathrm{u}}\rtimes \Gm$, where $\Aut(D)^{\mathrm{u}}$ is the pro-unipotent radical of $\Aut(D)$. There is a natural action of $\Aut(D)$ on $L^+X$ and $LX$.

Throughout this section, we fix a geometrically connected smooth projective curve $C\in\Sch$. There is a natural map of stacks $\Coor: C\to B\!\Aut(D)$ given by the canonical \'etale $\Aut(D)$-torsor of local coordinates of $C$.

Now we come to the semi-local version of the constructions above. Following \cite{KV} (See also \cite[\S 2]{cpsii}), we now define the jet and loop spaces over the curve $C$. For each finite set $I$, we denote the (multi)jet scheme of $X$ over $C^I$ by $(L^+X)_{C^I}\in\Sch$ whose $S$-points for $S\in\Aff$ are given by \[(L^+X)_{C^I}(S)=\{c_I\in C^I(S) ~\textup{and}~ D_{c_I}\to X\},\] where $D_{c_I}$ is the formal completion of $C\times S$ along the graph  $\Gamma_{c_I}$ of $c_I$. We use $l_{\sloc,I}:(L^+X)_{C^I}\to C^I$ to denote the map remembering the legs.

We denote the (multi)loop space of $X$ over $C^I$ by $(LX)_{C^I}\in\Stack$ whose $S$-points for $S\in\Aff$ are given by \[(LX)_{C^I}(S)=\{c_I\in C^I(S) ~\textup{and}~ \Dc_{c_I}\to X\},\] where $\Dc_{c_I}$ is the punctured disc along the graph $\Gamma_{c_I}$ of $c_I$ defined in \cite[\S 2.6]{cpsii}. We use $l_{\sloc,I}:(LX)_{C^I}\to C^I$ to denote the map remembering the legs.

For each subset $J\sub I$ we also consider the closed substack $(LX)_{C^{J\sub I}}\sub (LX)_{C^I}$ defined by \[(LX)_{C^{J\sub I}}(S)=\{c_I\in C^I(S) ~\textup{and}~ D_{c_I}\bs\Gamma_{c_J}\to X\}.\]

Finally, we define moduli spaces involved in global theory. We use $\Bun_G\in\Stack$ to denote the moduli of $G$-bundles on the curve $C$ and $\Bun_G^X\in\Stack$ to denote the moduli of $G$-bundles on $C$ together with a regular $X$-section. More specifically, the $S$-points of $\Bun_G^X$ for $S\in\Aff$ is given by \[\Bun_G^X(S)=\{p:\cE\to C\times S~\textup{an \'etale $G$-torsor on $C\times S$ and}~s:C\times S\to X\times^G\cE~\textup{a section of $p$}\} .\]

For each finite set $I$, we use $\Bun_{G, I}^X\in\Stack$ to denote the moduli of $G$-bundles with a rational $X$-section which is regular away from legs indexed by $I$. Precisely, for each $S\in\Aff$, we define the $S$-points of $\Bun_{G, I}^X$ to be \[\Bun_{G,I}^X(S)=\{(c_I,\cE,s)|c_I\in C^I(S), \cE\in\Bun_G(S), s ~\textup{is a section of $X\times^{G}\cE$ over $C\times S\bs \Gamma_{c_I}$} \}.\]

Finally, for each subset $J\sub I$, we also have $\Bun^X_{G,J\sub I}$ defined via the fiber product \[\begin{tikzcd}
    \Bun^X_{G,J\sub I} \ar[r] \ar[d] & \Bun_{G,I}^X \ar[d] \\
    (L^+X/L^+G)_{C^{I-J}} \ar[r] & (LX/L^+G)_{C^{I-J}}\rcart
\end{tikzcd}.\]

Now we introduce various versions of Hecke stacks. We define the \emph{local Hecke stack} to be the \'etale stack quotient $\hk^{\loc}_G= (L^+G\bs LG/L^+G)/
\Aut(D)\in\Stack$, which is also naturally a groupoid acting on $B(L^+G\rtimes\Aut(D))$ with the target and source morphisms denoted by $\rh_{\loc}$ and $\lh_{\loc}$. We use $\hk_G^{\Xp, \loc}\in\Stack$\footnote{We use $\Xp$ rather than $X$ here to emphasize that the $X$-section is \emph{not} required to be regular for either $G$-torsors over the disc.} to denote the fiber product \[\begin{tikzcd}
    \hk_G^{\Xp,\loc} \ar[r]\ar[d] & LX/(L^+G\rtimes\Aut(D))\ar[d] \\
    \hk_G^{\loc} \ar[r, "\rh_{\loc}"] & B(L^+G\rtimes\Aut(D)) \rcart
\end{tikzcd}.\]

More explicitly, the stack $\hk_G^{\Xp,\loc}$ is involved in the following correspondence:
\begin{equation}\label{localcordiag}
    \begin{tikzcd}
    &\hk_G^{\Xp,\loc}\ar[d, "\sim"]&\\
        &LX\times^{L^+G\rtimes\Aut(D)}LG\rtimes\Aut(D)/L^+G\rtimes\Aut(D)\ar[dl, "\lh^X_{\loc}"]\ar[dr,"\rh^X_{\loc}"']& \\
        LX/L^+G\rtimes\Aut(D)&&LX/L^+G\rtimes\Aut(D)
    \end{tikzcd}
\end{equation}
where $\rh^X_{\loc}(x,g)=x$ and $\lh^X_{\loc}(x,g)=xg$ for $x\in LX$ and $g\in LG\rtimes\Aut(D)$.

For the finite set $I=[r]$, we define the local iterated Hecke stack to be \[\thk^{\loc}_{G,I}:=\hk^{\loc}_G\times_{B(L^+G\rtimes\Aut(D))}\cdots\times_{B(L^+G\rtimes\Aut(D))} \hk^{\loc}_G.\] Similarly, we have the version with $X$-section given by the fiber product \[\begin{tikzcd}
    \thk_{G,I}^{\Xp,\loc} \ar[r]\ar[d] & LX/(L^+G\rtimes\Aut(D))\ar[d] \\
    \thk_{G,I}^{\loc} \ar[r, "\rh_{\loc}"] & B(L^+G\rtimes\Aut(D))\rcart
\end{tikzcd}.\]

For each finite set $I$, we define the \emph{semi-local Hecke stack} over $C^I$ to be $\hk^{\sloc}_{G,I}\in\Stack$ with $S$-points given by \[\hk^{\sloc}_{G,I}(S)=\{(c_I,\cE_0,\cE_1,\phi)|c_I\in C^I(S), \cE_0,\cE_1~\textup{are $G$-bundles on $D_{c_I}$},\phi:\cE_0|_{\Dc_{c_I}}\isom\cE_1|_{\Dc_{c_I}} \}.\] We denote the two maps remembering the $G$-bundles by $\rh_{I,\sloc}$ and $\lh_{I,\sloc}$.

For each subset $J\sub I$, we define the \emph{$J$-th partial (semi-local) Hecke stack} over $C^I$ to be $\hk^{\sloc}_{G,J\sub I}\in\Stack$ with $S$-points given by \[\hk^{\sloc}_{G,J\sub I}(S)=\{(c_I,\cE_0,\cE_1,\phi)\in \hk^{\sloc}_{G,I}|c_I\in C^I(S), \cE_0,\cE_1~\textup{are $G$-bundles on $D_{c_I}$},\phi:\cE_0|_{D_{c_{I}}\bs\Gamma_{c_J}}\isom\cE_1|_{D_{c_{I}}\bs\Gamma_{c_J}} \}.\]  We denote the two maps remembering the $G$-bundles by $\rh_{J\sub I,\sloc}$ and $\lh_{J\sub I,\sloc}$. For each element $x\in I$, we denote $\hk^{\sloc}_{G,x\in I}:=\hk^{\sloc}_{G,\{x\}\subset I}$.

Moreover, given a partition $I=J_1\sqcup\cdots \sqcup J_n$ (which will be denoted by $J_{\bullet}$ later), we have the \emph{iterated semi-local Hecke stack} $\thk^{\sloc}_{G,J_\bullet}\in\Stack$ over $C^I$ defined as\[\thk^{\sloc}_{G,J_{\bullet}}:=\hk^{\sloc}_{G,J_1\sub I}\times_{B(L^+G)_{C^I}}\cdots\times_{B(L^+G)_{C^I}}\hk^{\sloc}_{G,J_n\sub I}.\] We denote the two maps remembering the (right-most and left-most) $G$-bundles by $\rh_{J_{\bullet},\sloc}$ and $\lh_{J_{\bullet},\sloc}$.

Similar to the local setting, we can define $\hk^{\Xp,\sloc}_{G,I},\hk^{\Xp,\sloc}_{G,J\sub I},\thk^{\Xp,\sloc}_{G,J_{\bullet}}$ by adding a $X$-section over the punctured disc $D_{c_I}$ to the original moduli problem of the Hecke stack. We also consider the closed substack $\hk^{\Xp_{J\sub I},\sloc}_{G,J\sub I}\sub \hk^{\Xp,\sloc}_{G,J\sub I}$ by requiring that the $X$-section is regular on $D_{c_I}\bs\Gamma_{c_J}$ throughout the modification.

We also have three versions of global Hecke stacks $\hk^{\glob}_{G,I},\hk^{\glob}_{G,J\sub I},\thk^{\Xp,\glob}_{G,J_{\bullet}}$ and the version with rational $X$-section $\hk^{\Xp,\glob}_{G,I},\hk^{\Xp,\glob}_{G,J\sub I},\thk^{\Xp,\glob}_{G,J_{\bullet}}, \hk^{\Xp_{J\sub I},\glob}_{G,J\sub I}$.

\subsubsection{Sheaves and categories}\label{shvandcat}
Consider \[\Sat_{G}:=\Shv_*(L^+G\bs LG/L^+G)\] and \[\Sat_{G,\hbar}:=\Shv_*(\hk^{\loc}_G)\] where the cotruncated presentation of $L^+G\bs LG/L^+G$ is given as usual. These categories are the usual (monoidal) Satake categories without and with loop rotation. We also define $\Sat_G^{\nai}:=D\Sat_G^{\he}$ (with respect to perverse $t$-structure). Then the classical geometric Satake equivalence proved in \cite{MV} gives an equivalence of symmetric monoidal categories $\Sat_G^{\nai}\isom\Rep(\Gc)$, and the derived Satake equivalence proved in \cite{BF} gives us equivalence of monoidal categories $\Sat_{G}^{\ren}\isom\QCoh(\frgc^{*\shear}/\Gc)$\footnote{Here by $\Sat_{G}^{\ren}$ we mean the renormalization such that constructible sheaves become compact.} and $\Sat_{G,\hbar}^{\ren}\isom \Mod^{\Gc}(U_{\hbar}(\frgc^{\shear}))$, where the shearing $\frgc^{*\shear}=\frgc^*[2]$ is defined in \autoref{shear} in which the $\Ggr$-action on $\frgc$ has weight $-2$. Note that there is a natural equivalence of monoidal categories $D\Sat_{G,\hbar}^{\he}\isom D\Sat_G^{\he}=\Sat_G^{\nai}$. This allows us to view $\Rep(\Gc)\isom\Sat_G^{\nai}$ as full subcategories of both $\Sat_G$ and $\Sat_{G,\hbar}$. For each $V\in\Rep(\Gc)^{\he,\om}$, we write $\IC_V$ to be the corresponding element in $\Sat_G$ (or $\Sat_{G,\hbar}$). By our convention \autoref{icsheaf}, the sheaf $\IC_V$ has parity congruence to $\langle 2\r,\l_V\rangle$ where $\l_V\in X_*(T)^+$ is the highest weight of $V$.

Note that there are natural maps from various Hecke stacks defined in \autoref{modulispaces} to $\hk^{\loc}_G$ (e.g. $\pi:\hk^{\Xp,\loc}_G\to \hk^{\loc}_G$, $f_{x,\sltol}^{\hk}:\hk^{\sloc}_{G, I}\to\hk^{\loc}_G$, etc). For each $\cK\in\Sat_{G,\hbar}$, we still use $\cK$ or $\cK_i$ to denote the pull-back of $\cK$ to the corresponding Hecke stack. In particular, for each $V\in\Rep(\Gc)^{\he,\om}$, we get sheaves $\IC_V$ ($\IC_{V,x}$ for $x\in I$) on various Hecke stacks. Moreover, for $V^I\in\Rep(\Gc^I)^{\he,\om}$, we get sheaves $\IC_{V^I}$ on various Hecke stacks via pull-back and convolution.

Define $\Sat_{X}:=\Shv_*(LX/L^+G)$\footnote{The category $\Sat_X$ is the same as the large category of safe $*$-sheaves $\mathrm{SHV}_{s}^*(X_F/G_{\cO})$ defined in \cite[\S 7.2.3, \S B.7]{BZSV}, see \thref{COMPAREshvsafebzsv}.} and $\Sat_{X,\hbar}:=\Shv_*(LX/L^+G\rtimes \Aut(D))$. The category $\Sat_X$ is naturally a module over $\Sat_G$ and the category $\Sat_{X,\hbar}$ is natural a module over $\Sat_{G,\hbar}$. Explicitly, the module structure is given by the following: For each $\cF\in\Sat_{X,\hbar}$ and $\cK\in\Sat_{G,\hbar}$ we define \[\cK*\cF:=\lh^X_{\loc,!}(\rh_{\loc}^{X,*}\cF\otimes\cK)\in\Sat_{X,\hbar}\] where the maps $\lh^X_{\loc}$, $\rh^X_{\loc}$ are defined in \eqref{localcordiag}. Note that for each leg $x\in I$, similar constructions define actions of $\Sat_{G,\hbar}$ on \[\sC=\Shv(\Bun_G\times C^I),\Shv(\Bun_{G,I}^X),\Shv((LX/L^+G)_{C^I}),\] and for $\cF\in\sC$ and $\cK\in\Sat_{G,\hbar}$, we denote the action at leg $x$ by \begin{equation}\label{boxstar}\cK\boxstar_x \cF\in\sC.\end{equation}

For $V^I\in\Rep(\Gc)$, we define $T_{V^I}:=\lh_{I,\glob,!}(\rh_{I,\glob}^*(-)\otimes\IC_{V^I}):\Shv(\Bun_G\times C^I)\to\Shv(\Bun_G\times C^I)$. These are the Hecke operators considered in \cite{arinkin2022stacklocalsystemsrestricted}. By our convention, we have $\IC_{V}\boxstar-\cong T_{c^*V}$ where $c:\Gc\to \Gc$ is the Cartan involution.

\subsubsection{Unit object and the Plancherel algebra}\label{unitandplan}
The natural map $i_{\loc}:L^+X/L^+G\rtimes\Aut(D)\to LX/L^+G\rtimes\Aut(D)$ defines an object \[\d^X_{\loc}:=i_{\loc,*} \uk\in\Sat_{X,\hbar}.\] Consider similar natural maps $i_{I,\sloc}:(L^+X/L^+G)_{C^I}\to (LX/L^+G)_{C^I}$ and $i_{I,\glob}:\Bun_G^X\times C^I\to\Bun^X_{G,I}$, we can define \[\d^X_{I,\sloc}:=i_{I,\sloc,*}\uk\in\Shv((LX/L^+G)_{C^I})\]
\[\d^X_{J\sub I,\sloc}:=i_{J\sub I,\sloc,*}\uk\in\Shv((LX/L^+G)_{C^{J\sub I}})\]
and \[\d^X_{I,\glob}:=i_{I,\glob,*}\uk\in\Shv(\Bun^X_{G,I})\]
\[\d^X_{J\sub I,\glob}:=i_{J\sub I,\glob,*}\uk\in\Shv(\Bun^X_{G,J\sub I}).\]

Regarding $\Sat_{X,\hbar}\in\Mod(\Sat_{G,\hbar})$, from the object $\d^X_{\loc}$ we can construct its inner endomorphism algebra object \begin{equation}\label{noncommpl} \PL_{X,\hbar}:=\iHom_{\Sat_{G,\hbar}}(\d^X_{\loc},\d^X_{\loc})\in\Sat_{G,\hbar}\end{equation} which is characterized by the property that for each object $\cK\in \Sat_{G,\hbar}$ there exists a functorial isomorphism \[\Hom_{\Sat_{G,\hbar}}(\cK,\iHom_{\Sat_{X,\hbar}}(\d^X_{\loc},\d^X_{\loc}))=\Hom_{\Sat_{G,\hbar}}(\cK*\d^X_{\loc},\d^X_{\loc}).\] This is the (non-commutative) Plancherel algebra introduced in \cite[\S 8.5]{BZSV}. Similarly, we can regard $\d^X_{\loc}\in\Sat_X\in\Mod(\Sat_G)$ and define the (commutative) Plancherel algebra \[\PL_X:=\iHom_{\Sat_{G}}(\d^X_{\loc},\d^X_{\loc})\in\Sat_{G}.\]

By replacing the monoidal categories $\Sat_{G,\hbar}$ and $\Sat_G$ by their common monoidal subcategory $\Sat_G^{\nai}\isom \Rep(\Gc)$, we can regard both Plancherel algebras as objects $\PL_{X,\hbar},\PL_{X}\in\Rep(\Gc)$ by applying the right adjoint of the natural inclusion $\Sat_G^{\nai}\to\Sat_{G,\hbar}$ (and $\Sat_G^{\nai}\to\Sat_G$). Note that $\PL_{X,\hbar}$ is naturally equipped with a map from the algebra (with trivial $\Gc$-action) $\Gamma(B\Aut(D),\uk)\isom \Gamma(B\Gm,\uk)\isom k[\hbar]$ where the last isomorphism is normalized in  \autoref{cohbg}.

The commutative and non-commutative Plancherel algebras are related by $\PL_X\isom\PL_{X,\hbar}\otimes_{k[\hbar]}k.$ We have the following conjecture from \cite{BZSV}:

\begin{conj}\thlabel{bzsvloc}\cite[Conjecture
\,8.1.8\&8.5.2]{BZSV}
    Suppose the $G$-Hamiltonian space $T^*X$ is hyperspherical, then the following properties hold:
    \begin{enumerate}
        \item \label{bzsvlocflat} The non-commutative Plancherel algebra $\PL_{X,\hbar}$ is flat over $k[\hbar]$;
        \item \label{bzsvloccom} The Plancherel algebra without loop rotation $\PL_X$ is commutative;
        \item \label{bzsvlocdual} There is an isomorphism of $\Gc$-equivariant commutative 2-shifted Poisson algebras $\PL_X\isom \cO(\Mc^{\shear})$. Moreover, there is a canonical isomorphism $\PL_{X,\hbar}\isom \cO_{\hbar}(\Mc^{\shear})$ between quantization of both Poisson algebras. Here $\Mc$ is the hyperspherical dual of $T^*X$ introduced in \autoref{hdual}, and the shearing $\Mc^{\shear}$ is defined in \autoref{shear}.\footnote{When $X$ is not homogeneous, one needs to twist the $\Ggr$-action on $\Mc$ or the $\Sat_{G,\hbar}$-module structure on $\Sat_{X,\hbar}$. We refer to \cite[\S8]{BZSV} for details.}
    \end{enumerate}
\end{conj}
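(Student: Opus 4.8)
The plan is to deduce \thref{bzsvloc} from the structure theory of hyperspherical varieties together with the known properties of (quantized) Coulomb branches, via one geometric reduction and a comparison with the Braverman--Finkelberg--Nakajima construction.

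\emph{Step 1: reduction to a linear representation.} Since $T^*X$ is hyperspherical, one may write $X=Y\times^H G$ with $H\sub G$ a connected reductive subgroup and $Y$ an $H$-representation on which $\Ggr$ acts by scaling (cf. \autoref{Gvar}). Using that $G/H$, and hence $L^+(G/H)$, is affine, the inclusion of the trivial coset restricts to an identification $L^+Y/L^+H\rtimes\Aut(D)\isom L^+X/L^+G\rtimes\Aut(D)$, under which $\d^Y_{\loc}$ goes to $\d^X_{\loc}$. A projection-formula and adjunction computation should then produce a canonical isomorphism of $k[\hbar]$-algebra objects of $\Rep(\Gc)$ exhibiting $\PL_{X,\hbar}$ as the (Hamiltonian) induction along $H\sub G$ of $\PL_{Y,\hbar}$, compatibly with the parallel induction of hyperspherical duals, so that $\Mc_X$ is induced from $\Mc_Y$. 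This reduces all three assertions to the case $X=Y$ linear with $G=H$; the twist of the $\Ggr$-action when $X$ is not homogeneous is accounted for here exactly as in \cite[\S8]{BZSV}.

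\emph{Steps 2--4: the linear case.} For $Y$ a linear $G$-representation with $T^*Y$ hyperspherical, the key identification to establish is
\[
\PL_{Y,\hbar}\;=\;\iHom_{\Sat_{G,\hbar}}(\d^Y_{\loc},\d^Y_{\loc})\;\isom\;\mathcal A_\hbar(G,Y),
\]
the quantized Coulomb branch algebra of the gauge theory $(G,Y)$: the inner Hom is computed by an $L^+G\rtimes\Ggr$-equivariant Borel--Moore homology of the variety of triples, which is precisely the Braverman--Finkelberg--Nakajima construction, with the $\Aut(D)$/loop-rotation grading matching the quantization parameter $\hbar$. This is the sheaf-theoretic incarnation of the Coulomb-branch-as-Plancherel-algebra picture of \cite[\S8]{BZSV}. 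Granting it, part \itemref{bzsvloc}{bzsvlocflat} follows from flatness of the quantized Coulomb branch over $k[\hbar]$, and part \itemref{bzsvloc}{bzsvloccom} from the fact that its classical limit $\PL_Y\isom\PL_{Y,\hbar}\otimes_{k[\hbar]}k$ is a finitely generated integral commutative (shifted) Poisson algebra --- both foundational Coulomb-branch results. Commutativity of $\PL_X$ without loop rotation also admits a direct proof via the factorization structure over the Beilinson--Drinfeld Grassmannian and the fusion commutativity constraint, following \cite[\S8]{BZSV}. Finally, part \itemref{bzsvloc}{bzsvlocdual} is the statement that $\Spec\mathcal A_0(G,Y)=\Mc^{\shear}$ together with matching of $\Ggr$-weights and $2$-shifted Poisson brackets: in the strongly tempered and tempered cases this identification is built into the construction of the hyperspherical dual $\Mc$ in \cite[\S3]{BZSV}, and the $\hbar$-refinement follows from the classical identification plus the flatness of Step~3 by a Rees-algebra uniqueness-of-quantization argument.

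\emph{Main obstacle.} The crux is the identification with the Coulomb branch: it presupposes a sufficiently robust $*$-sheaf theory on the non-Artin, non-placid stack $LX/L^+G\rtimes\Aut(D)$ and a genuine comparison of $\iHom_{\Sat_{G,\hbar}}$ with the equivariant homology of the variety of triples --- exactly \cite[Problem\,16.3.2]{BZSV}, which is not available at present in the required generality. This is why the present article develops only the fragment of the formalism it needs and, for the main theorem, checks the case $G=\GL_n\times\GL_{n-1}$, $X=\GL_n$ directly via \cite{BFGT} and \cite{lysenko2002local} rather than through \thref{bzsvloc}. A further difficulty is that hyperspherical spaces which are neither strongly tempered nor tempered lie outside the scope of the usual Coulomb-branch formalism, so that both the machinery computing $\PL_{Y,\hbar}$ and the identification of $\Mc$ would need to be extended; both remain open.
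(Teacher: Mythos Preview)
The statement you are attempting to prove is labeled as a \emph{conjecture} in the paper (\thref{bzsvloc}), cited from \cite[Conjecture\,8.1.8\,\&\,8.5.2]{BZSV}, and the paper makes no attempt to prove it. Immediately after stating it, the authors add only: ``We only state part of the conjectures made in \cite{BZSV} which is relevant to our article.'' The paper \emph{uses} this conjecture as an input in specific cases where it has been established elsewhere --- for $G=\GL_n\times\GL_{n-1}$, $X=\GL_n$ via \cite{BFGT}, and for the orthogonal Gan--Gross--Prasad case via \cite{Braverman_2022} --- but there is no proof in the paper to compare your proposal against.

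Your proposal is therefore not a proof to be matched with the paper's, but rather a strategy sketch toward the conjecture itself, and you are candid about this in your final paragraph. The outline is reasonable and you correctly isolate the central obstruction: the identification of $\PL_{Y,\hbar}$ with the quantized Coulomb branch $\mathcal A_\hbar(G,Y)$ rests on sheaf-theoretic foundations for $LY/L^+G\rtimes\Aut(D)$ that are not yet available in the required generality --- precisely the open problem you cite. Two points deserve sharpening. First, your Step~1 reduction is more delicate than you suggest: the compatibility of $\PL_{X,\hbar}$ with Hamiltonian induction along $H\sub G$ is itself part of what the conjecture asserts, not an independent lemma you can invoke to reduce to the linear case. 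Second, your claim that the identification $\Spec\mathcal A_0(G,Y)\cong\Mc$ is ``built into the construction of $\Mc$'' is circular in general: the hyperspherical dual in \cite[\S3]{BZSV} is defined by a structure-theoretic recipe, and matching it with the Coulomb branch is a substantive (and in many cases open) statement, not a tautology. Since you flag both of these as open at the end, your proposal is best read as a correct diagnosis of why \thref{bzsvloc} remains a conjecture rather than as a proof.
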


\begin{remark}
    We only state part of the conjectures made in \cite{BZSV}, which is relevant to our article.
\end{remark}

\subsubsection{Period sheaf}\label{periodsheaf}
Consider the map $\pi:\Bun_G^X\to\Bun_G$, we define the $X$-period sheaf to be \[\cP_X:=\pi_!\uk_{\Bun_G^X}.\]

\begin{remark}
    As we have mentioned in \thref{warninghalfspin}, when we consider hyperspherical $T^*X$ (see  \autoref{Gvar} for a setup), if the $\Ggr$-action on $X$ is trivial (equivalently speaking: when $X$ is $G$-homogeneous), the object $\cP_X$ is the unnormalized $!$-period sheaf introduced in \cite[\S10.3]{BZSV}. When the $\Ggr$-action on $X$ is non-trivial, this is different from the unnormalized period sheaf in \textit{loc.cit} by a half-spin twist. One can easily adapt these normalizations when needed.
\end{remark}

\subsubsection{Summary of notations}\label{modulinotations}
The notations in this section are complicated, and we will use them throughout the article. We summarize the rules that we use to design our notations here for the convenience of the reader.

We use the word \emph{local} to indicate moduli of something over the disc $D$ (or punctured disc $\Dc$). We use \emph{semi-local} to indicate moduli of something over the disc $D_c$ (or punctured disc $\Dc_c$) for a moving point $c\in C$ (or several points $c\in C^I$). We use the word \emph{global} to indicate moduli of something over the curve (or punctured curve, possibly several copies of the curve). Whenever we write a moduli (or map between moduli) that is local/semi-local/global, we would add a subscript (or sometimes superscript) $\loc,\sloc,\glob$ to indicate this whenever it is unclear from other notations. We sometimes use the subscript $\sltol,\gtosl$ to indicate a morphism is mapping from semi-local moduli to local moduli, or global moduli to semi-local moduli.

We always use $\hk$ to indicate Hecke-type stacks (i.e., do modification one time, but can happen at several legs) and $\thk$ to indicate iterated Hecke-type stacks (i.e, do modification several times). We use $\lh$ and $\rh$ to denote the map sending a Hecke modification to its left-most object and right-most object, respectively. We use the word \emph{leg} to describe the point at which a Hecke modification happens or an $X$-section is not required to be regular. We use a superscript $\hk$ (or $\thk$) on a morphism to indicate that the morphism is between Hecke-type stacks.

We use a superscript $X$ to indicate a moduli of objects equipped with an $X$-section (the $X$-section may or may not be regular, depending on the context). When we use this for Hecke-type stacks, we sometimes write $\Xp$ to emphasize that the $X$-section is not required to be regular (This is to avoid confusion with the relative Grassmanian $\Gr^X_G$ in which the $X$-section is regular on both sides of the modification which will be introduced in  \autoref{dfc}).

We use $I$ to denote the index set of legs. We use the letter $x$ to indicate an element $x\in I$. When we have subscript $J\sub I$ or $x\in I$, we mean the Hecke modification happens only at $J$ (or i.e.$\{x\}$) or the $X$-section is rational at $J$ (or $\{x\}$).

We always use $l$ to denote a map remembering all the legs. We use $f$ to denote maps between moduli of different levels (i.e., local/semi-local/global). We use $i$ to denote maps from moduli with regular $X$-section to moduli with meromorphic $X$-section. We use $r_x$ to denote a morphism remembering only information at the leg $x$. We use $\pi$ to denote maps forgetting the $X$-section. We use $w$ to denote the maps forgetting the legs. We use $\D$ to denote diagonal-type maps.

When we write $\IC_V$, $\IC_{V^I}$, we may mean sheaves on any Hecke-type stack (which would be clear from the context) to which the intersection complexes in $\Sat_G$ can be pulled
back as kernel sheaves for some cohomological correspondence. We sometimes write $\IC_{V,x}$ to indicate that the sheaf is pulled back from leg $x$.

We use $*$ to denote the convolution in the local categories, and $\boxstar$ to denote the action of objects in the local categories on global/semi-local categories (or the action of kernel sheaves on some Hecke-type moduli by pull-tensor-push). We sometimes use $\boxstar_x$ to indicate the action happens at leg $x$.

\subsection{Construction of special cohomological correspondence}\label{conscc}
In this section, we introduce the local-to-global construction of special cohomological correspondences. The main output of this section is \thref{globccdef}.

For notations on the moduli spaces and morphisms between them, we refer to \autoref{modulispaces} (see  \autoref{modulinotations} for a summary).
\begin{defn}\thlabel{loccordef}
    A \emph{local special cohomological correspondence of type $V\in\Rep(\Gc)^{\he,\om}$ of degree $d$} is an element \[c_V^{\loc}\in\Hom_{\Gc}(V\langle -d\rangle,\PL_{X,\hbar}).\] Equivalently, consider the diagram \eqref{localcordiag}, we can regard $c_V^{\loc}$ as a cohomological correspondence by the tautological identification \[\begin{split}\Hom_{\Gc}(V\langle-d\rangle,\PL_{X,\hbar})&=\Hom_{\Sat_{X,\hbar}}(\IC_V*\d^X_{\loc},\d^X_{\loc}\langle d\rangle)\\&=\Hom(\lh_{\loc,!}^X(\rh_{\loc}^{X,*}\d^X_{\loc}\otimes \IC_V),\delta_l^X\langle d\rangle)\\&= \Cor_{\hk^{\Xp,\loc}_G,\IC_V}(\d^X_{\loc},\d^X_{\loc}\langle d\rangle)\end{split}.\]
\end{defn}

Now we apply the pull-back functoriality in  \autoref{ccfunc} to produce special cohomological correspondences at the semi-local and global levels. Consider the following diagram: \begin{equation}
    \begin{tikzcd}
        \Bun^X_{G,\fone} \ar[d, "f_{\fone,\gtosl}" '] & \hk^{\Xp,\glob}_{G,\fone} \ar[d, "f^{\hk}_{\fone,\gtosl}"] \ar[l, "\lh^X_{\fone,\glob}" '] \ar[r, "\rh^X_{\fone,\glob}"] & \Bun^X_{G,\fone} \ar[d, "f_{\fone,\gtosl}"] \\
        (LX/L^+G)_C \lcart\ar[d, "f_{\fone,\sltol}" '] & \hk^{\Xp,\sloc}_{G,\fone} \ar[d, "f^{\hk}_{\fone,\sltol}"] \ar[l, "\lh^X_{\fone,\sloc}" '] \ar[r, "\rh^X_{\fone,\sloc}"] & (LX/L^+G)_C \ar[d, "f_{\fone,\sltol}"] \rcart\\
        LX/L^+G\rtimes\Aut(D) \lcart& \hk^{\Xp,\loc}_{G} \ar[l, "\lh^X_{\loc}" '] \ar[r, "\rh^X_{\loc}"] & LX/L^+G\rtimes\Aut(D)\rcart.
    \end{tikzcd}
\end{equation}

Note that both left squares are Cartesian (in fact, all squares are Cartesian), from the isomorphisms $f_{\fone,\sltol}^*\d^X_{\loc}\isom\d^X_{\sloc}$ and $f_{\fone,\gtosl}^*\d^X_{\sloc}\isom\d^X_{\glob}$, we get cohomological correspondences \begin{equation}
    c_V^{\sloc}:=f_{\fone,\sltol}^*c_V^{\loc}\in\Cor_{\hk^{\Xp,\sloc}_{G,\fone},\IC_V}(\d^X_{\sloc},\d^X_{\sloc}\langle d\rangle)
\end{equation}
and
\begin{equation}
    c_V^{\glob}:=f_{\fone,\gtosl}^*c_V^{\sloc}\in\Cor_{\hk^{\Xp,\glob}_{G,\fone},\IC_V}(\d^X_{\glob},\d^X_{\glob}\langle d\rangle).  
\end{equation}

Now we construct cohomological correspondences with legs indexed by a finite set $I$. To begin with, for any $x\in I$, consider the following diagram \begin{equation}
    \begin{tikzcd}
        (LX/L^+G)_C \arrow[dr, phantom, "\ulcorner", shift left=2, very near end] & \hk^{\Xp,\sloc}_{G,\fone}  \ar[l, "\lh^X_{\fone,\sloc}" '] \ar[r, "\rh^X_{\fone,\sloc}"] & (LX/L^+G)_C  \arrow[dl, phantom, "\urcorner", shift right =2, very near end]\\
        (LX/L^+G)_{C^{x\in I}} \ar[u, "r_{x,\sloc}"] \ar[d, "i_{x\in I,\sloc}" ']  & \hk^{\Xp_{x\in I},\sloc}_{G,x\in I} \ar[l, "\lh^{X_{x\in I}}_{x\in I,\sloc}" '] \ar[r, "\rh^{X_{x\in I}}_{x\in I, \sloc}"] \ar[u, "r_{x,\sloc}^{\hk}" '] \ar[d, "i_{x\in I,\sloc}^{\hk}"] & (LX/L^+G)_{C^{x\in I}} \ar[u, "r_{x,\sloc}" '] \ar[d, "i_{x\in I,\sloc}"] \\
        (LX/L^+G)_{C^I}  \lcart& \hk^{\Xp,\sloc}_{G,x\in I} \ar[l, "\lh^X_{x\in I,\sloc}" '] \ar[r, "\rh^X_{x\in I, \sloc}"] & (LX/L^+G)_{C^I} \rcart
    \end{tikzcd}
\end{equation}

Note that all the squares above are Cartesian, and we have natural isomorphism $r_{x,\sloc}^*\d^X_{\sloc}\isom\d^X_{x\in I,\sloc}$ and $\d^X_{I,\sloc}\isom i_{x\in I,\sloc,*}\d^X_{x\in I,\sloc}$. By applying the pull-back functoriality for the upper diagram and push-forward functoriality for the lower diagram, we get cohomological correspondences
\begin{equation}
    c_{V,x\in I}^{\sloc}:= r_{x,\sloc}^*c_V^{\sloc}\in \Cor_{\hk^{\Xp_{x\in I},\sloc}_{G,x\in I},\IC_{V,x}}(\d^X_{x\in I,\sloc},\d^X_{x\in I,\sloc}\langle d\rangle )
\end{equation}
and
\begin{equation}\label{partialsloc}
    c_{V,x}^{\sloc}:=i_{x\in I,\sloc,!}c_{V,x\in I}^{\sloc}\in\Cor_{\hk^{\Xp,\sloc}_{G,x\in I},\IC_{V,x}}(\d^X_{I,\sloc},\d^X_{I,\sloc}\langle d\rangle).
\end{equation}

We also have a global counterpart of the picture above: Consider the diagram 
\begin{equation}
    \begin{tikzcd}
        \Bun^X_{G,\fone}  \rucart& \hk^{\Xp,\glob}_{G,\fone}  \ar[l, "\lh^X_{\fone,\glob}" '] \ar[r, "\rh^X_{\fone,\glob}"] & \Bun^X_{G,\fone}  \lucart\\
        \Bun^X_{G,x\in I} \ar[u, "r_{x,\glob}"] \ar[d, "i_{x\in I,\glob}" ']  & \hk^{\Xp_{x\in I},\glob}_{G,x\in I} \ar[l, "\lh^{X_{x\in I}}_{x\in I,\glob}" '] \ar[r, "\rh^{X_{x\in I}}_{x\in I, \glob}"] \ar[u, "r_{x,\glob}^{\hk}" '] \ar[d, "i_{x\in I,\glob}^{\hk}"] & \Bun^X_{G,x\in I} \ar[u, "r_{x,\glob}" '] \ar[d, "i_{x\in I,\glob}"] \\
        \Bun^X_{G,I} \lcart & \hk^{\Xp,\glob}_{G,x\in I} \ar[l, "\lh^X_{x\in I,\glob}" '] \ar[r, "\rh^X_{x\in I, \glob}"] & \Bun^X_{G,I}\rcart
    \end{tikzcd}.
\end{equation}
From the natural isomorphism $r_{x,\glob}^*\d^X_{\glob}\isom\d^X_{x\in I,\glob}$ and $\d^X_{I,\glob}\isom i_{x\in I,\glob,*}\d^X_{x\in I,\glob}$, we get cohomological correspondences
\begin{equation}
    c_{V,x\in I}^{\glob}:= r_{x,\glob}^*c_V^{\glob}\in \Cor_{\hk^{\Xp_{x\in I},\glob}_{G,x\in I},\IC_{V,x}}(\d^X_{x\in I,\glob},\d^X_{x\in I,\glob}\langle d\rangle )
\end{equation}
and
\begin{equation}\label{partialglob}
    c_{V,x}^{\glob}:=i_{x\in I,\glob,!}c_{V,x\in I}^{\glob}\in\Cor_{\hk^{\Xp,\glob}_{G,x\in I},\IC_{V,x}}(\d^X_{I,\glob},\d^X_{I,\glob}\langle d\rangle).
\end{equation}

From now on, we drop the subscript $J_{\bullet}$ in the notation for iterated Hecke stacks and always use the finest partition. We are going to construct global special cohomological correspondences and variants out of local special cohomological correspondences.

\begin{defn}\thlabel{loctovarious}
We define a \emph{local special cohomological correspondences datum} to be a tuple \[(I,V^I,\{c_{V^x}^{\loc}\}_{x\in I})\] where
\begin{itemize}
    \item $I$ is a finite set;
    \item $V^{\boxtimes I}\in\Rep(\Gc^I)^{\he,\om}$;
    \item For each $x\in I$, $c_{V^x}^{\loc}\in\Cor_{\hk^{\Xp,\loc}_G,\IC_{V^x}}(\d^X_{\loc},\d^X_{\loc}\langle d_x\rangle)$ is a local special cohomological correspondence of degree $d_x$ as in \thref{loccordef}.
\end{itemize}

Set $d_I=\sum_{x\in I}d_x$ to be the degree of $(I,V^I,\{c_{V^x}^{\loc}\}_{x\in I})$. Suppose $I=[r]$.

\begin{enumerate}
\item  We define the \emph{local composition} of $(I,V^I,\{c_{V^x}^{\loc}\}_{x\in I})$ to be the element
\begin{equation}\label{loccomp}
    c^{\loc}_{V^I}:=c^{\loc}_{V^1}\circ\cdots\circ c^{\loc}_{V^r}\in\Cor_{\hk^{\Xp,\loc}_G,\IC_{V^{\otimes I}}}(\d^X_{\loc},\d^X_{\loc}\langle d_I\rangle),
\end{equation}
\item  We define the \emph{semi-local special cohomological correspondence} of $(I,V^I,\{c_{V^x}^{\loc}\}_{x\in I})$ to be the element
\begin{equation}\label{sloccomp}
    c^{\sloc}_{V^I,I}:=c^{\sloc}_{V^1,1}\circ\cdots\circ c^{\sloc}_{V^r,r}\in\Cor_{\thk^{\Xp,\sloc}_{G,I},\IC_{V^{I}}}(\d^X_{I,\sloc},\d^X_{I,\sloc}\langle d_I\rangle)\isom \Cor_{\hk^{\Xp,\sloc}_{G,I},\IC_{V^{I}}}(\d^X_{I,\sloc},\d^X_{I,\sloc}\langle d_I\rangle);
\end{equation}
where the elements $c^{\sloc}_{V^x,x}$ are defined in \eqref{partialsloc};
\item  We define the \emph{global special cohomological correspondence} of $(I,V^I,\{c_{V^x}^{\loc}\}_{x\in I})$ to be the element
\begin{equation}\label{globcomp}
    c^{\glob}_{V^I,I}:=c^{\glob}_{V^1,1}\circ\cdots\circ c^{\glob}_{V^r,r}\in\Cor_{\thk^{\Xp,\glob}_{G,I},\IC_{V^{I}}}(\d^X_{I,\glob},\d^X_{I,\glob}\langle d_I\rangle)\isom \Cor_{\hk^{\Xp,\glob}_{G,I},\IC_{V^{I}}}(\d^X_{I,\glob},\d^X_{I,\glob}\langle d_I\rangle),
\end{equation}
where the elements $c^{\glob}_{V^x,x}$ are defined in \eqref{partialglob}.
\end{enumerate}
\end{defn}

We are finally able to define special cohomological correspondences for the period sheaf $\cP_X$. 

\begin{defn}\thlabel{globccdef}
Continuing the setting of \thref{loctovarious}, consider the diagram
\begin{equation}\label{globtoperioddiag}
    \begin{tikzcd}
        \Bun^X_{G,I} \ar[d, "\pi_I" ']  & \hk^{\Xp,\glob}_{G,I} \ar[l, "\lh^X_{I,\glob}" '] \ar[r, "\rh^X_{I, \glob}"] \ar[d, "\pi_I^{\hk}"] & \Bun^X_{G,I} \ar[d, "\pi_I"] \\
        \Bun_G\times C^I \lcart& \hk^{\glob}_{G,I}  \ar[l, "\lh_{I,\glob}" '] \ar[r, "\rh_{I, \glob}"] & \Bun_G\times C^I\rcart
    \end{tikzcd}
\end{equation}
where both squares are Cartesian. Note that we have natural isomorphism $\pi_{I,!}\d^{X}_{I,\glob}\isom \cP_X\boxtimes \uk_{C^I}$. 
\begin{enumerate}
\item We define the \emph{extended special cohomological correspondence} associated to $(I,V^I,\{c_{V^x}^{\loc}\}_{x\in I})$ to be the element
\begin{equation}\label{periodpush}
    c_{V^I,I}:=\pi_{I,!}c^{\glob}_{V^I,I}\in\Cor_{\hk^{\glob}_{G,I},\IC_{V^I}}(\cP_X\boxtimes \uk_{C^I},\cP_X\boxtimes \uk_{C^I}\langle d_I\rangle)
\end{equation}
where the element $c^{\glob}_{V^I,I}$ is defined in \eqref{globcomp};
\item Consider the map $w_I:\Bun_G^X\times C^I\to\Bun_G^X$. We define the \emph{special cohomological correspondence} associated to $(I,V^I,\{c_{V^x}^{\loc}\}_{x\in I})$ to be the element
\begin{equation}\label{removeleg}
    c_{V^I}:=[C^I]\circ c_{V^I,I}\circ \taut_{C^I}\in\Cor_{\hk^{\glob}_{G,I},\IC_{V^I}}(\cP_X,\cP_X\langle d_I-2|I|\rangle)
\end{equation}
where the cohomological correspondences $\taut_{C^I},[C^I]$ are introduced in \thref{tautcor},\thref{fundcor}.
\end{enumerate}
\end{defn}

\subsection{Main result}\label{mainresult3}
In this section, we state the so-called automorphic commutator relation \thref{geokolycor} (partially proved in \thref{geokolycorthm}). This is the main result in \autoref{commutatorrelations}.
\subsubsection{Local datum}
We now introduce the local special cohomological correspondence datum needed to formulate the commutator relation.

\begin{setting}\thlabel{commsetup}
    We fix a local special cohomological correspondence datum \[(I,V^I,\{c_{V^x}^{\loc}\}_{x\in I})\] as in \thref{loctovarious}, in which we take $I=[r]$.

    We fix $x\in [r-1]\sub I$. Consider $I':=I-\{x,x+1\}$ and $[I]:=[r-1]$. Denote $V=V_x$ and $W=V_{x+1}$. Set $d_V=d_x$ and $d_W=d_{x+1}$.
    
    Consider $\sw_{x,x+1}\in\Aut(I)$ to be the element swapping $x$ and $x+1$, and $\D_{x,x+1}\in\Hom(I,[I])$ to be the order-preserving map collapsing $x$ and $x+1$. These maps induce natural maps $\sw_{G,x,x+1}\in\Aut(BG^I)$, $\D_{G,x,x+1}\in\Hom(BG^{[I]},BG^I)$. We denote $V^{I,\sw}:=\sw_{G,x,x+1}^*V^I\in \Rep(\Gc^I)^{\he,\om}$ and $V^{[I]}:=\D_{G,x,x+1}^*V^I\in \Rep(\Gc^{[I]})^{\he,\om}$. 

    Assume \thref{genassum} (see below), we consider two more local special cohomological correspondence data \[(I, V^{I,\sw},\{c_{V^x}^{\loc}\}_{x\in I})\] and \[([I], V^{[I]},\{c_{V^x}^{\loc}\}_{x\in [I]})\] where the local special cohomological correspondences remain to be the old ones except we choose \begin{equation}\label{diagcor} 
c^{\loc}_{V^x\otimes V^{x+1}}\in\Cor_{\hk_G^{\Xp,\loc},\IC_{V^x\otimes V^{x+1}}}(\d^X_{\loc},\d^X_{\loc}\langle d_x+d_{x+1}-2\rangle).
\end{equation} to be the element in \itemref{genassum}{comassum}.

From these local cohomological correspondences, we can run the construction in  \autoref{conscc} and get \begin{equation}\label{cV}c_{V^I}\in \Cor_{\hk^{\glob}_{G, I},\IC_{V^I}}(\cP_X,\cP_X\langle d_I-2|I|\rangle ),\end{equation} \begin{equation}\label{cVsw} c_{V^{I,\sw}}\in \Cor_{\hk^{\glob}_{G , I},\IC_{V^{I, \sw}}}(\cP_X,\cP_X\langle d_I-2|I|\rangle ) ,\end{equation} and \begin{equation}\label{cVcom} c_{V^{[I]}}\in \Cor_{\hk^{\glob}_{G,[I]},\IC_{V^{[I]}}}(\cP_X,\cP_X\langle d_I-2|I|\rangle )\end{equation} defined in \eqref{removeleg}. Here we are using $d_{[I]}-2|[I]|=(d_I-2)-2(|I|-1)=d_I-2|I|$ in the last formula.

\end{setting}

\subsubsection{Fusion procedure}\label{cliffus}
Consider the diagram of correspondence
\begin{equation}
    \begin{tikzcd}
        \Bun_G \ar[d, "\id" '] & \hk^{\glob}_{G,[I]} \ar[d, "\D^{\hk}_{x,x+1}"] \ar[l, "\lh_{[I],\glob}" '] \ar[r, "\rh_{[I],\glob}"] & \Bun_G \ar[d, "\id"] \\
        \Bun_G & \hk^{\glob}_{G,I} \ar[l, "\lh_{I,\glob}" '] \ar[r, "\rh_{I,\glob}"] & \Bun_G
    \end{tikzcd}.
\end{equation}
Note that we have an isomorphism $\D^{\hk^*}_{x,x+1}\IC_{V^I}\isom \IC_{V^{[I]}}$. The push-forward functoriality in \thref{ccpush} gives us \begin{equation}\label{nolegdiagpush}
    \D^{\hk}_{x,x+1,!}c_{V^{[I]}}\in \Cor_{\hk^{\glob}_{G,I}, \IC_{V^I}}(\cP_X,\cP_X\langle d_I-2|I|\rangle ).
\end{equation}

\subsubsection{Commutator in the Plancherel algebra}
The natural multiplication map $m:\PL_{X,\hbar}\otimes \PL_{X,\hbar}\to\PL_{X,\hbar}$ on the (non-commutative) Plancherel algebra can be characterized as following: Given $V,W\in\Rep(\Gc)^{\he,\om}$ and \[c_V^{\loc}\in\Cor_{\hk^{\loc}_G,\IC_V}(\d^X_{\loc},\d^X_{\loc}\langle d_V\rangle),\] \[c_W^{\loc}\in\Cor_{\hk^{\loc}_G,\IC_W}(\d^X_{\loc},\d^X_{\loc}\langle d_W\rangle),\] the product $V\otimes W\langle -d_V-d_W\rangle \to \PL_{X,\hbar}\otimes\PL_{X,\hbar}\to  \PL_{X,\hbar}$ is defined by \[c_{V\boxtimes W}^{\loc}=c_V^{\loc}\circ c_W^{\loc} \in \Cor_{\hk^{\loc}_G,\IC_{V\otimes W}}(\d^X_{\loc},\d^X_{\loc}\langle d_V+d_W\rangle).\] Similarly, we have \[c^{\loc}_{W\boxtimes V}\in \Cor_{\hk^{\loc}_G,\IC_{W\otimes V}}(\d^X_{\loc},\d^X_{\loc}\langle d_V+d_W\rangle).\] Using the natural isomorphism \[\Cor_{\hk^{\loc}_G,\IC_{W\otimes V}}(\d^X_{\loc},\d^X_{\loc}\langle d_V+d_W\rangle)\isom \Cor_{\hk^{\loc}_G,\IC_{V\otimes W}}(\d^X_{\loc},\d^X_{\loc}\langle d_V+d_W\rangle), \] we can view both $c^{\loc}_{V\boxtimes W}$ and $c^{\loc}_{W\boxtimes V}$ as elements in the same vector space $\Cor_{\hk^{\loc}_G,\IC_{V\otimes W}}(\d^X_{\loc},\d^X_{\loc}\langle d_V+d_W\rangle).$ We denote \[[c_V^{\loc},c_W^{\loc}]:=c^{\loc}_{V\boxtimes W}-c^{\loc}_{W\boxtimes V}.\] We make the following assumption:
\begin{assumption}\thlabel{genassum}
The following two things hold:
\begin{enumerate}
    \item \label{flatassum} The multiplication-by-$\hbar$ map \[\cdot\hbar:\Hom^{0}(V\otimes W\langle-d_V-d_W+2\rangle ,\PL_{X,\hbar})\to\Hom^{0}(V\otimes W\langle -d_V-d_W\rangle ,\PL_{X,\hbar})\] is injective;
    \item \label{comassum} There exists an element $c_{V\otimes W}^{\loc}\in \Hom^{0}(V\otimes W\langle-d_V-d_W+2\rangle ,\PL_{X,\hbar})$ such that \[c_{V\otimes W}^{\loc}\cdot \hbar = [c_V^{\loc},c_W^{\loc}].\]
\end{enumerate}
\end{assumption}
\begin{remark}
    Note that in the case $T^*X$ is a hyperspherical $G$-Hamiltonian space, \thref{genassum} above is a direct consequence of \thref{bzsvloc}.
\end{remark}

\subsubsection{Automorphic commutator relations}
Assume we are in \thref{commsetup}, we get three special cohomological correspondences \[c_{V^I},c_{V^{I,\sw}},\D^{\hk}_{x,x+1,!}c_{V^{[I]}}\in \Cor_{\hk^{\glob}_{G,I},\IC_{V^I}}(\cP_X,\cP_X\langle d_{I}-2|I|\rangle)\] where the last one is defined in \eqref{nolegdiagpush}. Our main conjecture in this section is the following:

\begin{conj}[Automorphic commutator relation]\thlabel{geokolycor}
    Under \thref{commsetup}, we have the following identity:
    \[c_{V^{I}}-c_{V^{I,\sw}}=\D^{\hk}_{x,x+1,!}c_{V^{[I]}}\in\Cor_{\hk^{\glob}_{G,I},\IC_{V^I}}(\cP_X,\cP_X\langle d_{I}-2|I|\rangle).\]
\end{conj}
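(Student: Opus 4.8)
The plan is to deduce the global identity from the local commutator relation \eqref{loccomrelfirst} (i.e. from \thref{genassum}) by a careful bookkeeping of the pull-back/push-forward/composition functorialities set up in \autoref{cc} and the local-to-global machinery of \autoref{conscc}. First I would reduce to a statement purely about cohomological correspondences on the \emph{semi-local} iterated Hecke stack $\thk^{\Xp,\sloc}_{G,I}$: since $c_{V^I}$, $c_{V^{I,\sw}}$ and $c_{V^{[I]}}$ are all obtained from their semi-local avatars by applying $\pi_{I,!}$, then $[C^I]\circ(-)\circ\taut_{C^I}$, and (in the fused case) $\D^{\hk}_{x,x+1,!}$, and all these operations are additive, it suffices to prove
\[
c^{\sloc}_{V^I,I}-c^{\sloc}_{V^{I,\sw},I}=\D^{\hk}_{x,x+1,!}c^{\sloc}_{V^{[I]},[I]}
\]
as elements of $\Cor_{\hk^{\Xp,\sloc}_{G,I},\IC_{V^I}}(\d^X_{I,\sloc},\d^X_{I,\sloc}\langle d_I-2\rangle)$, using the base-change compatibility \thref{ccbc} and \thref{pushablebccomp} to commute $\pi_{I,!}$, $\D^{\hk}_{x,x+1,!}$ and the $[C^I]$, $\taut_{C^I}$ operations past the difference. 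Here I would expand each side via the composition formula \eqref{sloccomp}: everything outside the adjacent pair of legs $\{x,x+1\}$ factors out identically, so the problem localizes further to the two-leg situation, i.e. to comparing $c^{\sloc}_{V,x}\circ c^{\sloc}_{W,x+1}$, $c^{\sloc}_{W,x}\circ c^{\sloc}_{V,x+1}$ (after identifying via $\sw_{x,x+1}$) and $c^{\sloc}_{V\otimes W, \text{fused leg}}$ on $\hk^{\Xp,\sloc}_{G,\{x,x+1\}\subset I}$.

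Next I would trace through how the loop-rotation/$\hbar$-structure enters. The local special cohomological correspondences $c^{\loc}_V$, $c^{\loc}_W$ live in the $\hbar$-deformed category $\Sat_{X,\hbar}$ and their composition is the product in $\PL_{X,\hbar}$; by \itemref{genassum}{comassum} the commutator $[c^{\loc}_V,c^{\loc}_W]$ is $\hbar$ times $c^{\loc}_{V\otimes W}$, and by \itemref{genassum}{flatassum} this $c^{\loc}_{V\otimes W}$ is unique. The key point is that passing from the local picture to the semi-local one over $C^I$ \emph{trivializes} the loop rotation: the map $f_{x,\sltol}$ factors the $\Aut(D)$-action through the local-coordinate torsor on $C$, and multiplication by $\hbar=c_1(\cO(-1))$ pulled back along $\Coor:C\to B\Aut(D)$ corresponds to the class of a point, i.e. to the fundamental-class/diagonal map collapsing two nearby legs into one. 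Concretely, after pull-back to the semi-local setting the operator ``$\cdot\hbar$'' becomes exactly the push-forward $\D^{\hk}_{x,x+1,!}$ composed with the tautological/fundamental-class correspondences that move the two colliding legs onto the diagonal $C\subset C^{\{x,x+1\}}$. So the local identity $[c^{\loc}_V,c^{\loc}_W]=\hbar\cdot c^{\loc}_{V\otimes W}$ pulls back to precisely the asserted semi-local identity, once one checks that $f^*_{x,\sltol}$ of ``multiplication by $\hbar$'' is this diagonal push-forward. This last verification — identifying the pull-back of the $\hbar$-action on $\PL_{X,\hbar}$ with the geometric fusion operation $\D^{\hk}_{x,x+1,!}$ — is the technical heart, and it is where the technical hypotheses of \thref{techassum} (and the assumption $g\neq 1$) get used; I expect to need a compatibility between the cohomology of $B\Aut(D)$, the Chern class of the tautological bundle, and the excision sequence \eqref{fibseq} for the diagonal stratification of $(LX/L^+G)_{C^{\{x,x+1\}}}$.

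The main obstacle, then, is \emph{not} the formal manipulation of the six operations (that is routine given \thref{pushablebccomp}, \thref{ccbc}, \thref{!pushbcstar}, and \thref{purity}), but rather making rigorous the geometric incarnation of $\hbar$ over the curve and its interaction with the non-Artin, non-placid stacks $LX/L^+G$ and their $C^I$-versions in the sheaf theory $\Shv_*$ of \autoref{sheaftheory}. Specifically I would need: (i) a clean statement that $c^{\sloc}_{V,x}$ depends only on the germ of $c^{\loc}_V$ near the leg, so that the colliding-legs computation can be done Zariski-locally on $C$; (ii) control of the $!$-push-forward $\pi_{I,!}$ and the closed push-forwards $i_{x\in I,\sloc,!}$ through the excision formalism, which is exactly what \thref{excisionSHV} and the cotruncated-sheaf formalism are designed to provide; and (iii) the identification of $f^*_{x,\sltol}(\cdot\hbar)$ with $\D^{\hk}_{x,x+1,!}\circ([\cdot])\circ\taut$, which I would prove by reducing, via \thref{conserv} and \thref{lprosmBC}, to the well-known computation of $\Gamma(C\times B\Aut(D),\uk)$ and the fact that $\Coor^*\hbar$ is the class of the relative fundamental cycle of the diagonal. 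Granting \thref{techassum}, assembling these gives \thref{geokolycor}; the proof itself is carried out in \autoref{proofgeokolycorthm} as \thref{geokolycorthm}.
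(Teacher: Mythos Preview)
Your overall architecture matches the paper: reduce to $|I|=2$ via \thref{compfund}, then pass from the period-sheaf statement through a global version to a semi-local version on $(LX/L^+G)_{C^2}$ (the paper's \thref{geokolycorperiod}, \thref{geokolycorglob}, \thref{geokolycorsloc}), and finally use that $\Coor^*\hbar=c_1(T_C)$. The compatibilities you list in (i) and (ii) are indeed the routine six-functor bookkeeping carried out in \autoref{refgeokolycor} and \autoref{globslocvar}.

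The genuine gap is your item (iii). The identification you propose, of $f^*_{x,\sltol}(\cdot\hbar)$ with $\D^{\hk}_{x,x+1,!}\circ[\cdot]\circ\taut$, is not true and cannot be: the left side is an endomorphism of the \emph{one-leg} correspondence space (it is $\cdot c_1(T_C)$, cf.\ \thref{funddiagtwocom}), while the right side, which is the paper's $\coev_{\sloc}$ in \eqref{coevsloc}, maps one-leg correspondences to \emph{two-leg} correspondences. What actually holds is only the compatibility $\D^{X,*}_{\sloc}\circ\coev_{\sloc}=\cdot c_1(T_C)$ after restricting back to the diagonal, which is the commutative square \eqref{funddiagone} proved in \thref{funddiagonecom}. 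Consequently, the local commutator relation $[c_V^{\loc},c_W^{\loc}]=\hbar\cdot c_{V\otimes W}^{\loc}$ only tells you that the difference $\e:=c^{\sloc}_{V\boxtimes W}-c^{\sloc}_{W\boxtimes V}-\coev_{\sloc}(c^{\sloc}_{V\otimes W})$ has $\D^{X,*}_{\sloc}\e=0$; together with the easy observation $j^{X,*}\e=0$ over the open complement $U\subset C^2$ (\thref{gencomm}), this does \emph{not} force $\e=0$, because the excision sequence \eqref{funddiagone} is a long exact sequence, not a short exact one.

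The missing ingredient is the injectivity of the map $\cdot c_1(T_C)$ on the relevant one-leg $\Hom$-space (\thref{slocvanishing}). This is where \thref{techassum} is actually consumed: one rewrites the target via \eqref{localpush} and $\Coor_*\Coor^*\uk$, uses $g\neq 1$ to get formality of $H^{\bullet}(C)$ as a $k[\hbar]$-module (\thref{curvecoh}), and then uses the vanishing in \itemref{techassum}{bzsvlocvan} together with \itemref{genassum}{flatassum} to run the diagram chase \eqref{diagfora}. Your proposal locates the technical assumptions at the wrong step (the ``$\hbar\leadsto$ fusion'' identification, which does not exist) rather than at this injectivity, and without it the argument does not close.
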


We are currently not able to prove \thref{geokolycor} in general but are only able to prove it under some additional technical assumptions. The additional condition we have to add is the following:

\begin{assumption}\thlabel{techassum}
The following hold:
    \begin{enumerate}
        \item \label{bzsvlocvan} 
        For $d=1,2,3$, we have $\Hom^0(V\otimes W\langle -d_V-d_W+2\rangle[d],\PL_{X,\hbar})=0$
        \item \label{gnot1} Let $g=g(C)$ to be the genus of the curve $C$. We have $g\neq 1$.
    \end{enumerate}
\end{assumption}

Our main theorem in this section is the following, which will be proved in  \autoref{proofgeokolycorthm}:
\begin{thm}\thlabel{geokolycorthm}
    Assume \thref{techassum}, then \thref{geokolycor} is true.
\end{thm}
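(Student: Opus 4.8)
\textbf{Outline of the proof of \thref{geokolycorthm}.}

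The strategy is to reduce \thref{geokolycor} --- an identity between three self cohomological correspondences of the period sheaf $\cP_X$, each built from the local data by the long chain of pull-backs and push-forwards in \autoref{conscc} --- to the \emph{local} commutator relation $[c_V^{\loc},c_W^{\loc}]=c_{V\otimes W}^{\loc}\cdot\hbar$ of \itemref{genassum}{comassum}. The key point is that all three sides of the conjectured identity are obtained by applying the same functorial operations to objects of the local Satake category $\Sat_{X,\hbar}$, module over $\Sat_{G,\hbar}$, so in principle the identity should be a formal consequence of the definition of the multiplication $m$ on $\PL_{X,\hbar}$. The obstruction to a purely formal argument is the presence of the loop-rotation parameter $\hbar$: the local relation holds only after multiplication by $\hbar$, and $\hbar$ acts by zero on all the global moduli spaces (there is no loop rotation globally, since the legs genuinely move on $C$). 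So the heart of the matter is to explain why a local identity that is only true ``up to $\hbar$'' nonetheless gives a sharp global identity once one passes to the semi-local and global pictures.

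First I would set up the comparison at the semi-local level. The fusion map $\D_{x,x+1}\colon C^I\to C^{[I]}$ over which two legs collide carries the crucial geometry: away from the diagonal the composition $c^{\sloc}_{V^x,x}\circ c^{\sloc}_{V^{x+1},x+1}$ and its swap agree, while on the diagonal their difference is governed by the local composition product, i.e. by $[c_V^{\loc},c_W^{\loc}]$. Concretely, I would restrict the relevant sheaves and correspondences to the complement of the diagonal and to the formal/henselian neighbourhood of the diagonal separately, using the excision fibre sequence \eqref{fibseq} of \thref{excisionSHV}; the open part contributes nothing to the difference $c_{V^I}-c_{V^{I,\sw}}$, and the closed part is where $\D^{\hk}_{x,x+1,!}c_{V^{[I]}}$ lives. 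Over the diagonal, the local-to-global specialization identifies the relevant $\Hom$-space with the cohomology of a stack over which loop rotation \emph{does} act --- this is where the factor $\hbar=c_1(\cO(-1))$ from \autoref{cohbg} enters as the first Chern class of the normal bundle of the diagonal $C\hookrightarrow C\times C$, which is $\cO(-2\Delta)$, hence the relation ``$\cdot\hbar$'' becomes the relation ``cup with the Euler class of the diagonal'', and that Euler class is exactly what the pushforward $\D^{\hk}_{x,x+1,!}$ followed by the fundamental class $[C^{[I]}]$ computes. This is the step where the genus hypothesis $g\neq 1$ of \itemref{techassum}{gnot1} is used: the relevant degree/self-intersection computation of the diagonal, and the vanishing of certain $H^1$ contributions, require $\deg(\cO(2-2g))\neq 0$, i.e. $g\neq 1$.

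Second, I would handle the degree bookkeeping and the vanishing inputs. The assumption \itemref{techassum}{bzsvlocvan}, that $\Hom^0(V\otimes W\langle -d_V-d_W+2\rangle[d],\PL_{X,\hbar})=0$ for $d=1,2,3$, is used to show that the specialization/deformation-to-the-normal-cone argument does not pick up spurious extension classes: concretely, after the excision triangle the difference $c_{V^I}-c_{V^{I,\sw}}-\D^{\hk}_{x,x+1,!}c_{V^{[I]}}$ is forced to lie in a $\Hom$-group which, by base change along $f_{\bullet,\gtosl}$ and the identification of the local part with $\PL_{X,\hbar}$-cohomology, is a subquotient of these vanishing groups (the shifts $[1],[2],[3]$ arising from the three stages diagonal/neighbourhood/open in the excision and from the degree shift $\langle -2\rangle$ built into \eqref{diagcor}). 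Together with the injectivity of $\cdot\hbar$ in \itemref{genassum}{flatassum} --- which upgrades the local relation ``modulo $\hbar$-torsion'' to an honest equality of the classes $c_{V\otimes W}^{\loc}$ --- this pins down the global difference to zero.

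Finally, I would assemble the pieces: apply the pull-back functoriality \thref{ccpull} along the Cartesian squares relating the local, semi-local and global Hecke stacks (so that $c^{\glob}_{V^I,I}$ really is the pull-back of $c^{\loc}_{V^I}$, and likewise for the swap and the fused version), then apply $\pi_{I,!}$ as in \eqref{periodpush} and the last fundamental-class step \eqref{removeleg}, using the compatibility of composition, pull-back and push-forward of cohomological correspondences recorded in \autoref{cccompatibilities} and in \thref{pushablebccomp}, \thref{ccbc}, to commute these operations past one another. The main obstacle, as indicated above, is the second step: making rigorous the claim that the ``defect'' of the local commutator relation (the fact that it holds only after $\cdot\hbar$) is precisely cancelled by the geometry of the colliding legs --- i.e. identifying $\hbar$ with the Euler class of the diagonal and controlling the resulting deformation-to-the-normal-cone picture in the non-Artin, non-placid sheaf theory $\Shv_*$ of \autoref{sheaftheory}. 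Everything else is a (lengthy but routine) diagram chase in the six-functor formalism, of the kind already encapsulated in \thref{pushablebccomp} and \thref{ccbc}.
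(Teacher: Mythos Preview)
Your outline follows essentially the same route as the paper: reduce to the two-leg case, pass to the semi-local statement over $C^2$, use excision for the diagonal/off-diagonal decomposition, and identify $\hbar$ with $c_1(T_C)$ via $\Coor\colon C\to B\Aut(D)$ so that the local commutator relation \itemref{genassum}{comassum} controls the diagonal contribution. The big picture is correct.

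However, the precise mechanism by which \thref{techassum} enters is not the one you describe. After excision, knowing that the difference $\epsilon := c_{V\boxtimes W}^{\sloc}-c_{W\boxtimes V}^{\sloc}-\coev_{\sloc}(c_{V\otimes W}^{\sloc})$ restricts to zero on the open locus only tells you that $\epsilon=\Gys(\epsilon_0)$ for some class $\epsilon_0$ supported on the diagonal; and the computation $\Delta^{X,*}\epsilon=0$ (which is where the local relation and $\hbar\mapsto c_1(T_C)$ enter) only tells you that $\epsilon_0\cdot c_1(T_C)=0$. To conclude $\epsilon_0=0$ one must show that multiplication by $c_1(T_C)$ is \emph{injective} on the relevant $\Hom$-group. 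This injectivity lemma is the technical heart of the argument, and it is here---not in a deformation-to-the-normal-cone picture---that both parts of \thref{techassum} are consumed: one computes $\Coor_*\Coor^*\uk$ as a $k[\hbar]$-module using formality of $H^*(C)$ and the hypothesis $g\neq 1$ (which makes $c_1(T_C)\neq 0$, giving the shape $k[\hbar]/\hbar^2\oplus k^{2g}[-1]$), and then the vanishing of $\Hom^0(V\otimes W\langle -d_V-d_W+2\rangle[d],\PL_{X,\hbar})$ for $d=1,2,3$ together with \itemref{genassum}{flatassum} yields the required injectivity. Your account treats the vanishing hypotheses as killing ``spurious extension classes in three stages of excision'', which is not how the shifts $d=1,2,3$ actually arise.

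Two minor corrections: the normal bundle of the diagonal $C\hookrightarrow C^2$ is $T_C$, not $\cO(-2\Delta)$; and you omit the preliminary reduction to $|I|=2$ (\thref{compfund}), though that step is indeed routine.
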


\begin{remark}
    We are mostly interested in the middle-dimensional case (i.e. $d_V+d_W=2$, see \thref{middimassum}) and $T^*X$ is tempered hyperspherical as defined in  \autoref{hdual}. In this case, \itemref{techassum}{bzsvlocvan}
    just follows from \itemref{bzsvloc}{bzsvlocdual} which implies that $\PL_{X,\hbar}$ is concentrated in nonnegative cohomological degree.
    
    Therefore, \itemref{techassum}{bzsvlocvan} conjecturally holds in the middle-dimensional tempered case. \itemref{techassum}{gnot1} is more restrictive, and we would like to remove it in the future. We believe that all the additional assumptions in \thref{techassum} can be removed at the same time once a more essential argument is discovered.
\end{remark}

\begin{remark}
    One can also formulate and prove \thref{geokolycor} without assuming \thref{techassum} when taking $C=[\A^1/\Gm]$. The key point making this case provable is that \thref{slocvanishing} is automatically true in this case. This is the reason that we believe \thref{geokolycor} is true even without \thref{techassum}.  We would hope one can prove \thref{geokolycor} by reducing to the case $C=[\A^1/\Gm]$, but we do not know how to achieve this. 
\end{remark}

\begin{remark}
    Instead of using the Plancherel algebra with loop rotation, it may be more conceptual to formulate \thref{geokolycor} using the $\mathbb{E}_2$-Plancherel algebra and factorization structure developed in \cite[\S16]{BZSV}. However, as far as we know, the factorizable local conjecture is far from being known or even well-formulated, while the loop-rotated version seems to be reachable or known in many cases. Therefore, we choose to use the loop-rotated version rather than the factorizable version.
\end{remark}

\subsection{Proof of global commutator relations}\label{proofgeokolycorthm}
In this section, we give the proof of \thref{geokolycorthm}. The idea is easy: One first reduces to the case $|I|=2$, which is done in  \autoref{redtotwo}. After that, one reduces to a semi-local statement (\thref{geokolycorsloc}), which is the main subject of  \autoref{refgeokolycor} and  \autoref{globslocvar}. Finally, one relates the semi-local statement to the local commutator relation \itemref{genassum}{comassum} and concludes the proof in \autoref{proofgeokolycorsloc}.

\subsubsection{Reduction to the case $|I|=2$}\label{redtotwo}
To reduce the proof of \thref{geokolycor} to the case $|I|=2$, by compatibility of push-forward functoriality in \thref{ccpush} and composition in  \autoref{cccomp}, we only need to prove the following:
\begin{prop}\thlabel{compfund}
    Under \thref{commsetup}, we have \[c_{V^I}=c_{V^1}\circ c_{V^2}\circ\cdots\circ c_{V^r}\in \Cor_{\thk^{\glob}_{G,I},\IC_{V^I}}(\cP_X,\cP_X\langle d_I-2|I|\rangle)\isom\Cor_{\hk^{\glob}_{G,I},\IC_{V^I}}(\cP_X,\cP_X\langle d_I-2|I|\rangle)\] where $c_{V^I}$ and $c_{V^x}$ for $x\in I$ are both defined via \eqref{removeleg}.
\end{prop}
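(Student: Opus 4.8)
This statement is essentially formal. Both $c_{V^I}$ and the iterated composition $c_{V^1}\circ\cdots\circ c_{V^r}$ are assembled from the same atomic data --- the local-to-global correspondences $c^{\glob}_{V^x,x}$, the proper push-forward along $\pi_I\colon\Bun^X_{G,I}\to\Bun_G\times C^I$, and the curve-absorbing operation $[C^{\bullet}]\circ(-)\circ\taut_{C^{\bullet}}$ of \eqref{removeleg} --- but combined in two different orders, and the plan is to show that the order is immaterial. The three inputs are: (i) the mutual compatibilities between composition of cohomological correspondences, push-forward functoriality, and pull-back functoriality recorded in \autoref{cccompatibilities} (ultimately \thref{pushablebccomp} and \thref{ccbc}); (ii) associativity of composition from \autoref{cccomp}; and (iii) the multiplicativity over the index set $I$ of $\pi_I$, of the fundamental class $[C^I]$, and of the tautological correspondence $\taut_{C^I}$, which follow at once from the explicit descriptions of the moduli stacks in \autoref{modulispaces} (e.g. $\hk^{\glob}_{G,x\in I}$ is the product of its one-leg version with the spectator curves $C^{I\setminus\{x\}}$).

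I would argue by induction on $r=|I|$, the case $r=1$ being the definition \eqref{removeleg} itself. For the inductive step one first uses that $\pi_I$ lies compatibly over every intermediate object of the composition $c^{\glob}_{V^I,I}=c^{\glob}_{V^1,1}\circ\cdots\circ c^{\glob}_{V^r,r}$ of \eqref{globcomp}, with $\pi_{I,!}$ carrying the relevant unit sheaf on $\Bun^X_{G,I}$ to $\cP_X\boxtimes\uk_{C^I}$; then compatibility of composition with push-forward gives
\[c_{V^I,I}=\pi_{I,!}c^{\glob}_{V^I,I}=\bigl(\pi_{I,!}\,c^{\glob}_{V^1,1}\bigr)\circ\cdots\circ\bigl(\pi_{I,!}\,c^{\glob}_{V^r,r}\bigr).\]
Using the product decomposition of $\hk^{\glob}_{G,x\in I}$ recalled above together with the compatibility of the local-to-global construction with pull-back, one identifies $\pi_{I,!}\,c^{\glob}_{V^x,x}$ with the pull-back of the one-leg correspondence $\pi_{\{x\},!}\,c^{\glob}_{V^x}$ along the projection forgetting the spectator legs; and $[C^I]$, $\taut_{C^I}$ decompose as the composite, over the legs of $I$, of the one-leg classes $[C]$, $\taut_C$ placed at the respective legs.

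It then remains to rearrange
\[c_{V^I}=[C^I]\circ\bigl(\pi_{I,!}\,c^{\glob}_{V^1,1}\bigr)\circ\cdots\circ\bigl(\pi_{I,!}\,c^{\glob}_{V^r,r}\bigr)\circ\taut_{C^I}\]
by sliding, for each $x$, the leg-$x$ copies of $[C]$ and $\taut_C$ next to the leg-$x$ factor $\pi_{I,!}\,c^{\glob}_{V^x,x}$. The only point that is not bookkeeping is that a copy of $[C]$ or $\taut_C$ at one leg commutes past a correspondence supported at a disjoint set of spectator legs; at those legs these are simply pull-back or push-forward operations, so this is precisely the mutual compatibility of push-forward and pull-back functoriality from \autoref{cccompatibilities}, applied to the appropriate Cartesian squares of Hecke stacks. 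After the rearrangement each block $[C]\circ\pi_{\{x\},!}\,c^{\glob}_{V^x}\circ\taut_C$ is, by \eqref{removeleg}, the correspondence $c_{V^x}$, and associativity assembles the blocks into $c_{V^1}\circ\cdots\circ c_{V^r}$, yielding the asserted equality in $\Cor_{\thk^{\glob}_{G,I},\IC_{V^I}}(\cP_X,\cP_X\langle d_I-2|I|\rangle)\cong\Cor_{\hk^{\glob}_{G,I},\IC_{V^I}}(\cP_X,\cP_X\langle d_I-2|I|\rangle)$.

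The mathematics here is light; the obstacle is organizational. One must track a substantial number of base-change and compatibility $2$-isomorphisms in the six-functor formalism and verify that they cohere, which is delicate because everything happens with cotruncated sheaves on the non-Artin, non-placid stacks of \autoref{modulispaces} (local and semi-local Hecke stacks, loop stacks), so one leans throughout on \autoref{sheaftheory}, in particular on pro-smooth base change, excision, and the cotruncation formalism. Making the "sliding past disjoint legs" step and the fusion identification $\thk^{\glob}_{G,I}\cong\hk^{\glob}_{G,I}$ fully rigorous amounts to drawing the relevant three-dimensional commutative diagrams of Hecke stacks and invoking \thref{ccbc} in the correct configuration: routine, but genuinely lengthy.
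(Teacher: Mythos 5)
Your proposal is correct and matches the paper's own argument: the paper likewise expands $c_{V^I}$ as $[C^I]\circ c_{V^1,1}\circ\cdots\circ c_{V^r,r}\circ\taut_{C^I}$ via the compatibility of push-forward (and pull-back) with composition, expands $c_{V^1}\circ\cdots\circ c_{V^r}$ as the interleaved product $[C_1]\circ c_{V^1,1}\circ\taut_{C_1}\circ\cdots\circ[C_r]\circ c_{V^r,r}\circ\taut_{C_r}$, and then commutes factors carrying different leg indices past one another. Your inductive packaging and the explicit identification of each block $[C]\circ\pi_{\{x\},!}c^{\glob}_{V^x}\circ\taut_C$ with $c_{V^x}$ are just a more detailed write-up of the same sliding argument the paper dismisses as "easy to check."
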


\begin{proof}[Proof of \thref{compfund}]
We use $C_1,\cdots, C_r$ to distinguish the different factors of $C^I$.
By \eqref{globcomp}, \eqref{periodpush}, \eqref{removeleg}, and compatibility of push-forward and composition (and compatibility between pull-back and composition), we know \[c_{V^I}=[C^I]\circ c_{V^1,1}\circ c_{V^2,2}\circ\cdots\circ c_{V^r,r}\circ \taut_{C^I}.\] On the other hand, we have \[c_{V^1}\circ c_{V^2}\circ\cdots\circ c_{V^r}= [C_1]\circ c_{V^1,1} \circ \taut_{C_1}\circ\cdots\circ[C_r] \circ c_{V^r,r}\circ \taut_{C_r}.\] Then it is easy to check that one can exchange two correspondences appearing in the formula above whenever they have different subindices $x\in I$.
\end{proof}

\subsubsection{A reformulation of \thref{geokolycor}}\label{refgeokolycor}
From now on, we assume $I=\{1,2\}$ and $[I]=\{1\}$.

\thref{geokolycor} is formulated in terms of special cohomological correspondences \eqref{removeleg}. In this section, we reformulate \thref{geokolycor} using \eqref{periodpush}.

Consider the diagram \begin{equation}\label{coevperioddiag}
    \begin{tikzcd}
        \Bun_G\times C \ar[d, "\id\times\D_C" ']  & \hk^{\glob}_{G,\fone} \ar[d, "\D^{\hk}_{\glob}"] \ar[l, "\lh_{\fone,\glob}" '] \ar[r, "\rh_{\fone,\glob}"] & \Bun_G\times C \ar[d, "\id\times\D_C"] \\
        \Bun_G\times C^2 & \hk^{\glob}_{G,\ftwo} \ar[l, "\lh_{\ftwo,\glob}" '] \ar[r, "\rh_{\ftwo,\glob}"] & \Bun_G\times C^2
    \end{tikzcd}.
\end{equation}
We have a map \begin{equation}\label{coevperiod}
\begin{split}
    \coev:&\Cor_{\hk^{\glob}_{G,\fone},\IC_{V\otimes W}}(\cP_X\boxtimes\uk_C,\cP_X\boxtimes\uk_C\langle d_V+d_W-2\rangle) \\
    \to& \Cor_{\hk^{\glob}_{G,\ftwo},\IC_{V\boxtimes W}}(\cP_X\boxtimes\uk_{C^2},\cP_X\boxtimes\uk_{C^2}\langle d_V+d_W\rangle)
    \end{split}
\end{equation}
defined by 
\begin{equation}\label{coevperioddef}
\begin{split}
    \coev:&\Cor_{\hk^{\glob}_{G,\fone},\IC_{V\otimes W}}(\cP_X\boxtimes\uk_C,\cP_X\boxtimes\uk_C\langle d_V+d_W-2\rangle) \\
    =&\Hom(\IC_{V\otimes W}\boxstar (\cP_X\boxtimes \uk_C), \cP_X\boxtimes \uk_C\langle d_V+d_W-2\rangle) \\
    \isom& \Hom((\id\times\D_C)^*\IC_{V\boxtimes W}\boxstar (\cP_X\boxtimes \uk_{C^2}), (\id\times\D_C)^*\cP_X\boxtimes \uk_{C^2}\langle d_V+d_W-2\rangle) \\
    \isom& \Hom((\id\times\D_C)^*\IC_{V\boxtimes W}\boxstar (\cP_X\boxtimes \uk_{C^2}), (\id\times\D_C)^!\cP_X\boxtimes \uk_{C^2}\langle d_V+d_W\rangle) \\
    \isom& \Hom(\IC_{V\boxtimes W}\boxstar (\cP_X\boxtimes \uk_{C^2}), (\id\times\D_C)_!(\id\times\D_C)^!\cP_X\boxtimes \uk_{C^2}\langle d_V+d_W\rangle) \\
    \to& \Hom(\IC_{V\boxtimes W}\boxstar (\cP_X\boxtimes \uk_{C^2}), \cP_X\boxtimes \uk_{C^2}\langle d_V+d_W\rangle) \\
    =&\Cor_{\hk^{\glob}_{G,\ftwo},\IC_{V\boxtimes W}}(\cP_X\boxtimes\uk_{C^2},\cP_X\boxtimes\uk_{C^2}\langle d_V+d_W\rangle)
    \end{split}
\end{equation}
Where the third step uses the purity isomorphism \thref{purity}.

We can reformulate \thref{geokolycor} as follows:

\begin{conj}\thlabel{geokolycorperiod}
    Under \thref{commsetup}, we have the following identity:
    \[c_{V\boxtimes W,\ftwo}-c_{W\boxtimes V,\ftwo}=\coev(c_{V\otimes W,\fone})\in\Cor_{\hk^{\glob}_{G,\ftwo},\IC_{V\boxtimes W}}(\cP_X\boxtimes\uk_{C^2},\cP_X\boxtimes\uk_{C^2}\langle d_V+d_W\rangle)\] where the elements $c_{V\boxtimes W,\ftwo},c_{W\boxtimes V,\ftwo},c_{V\otimes W,\fone}$ are both defined via \eqref{periodpush}.
\end{conj}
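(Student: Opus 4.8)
\textbf{Proof plan for \thref{geokolycorperiod}.}

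The plan is to pass from the ``legs removed'' formulation \thref{geokolycor} to the ``legs present'' formulation \thref{geokolycorperiod} by a purely formal manipulation, and then reduce the latter to a semi-local statement. First I would observe that \thref{geokolycor} and \thref{geokolycorperiod} are equivalent: applying $[C^I]\circ(-)\circ\taut_{C^I}$ to the identity in \thref{geokolycorperiod} and using the compatibility between push-forward, pull-back and composition of cohomological correspondences (\autoref{cccompatibilities}, in particular \thref{pushablebccomp}) recovers \thref{geokolycor}; conversely, since the curve $C$ is geometrically connected of genus $g$, the operations $\taut_{C^I}$ and $[C^I]$ only change the relevant $\Cor$-spaces by cohomology of $C^I$, and one checks that the map $\coev$ in \eqref{coevperioddef} is adjoint to the fusion-and-delete operation under this identification. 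The key point here is that $\D^{\hk}_{x,x+1,!}c_{V^{[I]}}$ in \eqref{nolegdiagpush}, after reattaching the two legs via $\taut$ and $[C^2]$, becomes exactly $\coev(c_{V\otimes W,\fone})$: both are built from the single-leg correspondence $c_{V\otimes W}$ by pulling back along the diagonal $\id\times\D_C$ and then using the purity isomorphism of \thref{purity} for the (smooth, relative dimension $1$) diagonal together with the base change compatibilities of \autoref{sheaftheory}.

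Next I would reduce \thref{geokolycorperiod} to a semi-local statement. The three cohomological correspondences $c_{V\boxtimes W,\ftwo}$, $c_{W\boxtimes V,\ftwo}$, $\coev(c_{V\otimes W,\fone})$ are all obtained by the local-to-global procedure of \autoref{conscc}, hence are push-forwards along $\pi_I:\Bun^X_{G,I}\to\Bun_G\times C^I$ of global special cohomological correspondences, which in turn are Cartesian pull-backs (along $f_{\fone,\gtosl}$, $r_{x,\glob}$, etc.) of the \emph{semi-local} special cohomological correspondences $c^{\sloc}_{V^I,I}$. Since Cartesian pull-back and push-forward are faithful enough on the relevant $\Cor$-spaces — this is where the genus hypothesis \itemref{techassum}{gnot1} and the vanishing \itemref{techassum}{bzsvlocvan} enter, to guarantee that the relevant $\Hom^0$ between the global sheaves injects into the semi-local ones (cf.\ the statement labelled \thref{slocvanishing} alluded to in the remarks) — it suffices to prove the identity
\[
c^{\sloc}_{V\boxtimes W,\ftwo}-c^{\sloc}_{W\boxtimes V,\ftwo}=\coev^{\sloc}(c^{\sloc}_{V\otimes W,\fone})
\]
at the semi-local level (this is the statement I would call \thref{geokolycorsloc}). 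I would carry out this reduction in \autoref{globslocvar}, checking that all the diagonal, forgetful, and insertion maps involved are Cartesian so that the compatibilities of \autoref{cccompatibilities} apply verbatim.

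Finally I would prove the semi-local identity \thref{geokolycorsloc} by relating it to the local commutator relation \itemref{genassum}{comassum}. The semi-local Hecke stack over $C^2$ degenerates, along the diagonal $\D_C\hookrightarrow C^2$, to the local iterated Hecke stack $\thk^{\loc}_{G,\{1,2\}}$ with loop rotation; the difference $c^{\sloc}_{V^1,1}\circ c^{\sloc}_{V^2,2}-c^{\sloc}_{V^2,2}\circ c^{\sloc}_{V^1,1}$ restricted to the diagonal is, by construction of the composition product and the identification $D\Sat^{\he}_{G,\hbar}\isom\Sat^{\nai}_G$, precisely the image of $[c^{\loc}_V,c^{\loc}_W]=c^{\loc}_{V\otimes W}\cdot\hbar$ under the map that pushes the $\hbar$-class onto the deformation to the normal cone of the diagonal; away from the diagonal the two compositions agree because the two legs are disjoint. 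Matching the $\hbar$-twist with the Tate twist $\langle 2\rangle$ discrepancy in \eqref{diagcor} (via the normalization $\Gamma(B\Gm,\uk)\isom k[\hbar]$ of \autoref{cohbg}) gives exactly $\coev^{\sloc}(c^{\sloc}_{V\otimes W,\fone})$. I expect \textbf{this last step to be the main obstacle}: making precise the ``degeneration to the diagonal carries the $\hbar$-commutator to the $\coev$ term'' requires a careful analysis of the nearby-cycles/specialization behaviour of the iterated semi-local Hecke stack and of how the kernel sheaves $\IC_{V\boxtimes W}$ fuse to $\IC_{V\otimes W}$, which is the function-field incarnation of the factorization structure that \cite{BZSV} leaves open; the technical assumptions \thref{techassum} are exactly what let us bypass the full factorizable statement and argue with the loop-rotated Plancherel algebra instead.
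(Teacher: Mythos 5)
Your overall architecture matches the paper's: reduce the two-leg period statement to a semi-local statement over $C^2$, and close the argument using the local commutator relation \itemref{genassum}{comassum} together with the identification $\Coor^*\hbar=c_1(T_C)$. Two remarks on the reductions. First, the claimed equivalence of \thref{geokolycor} and \thref{geokolycorperiod} is not needed and the direction you would actually use is doubtful: applying $[C^I]\circ(-)\circ\taut_{C^I}$ integrates over $C^I$ and is not injective on $\Cor$-spaces, so one cannot recover \thref{geokolycorperiod} from \thref{geokolycor}; the paper only ever uses \thref{geokolycorperiod}$\Rightarrow$\thref{geokolycor}. Second, the reduction from the period/global level to the semi-local level does not require any faithfulness of pull-back: the three correspondences are \emph{by construction} push-forwards along $\pi_I$ of pull-backs along $f_{I,\gtosl}$ of the semi-local ones, so the identity propagates forward once one checks that $\coev$ commutes with these operations (this uses the purity isomorphisms of \thref{deltapurity}, not injectivity). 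You have misplaced where \thref{techassum} enters.

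The genuine gap is in the last step. Knowing that the defect $\e:=c^{\sloc}_{V\boxtimes W}-c^{\sloc}_{W\boxtimes V}-\coev_{\sloc}(c^{\sloc}_{V\otimes W})$ restricts to zero on the open locus $U=C^2\setminus\D_C(C)$ (disjoint legs commute, \thref{gencomm}) and that its $*$-restriction to the diagonal is $f^{\hk,*}_{\sltol}(c^{\loc}_{V\boxtimes W}-c^{\loc}_{W\boxtimes V}-\hbar\cdot c^{\loc}_{V\otimes W})=0$ does \emph{not} by itself force $\e=0$: the excision fiber sequence only shows that $\e$ lies in the image of the Gysin map from the diagonal, and the vanishing of $\D^{X,*}\e$ then says that the preimage $\e_0$ satisfies $\adj(\e_0)\cdot c_1(T_C)=0$. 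To conclude $\e_0=0$ one needs the injectivity of multiplication by $c_1(T_C)$ on $\Hom^0(\IC_{V\otimes W}\boxstar\d^X_{\fone,\sloc},\d^X_{\fone,\sloc}\langle d_V+d_W\rangle)$ — this is exactly \thref{slocvanishing}, and it is here (not in the global-to-semi-local reduction) that both parts of \thref{techassum} are consumed, via the computation of $\Coor_*\Coor^*\uk$ for $g\neq 1$ and the flatness/vanishing of $\PL_{X,\hbar}$ in low degrees. Your proposed nearby-cycles/deformation-to-the-normal-cone mechanism does not supply this injectivity and is not the route the paper takes; replacing it by the excision sequence of \thref{excisionSHV} together with \thref{funddiagonecom}, \thref{funddiagtwocom} and \thref{slocvanishing} is the missing idea.
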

\begin{proof}[Proof of \thref{geokolycor} assuming \thref{geokolycorperiod}]
    Recall the map forgetting legs $w_I:\Bun_G\times C^I\to \Bun_G$. Consider the map \[\begin{split}
        \phi_{\ftwo}:=[C^2]\circ (\bullet)\circ \taut_{C^2}:&\Cor_{\hk^{\glob}_{G,\ftwo},\IC_{V\boxtimes W}}(\cP_X\boxtimes\uk_{C^2},\cP_X\boxtimes\uk_{C^2}\langle d_V+d_W\rangle)\\
        \to& \Cor_{\hk^{\glob}_{G,\ftwo},\IC_{V\boxtimes W}}(\cP_X,\cP_X\langle d_V+d_W-4\rangle)
    \end{split} \]
    It follows from the definition in \eqref{removeleg} that \[\phi_{\ftwo}(c_{V\boxtimes W,\ftwo}-c_{W\boxtimes V,\ftwo})=c_{V\boxtimes W}-c_{W\boxtimes V}.\] Therefore, it suffices to prove the following two maps coincide: \begin{equation}\label{interpcor}\begin{split}
        \phi_{\ftwo}\circ\coev=\D^{\hk}_{\glob,!}\circ \phi_{\fone}: & \Cor_{\hk^{\glob}_{G,\fone},\IC_{V\otimes W}}(\cP_X\boxtimes\uk_C,\cP_X\boxtimes\uk_C\langle d_V+d_W-2\rangle) \\
        \to & \Cor_{\hk^{\glob}_{G,\ftwo},\IC_{V\boxtimes W}}(\cP_X,\cP_X\langle d_V+d_W-4\rangle)
    \end{split}\end{equation} where $\phi_{\fone}=[C]\circ (\bullet) \circ \taut_C$ is the one-leg analogue of $\phi_{\ftwo}$.
    Consider the diagram \[\begin{tikzcd}
        \Bun_G \times C \ar[d, "\id\times\D_C"'] &\hk^{\glob}_{G,\fone} \ar[d, "\D^{\hk}_{\glob}" '] \ar[l, "\lh_{\fone,\glob}" '] &  \\
        \Bun_G\times C^2  \lcart&\hk^{\glob}_{G,\ftwo} \ar[r, "\rh_{\ftwo,\glob}"] \ar[l, "\lh_{\ftwo,\glob}" '] & \Bun_G \times C^2,
    \end{tikzcd}\] where the square is Cartesian.
    Denote \[\cF:=\rh_{\ftwo,\glob}^*(\cP_X\boxtimes\uk_{C^2})\otimes\IC_{V\boxtimes W}\langle -d_V-d_W+2\rangle \in \Shv(\hk^{\glob}_{G,\ftwo}).\] We can identify both maps in \eqref{interpcor} as maps \begin{equation}
            \phi_{\ftwo}\circ\coev=\D^{\hk}_!\circ \phi_{\fone}:\Hom(\lh_{\fone,\glob,!}\D^{\hk,*}_{\glob}\cF,w_{\fone}^!\cP_X\langle -2\rangle )\to \Hom(w_{\ftwo,!}\lh_{\ftwo,\glob,!}\cF,\cP_X\langle -2\rangle)
    \end{equation}
    For $\a\in \Hom(\lh_{\fone,\glob,!}\D^{\hk,*}_{\glob}\cF,w_{\fone}^*\cP_X)$, unwinding the definition of all the maps given in \eqref{nolegdiagpush} and \eqref{coevperioddef}, we have the following diagrams: \begin{equation}\label{droplegone}
        \begin{tikzcd}
            w_{\ftwo,!}\lh_{\ftwo,\glob,!}\cF \ar[d, "\id"] \ar[r]  & w_{\ftwo,!}\lh_{\ftwo,\glob,!}\D^{\hk}_{\glob,!}\D^{\hk,*}_{\glob}\cF \ar[r] & w_{\ftwo,!}(\id\times\D_C)_!\lh_{\fone,\glob,!}\D^{\hk,*}_{\glob}\cF \ar[d,"\id"] \\
            w_{\ftwo,!}\lh_{\ftwo,\glob,!}\cF \ar[r] & w_{\ftwo,!}(\id\times\D_C)_!(\id\times\D_C)^*\lh_{\ftwo,\glob,!}\cF \ar[r]  & w_{\ftwo,!}(\id\times\D_C)_!\lh_{\fone,\glob,!}\D^{\hk,*}_{\glob}\cF
        \end{tikzcd}
    \end{equation}
    \begin{equation}\label{droplegtwo}
        \begin{tikzcd}
             w_{\ftwo,!}(\id\times\D_C)_!w_{\fone}^!\cP_X\langle-2\rangle \ar[d, "\id"] \ar[r]  & w_{\fone,!}w_{\fone}^!\cP_X\langle -2\rangle \ar[r] & \cP_X\langle -2\rangle \ar[d, "\id"] \\
             w_{\ftwo,!}(\id\times\D_C)_!w_{\fone}^!\cP_X\langle-2\rangle \ar[r] & w_{\ftwo,!}(\id\times\D_C)_!(\id\times \D_C)^!w_{\ftwo}^!\cP_X\langle -2\rangle \ar[r] & \cP_X\langle -2\rangle
        \end{tikzcd}
    \end{equation}

    Note that $\D^{\hk}_!\circ \phi_{\fone}(\a)$ is given by the composition of the upper row of \eqref{droplegone} and $w_{\ftwo,!}(\id\times\D_C)_!\a$ and the upper row of \eqref{droplegtwo}, and $\phi_{\ftwo}\circ\coev(\a)$ is given by the composition of the lower row of \eqref{droplegone} and $w_{\ftwo,!}(\id\times\D_C)_!\a$ and the lower row of \eqref{droplegtwo}.
    The commutativity of the diagram \eqref{droplegone} follows from \thref{!pushbcstar}. The commutativity of the diagram \eqref{droplegtwo} follows from a similar argument. This concludes the proof.
\end{proof}

\subsubsection{Global and semi-local variants of \thref{geokolycorperiod}}\label{globslocvar}
In this section, we formulate variants of \thref{geokolycorperiod} in terms of \eqref{globcomp} and \eqref{sloccomp} and reduce the proof of the original statement to these variants.

Consider the diagram parallel to \eqref{coevperioddiag}:
\begin{equation}\label{coevglobdiag}
    \begin{tikzcd}
        \Bun_{G,\fone}^X \ar[d, "\D^{X}_{\glob}" ']  & \hk^{\Xp,\glob}_{G,\fone} \ar[d, "\D^{X,\hk}_{\glob}"] \ar[l, "\lh^X_{\fone,\glob}" '] \ar[r, "\rh^X_{\fone,\glob}"] & \Bun_{G,\fone}^X \ar[d, "\D^{X}_{\glob}"] \\
        \Bun_{G,\ftwo}^X & \hk^{\Xp,\glob}_{G,\ftwo} \ar[l, "\lh^X_{\ftwo,\glob}" '] \ar[r, "\rh^X_{\ftwo,\glob}"] & \Bun_{G,\ftwo}^X
    \end{tikzcd}
\end{equation}
and also the semi-local version
\begin{equation}\label{coevslocdiag}
    \begin{tikzcd}
        (LX/L^+G)_C \ar[d, "\D^{X}_{\sloc}" ']  & \hk^{\Xp,\sloc}_{G,\fone} \ar[d, "\D^{X,\hk}_{\sloc}"] \ar[l, "\lh^X_{\fone,\sloc}" '] \ar[r, "\rh^X_{\fone,\sloc}"] & (LX/L^+G)_C \ar[d, "\D^{X}_{\sloc}"] \\
        (LX/L^+G)_{C^2} & \hk^{\Xp,\sloc}_{G,\ftwo} \ar[l, "\lh^X_{\ftwo,\sloc}" '] \ar[r, "\rh^X_{\ftwo,\sloc}"] & (LX/L^+G)_{C^2}
    \end{tikzcd}
\end{equation}

We have maps of vector spaces similar to \eqref{coevperiod}:
\begin{equation}\label{coevglob}
\begin{split}
    \coev_{\glob}:&\Cor_{\hk^{\Xp,\glob}_{G,\fone},\IC_{V\otimes W}}(\d^X_{\fone,\glob},\d^X_{\fone,\glob}\langle d_V+d_W-2\rangle) \\
    \to& \Cor_{\hk^{\Xp,\glob}_{G,\ftwo},\IC_{V\boxtimes W}}(\d^X_{\ftwo,\glob},\d^X_{\ftwo,\glob}\langle d_V+d_W\rangle)
    \end{split}
\end{equation}
and 
\begin{equation}\label{coevsloc}
\begin{split}
    \coev_{\sloc}:&\Cor_{\hk^{\Xp,\sloc}_{G,\fone},\IC_{V\otimes W}}(\d^X_{\fone,\sloc},\d^X_{\fone,\sloc}\langle d_V+d_W-2\rangle) \\
    \to& \Cor_{\hk^{\Xp,\sloc}_{G,\ftwo},\IC_{V\boxtimes W}}(\d^X_{\ftwo,\sloc},\d^X_{\ftwo,\sloc}\langle d_V+d_W\rangle)
    \end{split}
\end{equation}
whose definition is completely parallel to \eqref{coevperioddef} given the following lemma:
\begin{lemma}\thlabel{deltapurity}
We have $\d^X_{\fone,\glob}\isom \D^{X,!}_{\glob}\d^X_{\ftwo,\glob}\langle 2\rangle $ and $\d^X_{\fone,\sloc}\isom \D^{X,!}_{\sloc}\d^X_{\ftwo,\sloc}\langle 2\rangle $ .
\end{lemma}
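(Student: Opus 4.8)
\textbf{Proof proposal for \thref{deltapurity}.}

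The plan is to reduce both isomorphisms to a purity statement coming from the fact that the relevant diagonal maps are closed embeddings into smooth (over base) relative situations, and that the unit sheaves $\d^X_{\bullet}$ are $*$-pushforwards of constant sheaves from the ``regular section'' loci. First I would spell out the geometry: the map $\D^X_{\glob}:\Bun^X_{G,\fone}\to\Bun^X_{G,\ftwo}$ is, on the level of legs, the restriction of $\id_{\Bun_G}\times\D_C: \Bun_G\times C\to\Bun_G\times C^2$ (a regular closed embedding of codimension one, since $\D_C\colon C\to C^2$ is), pulled back along the maps remembering the $X$-section data. Because $\Bun^X_{G,I}$ is obtained from $\Bun_G\times C^I$ by the Cartesian square defining rational sections regular away from the legs, one gets a Cartesian square relating $\D^X_{\glob}$ to $\id\times\D_C$. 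I would then invoke base change (the locally pro-smooth base change \thref{lprosmBC}, or simply the Cartesian-square compatibilities of the six-functor formalism) to transport the purity isomorphism $\uk_{C}\isom \D_C^!\uk_{C^2}\langle 2\rangle$ — which holds because $\D_C$ is a regular closed immersion of codimension $1$ between smooth curves, so $\D_C^!\uk_{C^2}\cong \uk_C(-1)[-2] = \uk_C\langle -2\rangle^{-1}$ in our normalization — to the isomorphism $\uk_{\Bun_G\times C}\isom (\id\times\D_C)^!\uk_{\Bun_G\times C^2}\langle 2\rangle$.

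Next I would handle the presence of the $X$-section, i.e.\ pass from $\uk$ on $\Bun_G\times C^I$ to $\d^X_{I,\glob}=i_{I,\glob,*}\uk_{\Bun^X_G\times C^I}$. The key point is that $i_{I,\glob}$ is a closed immersion of finite presentation (as noted in \autoref{modulispaces}), so $i_{I,\glob,*}=i_{I,\glob,!}$, and the squares
\[
\begin{tikzcd}
\Bun_G^X\times C \ar[r] \ar[d] & \Bun_G^X\times C^2 \ar[d] \\
\Bun^X_{G,\fone} \ar[r, "\D^X_{\glob}"] & \Bun^X_{G,\ftwo}
\end{tikzcd}
\qquad
\begin{tikzcd}
\Bun^X_{G,\fone} \ar[r, "\D^X_{\glob}"] \ar[d] & \Bun^X_{G,\ftwo} \ar[d] \\
\Bun_G\times C \ar[r, "\id\times\D_C"] & \Bun_G\times C^2
\end{tikzcd}
\]
are Cartesian (the left one because along the diagonal the rational $X$-section regular away from both legs becomes the rational $X$-section regular away from the merged leg, whose regular locus pulls back correctly; the right one by construction of $\Bun^X_{G,I}$ as a fiber product over $(LX/L^+G)_{C^{I\setminus I}}$). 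Applying proper base change to the closed immersion $i_{\fone,\glob}$ and the $!$-pullback compatibility $\D^{X,!}_{\glob}i_{\ftwo,\glob,*}\cong i_{\fone,\glob,*}(\id\times\D_C|_{X})^!$ (using that $\id\times\D_C$ and $\D^X_{\glob}$ fit into a Cartesian square with the finite-presentation maps $i$), one obtains
\[
\D^{X,!}_{\glob}\d^X_{\ftwo,\glob}\cong i_{\fone,\glob,*}(\id\times\D_C)^!\uk_{\Bun_G^X\times C^2}\cong i_{\fone,\glob,*}\uk_{\Bun_G^X\times C}\langle -2\rangle = \d^X_{\fone,\glob}\langle -2\rangle,
\]
which rearranges to $\d^X_{\fone,\glob}\isom\D^{X,!}_{\glob}\d^X_{\ftwo,\glob}\langle 2\rangle$. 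The semi-local statement is proved \emph{mutatis mutandis}, replacing $\Bun_G$ by $B(L^+G)_{C^I}$ (equivalently working with $(LX/L^+G)_{C^I}$), $\id\times\D_C$ by the corresponding diagonal $(LX/L^+G)_C\to(LX/L^+G)_{C^2}$ which is again cut out by the codimension-one diagonal in the $C$-legs, and $i_{I,\glob}$ by $i_{I,\sloc}$; here one may need \thref{lprosmBC} to handle the pro-smooth (non-finite-type) nature of the loop/jet spaces, but the purity input is identical.

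The main obstacle I anticipate is purely bookkeeping: one must verify that the diagonal map on the $X$-section moduli really is the base change of the curve-diagonal $\D_C$ and carries no extra normal directions — i.e.\ that merging two legs at which the section is already allowed to be irregular does not change the ``regular locus'' constraint in a way that alters the conormal sheaf. Concretely, the normal bundle of $\D^X_{\glob}$ should be the pullback of $T_C$ under the leg map, with no contribution from the section, and this requires knowing that the fiber-product definition of $\Bun^X_{G,I}$ over $(LX/L^+G)_{C^{I\setminus I}} = \mathrm{pt}$ is literally flat/transverse over the diagonal. Once this transversality is pinned down, the purity isomorphism and all the base-change compatibilities are formal consequences of the six-functor formalism set up in \autoref{sheaftheory}, in particular \thref{purity} and \thref{lprosmBC}.
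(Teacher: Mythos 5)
Your proof is correct and follows essentially the same route as the paper's: use the Cartesian square relating $i_{\fone,\glob}$ and $i_{\ftwo,\glob}$ together with proper base change to reduce to the purity isomorphism for $\id\times\D_C$ on the regular-section loci, which holds by \thref{purity} in the global case and by \thref{purity} combined with \thref{lprosmBC} in the semi-local case. The transversality worry in your last paragraph is moot: after base change the purity input lives entirely on $\Bun_G^X\times C^I$ (resp.\ $(L^+X/L^+G)_{C^I}$), where the relevant diagonal is literally pulled back from $\D_C:C\to C^2$, so no normal directions from the section can appear.
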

\begin{proof}[Proof of lemma]
Consider \begin{equation}
\begin{tikzcd}
    \Bun^X_{G}\times C \ar[r, "i^X_{\fone,\glob}" ] \ar[d, "\id\times\D_C" '] & \Bun^X_{G,\fone} \ar[d, "\D^X_{\glob}"] \\
    \Bun^X_{G}\times C^2 \ar[r, "i^X_{\ftwo,\glob}" ] & \Bun^X_{G,\ftwo}\rcart
    \end{tikzcd}
\end{equation}
and 
\begin{equation}
\begin{tikzcd}
    (L^+X/L^+G)_C \ar[r, "i^X_{\fone,\sloc}" ] \ar[d, "\D^{X,+}_{\sloc}" '] & (LX/L^+G)_C \ar[d, "\D^X_{\sloc}"] \\
    (L^+X/L^+G)_{C^2} \ar[r, "i^X_{\ftwo,\sloc}" ] & (LX/L^+G)_{C^2}\rcart
    \end{tikzcd}.
\end{equation}
By proper base change, we only need to show the purity transformations
\begin{equation}
    [\id\times \D_C]:\uk_{\Bun_G^X\times C}\to (\id\times\D_C)^!\uk_{\Bun_G^X\times C^2}
\end{equation}
and 
\begin{equation}
    [\D^{X,+}_{\sloc}]:\uk_{(L^+X/L^+G)_C}\to \D^{X,+,!}_{\sloc}\uk_{(L^+X/L^+G)_{C^2}}
\end{equation}
are isomorphisms. Here, both fundamental classes are defined via \thref{dfcgeneral}, where the second one uses the Cartesian square
\begin{equation*}
\begin{tikzcd}
(L^+X/L^+G)_{C} \arrow[r, "{\D^{X,+}_{\sloc}}"] \arrow[d, "l_{\{1\},\sloc}"'] & (L^+X/L^+G)_{C^2} \arrow[d, "l_{\{1,2\},\sloc}"]  \\
 C    \arrow[r, "\Delta_C"]                           & C^2    \rcart              
\end{tikzcd}
\end{equation*}
which makes the map ${\D^{X,+}_{\sloc}}$ quasi-smooth as in \thref{dfcgeneral}.

The first isomorphism follows from \thref{purity} and the second follows from \thref{purity} and \itemref{lprosmBC}{pro3} (note that $(L^+X)_{C^2}\rightarrow (L^+X/L^+G)_{C^2}$, $(L^+X)_{C^2}\rightarrow C^2$ are pro-smooth surjection, and $(L^+X)_{C^2}$ is qcqs).

\end{proof}

We can now formulate global and semi-local variants (or primitive versions) of \thref{geokolycorperiod}:
\begin{conj}[Global version]\thlabel{geokolycorglob}
    Under \thref{genassum}, we have the following identity:
    \[c_{V\boxtimes W,\ftwo}^{\glob}-c_{W\boxtimes V,\ftwo}^{\glob}=\coev_{\glob}(c_{V\otimes W,\fone}^{\glob})\in\Cor_{\hk^{\Xp,\glob}_{G,\ftwo},\IC_{V\boxtimes W}}(\d^X_{\ftwo,\glob},\d^X_{\ftwo,\glob}\langle d_V+d_W\rangle)\] where the elements $c_{V\boxtimes W,\ftwo}^{\glob},c_{W\boxtimes V,\ftwo}^{\glob},c_{V\otimes W,\fone}^{\glob}$ are both defined via \eqref{globcomp}.
\end{conj}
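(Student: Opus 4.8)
\textbf{Proof proposal for \thref{geokolycorglob}.} The plan is to reduce the global identity to the semi-local one (\thref{geokolycorsloc}) by pulling back along the map $f_{\ftwo,\gtosl}:\Bun^X_{G,\ftwo}\to (LX/L^+G)_{C^2}$, and then to combine this semi-local statement with the local commutator relation provided by \itemref{genassum}{comassum}. The key observation that makes the reduction work is that the pull-back functoriality of cohomological correspondences (\thref{ccpull}) is already how all the global special cohomological correspondences $c^{\glob}_{V^x,x}$ were defined in \autoref{conscc} — they are pulled back from their semi-local counterparts $c^{\sloc}_{V^x,x}$ along Cartesian squares. Since pull-back is compatible with composition (\autoref{cccompatibilities}) and with the $\coev$ construction (which is assembled from purity isomorphisms, base change, and adjunction maps, all of which commute with $*$-pull-back under \thref{pro1}, \thref{pro2}, \thref{lprosmBC}), the whole of \thref{geokolycorglob} is the pull-back along $f_{\ftwo,\gtosl}$ of the corresponding identity among semi-local cohomological correspondences. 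The only subtle point is that $\coev$ and $\coev_{\glob}$, $\coev_{\sloc}$ involve the purity isomorphisms $\d^X_{\fone,\glob}\cong\D^{X,!}_{\glob}\d^X_{\ftwo,\glob}\langle 2\rangle$ and $\d^X_{\fone,\sloc}\cong\D^{X,!}_{\sloc}\d^X_{\ftwo,\sloc}\langle 2\rangle$ of \thref{deltapurity}, so one must check these two fundamental classes are compatible under pull-back; this follows from the construction of the derived fundamental class in \thref{dfcgeneral} together with \itemref{lprosmBC}{pro3}, exactly as in the proof of \thref{deltapurity} itself.

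First I would set up the Cartesian cube expressing the global diagram \eqref{coevglobdiag} as the base change of the semi-local diagram \eqref{coevslocdiag} along $f_{\ftwo,\gtosl}$, using that all the squares appearing in \autoref{conscc} relating global to semi-local Hecke stacks are Cartesian. Then I would invoke \thref{ccbc} (the base change compatibility between push and pull for cohomological correspondences) to see that $f_{\ftwo,\gtosl}^*$ intertwines $\coev_{\glob}$ with $\coev_{\sloc}$, and that $f_{\ftwo,\gtosl}^*c^{\sloc}_{V\boxtimes W,\ftwo}=c^{\glob}_{V\boxtimes W,\ftwo}$ and similarly for the other two terms — this is essentially tautological given the definitions \eqref{globcomp}, \eqref{sloccomp}. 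At this point \thref{geokolycorglob} becomes the image under $f_{\ftwo,\gtosl}^*$ of \thref{geokolycorsloc}.

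Then I would invoke \thref{geokolycorsloc} itself. I expect that statement's proof (carried out in \autoref{proofgeokolycorsloc}) to be where the real content lies: it is the semi-local reflection of the local commutator relation $c^{\loc}_{V\otimes W}\cdot\hbar=[c^{\loc}_V,c^{\loc}_W]$ from \itemref{genassum}{comassum}, and converting the $\hbar$-divisibility — which lives in the loop-rotated Plancherel algebra $\PL_{X,\hbar}$ — into the $\coev$ term over $C^2$ requires the geometry of how the diagonal $\D_C\subset C^2$ degenerates and how the $\Aut(D)$-equivariant (loop-rotated) local picture matches the moving-points semi-local picture via the local-coordinate map $\Coor:C\to B\Aut(D)$. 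The vanishing hypotheses \itemref{techassum}{bzsvlocvan} (no $\Hom$'s in degrees $1,2,3$) and $g\neq 1$ from \itemref{techassum}{gnot1} presumably enter there to rule out correction terms and to control $H^*(C,-)$-type obstructions; since \thref{geokolycorglob} as stated only assumes \thref{genassum}, I would expect that in the logical structure of the paper \thref{geokolycorsloc} is proved under \thref{techassum} and then \thref{geokolycorglob} is really only used downstream under that same hypothesis — or else the $\coev$ over $C^2$ absorbs the genus-dependent factor automatically.

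The main obstacle, then, is not the reduction (which is formal bookkeeping with Cartesian squares and the six-functor compatibilities already established in \autoref{sheaftheory} and \autoref{cc}), but rather ensuring that $\coev_{\glob}$ and $\coev_{\sloc}$ are genuinely compatible with pull-back along $f_{\ftwo,\gtosl}$ at the level of the derived fundamental classes — i.e.\ that the purity isomorphisms of \thref{deltapurity} are natural in the relevant sense. I would handle this by observing that $f_{\ftwo,\gtosl}$ is (after base change) pro-smooth or at worst factors through pro-smooth surjections of qcqs schemes, so \itemref{pro2}{pro2Comp} and \itemref{lprosmBC}{pro3} apply and the fundamental class $[\id\times\D_C]$ pulls back to $[\D^{X,+}_{\sloc}]$ up to the expected shift; the rest of the argument is then the routine diagram chase showing the two composites in the definitions \eqref{coevperioddef}-style of $\coev_{\glob}$ and $f_{\ftwo,\gtosl}^*\coev_{\sloc}$ agree.
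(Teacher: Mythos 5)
Your proposal is correct and follows essentially the same route as the paper: the paper deduces \thref{geokolycorglob} from \thref{geokolycorsloc} by pull-back along $f_{\ftwo,\gtosl}$, with the only non-formal step being the verification that $f_{\ftwo,\gtosl}^*\circ\coev_{\sloc}=\coev_{\glob}\circ f_{\fone,\gtosl}^*$, which it establishes via the two base-change/purity diagrams \eqref{sloctoglobtwo}--\eqref{sloctoglobthree} identifying the Beck--Chevalley map with the purity isomorphism of \thref{deltapurity} — exactly the compatibility you flag as the main point. Your remark about where \thref{techassum} enters is also accurate: it is needed only for the proof of \thref{geokolycorsloc}, not for this reduction.
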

\begin{conj}[Semi-local version]\thlabel{geokolycorsloc}
    Under \thref{genassum}, we have the following identity:
    \[c_{V\boxtimes W,\ftwo}^{\sloc}-c_{W\boxtimes V,\ftwo}^{\sloc}=\coev_{\sloc}(c_{V\otimes W,\fone}^{\sloc})\in\Cor_{\hk^{\Xp,\sloc}_{G,\ftwo},\IC_{V\boxtimes W}}(\d^X_{\ftwo,\sloc},\d^X_{\ftwo,\sloc}\langle d_V+d_W\rangle)\] where the elements $c_{V\boxtimes W,\ftwo}^{\sloc},c_{W\boxtimes V,\ftwo}^{\sloc},c_{V\otimes W,\fone}^{\sloc}$ are both defined via \eqref{sloccomp}.
\end{conj}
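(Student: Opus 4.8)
The plan is to prove \thref{geokolycorsloc} by localizing both sides along the diagonal $\Delta_C\subset C^2$ and matching the diagonal contribution with the local commutator relation \itemref{genassum}{comassum}. The first step is a factorization argument away from the diagonal. Over the open locus $C^2\setminus\Delta_C$ the iterated semi-local Hecke stack splits as a product of two one-leg Hecke stacks supported at disjoint legs, and the composition of cohomological correspondences (\autoref{cccomp}) defining $c^{\sloc}_{V\boxtimes W,\ftwo}$ and $c^{\sloc}_{W\boxtimes V,\ftwo}$ is therefore symmetric in the two legs; matching this symmetry with the commutativity constraint of the Satake category that identifies $\IC_{W\boxtimes V}$ with $\IC_{V\boxtimes W}$, the two elements agree after restriction to $C^2\setminus\Delta_C$. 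On the other hand $\coev_{\sloc}$ is constructed (see \eqref{coevslocdiag}, \eqref{coevsloc}, and the analogue of \eqref{coevperioddef}) as a pull--push through the diagonal map $\D^{X,\hk}_{\sloc}$, so $\coev_{\sloc}(c^{\sloc}_{V\otimes W,\fone})$ restricts to $0$ on $C^2\setminus\Delta_C$. Writing $\epsilon$ for the difference of the two sides of the claimed identity, we conclude that $\epsilon$ restricts to $0$ over $C^2\setminus\Delta_C$; by the excision triangle of \thref{excisionSHV} applied to the closed immersion $\hk^{\Xp,\sloc}_{G,\fone}\hookrightarrow\hk^{\Xp,\sloc}_{G,\ftwo}$, it then suffices to show that $\epsilon$ vanishes after restriction to the formal neighbourhood $\widehat{\Delta}_C$ of $\Delta_C$ in $C^2$.

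The second step is the computation over $\widehat{\Delta}_C$, which I expect to be where the ``fusion equals loop rotation'' principle enters. Using the local-coordinate $\Aut(D)$-torsor on $C$ together with locally pro-smooth base change \thref{lprosmBC}, the restrictions to $\widehat{\Delta}_C$ of the semi-local Hecke stacks $\hk^{\Xp,\sloc}_{G,\ftwo}$, $\hk^{\Xp,\sloc}_{G,\fone}$, of all the kernels $\IC_{(\cdot)}$, and of the unit objects $\d^X_{(\cdot),\sloc}$ are pulled back from the $\Aut(D)$-equivariant two-leg local Hecke stack, in which the relative coordinate $t_1-t_2$ that degenerates the two legs into one leg is identified with the loop-rotation parameter; concretely this should identify the relevant $\Cor$-space over $\widehat{\Delta}_C$ with (an $\hbar$-adic completion of) $\Hom_{\Gc}(V\otimes W\langle -d_V-d_W\rangle,\PL_{X,\hbar})$. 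Under this identification I expect: $c^{\sloc}_{V\boxtimes W,\ftwo}|_{\widehat{\Delta}_C}$ corresponds to the Plancherel-algebra product $c^{\loc}_V\circ c^{\loc}_W$ (this is exactly the characterization of the multiplication $m:\PL_{X,\hbar}\otimes\PL_{X,\hbar}\to\PL_{X,\hbar}$ recalled just before \thref{genassum}), $c^{\sloc}_{W\boxtimes V,\ftwo}|_{\widehat{\Delta}_C}$ to $c^{\loc}_W\circ c^{\loc}_V$, and $\coev_{\sloc}(c^{\sloc}_{V\otimes W,\fone})|_{\widehat{\Delta}_C}$ to $\hbar\cdot c^{\loc}_{V\otimes W}$, with the factor $\hbar$ produced by the purity shift $\langle 2\rangle$ of \thref{deltapurity} together with the diagonal counit $(\D^{X}_{\sloc})_!(\D^{X}_{\sloc})^!\to\id$, which contributes the class $c_1(\cO(-1))=\hbar$ in the normalization of \autoref{cohbg}. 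Granting all this, $\epsilon|_{\widehat{\Delta}_C}$ becomes $[c^{\loc}_V,c^{\loc}_W]-\hbar\cdot c^{\loc}_{V\otimes W}$, which is $0$ by \itemref{genassum}{comassum}; the injectivity of multiplication by $\hbar$ from \itemref{genassum}{flatassum} moreover ensures $c^{\loc}_{V\otimes W}$ is unambiguously defined, so there is no normalization slack. Combining the vanishing over $\widehat{\Delta}_C$ with the vanishing over $C^2\setminus\Delta_C$ from the first step gives $\epsilon=0$, proving \thref{geokolycorsloc}; together with the global and period reductions (\thref{geokolycorglob}, \thref{geokolycorperiod}) this is the final input to \thref{geokolycorthm}.

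The main obstacle is the second step, specifically the rigorous construction of the ``fusion equals loop rotation'' dictionary at the level of cohomological correspondences with kernel over the non-Artin, non-placid semi-local Hecke stacks. Two sub-points require care: (i) showing that composition of correspondences in the iterated semi-local Hecke stack, after restriction to $\widehat{\Delta}_C$ and descent along the local-coordinate torsor, recovers the multiplication $m$ on $\PL_{X,\hbar}$ --- this amounts to a careful comparison of the Beilinson--Drinfeld degeneration of the two-leg affine Grassmannian with the loop-rotated local Grassmannian, compatibly with the $\d^X$-objects and with the $\Sat_{G,\hbar}$-module structure of \autoref{shvandcat}; and (ii) pinning down the exact $\hbar$-factor produced by $\coev_{\sloc}$, for which the degree and Tate-twist bookkeeping of the analogue of \eqref{coevperioddef}, of \thref{purity}, and of \thref{deltapurity} has to be tracked precisely. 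All of the six-functor manipulations needed for the ``support on the diagonal'' reduction in the first step --- excision, proper base change, and (locally) pro-smooth base change --- are available from \autoref{sheaftheory}, so the difficulty is concentrated in this geometric comparison rather than in the sheaf-theoretic foundations.
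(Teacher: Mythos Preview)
Your first step --- vanishing of $\epsilon$ over $C^2\setminus\Delta_C$ via the product factorization of the two-leg Hecke stack --- is exactly what the paper does (\thref{gencomm}), and your observation that $\coev_{\sloc}(c^{\sloc}_{V\otimes W,\fone})$ vanishes there because $j^{X,*}\circ\Gys=0$ is also correct.

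The divergence, and the gap, is in your second step. You propose to restrict to the \emph{formal neighbourhood} $\widehat{\Delta}_C$ and invoke a ``fusion equals loop rotation'' dictionary identifying the relevant $\Cor$-space with an $\hbar$-adic completion of $\Hom_{\Gc}(V\otimes W,\PL_{X,\hbar})$. This is precisely the step the paper does \emph{not} know how to carry out: the authors remark explicitly (see the remarks following \thref{geokolycorthm}) that making this comparison rigorous over a general curve is open, and that the only case where it is transparent is $C=[\A^1/\Gm]$. Your proposal correctly identifies this as ``the main obstacle,'' but treats it as a technicality to be filled in; in fact it is the entire content of removing \thref{techassum}, which the paper leaves as a conjecture.

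What the paper actually does is more modest and more concrete. It restricts $\epsilon$ only to the \emph{set-theoretic} diagonal via $\D^{X,*}_{\sloc}$, not to any formal thickening. The key diagram \eqref{funddiagone} shows that for a class in the image of the Gysin map $\Gys$, its $\D^{X,*}$-restriction equals the preimage composed with $\cdot\, c_1(T_C)$; combining with \thref{funddiagtwocom} (which says $\Coor^*\hbar=c_1(T_C)$, so pulling back the local $\cdot\hbar$ gives the semi-local $\cdot c_1(T_C)$) and \thref{fundequal}, one gets $\D^{X,*}\epsilon=0$. Now $j^{X,*}\epsilon=0$ together with $\D^{X,*}\epsilon=0$ is \emph{not} enough to force $\epsilon=0$ --- this is exactly the information lost by working with the diagonal rather than its formal neighbourhood. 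The paper closes the gap with \thref{slocvanishing}: under \thref{techassum}, the map $\cdot\, c_1(T_C)$ on the relevant $\Hom$-space is injective, so writing $\epsilon=\Gys(\epsilon_0)$ (possible since $j^{X,*}\epsilon=0$) and using $\adj(\epsilon_0)\cdot c_1(T_C)=\D^{X,*}\epsilon=0$ gives $\epsilon_0=0$. The proof of \thref{slocvanishing} is where \itemref{techassum}{gnot1} (formality of $H^\bullet(C)$) and \itemref{techassum}{bzsvlocvan} (degree vanishing in $\PL_{X,\hbar}$) are genuinely consumed.

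In short: your off-diagonal argument is right and matches the paper; your on-diagonal argument would prove the full conjecture if it could be made rigorous, but that is open; the paper trades the formal-neighbourhood comparison for an injectivity statement about $\cdot\, c_1(T_C)$, at the cost of assuming \thref{techassum}.
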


We now deduce \thref{geokolycorperiod} from \thref{geokolycorglob} and then deduce \thref{geokolycorglob} from \thref{geokolycorsloc}.

\begin{proof}[Proof of \thref{geokolycorperiod} assuming \thref{geokolycorglob}]\label{periodpushproof}
    Take $I=\fone,\ftwo$ in the diagram \eqref{globtoperioddiag}. The push-forward functoriality in  \autoref{ccfunc} gives us \begin{equation}
        \pi_{I,!}:\Cor_{\hk^{\Xp,\glob}_{G,I},\IC_{V^I}}(\d^X_{I,\glob},\d^X_{I,\glob}\langle d_I\rangle) \to \Cor_{\hk^{\glob}_{G,I},\IC_{V^I}}(\cP_X\boxtimes\uk_{C^I},\cP_X\boxtimes\uk_{C^I}\langle d_I\rangle)
    \end{equation}
for $I=\fone,\ftwo$. We only need to show \begin{equation}\label{periodpushone}
\begin{split}
    \pi_{\ftwo,!}\circ \coev_{\glob} = \coev\circ \pi_{\fone,!} :& \Cor_{\hk^{\Xp,\glob}_{G,\fone},\IC_{V\otimes W}}(\d^X_{\fone,\glob},\d^X_{\fone,\glob}\langle d_V+d_W-2\rangle) \\
    \to& \Cor_{\hk^{\Xp,\sloc}_{G,\ftwo},\IC_{V\boxtimes W}}(\d^X_{\ftwo,\sloc},\d^X_{\ftwo,\sloc}\langle d_V+d_W\rangle)
    \end{split}.
\end{equation}
The proof is similar to \eqref{interpcor}, but we write down the argument again for safety.
Denote \[\cF:=\rh^{X,*}_{\ftwo,\glob}\d^X_{\glob}\otimes \IC_{V\boxtimes W}\langle -d_V-d_W\rangle \in\Shv(\hk^{\Xp,\glob}_{G,\ftwo}).\] We can identify both maps in \eqref{periodpushone} as maps
\begin{equation}
\begin{split}
    \pi_{\ftwo,!}\circ \coev_{\glob} = \coev\circ \pi_{\fone,!} :& \Hom(\lh^X_{\fone,\glob,!}\D^{\hk,*}_{\glob}\cF, \D^{X,!}_{\glob}\d^X_{\ftwo,\glob}) \\
    \to& \Hom(\pi_{\ftwo,!}\lh^X_{\ftwo,\glob,!}\cF, \pi_{\ftwo,!}\d^X_{\ftwo,\glob})
    \end{split}   
\end{equation}
Consider commutative diagrams
\begin{equation}\label{periodpushtwo}
    \begin{tikzcd}
        \pi_{\ftwo,!}\lh^X_{\ftwo,\glob,!}\cF \ar[d, "\id" '] \ar[r] & \pi_{\ftwo,!}\D^X_{\glob,!}\D^{X,*}_{\glob}\lh^X_{\ftwo,\glob,!}\cF \ar[r] & (\id\times \D_C)_!\pi_{\fone,!}\lh^X_{\fone,\glob,!}\D^{\hk,*}_{\glob} \cF \ar[d, "\id" ] \\
        \pi_{\ftwo,!}\lh^X_{\ftwo,\glob,!}\cF \ar[r] & (\id\times \D_C)_!(\id\times\D_C)^*\pi_{\ftwo,!}\lh^X_{\ftwo,\glob,!}\cF \ar[r] & (\id\times \D_C)_!\pi_{\fone,!}\lh^X_{\fone,\glob,!}\D^{\hk,*}_{\glob} \cF
    \end{tikzcd}
\end{equation}
and 
\begin{equation}\label{periodpushthree}
    \begin{tikzcd}
        (\id\times \D_C)_!\pi_{\fone,!}\D^{X,!}_{\glob}\d^X_{\ftwo,\glob} \ar[d, "\id" '] \ar[r] & \pi_{\ftwo,!}\D^X_{\glob,!}\D^{X,!}_{\glob}\d^X_{\ftwo,\glob} \ar[r] & \pi_{\ftwo,!}\d^X_{\ftwo,\glob} \ar[d, "\id"] \\
        (\id\times \D_C)_!\pi_{\fone,!}\D^{X,!}_{\glob}\d^X_{\ftwo,\glob} \ar[r] & (\id\times \D_C)_!(\id\times \D_C)^!\pi_{\ftwo,!}\d^X_{\ftwo,\glob} \ar[r] & \pi_{\ftwo,!}\d^X_{\ftwo,\glob}
    \end{tikzcd}
\end{equation}
Here the first arrow in the bottom of \eqref{periodpushthree} uses the Beck-Chevalley base change map $\pi_{\fone,!}\D^{X,!}_{\glob}\d^X_{\ftwo,\glob}\to (\id\times \D_C)^!\pi_{\ftwo,!}\d^X_{\ftwo,\glob}$ which can be identified with the natural purity isomorphism $\cP_X\boxtimes\uk_C\langle -2\rangle \isom \cP_X\boxtimes \uk_{C^2}$ using \thref{deltapurity}.

Fix an element $\a\in\Hom(\lh^X_{\fone,\glob,!}\D^{\hk,*}_{\glob}\cF, \D^{X,!}_{\glob}\d^X_{\ftwo,\glob})$. Unwinding definitions, one can see that the map $\pi_{\ftwo,!}\circ \coev_{\glob}(\a)$ can be identified with the composition of the top row of \eqref{periodpushtwo} and $(\id\times \D_C)_!\pi_{\fone,!}(\a)$ and the top row of \eqref{periodpushthree}. The map $\coev\circ \pi_{\fone,!}(\a)$ can be identified with the composition of the bottom row of \eqref{periodpushtwo} and $(\id\times \D_C)_!\pi_{\fone,!}(\a)$ and the bottom row of \eqref{periodpushthree}. The commutativity of \eqref{periodpushtwo} follows from \thref{!pushbcstar}. The commutativity of \eqref{periodpushthree} follows from a similar argument.
This concludes the proof.
\end{proof}

\begin{proof}[Proof of \thref{geokolycorglob} assuming \thref{geokolycorsloc}]
    Keep our notation the same as the previous proof. The pull-back functoriality in  \autoref{ccfunc} gives us
    \begin{equation}
        f_{I,\gtosl}^*: \Cor_{\hk^{\Xp,\sloc}_{G,I},\IC_{V^I}}(\d^X_{I,\sloc},\d^X_{I,\sloc}\langle d_I\rangle) \to \Cor_{\hk^{\Xp,\glob}_{G,I},\IC_{V^I}}(\d^X_{I,\glob},\d^X_{I,\glob}\langle d_I\rangle)
    \end{equation}
\end{proof}
for $I=\fone,\ftwo$. 
Given the compatibility of the pull-back functoriality in  \autoref{ccfunc} and composition  \autoref{cccomp}, we only need to show \begin{equation}\label{sloctoglobone}
\begin{split}
    f_{\ftwo,\gtosl}^*\circ \coev_{\sloc} = \coev_{\glob}\circ f_{\fone,\gtosl}^* :& \Cor_{\hk^{\Xp,\sloc}_{G,\fone},\IC_{V\otimes W}}(\d^X_{\fone,\sloc},\d^X_{\fone,\sloc}\langle d_V+d_W-2\rangle) \\
    \to& \Cor_{\hk^{\Xp,\glob}_{G,\ftwo},\IC_{V\boxtimes W}}(\d^X_{\ftwo,\glob},\d^X_{\ftwo,\glob}\langle d_V+d_W\rangle)
    \end{split}.
\end{equation}
Denote \[\cF:=\rh^{X,*}_{\ftwo,\sloc}\d^X_{\sloc}\otimes \IC_{V\boxtimes W}\langle -d_V-d_W\rangle \in\Shv(\hk^{\Xp,\sloc}_{G,\ftwo}).\] Then the maps in \eqref{sloctoglobone} are identifies with maps
\begin{equation}
\begin{split}
    f_{\ftwo,\gtosl}^*\circ \coev_{\sloc} = \coev_{\glob}\circ f_{\fone,\gtosl}^* :& \Hom(\lh^X_{\fone,\sloc,!}\D^{\hk,*}_{\sloc}\cF, \D^{X,!}_{\sloc}\d^X_{\ftwo,\sloc}) \\
    \to& \Hom(f^*_{\ftwo,\gtosl}\lh^X_{\ftwo,\sloc,!}\cF, f^*_{\ftwo,\gtosl}\d^X_{\ftwo,\sloc})
    \end{split}.
\end{equation}
Consider commutative diagrams
\begin{equation}\label{sloctoglobtwo}
    \begin{tikzcd}
        f^*_{\ftwo,\gtosl}\lh^X_{\ftwo,\sloc,!}\cF \ar[d, "\id" '] \ar[r] & f^*_{\ftwo,\gtosl}\D^X_{\sloc,!}\D^{X,*}_{\sloc}\lh^X_{\ftwo,\sloc,!}\cF \ar[r] & \D^{X}_{\glob,!}f^*_{\fone,\gtosl}\lh^X_{\fone,\sloc,!}\D^{\hk,*}_{\sloc}\cF \ar[d, "\id"] \\
        f^*_{\ftwo,\gtosl}\lh^X_{\ftwo,\sloc,!}\cF \ar[r] & \D^X_{\glob,!}\D^{X,*}_{\glob}f^*_{\ftwo,\gtosl}\lh^X_{\ftwo,\sloc,!}\cF \ar[r] & \D^{X}_{\glob,!}f^*_{\fone,\gtosl}\lh^X_{\fone,\sloc,!}\D^{\hk,*}_{\sloc}\cF
    \end{tikzcd}
\end{equation}
and 
\begin{equation}\label{sloctoglobthree}
    \begin{tikzcd}
        \D^{X}_{\glob,!}f^*_{\fone,\gtosl}\D^{X,!}_{\sloc}\d^X_{\ftwo,\sloc} \ar[d, "\id"] \ar[r] & \D^{X}_{\glob,!}\D^{X,!}_{\glob}f^*_{\ftwo,\gtosl}\d^X_{\ftwo,\sloc} \ar[r] & f^*_{\ftwo,\gtosl}\d^X_{\ftwo,\sloc} \ar[d, "\id"] \\
        \D^{X}_{\glob,!}f^*_{\fone,\gtosl}\D^{X,!}_{\sloc}\d^X_{\ftwo,\sloc} \ar[r] & f^*_{\ftwo,\gtosl}\D^{X}_{\sloc,!}\D^{X,!}_{\sloc}\d^X_{\ftwo,\sloc} \ar[r] & f^*_{\ftwo,\gtosl}\d^X_{\ftwo,\sloc}
    \end{tikzcd}.
\end{equation}
Here the first arrow in the top of \eqref{sloctoglobthree} uses the Beck-Chevalley base change map $f^*_{\fone,\gtosl}\D^{X,!}_{\sloc}\d^X_{\ftwo,\sloc}\to \D^{X,!}_{\glob}f^*_{\ftwo,\gtosl}\d^X_{\ftwo,\sloc}$, which is identified with the natural purity isomorphism $\d^X_{\fone,\glob}\langle -2\rangle\isom \D^{X,!}_{\glob}\d^X_{\ftwo,\glob}$ defined in \thref{deltapurity}.
The commutativity of both diagrams follows from statements similar to \thref{!pushbcstar}. The rest of the argument is similar to the previous proof, and we omit it.

\subsubsection{Proof of \thref{geokolycorsloc} assuming \thref{techassum}}\label{proofgeokolycorsloc}
Now we come to the technical heart of this section.
\begin{proof}[Proof of \thref{geokolycorsloc} assuming \thref{techassum}]
Consider $U:=C^2\bs \D_C(C)$.
Consider the diagram
\begin{equation}
\begin{tikzcd}
    (LX/L^+G)_C \ar[d,"l_C" '] \ar[r, "\D^X_{\sloc}" ] & (LX/L^+G)_{C^2} \ar[d,"l_{C^2}"] & (LX/L^+G)_{U} \ar[d,"l_{U}"] \ar[l, "j^X" ']\\
    C \ar[r, "\D_C"] & C^2 \rcart\lcart& U \ar[l, "j" ']
    \end{tikzcd}
\end{equation}
where both squares are Cartesian. 
Consider the excision fiber sequence in \thref{excisionSHV}:
\begin{equation}
    j^X_!j^{X,*}\IC_{V\boxtimes W}\boxstar \d^X_{\ftwo,\sloc}\to \IC_{V\boxtimes W}\boxstar \d^X_{\ftwo,\sloc}\to \D^X_{\sloc,!}\D^{X,*}_{\sloc} \IC_{V\boxtimes W}\boxstar \d^X_{\ftwo,\sloc}.
\end{equation}
It gives rise to the left column of the following diagram
\begin{equation}\label{funddiagone}
\begin{tikzcd}
    \Hom^0(\D^X_{\sloc,!}\D^{X,*}_{\sloc} \IC_{V\boxtimes W}\boxstar \d^X_{\ftwo,\sloc}, \d^X_{\ftwo,\sloc}\langle d_V+d_W\rangle) \ar[r, "\adj", "\sim" '] \ar[d, "\Gys" '] & \Hom^0(\IC_{V\otimes W}\boxstar\d^{X}_{\fone,\sloc},\d^X_{\fone,\sloc}\langle d_V+d_W-2\rangle ) \ar[d, "\cdot c_1(T_C)"] \\
    \Hom^0( \IC_{V\boxtimes W}\boxstar \d^X_{\ftwo,\sloc}, \d^X_{\ftwo,\sloc}\langle d_V+d_W\rangle) \ar[r, "\D^{X,*}_{\sloc}"] \ar[d, "j^{X,*}" '] & \Hom^0(\IC_{V\otimes W}\boxstar \d^X_{\fone,\sloc},\d^X_{\fone,\sloc}\ \langle d_V+d_W\rangle)\\
    \Hom^0(j^X_!j^{X,*}\IC_{V\boxtimes W}\boxstar \d^X_{\ftwo,\sloc},\d^X_{\ftwo,\sloc}\langle d_V+d_W\rangle)&
\end{tikzcd}
\end{equation}
Here, the map $\adj$ is induced by $(\D^X_{\sloc,!},\D^{X,!}_{\sloc})$-adjunction and the isomorphism $\D^{X,!}_{\sloc}\d^X_{\fone,\sloc}\isom\d^X_{\ftwo,\sloc}\langle -2\rangle$ in \thref{deltapurity}. The map $\cdot c_1(T_C)$ is the cup product with the first Chern class of the tangent bundle $T_C$ of the curve $C$. The map $\D^{X,*}_{\sloc}$ is induced by the pull-back functoriality of cohomological correspondence in \thref{ccpull}.

Note that by definition of the map $\coev_{\sloc}$ given in \eqref{coevsloc} we have \[\coev_{\sloc}=\Gys\circ\adj^{-1}.\] Therefore, to prove \thref{geokolycorsloc} is to prove
\begin{equation}\label{geokolycorslocref}
    c_{V\boxtimes W}^{\sloc} - c_{W\boxtimes V}^{\sloc} = \Gys\circ\adj^{-1}(c_{V\otimes W}^{\sloc})
\end{equation}

\begin{lemma}\thlabel{funddiagonecom}
    The square in \eqref{funddiagone} is commutative.
\end{lemma}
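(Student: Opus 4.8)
The idea is to recognize the square in \eqref{funddiagone} as the composite of a purely formal compatibility between the excision boundary map and the $(\D_!,\D^!)$-adjunction, with the self-intersection formula for the diagonal divisor $\Delta_C\subset C^2$. First I would strip away the Hecke kernel and the $\delta$-sheaves. Write $\D:=\D^X_{\sloc}$, $j:=j^X$, $\cF:=\IC_{V\boxtimes W}\boxstar\d^X_{\ftwo,\sloc}\in\Shv((LX/L^+G)_{C^2})$ and $\cG:=\d^X_{\ftwo,\sloc}\langle d_V+d_W\rangle$, and recall that $\D$ is a closed immersion of finite presentation (the base change of the divisor $\Delta_C:C\hookrightarrow C^2$ along $l_{C^2}$) with open complement $j$. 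Unwinding the definition of $\coev_{\sloc}$ in \eqref{coevsloc} and of $\adj$ in \eqref{funddiagone}, the map $\Gys$ is precomposition with the excision unit $u_{\cF}:\cF\to\D_!\D^*\cF$, while $\adj$ is the composite of the $(\D_!,\D^!)$-adjunction $\Hom^0(\D_!\D^*\cF,\cG)\cong\Hom^0(\D^*\cF,\D^!\cG)$ with the purity isomorphism $\D^!\d^X_{\ftwo,\sloc}\cong\d^X_{\fone,\sloc}\langle-2\rangle$ of \thref{deltapurity} and the fusion identification $\D^*\cF\cong\IC_{V\otimes W}\boxstar\d^X_{\fone,\sloc}$. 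This fusion identification enters the corners of \eqref{funddiagone} in the same way along both paths, so it drops out, and the commutativity of \eqref{funddiagone} reduces to the identity, for $c\in\Hom^0(\D_!\D^*\cF,\cG)$,
\[\D^*(c\circ u_{\cF})\;=\;\gamma_{\cG}(\mathrm{adj}_{\D_!,\D^!}(c))\quad\text{in }\Hom^0(\D^*\cF,\D^*\cG),\]
where $\gamma_{\cG}:\D^!\cG\to\D^*\cG$ is the canonical ``forget supports'' transformation, defined for any closed immersion by $\D^!(-)\cong\D^*\D_*\D^!(-)\xrightarrow{\ \D^*\mathrm{counit}\ }\D^*(-)$; here one uses $\D_!=\D_*$ and $\D^*\d^X_{\ftwo,\sloc}\cong\d^X_{\fone,\sloc}$.

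The displayed identity follows from two inputs. (i) A formal lemma: for any closed immersion $i:Z\hookrightarrow Y$ of finite presentation and any $\cF,\cG\in\Shv(Y)$, the square
\[\begin{tikzcd}
\Hom^0(i_!i^*\cF,\cG) \ar[r,"\sim"] \ar[d,"\circ\,u_{\cF}"'] & \Hom^0(i^*\cF,i^!\cG) \ar[d,"\gamma_{\cG}\circ"] \\
\Hom^0(\cF,\cG) \ar[r,"i^*"'] & \Hom^0(i^*\cF,i^*\cG)
\end{tikzcd}\]
commutes; this is a diagram chase with the triangle identities of the adjunctions $(i^*,i_*)$ and $(i_*,i^!)$ and the definition of $\gamma$, carried out entirely inside the six-functor formalism of \autoref{sheaftheory}. (ii) The self-intersection formula: under the purity isomorphisms $\D^!\d^X_{\ftwo,\sloc}\cong\d^X_{\fone,\sloc}\langle-2\rangle$ (\thref{deltapurity}) and $\D^*\d^X_{\ftwo,\sloc}\cong\d^X_{\fone,\sloc}$, the transformation $\gamma_{\d^X_{\ftwo,\sloc}}$ becomes cup product with $c_1(T_C)$. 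Indeed $\D$ is a regular closed immersion of codimension one with conormal sheaf $l_C^*\Omega^1_C$, since it is obtained from the divisor $\Delta_C\subset C^2$ (with $N_{\Delta_C/C^2}=T_C$) by base change along the pro-smooth map $l_{C^2}$; checking the statement after pro-smooth base change to the qcqs scheme $(LX)_{C^2}$ via \thref{lprosmBC} and \thref{purity} reduces it to the classical self-intersection formula for the smooth divisor $\Delta_C\subset C^2$, namely that $\Delta_C^!\uk_{C^2}\to\Delta_C^*\uk_{C^2}=\uk_C$ is, after the purity identification $\Delta_C^!\uk_{C^2}\cong\uk_C\langle-2\rangle$, multiplication by $c_1(\cO_{C^2}(\Delta_C)|_{\Delta_C})=c_1(T_C)$. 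Applying (i) with $\cG=\d^X_{\ftwo,\sloc}\langle d_V+d_W\rangle$ and inserting $\gamma_{\cG}=(\,\cdot\,c_1(T_C))$ from (ii) yields exactly the displayed identity, hence \thref{funddiagonecom}.

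I expect the main obstacle to be input (ii): although the self-intersection formula is classical over schemes, here it must be tracked carefully through the pro-smooth base change and through the paper's normalization conventions for $\IC$-sheaves and the twist $\langle-\rangle$, and one must verify that the class denoted ``$c_1(T_C)$'' in \eqref{funddiagone} is really $l_C^*c_1(T_C)$ acting on $\d^X_{\fone,\sloc}$ through its support $(L^+X/L^+G)_C$. Step (i) is routine, but spelling out the coherences of the two adjunctions and the excision cofiber sequence in the $\infty$-categorical six-functor formalism still requires some care.
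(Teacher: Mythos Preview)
Your proposal is correct and follows essentially the same route as the paper: the formal adjunction compatibility (your (i)) is exactly the paper's diagram \eqref{funddiagoneuf}, and the self-intersection identification (your (ii)) is what the paper leaves implicit in asserting that the lower route of that diagram equals $\a\cdot c_1(T_C)$. The one difference worth noting is that the paper first applies the $(i^*,i_*)$-adjunction coming from $\d^X_{\ftwo,\sloc}=i_{\ftwo,\sloc,*}\uk$ to transport the entire square onto the jet space $(L^+X/L^+G)_{C^2}$, and then runs (i) and (ii) with $\D^+$ in place of $\D$; this has the advantage that the target sheaf becomes the constant sheaf $\uk$ and the pro-smooth cover needed to verify (ii) is the genuinely qcqs scheme $(L^+X)_{C^2}$, whereas your suggested base change to $(LX)_{C^2}$ lands on an ind-scheme.
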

\begin{proof}[Proof of \thref{funddiagonecom}]
    Write $\cF:=\IC_{V\boxtimes W}\boxstar \d^X_{\ftwo,\sloc}\langle -d_V-d_W\rangle$. Recall $\d^X_{\ftwo,\sloc}=i_{\ftwo,\sloc,*}\uk_{(L^+X/L^+G)_{C^2}}$. Write \[i_{|I|}=i_{I,\sloc}:(L^+X/L^+G)_{C^I}\to (LX/L^+G)_{C^I}\] for $I=\ftwo,\fone$, write \[\D=\D^{X}_{\sloc}:(LX/L^+G)_C\to (LX/L^+G)_{C^2}\] and $\uk=\uk_{(L^+X/L^+G)_{C^2}}$. Then we can rewrite the square in \eqref{funddiagone} as
    \begin{equation}\label{funddiagonesimp}
        \begin{tikzcd}
            \Hom^0(\D_!\D^*\cF,i_{2,*}\uk) \ar[r, "\adj", "\sim" '] \ar[d, "\Gys" '] & \Hom^0(\D^*\cF,\D^!i_{2,*}\uk) \ar[d, "\cdot c_1(T_C)"]   \\
            \Hom^0(\cF,i_{2,*}\uk) \ar[r, "\D^*"] & \Hom^0(\D^*\cF,\D^*i_{2,*}\uk)
        \end{tikzcd}.
    \end{equation}

    Write \[\D^+:(L^+X/L^+G)_C\to (L^+X/L^+G)_{C^2}\] and $\cG:=i_2^*\cF$. Applying $(i^*,i_*)$-adjunction and base change isomorphisms for $i_*$, we can further rewrite \eqref{funddiagonesimp} as
    \begin{equation}\label{funddiagonebc}
        \begin{tikzcd}
            \Hom^0(\D^+_!\D^{+,*}\cG, \uk) \ar[r, "\adj", "\sim" '] \ar[d, "\Gys" '] & \Hom^0(\D^{+,*}\cG,\D^{+,!}\uk) \ar[d, "\cdot c_1(T_C)"] \\
            \Hom^0(\cG,\uk) \ar[r, "\D^{+,*}"] & \Hom(\D^{+,*}\cG,\D^{+,*}\uk)
        \end{tikzcd}.
    \end{equation}

    For any $\a\in\Hom^0(\D^{+,*}\cG,\D^{+,!}\uk)$, consider the following diagram
    \begin{equation}\label{funddiagoneuf}
        \begin{tikzcd}
            \D^{+,*}\cG \ar[r]\ar[d ,"\a" ']& \D^{+,*}\D^+_*\D^{+,*}\cG\ar[d, "\D^{+,*}\D^+_*\a"] & \\
            \D^{+,!}\uk \ar[dash, r, "\sim"] & \D^{+,*}\D^+_*\D^{+,!}\uk \ar[r] & \D^*\uk
        \end{tikzcd}
    \end{equation}
    where arrows without names are induced by natural adjunctions. It is easy to see $\D^{+,*}\circ\Gys\circ\adj^{-1}(\a)$ gives the upper route from the left-top of the diagram \eqref{funddiagoneuf} to the right-bottom of the diagram, while $\a\cdot c_1(T_C)$ gives the lower route. The commutativity of the square in \eqref{funddiagoneuf} is tautological. This concludes the proof of \thref{funddiagonecom}.

    \end{proof}

    Consider the following diagram
    \begin{equation}\label{funddiagtwo}
        \begin{tikzcd}
            \Hom^0(\IC_{V\otimes W}*\d^X_{\loc},\d^X_{\loc}\langle d_V+d_W-2\rangle)\ar[r, "f^{\hk,*}_{\sltol}"] \ar[d, "\cdot\hbar" '] & \Hom^0(\IC_{V\otimes W}\boxstar\d^{X}_{\fone,\sloc},\d^X_{\fone,\sloc}\langle d_V+d_W-2\rangle ) \ar[d, "\cdot c_1(T_C)"] \\
            \Hom^0(\IC_{V\otimes W}*\d^X_{\loc},\d^X_{\loc}\langle d_V+d_W\rangle) \ar[r, "f^{\hk,*}_{\sltol}"] & \Hom^0(\IC_{V\otimes W}\boxstar\d^{X}_{\fone,\sloc},\d^X_{\fone,\sloc}\langle d_V+d_W\rangle )
        \end{tikzcd}
    \end{equation}
    where the map $f^{\hk,*}_{\sltol}$ is defined via the pull-back functoriality of cohomological correspondences in  \autoref{ccfunc}. 
    \begin{lemma}\thlabel{funddiagtwocom}
        The diagram \eqref{funddiagtwo} is commutative.
    \end{lemma}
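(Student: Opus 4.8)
The plan is to show that each vertical arrow of \eqref{funddiagtwo} is cup product with a class of degree $\langle 2\rangle$, and that the horizontal pullback $f^{\hk,*}_{\sltol}$ carries the loop‑rotation class $\hbar$ to $c_1(T_C)$. First I would recall, from \autoref{modulispaces} and \autoref{conscc}, that the semi‑local correspondence $\hk^{\Xp,\sloc}_{G,\fone}\rightrightarrows (LX/L^+G)_C$ together with the unit sheaf $\d^X_{\fone,\sloc}$ is the Cartesian base change of the local correspondence $\hk^{\Xp,\loc}_G\rightrightarrows LX/L^+G\rtimes\Aut(D)$ together with $\d^X_\loc$ along the local‑coordinate map $\Coor\colon C\to B\Aut(D)$ (concretely, along $B(L^+G)_C\to B(L^+G\rtimes\Aut(D))$, the base change of $\Coor$), and that $f^{\hk}_{\fone,\sltol}$ is the resulting Cartesian‑pullable morphism of correspondences in the sense of \thref{ccpull}. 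In particular, at the level of the $\Hom$‑spaces in \eqref{funddiagtwo}, the map $f^{\hk,*}_{\sltol}$ is the $*$‑pullback along $\Coor$ (using $\d^X_{\fone,\sloc}\cong f^*_{\fone,\sltol}\d^X_\loc$ and the pullback of the kernel $\IC_{V\otimes W}$).

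Next I would unwind the two vertical maps. By the definition of the $k[\hbar]\cong\Gamma(B\Aut(D),\uk)$‑linear structure on $\Sat_{X,\hbar}$ recalled in \autoref{cohbg} and \autoref{shvandcat}, multiplication by $\hbar$ on $\Hom^0(\IC_{V\otimes W}*\d^X_\loc,\d^X_\loc\langle\bullet\rangle)$ is cup product with the class $\hbar\in\Gamma(B\Aut(D),\uk)$, pulled back to $\hk^{\Xp,\loc}_G$. Since $*$‑pullback in the six‑functor formalism is symmetric monoidal, it is compatible with cup products; and the pullback functoriality of cohomological correspondences (\autoref{ccfunc}) is built out of $*$‑pullbacks. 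Hence $f^{\hk,*}_{\sltol}$ intertwines cup product with (the relevant pullback of) a cohomology class $\alpha$ on $B\Aut(D)$ with cup product with $\Coor^*\alpha$ on $C$; taking $\alpha=\hbar$ gives $f^{\hk,*}_{\sltol}(\hbar\cdot c)=\Coor^*(\hbar)\cdot f^{\hk,*}_{\sltol}(c)$, and it remains to identify $\Coor^*\hbar$ with $c_1(T_C)$.

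Finally I would compute $\Coor^*\hbar$ as $c_1(\cL)$, where $\cL$ is the line bundle on $C$ classified by $C\xrightarrow{\Coor}B\Aut(D)\to B\Gm$: this $\cL$ is the associated bundle of the $\Aut(D)$‑torsor of local coordinates of $C$ along the reductive‑quotient character $\Aut(D)\to\Gm$, which is the torsor of nonzero tangent directions at points of $C$, so $\cL\cong T_C$ with the normalization of $\cO(-1)$ fixed in \autoref{cohbg}. The delicate point — and the step I expect to be the main obstacle — is getting this last identification exactly on the nose: one has to track the interaction of the $\Gm\subset\Aut(D)$ convention, the sign in $\hbar=c_1(\cO(-1))$, and the sign implicit in the loop‑rotated commutator relation in \itemref{genassum}{comassum}, so that the answer is $+c_1(T_C)$ and not $c_1(\Omega_C)=-c_1(T_C)$. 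This bookkeeping is exactly what makes the chain of diagrams \eqref{funddiagone} and \eqref{funddiagtwo} reproduce \itemref{genassum}{comassum} and so yield \thref{geokolycorsloc}.
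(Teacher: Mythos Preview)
Your proposal is correct and follows the same approach as the paper: the paper's proof is a single sentence observing that the composition $C\xrightarrow{\Coor}B\Aut(D)\to B\Gm$ classifies $T_C$, whence $\Coor^*\hbar=c_1(T_C)$, and the commutativity of \eqref{funddiagtwo} follows. You have simply unpacked the surrounding steps (that $f^{\hk,*}_{\sltol}$ is Cartesian pullback along $\Coor$ and that the vertical arrows are cup products) which the paper leaves implicit; the sign bookkeeping you flag is a detail the paper also elides by appeal to the normalization in \autoref{cohbg}.
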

    \begin{proof}[Proof of \thref{funddiagtwocom}]
    This follows from the fact that the composition $C\to B\Aut(D)\to B\Gm$ is the map defining $T_C\in\Pic(C)$, hence gives us $\Coor^*\hbar=c_1(T_C)$. See  \autoref{cohbg} for our choice of $\hbar$.
    \end{proof}

    We make the following observation:
    \begin{lemma}\thlabel{fundequal}
        We have \[\D^{X,*}c_{V\boxtimes W}^{\sloc} = f^{\hk,*}_{\sltol}c_{V\boxtimes W}^{\loc}\in \Hom^0(\IC_{V\otimes W}\boxstar \d^X_{\fone,\sloc},\d^X_{\fone,\sloc}\ \langle d_V+d_W\rangle)\] and \[\D^{X,*}c_{W\boxtimes V}^{\sloc} = f^{\hk,*}_{\sltol}c_{W\boxtimes V}^{\loc}\in\Hom^0(\IC_{V\otimes W}\boxstar \d^X_{\fone,\sloc},\d^X_{\fone,\sloc}\ \langle d_V+d_W\rangle),\] where $\D^{X,*}:=\D^{X,*}_{\sloc}$ is the middle horizontal arrow in \eqref{funddiagone}, $f^{\hk,*}_{\fone,\sltol}$ is the bottom arrow in \eqref{funddiagtwo}, and $c_{V\boxtimes W}^{\loc}$, $c_{W\boxtimes V}^{\loc}$ are defined in \eqref{loccomp}.
    \end{lemma}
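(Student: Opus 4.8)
\textbf{Proof proposal for \thref{fundequal}.}

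The plan is to trace through the two sides of each claimed identity by unwinding the local-to-global construction of \autoref{conscc} and then matching the various base-change and pull-back maps. Both $c_{V\boxtimes W}^{\sloc}$ and $c_{W\boxtimes V}^{\sloc}$ are defined in \eqref{sloccomp} as a composition of the partial semi-local correspondences $c^{\sloc}_{V^1,1}$ and $c^{\sloc}_{V^2,2}$ from \eqref{partialsloc}, each of which is in turn obtained from the basic semi-local correspondence $c_V^{\sloc}=f_{\fone,\sltol}^*c_V^{\loc}$ by a pull-back along $r_{x,\sloc}$ followed by a push-forward along $i_{x\in I,\sloc}$. On the other side, $c_{V\boxtimes W}^{\loc}=c^{\loc}_{V^1}\circ c^{\loc}_{V^2}$ and $c_{W\boxtimes V}^{\loc}=c^{\loc}_{V^2}\circ c^{\loc}_{V^1}$ are the local compositions in \eqref{loccomp}. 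So what needs to be shown is a compatibility of the diagonal-pullback $\D^{X,*}_{\sloc}$ (which geometrically collapses the two legs onto the diagonal $\D_C\sub C^2$) with the semi-local-to-local pullback $f^{\hk,*}_{\sltol}$ (which restricts to a single disc $D$): when one restricts a two-leg modification to the locus where the two legs collide on the diagonal, one recovers exactly the one-leg iterated modification over $D_{c}$, and the composition of the two partial correspondences $c^{\sloc}_{V^1,1}$ and $c^{\sloc}_{V^2,2}$ restricts to the composition $c^{\loc}_{V^1}\circ c^{\loc}_{V^2}$ (resp. with the order reversed for $W\boxtimes V$).

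Concretely, I would proceed as follows. First, I would set up the Cartesian square relating $\hk^{\Xp,\sloc}_{G,\ftwo}$ restricted to the diagonal with the local iterated Hecke stack $\thk^{\Xp,\loc}_{G,\ftwo}$ — this uses that when $c_1=c_2$ the semi-local punctured disc $\Dc_{c_{\ftwo}}$ degenerates to the local punctured disc $\Dc$, so that $\D^{X,\hk,*}_{\sloc}$ followed by restriction to a point agrees with $f^{\hk,*}_{\sltol}$ composed with the map $\thk^{\Xp,\loc}_{G,\ftwo}\to\hk^{\Xp,\loc}_G$. Second, I would invoke the compatibility of pull-back functoriality with composition of cohomological correspondences (stated in \autoref{cccompatibilities}, and following from \thref{pushablebccomp}) to reduce the two-fold composition $c^{\sloc}_{V^1,1}\circ c^{\sloc}_{V^2,2}$ to a single statement about how $\D^{X,*}_{\sloc}$ interacts with a single partial correspondence $c^{\sloc}_{V^x,x}=i_{x\in I,\sloc,!}r_{x,\sloc}^*c_V^{\sloc}$. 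Third, I would need the base-change compatibility \thref{ccbc} between the push-forward along $i_{x\in I,\sloc}$ and the diagonal pull-back $\D^{X,*}_{\sloc}$: pulling a sheaf pushed from the closed locus $(LX/L^+G)_{C^{x\in I}}$ back to the diagonal, one lands on the corresponding local closed locus, and the adjunction units match. Putting these together identifies $\D^{X,*}_{\sloc}c^{\sloc}_{V^x,x}$ with the restriction to the disc of the basic correspondence, which after the second step gives $f^{\hk,*}_{\sltol}c^{\loc}_{V^1}$ composed with $f^{\hk,*}_{\sltol}c^{\loc}_{V^2}$ — but composition of local correspondences is exactly the multiplication in $\PL_{X,\hbar}$, so this equals $f^{\hk,*}_{\sltol}c^{\loc}_{V\boxtimes W}$. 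The argument for $W\boxtimes V$ is literally the same with the roles of $V$ and $W$ interchanged; the only subtlety is bookkeeping of the identification $\Cor_{\cdots,\IC_{W\otimes V}}(\cdots)\cong\Cor_{\cdots,\IC_{V\otimes W}}(\cdots)$ used to view both inside the same space, which is transported along the commutativity constraint and is compatible with all the functorialities involved.

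The main obstacle I anticipate is not any deep geometric input but rather the careful verification that the various diagonal-restriction identifications are \emph{compatible across the composition} — i.e. that the Cartesian square identifying the restricted two-leg iterated Hecke stack with the local iterated Hecke stack $\thk^{\Xp,\loc}_{G,\ftwo}$ is compatible, via the leg-remembering maps, with the two individual restrictions of $\hk^{\Xp,\sloc}_{G,x\in I}$. This is where one genuinely uses that the two legs \emph{collide} (rather than just both specialize to a fixed point), and it requires a slightly delicate analysis of the formal neighborhoods $D_{c_{\ftwo}}$ and $D_{c_{\ftwo}}\bs\Gamma_{c_{\{1\}}}$ as $c_1\to c_2$. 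Once this geometric identification is pinned down, everything else is a formal consequence of the six-functor compatibilities collected in \autoref{cc}, so I would expect the bulk of the write-up to consist of drawing the relevant cube of correspondences and citing \thref{ccbc}, \thref{pushablebccomp}, and \thref{ccpull}.
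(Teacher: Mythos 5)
Your proposal is correct and matches the paper's approach: the paper's entire proof of \thref{fundequal} is a single sentence asserting that the identity follows directly from the compatibility of pull-back with composition of cohomological correspondences (\autoref{ccfunc}, \autoref{cccomp}), which is precisely the formal unwinding you describe. Your additional identification of \thref{ccbc} as the ingredient needed to commute the diagonal pull-back past the push-forward $i_{x\in I,\sloc,!}$ is accurate and, if anything, more explicit than the paper's citation.
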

    \begin{proof}
        This follows directly from the compatibility of pull-back and composition of cohomological correspondences in  \autoref{ccfunc} and  \autoref{cccomp}.
    \end{proof}

    \begin{lemma}\thlabel{gencomm}
    We have \begin{equation}\label{gencommidentity}
        j^{X,*}c_{V\boxtimes W}^{\sloc}=j^{X,*}c_{W\boxtimes V}^{\sloc}\in \Hom^0(j^X_!j^{X,*}\IC_{V\boxtimes W}\boxstar \d^X_{\ftwo,\sloc},\d^X_{\ftwo,\sloc}\langle d_V+d_W\rangle).
    \end{equation}
    \end{lemma}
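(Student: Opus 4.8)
\textbf{Proof proposal for \thref{gencomm}.}

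The plan is to exploit the fact that away from the diagonal the two legs of the Hecke modification are ``far apart'', so that the two iterated semi-local cohomological correspondences $c^{\sloc}_{V\boxtimes W}$ and $c^{\sloc}_{W\boxtimes V}$ literally factor through the same object and differ only by the order in which two commuting operators are applied. Concretely, over $U = C^2 \setminus \Delta_C(C)$ the disjointness of the graphs $\Gamma_{c_1}$ and $\Gamma_{c_2}$ gives a factorization of the restricted semi-local Hecke stack $\hk^{\Xp,\sloc}_{G,\ftwo}|_U$ into a ``product'' of two one-leg Hecke stacks, one supported near $c_1$ and one near $c_2$. First I would make this factorization precise: restricting the whole picture in \eqref{coevslocdiag} to $U$, the space $(LX/L^+G)_{C^2}|_U$ and the Hecke correspondences over it split (étale-locally on $U$, which suffices by \thref{conserv}) as external products of the corresponding one-leg objects over the two factors of $U \subset C^2$. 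Under this splitting, $j^{X,*}c^{\sloc}_{V,1}$ and $j^{X,*}c^{\sloc}_{W,2}$ become cohomological correspondences supported on \emph{disjoint} parts of the Hecke stack, in the sense that the first acts only through the first factor and the second only through the second.

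Second, I would invoke the compatibility between composition and the pull-back/push-forward functoriality established in \autoref{cccompatibilities} (in particular \thref{pushablebccomp}) to conclude that two cohomological correspondences supported on disjoint legs commute: $j^{X,*}(c^{\sloc}_{V,1}\circ c^{\sloc}_{W,2}) = j^{X,*}(c^{\sloc}_{W,2}\circ c^{\sloc}_{V,1})$ as elements of $\Hom^0(j^X_!j^{X,*}\IC_{V\boxtimes W}\boxstar\d^X_{\ftwo,\sloc},\d^X_{\ftwo,\sloc}\langle d_V+d_W\rangle)$. This is the key point, and it is really a formal consequence of base change: over $U$ the two correspondences live in a box-product situation, so their composites in either order both compute the external product of the two one-leg correspondences, which is manifestly symmetric. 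I expect this to require a careful but routine bookkeeping argument identifying both $c^{\sloc}_{V\boxtimes W}$ and $c^{\sloc}_{W\boxtimes V}$, after restriction along $j^X$, with the same ``external composition'' — the notational overhead of the iterated Hecke stacks $\thk^{\Xp,\sloc}_{G,\ftwo}$ and the various partial-modification substacks $\hk^{\Xp_{x\in I},\sloc}_{G,x\in I}$ is where the real work lies.

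Finally, since by construction (see \eqref{loccomp}, \eqref{sloccomp}, \eqref{partialsloc}) we have $c^{\sloc}_{V\boxtimes W} = c^{\sloc}_{V,1}\circ c^{\sloc}_{W,2}$ and $c^{\sloc}_{W\boxtimes V} = c^{\sloc}_{W,1}\circ c^{\sloc}_{V,2}$, applying $j^{X,*}$ and using the commutation just established yields \eqref{gencommidentity}. The main obstacle will be setting up the factorization of $\hk^{\Xp,\sloc}_{G,\ftwo}|_U$ cleanly enough that the base-change compatibilities of \autoref{cccompatibilities} apply verbatim; once that is in place, the symmetry is automatic and no genuine geometry beyond disjointness of the supports is needed. (A cleaner alternative, which I would mention, is to pull everything back to the local model via $f^{\hk,*}_{\sltol}$ as in \thref{fundequal} and observe that the local datum $c^{\loc}_{V}$, $c^{\loc}_{W}$ only ever interact through the single local Hecke stack $\hk^{\loc}_G$, so the $U$-restriction kills the ambiguity; but the disjoint-support argument is more robust.)
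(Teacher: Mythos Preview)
Your factorization idea over $U$ is correct and is exactly what the paper uses. The paper's execution is essentially your parenthetical ``cleaner alternative'', done right: over $U$ one has a diagram
\[\begin{tikzcd}
    (LX/L^+G)_U \ar[d] & \hk^{\Xp,\sloc}_{G,U} \ar[l]\ar[r]\ar[d,"(f^{\hk}_{\sltol})^{\times 2}"] & (LX/L^+G)_U \ar[d] \\
    (LX/L^+G\rtimes\Aut(D))^2 & (\hk^{\Xp,\loc}_G)^2 \ar[l]\ar[r] & (LX/L^+G\rtimes\Aut(D))^2
\end{tikzcd}\]
with both squares Cartesian, and one simply observes that both $j^{X,*}c^{\sloc}_{V\boxtimes W}$ and $j^{X,*}c^{\sloc}_{W\boxtimes V}$ equal the pullback $(f^{\hk}_{\sltol})^{\times 2,*}(c^{\loc}_V\boxtimes c^{\loc}_W)$. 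Note the target is the \emph{product} of two local Hecke stacks, one per leg --- not a single one as your alternative suggests.

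Your explicit steps 2 and 3, however, contain a genuine confusion. The identity you establish in step 2, $j^{X,*}(c^{\sloc}_{V,1}\circ c^{\sloc}_{W,2}) = j^{X,*}(c^{\sloc}_{W,2}\circ c^{\sloc}_{V,1})$, says the two disjoint-leg correspondences may be \emph{composed in either order}. This is true but beside the point: in step 3 you correctly write $c^{\sloc}_{W\boxtimes V} = c^{\sloc}_{W,1}\circ c^{\sloc}_{V,2}$, with $W$ now at leg $1$ and $V$ at leg $2$. The commutation you proved compares $(V~\text{at leg}~1,\ W~\text{at leg}~2)$ with itself in the reversed composition order; it says nothing about $(W~\text{at leg}~1,\ V~\text{at leg}~2)$, and $c^{\sloc}_{W,2}\circ c^{\sloc}_{V,1}$ is not $c^{\sloc}_{W,1}\circ c^{\sloc}_{V,2}$. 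What is actually doing the work --- and what your argument never addresses --- is the identification of the kernels $\IC_{V\boxtimes W}$ and $\IC_{W\boxtimes V}$ that lets both correspondences land in the same $\Hom$ space in the statement. Unpacking that identification over $U$ is where the symmetry comes from; the paper's direct pullback to $(\hk^{\Xp,\loc}_G)^2$ makes this transparent while bypassing the iterated-composition bookkeeping entirely.
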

    \begin{proof}
        Define $\hk^{\Xp,\sloc}_{G,U}=l_{\ftwo,\sloc}^{\hk,-1}(U)$ where $l_{\ftwo,\sloc}^{\hk}:\hk^{\Xp,\sloc}_{G,\ftwo}\to C^2$ is the map remembering only legs. Consider the following diagram
        \[\begin{tikzcd}
            (LX/L^+G)_U \ar[d, "f_{\sltol}^{\times 2}" '] & \hk^{\Xp,\sloc}_{G,U} \ar[l, "\lh^X_{\ftwo,\sloc}" '] \ar[r, "\rh^X_{\ftwo,\sloc}"] \ar[d, "(f^{\hk}_{\sltol})^{\times 2}"] & (LX/L^+G)_U \ar[d, "f_{\sltol}^{\times 2}" ] \\
            (LX/L^+G\rtimes\Aut(D))^2 \lcart& (\hk^{\Xp,\loc}_G)^2 \ar[l,"(\lh^X_{\loc})^{\times 2}"' ] \ar[r, "(\rh^X_{\loc})^{\times 2}"] & (LX/L^+G\rtimes\Aut(D))^2\rcart
        \end{tikzcd}\]
        where both squares are Cartesian. Then we have \[j^{X,*}c_{V\boxtimes W}^{\sloc}=(f^{\hk}_{\sltol})^{\times 2,*}(c_V^{\loc}\boxtimes c_W^{\loc})=
        j^{X,*}c_{W\boxtimes V}^{\sloc}\] and we are done.
    \end{proof}

    Denote $\e=c_{V\boxtimes W}^{\sloc} - c_{W\boxtimes V}^{\sloc} - \Gys\circ\adj^{-1}(c_{V\otimes W}^{\sloc})$. We have \begin{equation}
    \begin{split}
        \D^{X,*}\e&=\D^{X,*}(c_{V\boxtimes W}^{\sloc} - c_{W\boxtimes V}^{\sloc}) - \D^{X,*}\circ\Gys\circ\adj^{-1}(c_{V\otimes W}^{\sloc}) \\
        &=f^{\hk,*}_{\sltol}(c_{V\boxtimes W}^{\loc}-c_{W\boxtimes V}^{\loc})-c_{V\otimes W}^{\sloc}\cdot c_1(T_C) \\
        &=f^{\hk,*}_{\sltol}(c_{V\boxtimes W}^{\loc}-c_{W\boxtimes V}^{\loc}-c_{V\otimes W}^{\loc}\cdot\hbar) \\
        &=0
        \end{split},
    \end{equation}
    where the second equality uses \thref{fundequal} and \thref{funddiagonecom}, the third equality uses \thref{funddiagtwocom}, and the last equality follows from our choice of $c_{V\otimes W}^{\loc}$ in \itemref{genassum}{comassum}.

    On the other hand, we have
    \begin{equation}
            j^{X,*}\e =j^{X,*}(c_{V\boxtimes W}^{\sloc} - c_{W\boxtimes V}^{\sloc}) - j^{X,*}\circ\Gys\circ\adj^{-1}(c_{V\otimes W}^{\sloc}) =0 -0=0
    \end{equation}
    where the second identity uses \thref{gencomm} and $j^{X,*}\circ\Gys=0$.

    We can conclude our proof given the following lemma:
    \begin{lemma}\thlabel{slocvanishing}
        Under \thref{techassum}, the map $\cdot c_1(T_C)$ in \eqref{funddiagone} is injective.
    \end{lemma}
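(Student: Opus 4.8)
The plan is to identify the source and target of the map $\cdot c_1(T_C)$ with a construction made out of the local Plancherel algebra and the cohomology of $C$, and then to prove injectivity by elementary homological algebra over $k[\hbar]$.

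First I would compute the complex $\Hom_{(LX/L^+G)_C}(\IC_{V\otimes W}\boxstar\d^X_{\fone,\sloc},\d^X_{\fone,\sloc}\langle m\rangle)$. Recall that $(LX/L^+G)_C\cong C\times_{B\Aut(D)}(LX/(L^+G\rtimes\Aut(D)))$, that $\Coor\colon C\to B\Aut(D)$ is pro-smooth while $C$ is proper over $\overline{\F}_q$, and that $\Gamma(B\Aut(D),\uk)\cong k[\hbar]$. Since $\d^X_{\fone,\sloc}\isom f^*_{\fone,\sltol}\d^X_{\loc}$ and the Hecke action commutes with the Cartesian pull-back $f^*_{\fone,\sltol}$, both $\IC_{V\otimes W}\boxstar\d^X_{\fone,\sloc}$ and $\d^X_{\fone,\sloc}$ are $*$-pull-backs of objects on $LX/(L^+G\rtimes\Aut(D))$. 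Combining pro-smooth base change (\thref{pro2}, \thref{lprosmBC}) with a Künneth-type computation over $B\Aut(D)$ one obtains a $k[\hbar]$-linear isomorphism
\[
\Hom_{(LX/L^+G)_C}\big(\IC_{V\otimes W}\boxstar\d^X_{\fone,\sloc},\,\d^X_{\fone,\sloc}\langle m\rangle\big)\;\isom\; \Gamma(C,\uk)\otimes^{\mathbb L}_{k[\hbar]}\Hom_{\Sat_{G,\hbar}}\big(\IC_{V\otimes W},\,\PL_{X,\hbar}\langle m\rangle\big)
\]
for all $m$, where $\hbar$ acts on $\Gamma(C,\uk)$ by cup product with $c_1(T_C)$; that it is this $\hbar$-action is exactly \thref{funddiagtwocom} ($\Coor^*\hbar=c_1(T_C)$). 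Under this isomorphism $\cdot c_1(T_C)$ is multiplication by $\hbar$, so, writing $m=d_V+d_W$, $M:=\Hom_{\Sat_{G,\hbar}}(\IC_{V\otimes W},\PL_{X,\hbar}\langle m\rangle)$ and $P:=\Gamma(C,\uk)\otimes^{\mathbb L}_{k[\hbar]}M$, the assertion becomes that $\hbar\colon H^{-2}(P)\to H^0(P)$ is injective.

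Now the homological algebra, over the ring $k[\hbar]$ of global dimension $1$. Because $g\neq 1$ (\itemref{techassum}{gnot1}), $c_1(T_C)=(2-2g)[\mathrm{pt}]$ is nonzero in $H^2(C,\uk(1))\cong k$, so cup product with $c_1(T_C)$ is an isomorphism $H^0(C,\uk)\isom H^2(C,\uk)$, while it is zero on $H^1(C,\uk)$ for degree reasons; hence $\Gamma(C,\uk)\simeq k[\hbar]/\hbar^2\oplus k^{2g}[-1]$ as $k[\hbar]$-module complexes, the second summand with zero $\hbar$-action. Thus $P\simeq A\oplus B^{2g}[-1]$ with $A:=M\otimes^{\mathbb L}_{k[\hbar]}k[\hbar]/\hbar^2$ and $B:=M\otimes^{\mathbb L}_{k[\hbar]}k$, and $\hbar$ acts as $0$ on $B^{2g}[-1]$. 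Feeding the vanishing $H^{-3}(M)=H^{-4}(M)=H^{-5}(M)=0$ of \itemref{techassum}{bzsvlocvan} and the injectivity of $\hbar\colon H^{-2}(M)\to H^0(M)$ of \itemref{genassum}{flatassum} into the long exact sequences of the cofibre sequences $M[-2]\xrightarrow{\hbar}M\to B$ and $M[-4]\xrightarrow{\hbar^2}M\to A$, one gets $H^{-3}(B)=0$, an isomorphism $H^0(M)\isom H^0(A)$, and a surjection $H^{-2}(M)\twoheadrightarrow H^{-2}(A)$. Therefore $H^{-2}(P)=H^{-2}(A)$, the map $\hbar\colon H^{-2}(P)\to H^0(P)$ lands in $H^0(A)\subseteq H^0(P)$, and it fits into a commutative square whose top arrow is the injective $\hbar\colon H^{-2}(M)\to H^0(M)$, whose left arrow is the surjection $H^{-2}(M)\twoheadrightarrow H^{-2}(A)$, and whose right arrow is the isomorphism $H^0(M)\isom H^0(A)$; a one-line diagram chase then forces injectivity.

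The main obstacle is the base-change identification in the first step: one has to deal with the non-proper structure morphism $C\to B\Aut(D)$, and with the infinite-dimensionality of $(LX/L^+G)_C$ and the non-constructibility of $\d^X_{\loc}$, within the $\Shv$-formalism of \autoref{sheaftheory}, using properness of $C$ over the base field and perfectness of $\Gamma(C,\uk)$ over $k[\hbar]$. Granting this identification and its compatibility with $\hbar$ via \thref{funddiagtwocom}, the rest is routine — and it is precisely there, in the vanishing of $H^{-3},H^{-4},H^{-5}$ of $M$ and in the non-vanishing of $c_1(T_C)$, that \itemref{techassum}{bzsvlocvan} and the hypothesis $g\neq 1$ enter.
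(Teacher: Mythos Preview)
Your proposal is correct and takes essentially the same approach as the paper: identify the semi-local Hom-space with $\Hom(\cG,l_{\loc}^*\Coor_*\Coor^*\uk)$ via adjunctions and one instance of pro-smooth base change, use $g\neq 1$ to split $\Coor_*\Coor^*\uk\simeq\cofib(\uk\langle-4\rangle\xrightarrow{\hbar^2}\uk)\oplus\cofib(\uk\langle-2\rangle\xrightarrow{\hbar}\uk)^{\oplus 2g}[-1]$, and then run exactly your diagram chase using \itemref{techassum}{bzsvlocvan} and \itemref{genassum}{flatassum}. The only difference is packaging: the paper carries out your ``main obstacle'' as an explicit chain of $(i^*,i_*)$ and $(f^{+,*},f^+_*)$ adjunctions rather than asserting a K\"unneth-type tensor-product formula, but the content is identical.
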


    Assuming the lemma, we can conclude the proof of \thref{geokolycorsloc} as follows. Since $j^{X,*}\e=0$ and the left column of \eqref{funddiagone} is exact, there exists $\e_0\in\Hom^0(\D^X_{\sloc,!}\D^{X,*}_{\sloc} \IC_{V\boxtimes W}\boxstar \d^X_{\ftwo,\sloc}, \d^X_{\ftwo,\sloc}\langle d_V+d_W\rangle)$ at the upper-left corner of \eqref{funddiagone} such that $\e=\Gys(\e_0)$. By the commutativity of the diagram \eqref{funddiagone} we know $\adj(\e_0)\cdot c_1(T_C)=\D^{X,*}\circ\Gys(\e_0)=\D^{X,*}\e=0$. We can conclude $\e_0=0$ by \thref{slocvanishing} and get $\e=0$, which concludes the proof of \thref{geokolycorsloc}.
    \end{proof}

    \begin{proof}[Proof of \thref{slocvanishing}]
    We drop the subscript $\fone$ (which stands for one leg) from everywhere below. Consider the following diagram in which both squares are Cartesian
    \begin{equation}
        \begin{tikzcd}
            (LX/L^+G)_C \ar[d, "f_{\sltol}"'] & (L^+X/L^+G)_C \ar[r, "l_{\sloc}"] \ar[l, "i_{\sloc}"'] \ar[d, "f^+_{\sltol}"] & C \ar[d,"\Coor"] \\
            LX/L^+G\rtimes\Aut(D) \lcart& L^+X/L^+G\rtimes\Aut(D) \ar[r, "l_{\loc}"] \ar[l, "i_{\loc}"'] & B\Aut(D)\rcart
        \end{tikzcd}.
    \end{equation}
    We denote $\cF:=\IC_{V\otimes W}*\d^X_{\loc}\langle -d_V-d_W+2\rangle \in \Shv(LX/L^+G\rtimes\Aut(D)) $, then the map we care about becomes \begin{equation}\label{injectiontobeproved}
        \cdot c_1(T_C):\Hom^0(f^{*}_{\sltol}\cF,\d^X_{\sloc})\to\Hom^0(f^{*}_{\sltol}\cF,\d^X_{\sloc}\langle 2\rangle).
    \end{equation}
    We further denote $\cG:=i_{\loc}^*\cF$ and $\uk=\uk_{B\Aut(D)}$. 
    Note that \begin{equation}\label{localpush}
    \begin{split}
        \Hom(f^{*}_{\sltol}\cF,\d^X_{\sloc})&= \Hom(f^{*}_{\sltol}\cF,i_{\sloc,*}\uk_{(L^+X/L^+G)_C}), \\
        & \isom \Hom(i_{\sloc}^*f^{*}_{\sltol}\cF,\uk_{(L^+X/L^+G)_C}), \\
        &\isom \Hom(f^{+,*}_{\sltol}\cG,\uk_{(L^+X/L^+G)_C}), \\
        &\isom \Hom(\cG,f^{+}_{\sltol,*}l_{\sloc}^*\Coor^*\uk), \\
        &\isom \Hom(\cG,l_{\loc}^*\Coor_*\Coor^*\uk),
        \end{split}
    \end{equation}
    where the last step uses \itemref{lprosmBC}{pro4}, since $L^+X\rightarrow L^+X/L^+G\rtimes\rrr{Aut}(D)$ and $L^+X\rightarrow B\rrr{Aut}(D)$ are pro-smooth surjections.

    Under the identification \eqref{localpush}, the map \eqref{injectiontobeproved} becomes 
    \begin{equation}
        \cdot\hbar: \Hom^0(\cG,l_{\loc}^*\Coor_*\Coor^*\uk)\to\Hom^0(\cG,l_{\loc}^*\Coor_*\Coor^*\uk\langle 2\rangle).
    \end{equation}
    induced by the map $\hbar:\uk\to\uk\langle 2\rangle$.

    Note the following fact:
    \begin{lemma}\thlabel{curvecoh}
        $\Coor_*\Coor^*\uk=\cofib(\uk\langle-4\rangle\xrightarrow{\hbar^2}\uk)\oplus \cofib(\uk\langle-2\rangle\xrightarrow{\hbar} \uk)^{\oplus 2g}[-1]$.
    \end{lemma}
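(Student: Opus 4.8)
\textbf{Proof proposal for \thref{curvecoh}.}

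The statement to prove is a computation of $\Coor_*\Coor^*\uk$ as an object in the category $\Shv(B\Aut(D))$, which via the cohomology of the classifying stack $\Gamma(B\Aut(D),\uk)\cong\Gamma(B\Gm,\uk)\cong k[\hbar]$ we think of as a $k[\hbar]$-module spectrum. The plan is to use the projection formula together with the base change identity $\Coor_*\Coor^*\uk \cong \uk\otimes \Gamma(C,\uk)$, reducing the problem to the well-known computation of $\Gamma(C,\uk)$ for a smooth projective geometrically connected curve $C$, \emph{as a module over the ring $k[\hbar]$} where $\hbar$ acts through the map $\Coor:C\to B\Aut(D)\to B\Gm$ defining $T_C\in\Pic(C)$. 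So first I would record that $c_1(T_C)\in H^2(C,\uk)$ is the image of $\hbar$, and that since $\deg T_C = 2g-2$, this class is $(2g-2)$ times the fundamental class in $H^2(C)\cong k$.

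Next I would carry out the $k[\hbar]$-module computation of $H^\bullet(C,\uk)$. We have $H^0(C) = k$, $H^1(C) = k^{2g}$ (on which $\hbar$ acts by zero, placed in the appropriate shift/twist recorded by $\langle-\rangle$), and $H^2(C) = k$. The only nontrivial part of the module structure is the action of $\hbar$ from $H^0$ to $H^2$, which is multiplication by $c_1(T_C)$, i.e. by the scalar $2g-2$. The key case division is whether $g=1$: if $g\neq 1$ the scalar is nonzero and $H^0\oplus H^2$ is the cyclic $k[\hbar]$-module $k[\hbar]/(\hbar^2)$ — but here one must be careful about the shifts, so that this summand is really $\cofib(\uk\langle -4\rangle \xrightarrow{\hbar^2}\uk)$, whose cohomology in degrees $0$ and (shifted) $2$ matches $H^0$ and $H^2$; while $H^1$, being $\hbar$-torsion (in fact annihilated by $\hbar$), contributes a direct sum of $2g$ copies of $\cofib(\uk\langle -2\rangle\xrightarrow{\hbar}\uk)[-1]$. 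Assembling these gives exactly the claimed formula. I would present this as: the perfect $k[\hbar]$-complex $\Gamma(C,\uk)$ splits (since $k[\hbar]$ has global dimension $1$, every perfect complex is a direct sum of shifts of $k[\hbar]$ and of $k[\hbar]/(\hbar^n)$) and one identifies the summands by computing cohomology and the $\hbar$-action, which is determined by $c_1(T_C) = (2g-2)\cdot[\mathrm{pt}]$.

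The main obstacle I anticipate is purely bookkeeping rather than conceptual: getting the shifts, Tate twists, and parity changes hidden in $\langle\bullet\rangle$ exactly right, so that the decomposition genuinely holds at the level of objects in $\Shv(B\Aut(D))$ and not just after taking cohomology. In particular one needs $\Coor_*\Coor^*\uk$ to be $\Aut(D)^{\mathrm u}$-equivariantly trivial so that the computation on $B\Aut(D)$ reduces to one on $B\Gm$ — this uses that $\Aut(D)^{\mathrm u}$ is pro-unipotent and the relevant pro-smooth base change (\itemref{lprosmBC}{pro4}, as already invoked in the surrounding proof). A secondary subtlety is that in the case $g=1$ the formula still reads as stated, with $\cofib(\uk\langle-4\rangle\xrightarrow{\hbar^2}\uk)$ now being $\uk\oplus\uk\langle -4\rangle[1]$ since $\hbar^2$ acts by zero; this is consistent because $2g-2 = 0$, and it is precisely why the genus hypothesis $g\neq 1$ in \thref{techassum} is needed downstream (in \thref{slocvanishing}) rather than here — the formula for $\Coor_*\Coor^*\uk$ itself is uniform in $g$.
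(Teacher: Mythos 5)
Your main computation is correct and is essentially the paper's argument: both reduce to the category of $k[\hbar]$-modules (the paper via the equivalence $\Hom(\uk,\bullet):\Shv(B\Aut(D))^{>-\infty}\isom\Mod(k[\hbar])^{>-\infty}$, you via projection formula), identify $\Coor_*\Coor^*\uk$ with $\Gamma(C,\uk)$ on which $\hbar$ acts by $c_1(T_C)=(2g-2)[\mathrm{pt}]$, and split the resulting perfect $k[\hbar]$-complex into cyclic summands using formality/hereditarity.

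However, your final paragraph contains a genuine error. The lemma is \emph{not} uniform in $g$: it fails for $g=1$, and the paper's own proof explicitly invokes \itemref{techassum}{gnot1} at precisely this point. The confusion is between the map $\hbar^2$ appearing inside the cofiber and the action of $\hbar$ on $\Gamma(C,\uk)$. The object $\cofib(\uk\langle-4\rangle\xrightarrow{\hbar^2}\uk)$ corresponds under the equivalence to $\cofib(k[\hbar]\langle-4\rangle\xrightarrow{\cdot\hbar^2}k[\hbar])\cong k[\hbar]/(\hbar^2)$ for \emph{every} genus, because multiplication by $\hbar^2$ on the polynomial ring $k[\hbar]$ is always injective — it does not "become zero" when $g=1$. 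On $k[\hbar]/(\hbar^2)$ the map $\hbar:H^0\to H^2$ is an isomorphism, whereas for $g=1$ one has $c_1(T_C)=0$, so $\hbar$ acts by zero on $\Gamma(C,\uk)$ and the correct answer would be $k\oplus k^{2g}[-1]\oplus k[-2]$ with trivial $\hbar$-action, which is a different $k[\hbar]$-module. So the genus hypothesis is needed \emph{here}, not merely downstream in \thref{slocvanishing}; you should either add $g\neq 1$ to your argument (as the paper does) or note that the identification $H^0\oplus H^2\cong k[\hbar]/(\hbar^2)$ is exactly where it is used.
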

    \begin{proof}[Proof of \thref{curvecoh}]
        Note that we have a natural equivalence of categories
        \begin{equation}\label{shvba}
            \Hom(\uk,\bullet):\Shv(B\Aut(D))^{>-\infty}\isom \Mod(k[\hbar])^{>-\infty},
        \end{equation}
        since $\Hom(\uk,\uk)=k[\hbar]\in \Mod(k[\hbar])^{>-\infty}$. Here for a stable $\infty$-category $\sC$ with a t-structure, we write $\sC^{>-\infty}$ to denote the full-subcategory of $\sC$ consisting of objects bounded on the left. Moreover, given \itemref{techassum}{gnot1}, by the formality of the cohomology of $C$, we have \[\Hom(\uk,\Coor_*\Coor^*\uk)\isom k[\hbar]/(\hbar^2)\oplus k^{2g}[-1]\in \Mod(k[\hbar])^{>-\infty}.\]  Our desired result follows as
        \[k[\hbar]/(\hbar^2)\oplus k^{2g}[-1]\isom \cofib(k[\hbar]\langle-4\rangle\xrightarrow{\cdot\hbar^2}k[\hbar])\oplus \cofib(k[\hbar]\langle-2\rangle\xrightarrow{\cdot\hbar}k[\hbar])^{\oplus 2g}[-1].\]
    \end{proof}

    Denote $\cA=l_{\loc}^*\cofib(\uk\langle-4\rangle\xrightarrow{\hbar^2}\uk)$ and $\cB=l_{\loc}^*\cofib(\uk\langle-2\rangle\xrightarrow{\hbar} \uk)[-1]$. We need to show that both maps
    \begin{equation}\label{vanfora}
        \cdot\hbar:\Hom^0(\cG,\cA)\to\Hom^0(\cG,\cA\langle 2\rangle)
    \end{equation}
    \begin{equation}\label{vanforb}
        \cdot\hbar:\Hom^0(\cG,\cB)\to\Hom^0(\cG,\cB\langle 2\rangle)
    \end{equation}
    are injective.
    Denote $\uk_X=l_{\loc}^*\uk$. Note that we have \[\Hom(\cG,\uk_X)\isom\Hom(\IC_{V\otimes W}*\d^X_{\loc},\d^X_{\loc}\langle d_V+d_W-2\rangle )=\Hom_{\Gc}(V\otimes W\langle -d_V-d_W+2\rangle, \PL_{X,\hbar}).\]
    \itemref{techassum}{bzsvlocvan} translates to \begin{equation}\label{vanref}
        \Hom^0(\cG,\uk_X[d])=0
    \end{equation} for $d=-1,-2,-3$.
    \itemref{genassum}{flatassum} translates to the map \begin{equation}\label{injref}
     \cdot\hbar:\Hom^0(\cG,\uk_X)\to\Hom^0(\cG,\uk_X[2])
    \end{equation}
    is injective.
    For \eqref{vanfora}, we have the following diagram
    \begin{equation}\label{diagfora}
    \begin{tikzcd}
        & \Hom^0(\cG,\uk_X) \ar[d, "\gamma"] \ar[r, "\a"] & \Hom^0(\cG,\cA) \ar[d, "\delta"] \ar[r] & \Hom^0(\cG,\uk_X[-3]) \\
        \Hom^0(\cG,\uk_X[-2]) \ar[r] & \Hom^0(\cG,\uk_X[ 2]) \ar[r, "\b"] & \Hom^0(\cG,\cA[ 2]) & 
        \end{tikzcd}
    \end{equation}
    where both rows are exact sequences. Note that \eqref{vanref} implies that $\a$ is surjective and $\b$ is injective. Since $\gamma$ is injective by \eqref{injref}, we know $\d$ is injective and hence prove injectivity of \eqref{vanfora}.

    For \eqref{vanforb}, we have a short exact sequence \[\Hom^0(\cG,\uk_X[-1])\to\Hom^0(\cG,\cB)\to\Hom^0(\cG,\uk_X[-2]).\] Note that left and right terms are zero by \eqref{vanref}, we know $\Hom^0(\cG,\cB)=0$, which implies injectivity of \eqref{vanforb}. This concludes the proof of \thref{slocvanishing}.

    \end{proof}

\begin{proof}[Proof of \thref{geokolycorthm}]
    One combines all the arguments in  \autoref{proofgeokolycorthm}.
\end{proof}

\section{Automorphic Clifford relations}\label{automorphiccliffordrelations}
In this section, we restrict to a special case (minuscule and Poisson-pure case) such that the commutator relations (\thref{geokolycor}) on global special cohomological correspondences simplify to what we call the automorphic Clifford relations (\thref{geokolycormidmin}).
\begin{itemize}
    \item In \autoref{dfc}, we introduce a construction of local special cohomological correspondences via derived fundamental classes.
    \item In \autoref{middimcor}, we induce the minuscule and Poisson-pure condition and state the main result \thref{geokolycormidmin}. 
    \item In \autoref{verclifrel}, we introduce a way to compute a scalar to be determined in \thref{geokolycormidmin}. 
\end{itemize}

\subsection{Cohomological correspondence via derived fundamental class}\label{dfc}
In this section, we study a particular construction of the local special cohomological correspondences $c^{\loc}_V$ in \thref{loccordef} via derived fundamental classes. For notations on moduli spaces and morphisms between them, we refer to  \autoref{geometricsetup} (see  \autoref{modulinotations} for a summary).

We by default consider only moduli spaces with one leg with index set $I=\fone$ and drop the subscript $\fone$ everywhere.

\subsubsection{Local special cohomological correspondence and relative affine Grassmanian}

We first recall the geometric interpretation of the space of local special cohomological correspondences \[\Cor_{\hk^{\Xp,\loc}_{G},\IC_V}(\d^X_{\loc},\d^X_{\loc}\langle d\rangle)\] introduced in \thref{loccordef} using relative Grassmanian given in \cite[\S 8.2]{BZSV}.

We use $\Gr^X_G$ to denote the relative affine Grassmanian associated to the smooth affine $G$-variety $X$, which can be defined by defining the relative Hecke stack $\hk^{X,\loc}_G:=\Gr^X_G/L^+G\rtimes\Aut(D)$ via the Cartesian diagram:
\[\begin{tikzcd}
    \hk^{X,\loc}_G \ar[r, "i^{\hk}_{\loc}"] \ar[d, "\lh^{+,X}_{\loc}\times \rh^{+,X}_{\loc} " '] & \hk^{\Xp,\loc}_{G} \ar[d, "\lh^X_{\loc}\times \rh^X_{\loc}"] \\
    (L^+X/L^+G\rtimes\Aut(D))^2 \ar[r, "i_{\loc}^{\times 2}"] & (LX/L^+G\rtimes\Aut(D))^2\rcart
\end{tikzcd}.\]

Then we see that \begin{equation}\label{loccorrelgr}
    \begin{split}
        &\Cor_{\hk^{\Xp,\loc}_{G},\IC_V}(\d^X_{\loc},\d^X_{\loc}\langle d\rangle) \\
        =&\Hom^0(\IC_{V}*\d^X_{\loc},\d^X_{\loc}\langle d\rangle) \\
        \isom& \Hom^0(\IC_{V}*i_{\loc,*}\uk_{L^+X/L^+G\rtimes\Aut(D)},i_{\loc,*}\uk_{L^+X/L^+G\rtimes\Aut(D)}\langle d\rangle) \\
        \isom& \Hom^0(\IC_V,\lh^{+,X,!}_{\loc}\uk_{L^+X/L^+G\rtimes\Aut(D)}\langle d\rangle ) \\
        =& \Cor_{\hk^{X,\loc}_G,\IC_V}(\uk_{L^+X/L^+G\rtimes\Aut(D)},\uk_{L^+X/L^+G\rtimes\Aut(D)}\langle d\rangle)
    \end{split}.
\end{equation}

\subsubsection{Construction}

Now we assume $V\in\Rep(\Gc)^{\he,\om}$ is \emph{irreducible minuscule} with higher weight $\l_V$. More precisely, we assume that $\IC_V$ is \emph{the} (shifted) constant sheaf supported on the minuscule Schubert cell indexed by $\l_V$. For various versions of Hecke stacks, we add a subscript $V$ (or the extreme weight $\l_V$) to indicate the Schubert cell supporting $\IC_V$. We can further rewrite the vector space in \eqref{loccorrelgr} as \begin{equation}\label{loccorrelgrmin}
    \begin{split}
        &\Cor_{\hk^{\Xp,\loc}_{G},\IC_V}(\d^X_{\loc},\d^X_{\loc}\langle d\rangle) \\
        =& \Cor_{\hk^{X,\loc}_G,\IC_V}(\uk_{L^+X/L^+G\rtimes\Aut(D)},\uk_{L^+X/L^+G\rtimes\Aut(D)}\langle d\rangle) \\
        =& \Cor_{\hk^{X,\loc}_{G,V},\uk}(\uk_{L^+X/L^+G\rtimes\Aut(D)},\uk_{L^+X/L^+G\rtimes\Aut(D)}\langle d - d_{G,\l_V}\rangle)
    \end{split}
\end{equation}
where the number $d_{G,\l_V}=\langle 2\r_G,\l_V\rangle$.

To define the derived fundamental class, we make the following assumption:
\begin{assumption}\thlabel{Xlinear}
    There exists a split reductive subgroup $H\sub G$ and $Y\in\Rep(H)^{\he,\om}$ such that $X=Y\times^H G$.
\end{assumption}


Now we choose a minuscule coweight $\l\in X_*(T_H)$ such that $V$ has highest weight $\l_V=\l^+$. We refer to  \autoref{lie} for our notations concerning Lie theory.

\begin{defn}\thlabel{defect}
    For $\l\in X_*(T_H)$ and $Y\in\Rep(H)^{\he,\om}$, we define the \emph{defect module} of $\l$ with respect to $Y$ to be \[D_{\l,Y}:=Y[[t]]\cdot t^{-\lambda}/(Y[[t]] \cap Y[[t]]\cdot t^{-\l})\in\Rep(\cP_{H,\l}\rtimes\Aut(D))\] and the \emph{defect} of $\l$ with respect to $Y$ to be
    \[\d_{\l,Y}:=\dim (D_{\l,Y}) .\] Here \begin{equation}\label{parahoricg} \cP_{H,\l}:=t^{\l}L^+Ht^{-\l}\cap L^+H \end{equation} is a parahoric subgroup of $L^+H$.
\end{defn}

We define $V_{H,\l}\in\Rep(\Hc)^{\he,\om}$ to be the unique irreducible representation of $\Hc$ containing extremal weight $\l$.

Now, we are going to construct an element \[c_{V,\l}^{\loc}\in \Cor_{\hk^{\loc}_{G},\IC_V}(\d^X_{\loc},\d^X_{\loc}\langle d_{V,\l}\rangle)\] where \[d_{V,\l}:=d_{G,\l} - 2d_{H,\l} +2\d_{\l,Y}.\]

Consider diagram \begin{equation}\label{htogdiag}
    \begin{tikzcd}
        L^+Y/L^+H\rtimes\Aut(D) \ar[d, "a_{\loc}" ', "\sim"] & \hk^{Y,\loc}_{H,\l} \ar[l, "\lh^{+,Y}_{\loc}"'] \ar[r, "\rh^{+,Y}_{\loc}"] \ar[d, "a_{\loc}^{\hk}"] & L^+Y/L^+H\rtimes\Aut(D) \ar[d, "a_{\loc}", "\sim" '] \\
        L^+X/L^+G\rtimes\Aut(D) & \hk^{X,\loc}_{G,V} \ar[l, "\lh^{+,X}_{\loc}"'] \ar[r, "\rh^{+,X}_{\loc}"] & L^+X/L^+G\rtimes\Aut(D)
    \end{tikzcd}
\end{equation}
where left and right vertical arrows are isomorphisms. It is clear that both squares are pushable in the sense of  \autoref{pushabledef} and the push-forward functoriality of cohomological correspondences \thref{ccpush} gives us a map \begin{equation}\label{dfcpushloc}\begin{split}
       a_{\loc,!}^{\hk}:&\Cor_{\hk^{Y,\loc}_{H,\l},\uk}(\uk_{L^+Y/L^+H\rtimes\Aut(D)},\uk_{L^+Y/L^+H\rtimes\Aut(D)}\langle d - d_{G,\l}\rangle ) \\
       \to & \Cor_{\hk^{X,\loc}_{G,V},\uk}(\uk_{L^+X/L^+G\rtimes\Aut(D)},\uk_{L^+X/L^+G\rtimes\Aut(D)}\langle d - d_{G,\l}\rangle).   
\end{split}
\end{equation}
Define \begin{equation}\label{parahoric} \cP_{G,\l}=t^{\l}L^+G t^{-\l}\cap L^+G .\end{equation} Consider the correspondence diagram 
\begin{equation}
\adjustbox{max width=\linewidth}{%
    \begin{tikzcd}[column sep=small]
    &LY/\cP_{H,\l}\rtimes\Aut(D)\ar[d, "s", "\sim"']\ar[dl, "\lh^Y_{H,\loc}"  '] \ar[dr, "\rh^Y_{H,\loc}"]& \\
        LY/L^+H\rtimes\Aut(D) & LY\times^{L^+H\rtimes\Aut(D)}L^+Ht^{\lambda}L^+H\rtimes\Aut(D)/L^+H\rtimes\Aut(D) \ar[l, "\lh^Y_{H,\loc}"  ] \ar[r, "\rh^Y_{H,\loc}"'] & LY/L^+H\rtimes\Aut(D)\\
        &\hk^{\mathring{Y},\loc}_{H,\l}\ar[u, "\sim"]&
    \end{tikzcd}
    }
\end{equation}
where the lower half diagram is exactly \eqref{localcordiag} for $H$ restricted to the $\l$-Schubert cell. The isomorphism $s$ can be described as follows: One first defines the map 
\begin{equation}
    \tils:LY\to LY\times (L^+Ht^{\l}L^+H\rtimes\Aut(D))
\end{equation} as
\begin{equation}
    \tils(y)=(y,(t^{\l},1)).
\end{equation}
and defines
\begin{equation}
    s_{\cP}:\cP_{H,\l}\rtimes\Aut(D)\to (L^+H\rtimes\Aut(D))\times (L^+H\rtimes\Aut(D))
\end{equation} 
\begin{equation}
    s_{\cP}(p,a)=((p,a),(t^{-\l}pa^{-1}(t)^{\l},a)).
\end{equation}
Then one checks that the map $\tils$ is $s_{\cP}$-equivariant and descends to a map $s$, where we remind the reader that the right action of $\cP_{H,\l}\rtimes\Aut(D)$ on $LY$ is obtained by restricting the right action of $L^+H\rtimes\Aut(D)$ on $LY$ given by 
\begin{equation}
    y(t)\cdot(h(t),a)=y(a(t))h(a(t)) \in LY
\end{equation} for $y(t)\in LY$ and $(h(t),a)\in L^+H\rtimes\Aut(D)$, and the right action of $(L^+H\rtimes\Aut(D))^2$ on $LY\times (LH\rtimes\Aut(D))$ is given by 
\begin{equation}
    (y,(h,a))\cdot ((h_1,a_1),(h_2,a_2)) = (y\cdot (h_1,a_1),(h_1,a_1)^{-1}\cdot(h,a)\cdot (h_2,a_2))\in LY\times LH\rtimes\Aut(D)
\end{equation} for $(y,(h,a))\in LY\times (LH\rtimes\Aut(D))$ and $((h_1,a_1),(h_2,a_2))\in (L^+H\rtimes\Aut(D))^2$.

Under the isomorphism $s$, we can rewrite the upper row of \eqref{htogdiag} as the top of the diagram
\begin{equation}\label{dfccordiag}
\begin{tikzcd}
&L^+Y\cap L^+Y\cdot t^{-\l}/\cP_{H,\l}\rtimes\Aut(D) \ar[d, "s^+", "\sim"'] \ar[dl, "\lh^{+,Y}_{\loc}"'] \ar[dr, "\rh^{+,Y}_{\loc}"]&\\
    L^+Y/L^+H\rtimes\Aut(D) & \hk^{Y,\loc}_{H,\l} \ar[l, "\lh^{+,Y}_{\loc}"'] \ar[r, "\rh^{+,Y}_{\loc}"]  & L^+Y/L^+H\rtimes\Aut(D) 
    \end{tikzcd}
\end{equation}
such that for \[y\in L^+Y\cap L^+Y\cdot t^{-\l}\] a representative of an element in $L^+Y\cap L^+Y\cdot t^{-\l}/\cP_{H,\l}\rtimes\Aut(D)$, we have
\[\rh^{+,Y}_{\loc}(y)=y \in L^+Y /L^+H\rtimes\Aut(D) \]\[\lh^{+,Y}_{\loc}(y)=y\cdot t^{\l} \in L^+Y/L^+H\rtimes\Aut(D).\]

Now, it is easy to see that the map $\lh^{+,Y}_{\loc}$ is quasi-smooth of relative virtual dimension $d_{H,\l} - \d_{\l,Y}$. In fact, one can choose suitable $n,m,k\in\Z_{\geq 1}$ depending only on $(H,Y,\l)$ such that 
one can define \[L^{(m)}H:=L^+H/L^{\geq m}H\] \[\cP_{H,\l}^{(m)}=\cP_{H,\l}/L^{\geq m}H\] \[\Aut^{(k)}(D)=\Aut(D)/\Aut^{\geq k}(D)\] \[L^{(n)}Y:=L^+Y/t^nL^+Y.\]
and get a Cartesian diagram
\begin{equation}\label{derivedrelativehecke}
    \begin{tikzcd}
    L^+Y/L^+H\rtimes\Aut(D) \ar[d, "q_{\loc}"] &\hk^{Y,\loc}_{H,\l}\ar[l, "\lh^{+,Y}_{\loc}"'] \ar[d, "q_{\loc}^{\hk}"]  \\
    L^{(n)}Y/L^{(m)}H\rtimes\Aut^{(k)}(D)\lcart & \hk^{Y,\loc}_{H,\l,\fin}:=L^{(n)}Y\cap L^{(n)}Y\cdot t^{\l} /\cP_{H,\l}^{(m)}\rtimes\Aut^{(k)}(D) \ar[l, "\lh^{+,Y}_{\fin,\loc}" ']   
    \end{tikzcd}
\end{equation}
where the bottom map is quasi-smooth, hence makes $\lh^{+,Y}_{\loc}$ a quasi-smooth map in the sense of \thref{dfcgeneral}. Then the construction in \textit{loc.cit} gives a derived fundamental class \begin{equation}
\begin{split}
    [\lh^{+,Y}_{\loc}]& =[\hk^{Y,\loc}_{H,\l}/(L^+Y/L^+H\rtimes\Aut(D))] \\
    &\in \Hom^0(\uk_{\hk^{Y,\loc}_{H,\l}},\lh^{+,Y,!}_{\fin,\loc}\uk_{L^+Y/L^+H\rtimes\Aut(D)}\langle -2d_{H,\l} + 2\d_{\l,Y} \rangle ) \\
    &= \Cor_{\hk^{Y,\loc}_{H,\l},\uk}(\uk_{L^+Y/L^+H\rtimes\Aut(D)},\uk_{L^+Y/L^+H\rtimes\Aut(D)}\langle d_{V,\l} - d_{G,\l}\rangle )
    \end{split}.
\end{equation}

We can finally define \begin{equation}\label{dfcloc}
    \begin{split}
        c_{V,\l}^{\loc}:= a_{\loc,!}^{\hk}[\lh^{+,Y}_{\loc}]&\in  \Cor_{\hk^{X,\loc}_{G,V},\uk}(\uk_{L^+X/L^+G\rtimes\Aut(D)},\uk_{L^+X/L^+G\rtimes\Aut(D)}\langle d_{V,\l} - d_{G,\l}\rangle)\\
        &\isom \Cor_{\hk^{\Xp,\loc}_{G},\IC_V}(\d^X_{\loc},\d^X_{\loc}\langle d_{V,\l}\rangle)
    \end{split}
\end{equation}
where the map $a_{\loc,!}^{\hk}$ is the map \eqref{dfcpushloc}. This finishes the definition of local special cohomological correspondences via the derived fundamental class.

\subsubsection{Pull-back to global moduli}
In this section, we restrict to the case that $X$ is $G$-homogeneous. In this case, we have $X=H\backslash G$ for a split reductive subgroup $H\sub G$, and we have $\Bun_G^X=\Bun_H$. In this case, we have defect $\d_{\l,Y}=0$ and $d_{V,\l}=d_{G,\l} - 2d_{H,\l}$.
We now describe the global cohomological correspondences $c_{V,\l}$ constructed in \eqref{removeleg} given the input $c^{\loc}_{V,\l}$ constructed in \eqref{dfcloc} (i.e. consider the local datum $(\fone,V,c_{V,\l}^{\loc})$ in the language of \thref{loctovarious} and \thref{globccdef}).

\begin{prop}\thlabel{fundtoglob}
    Consider the diagram
    \[\begin{tikzcd}
        \Bun_H \ar[d, "\pi" '] & \hk_{H,\l}^{\glob} \ar[d, "\pi^{\hk}_{\l}"] \ar[l, "\lh_{H,\glob}" '] \ar[r, "\rh_{H,\glob}"] &\Bun_H \ar[d, "\pi"] \\
        \Bun_G & \hk_{G,V}^{\glob} \ar[l, "\lh_{\glob}" '] \ar[r, "\rh_{\glob}"] & \Bun_G
    \end{tikzcd}.\]
    The push-forward functoriality in \thref{ccpush} gives us a map \[\begin{split}\pi_{\l,!}^{\hk}:\Cor_{\hk_{H,\l}^{\glob},\uk}(\uk_{\Bun_H},\uk_{\Bun_H}\langle -2-2d_{H,\l} \rangle)&\to \Cor_{\hk_{G,V}^{\glob},\uk}(\pi_!\uk_{\Bun_H},\pi_!\uk_{\Bun_H}\langle -2-2d_{H,\l}\rangle )\\ &\isom\Cor_{\hk_{G}^{\glob},\IC_V}(\cP_X,\cP_X\langle d_{V,\l}-2\rangle)\end{split}.\]
    Then the element $c_{V,\l}$ constructed in \eqref{removeleg} from the local fundamental class $c_{V,\l}^{\loc}$ in \eqref{dfcloc} can be identified as \[c_{V,\l}=\pi_{\l,!}^{\hk}[\hk_{H,\l}^{\glob}/\Bun_H]\] where $[\hk_{H,\l}^{\glob}/\Bun_H]=[\lh_{H,\glob}]$ is the fundamental class introduced in \thref{dfcgeneral}.
\end{prop}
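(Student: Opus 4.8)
The strategy is to trace through the local-to-global construction of $c_{V,\l}$ in \thref{globccdef} and identify each step with a geometric operation on fundamental classes, using the fact that $X = G/H$ is $G$-homogeneous so that $\Bun_G^X = \Bun_H$ and the defect vanishes. First I would recall from \eqref{dfcloc} that $c_{V,\l}^{\loc} = a_{\loc,!}^{\hk}[\lh_{\loc}^{+,Y}]$, and observe that when $Y$ is a point (the homogeneous case), $\hk_{H,\l}^{\mathring{Y},\loc}$ collapses to the usual $\l$-Schubert cell of the affine Grassmannian for $H$, and the local fundamental class is just the relative fundamental class of $\lh^{+}_{\loc}: \hk_{H,\l}^{\loc} \to B(L^+H\rtimes\Aut(D))$ restricted to the $\l$-cell. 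The key point is that the whole chain of constructions in \autoref{conscc} — the pullbacks $f_{\fone,\sltol}^*$, $f_{\fone,\gtosl}^*$ along the Cartesian squares relating local, semi-local and global moduli — commutes with formation of derived fundamental classes. This is because the derived fundamental class is compatible with base change (see \thref{dfcgeneral}: the purity transformation $[b]$ for quasi-smooth maps is stable under Cartesian base change along the bottom map), so $c_{V,\l}^{\glob}$ is literally the fundamental class $[\lh^{X}_{\fone,\glob}]$ of the global relative Hecke correspondence $\hk_{H,\l}^{\glob} \to \Bun_H$ (here I use $X=G/H$ to identify $\Bun_G^X = \Bun_H$ and the relative Grassmannian with the partial Hecke stack for $H$ at the $\l$-Schubert cell).

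Next I would handle the passage from $c_{V,\l}^{\glob}$ to the extended and then the final special cohomological correspondence. By \eqref{periodpush}, $c_{V,\l,\fone} = \pi_{\fone,!}c_{V,\l}^{\glob}$; since $\pi: \Bun_G^X = \Bun_H \to \Bun_G$ and the squares in \eqref{globtoperioddiag} are Cartesian, the push-forward functoriality of cohomological correspondences (\thref{ccpush}) applied to the map $\pi^{\hk}_{\l}: \hk_{H,\l}^{\glob} \to \hk_{G,V}^{\glob}$ of correspondences sends the global fundamental class to exactly $\pi_{\l,!}^{\hk}[\hk_{H,\l}^{\glob}/\Bun_H]$. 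Then \eqref{removeleg} defines $c_{V,\l} = [C^{\fone}]\circ c_{V,\l,\fone}\circ \taut_{C^{\fone}}$; but since we are in the one-leg case with $I=\fone$ and $X$ homogeneous, I must check that composing with $[C]$ and $\taut_C$ (the fundamental class and tautological correspondences of \thref{fundcor}, \thref{tautcor}) is absorbed into the identification $\cP_X\boxtimes\uk_C \leftrightarrow \cP_X$, contributing the expected shift: the degree bookkeeping $d_{V,\l} - 2 = (d_{G,\l} - 2d_{H,\l}) - 2$ matches the statement. The cleanest way to phrase this is that $[C]\circ(-)\circ\taut_C$ is precisely the operation $\phi_{\fone}$ appearing in the proof of \thref{geokolycor} assuming \thref{geokolycorperiod}, and it commutes with $\pi_{!}$ by the base-change compatibilities of \autoref{cccompatibilities}.

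\textbf{Main obstacle.} The technically delicate part will be verifying that the derived fundamental class genuinely commutes with the pull-back functoriality of \thref{ccpull} along the Cartesian squares connecting $\hk^{Y,\loc}_{H,\l}$, $\hk^{\Xp_{x\in I},\sloc}_{G,x\in I}$ and $\hk^{\Xp,\glob}_{G,\fone}$ — i.e. that the global relative Hecke stack is genuinely the derived base change of the local one and that the purity/fundamental class is preserved. This requires knowing that the bottom map $b$ (the quasi-smooth truncation as in \eqref{derivedrelativehecke} and \eqref{derivedrelativehecke}) is chosen compatibly across the local/semi-local/global levels, and invoking locally pro-smooth base change (\thref{lprosmBC}) to move the fundamental class along the pro-smooth maps $(L^+Y)_{C^I} \to (L^+Y/L^+H)_{C^I}$. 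Once this compatibility is nailed down, the rest is routine diagram-chasing through the functorialities of \autoref{cc}. A secondary point to be careful about is the half-spin twist convention (\thref{warninghalfspin}): since $X = G/H$ is homogeneous the $\Ggr$-action on $X$ is trivial, so no twist enters, and $\cP_X$ as defined in \autoref{periodsheaf} is the honest $\pi_!\uk_{\Bun_H}$ — but I would state this explicitly to avoid normalization errors.
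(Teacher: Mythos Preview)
Your plan is correct and follows essentially the same route as the paper: apply the base change compatibility of cohomological correspondences (\thref{ccbc}) to swap the push-forward $a_{\loc,!}^{\hk}$ with the local-to-global pull-back $f_{\gtol}^{\hk,*}$, then observe that the pulled-back fundamental class on the $H$-side is again a fundamental class, and finally absorb $[C]\circ(-)\circ\taut_C$ into the relative fundamental class over $\Bun_H$ (versus $\Bun_H\times C$).

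One remark on emphasis: you overestimate the ``main obstacle''. The swap of push and pull is exactly \thref{ccbc}, and once you are on the $H$-side the maps $\hk_{H,\l}^{\glob}\to\Bun_H\times C$ and $\hk_{H,\l}^{\loc}\to B(L^+H\rtimes\Aut(D))$ are honestly smooth (no defect, since $X$ is homogeneous), so the compatibility of fundamental classes with Cartesian pull-back is immediate from \thref{purity}; you do not need to invoke pro-smooth base change or worry about derived structures on truncations. The paper's proof is a four-line chain of equalities: $c_{V,\l}=[C]\circ\pi_!^{\hk}f_{G,\gtol}^{X,\hk,*}a_{\loc,!}^{\hk}[\ldots]\circ\taut_C = [C]\circ\pi_!^{\hk}a_{\glob,!}^{\hk}f_{H,\gtol}^{\hk,*}[\ldots]\circ\taut_C = [C]\circ\pi_!^{\hk}a_{\glob,!}^{\hk}[\hk_{H,\l}^{\glob}/\Bun_H\times C]\circ\taut_C = \pi_{\l,!}^{\hk}[\hk_{H,\l}^{\glob}/\Bun_H]$. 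Also note a small slip: $c_{V,\l}^{\glob}$ is a correspondence over $\Bun_H\times C$, not $\Bun_H$; the $C$-factor is removed only in the last step by $[C]\circ(-)\circ\taut_C$.
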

\begin{proof}
    Applying the base change identity \thref{ccbc} in which the middle square is given by
    \[\begin{tikzcd}
    \hk^{X,\glob}_{G,V} \ar[d, "f_{G,\gtol}^{X,\hk}" ']& \hk^{\glob}_{H,\l} \ar[d, "f_{H,\gtol}^{\hk}"] \ar[l, "a_{\glob}^{\hk}" '] \\
    \hk^{X,\loc}_{G,V} & \hk^{\loc}_{H,\l} \ar[l, "a_{\loc}^{\hk}"']
\end{tikzcd},\] we get a commutative diagram
\[\begin{tikzcd}
    \Cor_{\hk^{\loc}_{H,\l},\uk}(\uk_{BL^+H\rtimes\Aut(D)},\uk_{BL^+H\rtimes\Aut(D)}\langle -2-2d_{H,\l} \rangle)\ar[r, "a_{\loc,!}^{\hk}"] \ar[d, "f_{H,\gtol}^{\hk,*}" '] & \Cor_{\hk_{G,V}^{X,\loc},\uk}(\d_X^{\loc},\d_X^{\loc}\langle -2-2d_{H,\l} \rangle) \ar[d, "f_{G,\gtol}^{X,\hk,*}"] \\
    \Cor_{\hk_{H,\l}^{\glob},\uk}(\uk_{\Bun_H\times C},\uk_{\Bun_H\times C}\langle -2-2d_{H,\l} \rangle) \ar[r, "a_{\glob,!}^{\hk}"] & \Cor_{\hk_{G,V}^{X,\glob},\uk}(\d_X^{\glob},\d_X^{\glob}\langle -2-2d_{H,\l} \rangle)
\end{tikzcd}.\]
Consider $\pi_{!}^{\hk}:\Cor_{\hk_{G,V}^{X,\glob},\uk}(\d_X^{\glob},\d_X^{\glob}\langle -2-2d_{H,\l} \rangle)\to \Cor_{\hk_{G,V},\uk}(\cP_X\boxtimes\uk_C,\cP_X\boxtimes\uk_C\langle -2-2d_{H,\l} \rangle)$. Unwinding the definition of $c_{V,\l}$, we get \[\begin{split}c_{V,\l}&=[C]\circ(\pi_!^{\hk}f_{G,\gtol}^{X,\hk,*}a_{\loc,!}^{\hk}[\hk_{H,\l}^{\loc}/BL^+H\rtimes\Aut(D)])\circ\taut_C\\
&=[C]\circ\pi_!^{\hk}a_{\glob,!}^{\hk}f_{H,\gtol}^{\hk,*}[\hk_{H,\l}^{\loc}/BL^+H\rtimes\Aut(D)]\circ\taut_C\\
&=[C]\circ\pi_!^{\hk}a_{\glob,!}^{\hk}[\hk_{H,\l}^{\glob}/\Bun_H\times C]\circ \taut_C\\
&=\pi_{\l,!}^{\hk}[\hk_{H,\l}^{\glob}/\Bun_H]
\end{split}\] which concludes the proof.
    
\end{proof}

\subsection{Middle-dimensional correspondence}\label{middimcor}
Assume we are under \thref{commsetup}. In this section, we consider a specific case of \thref{geokolycor} that we care about the most: the middle-dimensional case, that is, the case $d_V+d_W=2$. We say that this case is middle-dimensional because when $|I|=2$, the associated special cycle classes are middle-dimensional Borel-Moore homology classes in this case. 

In case $d_V+d_W=2$, we see that $c^{\loc}_{V\otimes W}\in \Hom^0(V\otimes W,\PL_{X,\hbar})$. Note that there is a canonical element $c^{\loc}_{\triv}\in\Hom^0(\triv,\PL_{X,\hbar})$ where $\triv\in\Rep(\Gc)$ is the trivial $\Gc$-representation. To single out the most interesting example, we make the following assumptions:

\begin{assumption}[Poisson-purity]\thlabel{middimassum}
We have $d_V+d_W=2$, and there exists a bilinear form $b:V\otimes W\to \triv$
such that the map \[c_{V\otimes W}^{\loc}\in\Hom^0(V\otimes W,\PL_{X,\hbar})\] is given by \begin{equation}\label{pos=symp}
    V\otimes W\xrightarrow{b}\triv\xrightarrow{c^{\loc}_{\triv}}\PL_{X,\hbar}.
\end{equation}
\end{assumption}

Now we further make the following assumption:
\begin{assumption}[Minuscule]\thlabel{minassum}
    We have $V,W\in\Rep(\Gc)^{\he,\om}$ are irreducible minuscule representations of $\Gc$. More specifically, we take $V, W\in\Rep(\Gc)^{\he}\isom\Sat_G^{\he}$ to be \emph{the} (shift of) constant sheaf supported on the corresponding minuscule Schubert cell.
\end{assumption}

\begin{remark}
    The minuscule assumption is only necessary when applying the construction in \autoref{dfc}.
\end{remark}

In this case, whenever a non-zero bilinear form $b: V\otimes W\to \triv $ exists, there would exist a canonical bilinear form \begin{equation}\label{geoform} b_{\geo}: V\otimes W\to \triv\end{equation} (or more precisely, a map $\IC_V*\IC_W\to\IC_{\triv}$) coming from geometry which we are going to explain now.

Suppose $V$ is of higher (minuscule) weight $\l_V\in X_*(T)^+$ (hence $W$ has lowest weight $-\l_V$). We use \begin{equation}\label{Vparabolic} P_{G,\l_V}\end{equation} to denote the parabolic subgroup of $G$ generated by roots $\{\a\in \Phi_G|\langle \a, \l_V\rangle \leq 0\}$ (hence contains the opposite Borel). Then it is easy to see that $\Hom_{\Sat_G^{\he}}(\IC_V*\IC_W,\IC_{\triv}) = H^0(G/P_{G,\l_V},\uk)$. Therefore, we can take $b_{\geo}:V\otimes W\to \triv$ to be given by the canonical element $1\in H^0(G/P_{G,\l_V},\uk)$ (the fundamental class of $G/P_{\mu}$ under Poincare duality).

When our affine smooth $G$-variety $X$ has the form $X=Y\times^H G$, for each $\l\in X_*(T_H)$ such that $\l^{+}=\l_V$ we have the derived fundamental class $c_{V,\l}^{\loc}$ constructed in \eqref{dfcloc} and similarly for $\mu\in X_*(T_H)$ where $\mu^+=\l_W$ a class $c^{\loc}_{W,\mu}$. Assume \thref{middimassum} is satisfied, we denote the bilinear form in \textit{loc.cit} by \begin{equation}\label{Xform}b_{\l,\mu}:V\otimes W\to\triv.\end{equation}

Now, it is a natural question to ask about the relation between $b_{\l,\mu}$ and $b_{\geo}$. Since the former $b_{\l,\mu}$ involves the choice of the $G$-variety $X$ as well as some choices of $\l,\mu\in X_*(T_H)^+$, while the second $b_{\geo}$ does not involve any extra choice, these two forms a priori have no relation and would differ by a scalar in general.  We denote \begin{equation}\label{geometricscalar} b_{\l,\mu}=\kappa_{\l,\mu}\cdot b_{\geo}.\end{equation} 

\subsubsection{Automorphic Clifford relations}

We have the following reformulation of \thref{geokolycor} under the strong assumptions in the previous section, which is what we call the automorphic Clifford relation:
\begin{cor}[Automorphic Clifford relation]\thlabel{geokolycormidmin}
        Under \thref{commsetup}. Suppose \thref{middimassum} and \thref{minassum} hold. Consider the diagram \begin{equation}\label{autoclifdiag1}\begin{tikzcd}
        \Bun_G \ar[d, "\id" '] & \hk^{\glob}_{G,\fone,V} \ar[l, "b_{\glob}" '] \ar[r, "b_{\glob}"] \ar[d, "\D^{\hk}_{V,\glob}"] & \Bun_G \ar[d, "\id"] \\
        \Bun_G & \thk_{G,\ftwo,V\boxtimes W}^{\glob} \ar[l, "\lh_{\ftwo,\glob}" '] \ar[r, "\rh_{\ftwo,\glob}"] & \Bun_G
    \end{tikzcd},\end{equation} where $b_{\glob}=\lh_{\fone,\glob}$ and the map $\D^{\hk}_{V}$ is given by sending a modification $\cE\dashrightarrow \cE'$ to the iterated modification $\cE\dashrightarrow \cE'\dashrightarrow \cE$.
    We get a map \[\begin{split}\D^{\hk}_{V,\glob,!}:\Cor_{\hk^{\glob}_{G,\fone,V},\uk}(\cP_X,\cP_X\langle -2-2d_{G,\l_V}\rangle)&\to\Cor_{\thk_{G,\ftwo,V\boxtimes W}^{\glob},\uk}(\cP_X,\cP_X\langle -2-2d_{G,\l_V}\rangle)\\ &\isom \Cor_{\hk_{G,\ftwo,V\boxtimes W}^{\glob},\IC_{V\boxtimes W}}(\cP_X,\cP_X\langle -2\rangle)\end{split}\] via the push-forward functoriality in \thref{ccpush}. Assuming \thref{geokolycor}, we have \[c_{V\boxtimes W} - c_{W\boxtimes V} = \kappa_{\l,\mu}\cdot\D_{V,\glob,!}^{\hk}[\hk^{\glob}_{G,\fone,V}/\Bun_G],\] where $c_{V\boxtimes W}$ and $c_{W\boxtimes V}$ are defined in \eqref{removeleg} and $[\hk^{\glob}_{G,\fone,V}/\Bun_G]$ is the (relative) fundamental class defined in \thref{dfcgeneral} viewed as a cohomological correspondence.
\end{cor}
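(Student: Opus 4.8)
The plan is to deduce \thref{geokolycormidmin} from \thref{geokolycor} by tracing through the two structural reductions already established in the paper, so that the only genuinely new content is the identification of the diagonal pushforward term. First I would apply \thref{geokolycor} in the case $I=\{1,2\}$ with the local datum $(I,V^I,\{c^{\loc}_{V,\l},c^{\loc}_{W,\mu}\})$ obtained from the derived fundamental classes of \autoref{dfc}, which are legitimate local special cohomological correspondences by \eqref{dfcloc}. Since $d_V+d_W=2$ (\thref{middimassum}), the shift on the right-hand side of \thref{geokolycor} is $\langle d_I-2|I|\rangle=\langle 2-4\rangle=\langle -2\rangle$, and the conjectured identity reads $c_{V\boxtimes W}-c_{W\boxtimes V}=\D^{\hk}_{x,x+1,!}c_{V^{[I]}}$, where $c_{V^{[I]}}$ is built from the element $c^{\loc}_{V\otimes W}$ of \itemref{genassum}{comassum}. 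The Poisson-purity assumption \thref{middimassum} says precisely that $c^{\loc}_{V\otimes W}$ is the composite $V\otimes W\xrightarrow{b}\triv\xrightarrow{c^{\loc}_{\triv}}\PL_{X,\hbar}$, where $b=b_{\l,\mu}$ is the form \eqref{Xform} coming from the fundamental-class construction; by definition \eqref{geometricscalar}, $b_{\l,\mu}=\kappa_{\l,\mu}\cdot b_{\geo}$. So the work is to show that the global cohomological correspondence $c_{V^{[I]}}$ attached to $c^{\loc}_{\triv}$ precomposed with $b_{\geo}$, after the fusion $\D^{\hk}_{x,x+1,!}$, equals $\D^{\hk}_{V,\glob,!}$ of the relative fundamental class $[\hk^{\glob}_{G,\fone,V}/\Bun_G]$, up to the scalar $\kappa_{\l,\mu}$ which pulls out by linearity of the whole local-to-global construction in the local input.

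Next I would unwind what $c_{V^{[I]}}$ is: it is the special cohomological correspondence of the one-leg datum $(\fone,V\otimes W,c^{\loc}_{V\otimes W})$, i.e. by \eqref{removeleg} it is $[C]\circ\pi_{\fone,!}c^{\glob,\Xp}_{V\otimes W}\circ\taut_C$. Because $c^{\loc}_{V\otimes W}=c^{\loc}_{\triv}\circ b_{\geo}$ factors through $\IC_{\triv}$, the kernel sheaf $\IC_{V\otimes W}$ enters only through the map $b_{\geo}:\IC_V*\IC_W\to\IC_{\triv}$, and the cohomological correspondence itself is the image under $b_{\geo}$ of the one attached to $c^{\loc}_{\triv}$, which is just the tautological unit correspondence on $\cP_X$ supported on the trivial-modification locus $\Bun_G\times C$. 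Thus, again by the naturality (compatibility of pullback/pushforward with the kernel morphism $b_{\geo}$, \thref{ccbc} and the functoriality statements in \autoref{ccfunc}), $c_{V^{[I]}}$ is exactly $b_{\geo}$ applied to $[C]\circ\taut_C=[\Bun_G\times C/\Bun_G]$, i.e. the trivial-modification fundamental class pushed forward to $\hk^{\glob}_{G,\fone}$ and converted into a self-correspondence of $\cP_X$ with kernel $\IC_{V\otimes W}$. Then the fusion map $\D^{\hk}_{x,x+1}$ in \autoref{cliffus} collapses the two legs to one, and under the identification of $b_{\geo}$ with the alternative description via $\D^{\hk}_{V,\loc,!}$ of $[\hk^{\loc}_{G,V}/BL^+G\rtimes\Aut(D)]$ given in \autoref{middimcor}, the composite $\D^{\hk}_{x,x+1,!}\circ(b_{\geo})_*$ is precisely $\D^{\hk}_{V,\glob,!}$ applied to the relative fundamental class $[\hk^{\glob}_{G,\fone,V}/\Bun_G]$. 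This matching is the heart of the argument; it comes down to checking that the two ways of producing the map $\IC_V*\IC_W\to\IC_{\triv}$ — the cohomological one via $b_{\geo}$ and the geometric one via the diagonal $\D^{\hk}_{V}$ on the iterated Hecke stack — are the same, which is a base-change compatibility between \eqref{globtoperioddiag}, \eqref{autoclifdiag1}, and the diagrams of \autoref{cliffus}, provable by \thref{ccbc} and \thref{pushablebccomp}.

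The main obstacle I anticipate is not conceptual but bookkeeping: correctly tracking all the shifts, Tate twists, and parity changes $\langle\bullet\rangle$ through the chain — the derived fundamental class contributes $\langle d_{V,\l}\rangle=\langle d_{G,\l}-2d_{H,\l}\rangle$ (recall $\d_{\l,Y}=0$ in the homogeneous case relevant to \thref{fundtoglob}), the map $b_{\geo}$ shifts by $d_{G,\l_V}$ via $\Hom_{\Sat_G^{\he}}(\IC_V*\IC_W,\IC_{\triv})=H^0(G/P_{G,\l_V},\uk)$, the fusion $[C]\circ\taut_C$ contributes $\langle -2\rangle$, and the relative fundamental class $[\hk^{\glob}_{G,\fone,V}/\Bun_G]$ sits in degree $\langle -2-2d_{G,\l_V}\rangle$ — so that everything lands consistently in $\Cor_{\hk^{\glob}_{G,\ftwo,V\boxtimes W}}(\cP_X,\cP_X\langle -2\rangle)$ after applying $\D^{\hk}_{V,\glob,!}$, which raises degree by $2d_{G,\l_V}$ since it inserts a $\langle 2d_{G,\l_V}\rangle$-twisted trivial leg. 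A secondary point to be careful about is that the normalization $b_{\l,\mu}=\kappa_{\l,\mu}b_{\geo}$ enters \emph{linearly} — this is clear because the whole construction $c^{\loc}_{V\otimes W}\mapsto c_{V^{[I]}}\mapsto \D^{\hk}_{x,x+1,!}c_{V^{[I]}}$ is $k$-linear in the local input, so rescaling $b$ by $\kappa_{\l,\mu}$ simply rescales the output, giving the factor in the final formula. Modulo \thref{geokolycor} (equivalently, modulo \thref{techassum} via \thref{geokolycorthm}), the proof is then complete.
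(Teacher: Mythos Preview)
Your proposal is correct and follows essentially the same route as the paper: apply \thref{geokolycor} with $|I|=2$, use \thref{middimassum} to factor $c^{\loc}_{V\otimes W}=\kappa_{\l,\mu}\cdot c^{\loc}_{\triv}\circ b_{\geo}$, invoke the alternative description of $b_{\geo}$ as $\D^{\hk}_{V,\loc,!}[\hk^{\loc}_{G,V}/BL^+G\rtimes\Aut(D)]$, and then identify the global diagonal pushforward via the base change compatibility \thref{ccbc}. The paper's proof is a one-liner citing \thref{ccbc}, whereas you have spelled out the intermediate reductions and the degree bookkeeping; the content is the same.
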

\begin{proof}
This follows easily from the base change compatibility between pull-back and push-forward of cohomological correspondences in \thref{ccbc}.
\end{proof}

\subsubsection{Examples}\label{sec:examples}

There are many examples that \thref{middimassum} and \thref{minassum} hold.
\begin{example}[Rankin--Selberg convolution: homogeneous case]\thlabel{rsconv}
    Case $G=\GL_n\times \GL_{n-1}$ and $X=\GL_n$. See the discussion in  \autoref{higherrs}.

\end{example}

\begin{example}[Rankin--Selberg convolution: inhomogeneous case]\thlabel{rsconvtwo}
Case $G=\GL_n\times \GL_n$ and $X=\GL_n\times\A^n$, on which the action in given by $(g,x)\cdot(g_1,g_2)=(g_1^{-1}gg_2,x\cdot g_2)$ for $(g_1,g_2)\in\GL_n\times \GL_n$ and $(g,x)\in \GL_n\times\A^n$. One has $\Mc=T^*(\std_n\boxtimes\std_n)$ and \thref{bzsvloc} is proved in \cite{BFGT}. One can take $V=\std_n\boxtimes\std_n$ and $W=V^*$, which uniquely determines the coweights $\l,\mu\in X_*(T_H)$ up to Weyl group translation. In this case, \thref{middimassum} and \thref{minassum} are satisfied. 
\end{example}

\begin{example}[Orthogonal Gan--Gross--Prasad]\thlabel{ggp}
Case $G=\SO_n\times\SO_{n-1}$ and $X=\SO_n$, on which the action in given by $x\cdot(g_1,g_2)=g_1^{-1}xg_2$ for $(g_1,g_2)\in\SO_n\times\SO_{n-1}$ and $x\in \SO_n$. One has $\Mc=\std\boxtimes\std$ as a symplectic representation of $\Gc$. \thref{bzsvloc} in this case is proved in \cite{Braverman_2022}. One can take $W=V=\std\boxtimes\std$, which uniquely determines the coweights $\l,\mu\in X_*(T_H)$ up to Weyl group translation. In this case, \thref{middimassum} and \thref{minassum} are satisfied.
\end{example}

\begin{example}[Strongly tempered case]
Case $T^*X$ is strongly tempered, the hyperspherical dual $\Mc$ is a symplectic representation of $\Gc$. This includes \thref{rsconv}, \thref{rsconvtwo}, and \thref{ggp}. In this case, \thref{bzsvloc} predicts that one can take $V$ and $W=V^*$ to be constituents of $\Mc$, and then  \thref{middimassum} will be satisfied. When $X=H\bs G$ is homogeneous, by checking all such examples in \cite{wan2023periodsautomorphicformsassociated}, we find that for each irreducible $\Gc$-representation $V$, the multiplicity of $V$ in $\Mc$ is exactly the number of $\l\in X_*(T_H)^+$ such that $\l^+=\l_V$ and $d_{V,\l}=1$. The representation $\Mc$ is multiplicity-free and has all its constituents minuscule in all such examples in \textit{loc.cit}. Therefore, the construction of local special cohomological correspondences via \autoref{dfc} should provide a \emph{canonical} presentation of $\Mc$ as a direct sum of minuscule irreducible $\Gc$-representations.
\end{example}

\begin{example}[Linear period]\thlabel{fj}
When $G=\GL_{2n}$, $H=\GL_n\times\GL_n$, and $X=H\bs G$. The hyperspherical dual $\Mc=(T^*(\std_{2n})\oplus (\mathfrak{gl}_{2n}/\mathfrak{sp}_{2n})^*)\times^{\Sp_{2n}}\GL_{2n}$. One can take $V=\Std_{2n}\in\Rep(\GL_{2n})$ and $W=V^*$. In this case, there will be multiple choices of $\l\in X_*(T_H)$ such that $\l^+=\l_V$: One can take $\l=\l_1:=((1,0\dots,0),(0,\dots,0))$ or $\l=\l_2:=((0,\dots,0),(1,0,\dots,0))$, and these are all possible choices of $\l$ up to $W_H$ permutation. In this case, \thref{bzsvloc} predicts that \thref{middimassum} is satisfied for $(\l,\mu)=(\l_1,-\l_1)$ and $(\l,\mu)=(\l_2,-\l_2)$.
\end{example}

\subsection{Determine the constant}\label{verclifrel}
This section will only be used in \autoref{localinput} and can be safely skipped on a first reading.

In this section, we develop an effective way to determine the number $\kappa_{\l,\mu}$ in \eqref{geometricscalar} under \thref{middimassum} and \thref{minassum}. This method will be applied later in the Rankin--Selberg case  \autoref{geometricscalarglnsection}. The method also works for \thref{rsconvtwo}, \thref{ggp}, \thref{fj}, and many other examples. We leave the details to the reader.

\begin{remark}
    We are not developing a method to verify \thref{middimassum}. We do not know an effective way to verify the existence of $b$ in \textit{loc. cit} when it indeed exists. Instead, we are assuming the existence of $b$, and try to determine the relation between $b$ and $b_{\geo}$ defined via \eqref{pos=symp} and \eqref{geoform}. However, when the Soergel functor \eqref{Xsoergel} is fully faithful on the objects we care about,\footnote{This is highly possible when $X$ is homogeneous and strongly tempered. The full faithfulness has been proved for \thref{rsconv}, \thref{ggp} in \cite{BFGT},\cite{Braverman_2022}.} our method can be used to prove the existence of $b$.
\end{remark}

\subsubsection{Setup}

We refer to  \autoref{lie} for notations on Lie theory. 

We first work under the general setup \thref{commsetup}.

Define \begin{equation} R_G:=\Gamma(B(L^+G\rtimes\Aut(D)),\uk)\isom\Gamma(BG\times B\Gm,\uk) \end{equation} and \begin{equation} \overline{R}_G:=\Gamma(BL^+G,\uk)\isom\Gamma(BG,\uk) .\end{equation} We have $R_G=\overline{R}_G[\hbar]$. Moreover, it is well-known that $R_G=R_T^{W_G}$ and $R_T\isom \cO(\frt)[\hbar]$.

Consider the functor taking global sections \begin{equation}\label{Xsoergel}
    \Gamma:\Sat_{X,\hbar}=\Shv_*(LX/L^+G\rtimes\Aut(D))\to \Mod(R_G)
\end{equation} and \begin{equation}
    \Gamma:\Sat_{G,\hbar}=\Shv_*(\hk^{\loc}_G)\to \Mod(R_G\otimes_{k[\hbar]} R_G).
\end{equation} We have the formula \begin{equation} \Gamma(\cK*\cF)=\Gamma(\cK)\otimes_{R_{G}}\Gamma(\cF) \end{equation} for nice\footnote{For $\cK$ belonging to the full-subcategory of $\Sat_{X,\hbar}$ generated by $\Sat_{G,\hbar}$ and $\d^X_{\loc}$, the argument in \cite[Corollary\,B.4.2]{bezrukavnikov2013koszul} works. This is enough for our purpose.} objects $\cK\in\Sat_{G,\hbar}$ and $\cF\in\Sat_{X,\hbar}$.

For each $V\in\Rep(\Gc)^{\he,\om}$, define \begin{equation}\label{MV} M_{V}:=\Gamma(\IC_V)\in\Mod(R_G\otimes_{k[\hbar]} R_G)\end{equation} and \begin{equation}\overline{M}_{V}:=M\otimes_{k[\hbar]}k.\end{equation} 

Now assume \thref{Xlinear} and use the notations there. We know \begin{equation}
    R_H=\Gamma(\d^X_{\loc})\in\Mod(R_G)
\end{equation} where the $R_G$-module structure on $R_H$ is given by the ring map $R_G\to R_H$ induced by the natural map $BH\times \Gm\to BG\times\Gm$ which is inclusion on the first actor and identity on the second factor.

Given each local special cohomological correspondence \begin{equation} c_V^{\loc}\in\Hom^0(\IC_V*\d^X_{\loc},\d^X_{\loc}\langle d_V\rangle),\end{equation} we get a map \begin{equation}
c_V^{\coh}:=\Gamma(c_V^{\loc})\in\Hom^0(M_{V}\otimes_{R_G}R_H,R_H\langle d_V\rangle)
\end{equation} 

Therefore, by the construction of $c_{V\otimes W}^{\loc}$ in \itemref{genassum}{comassum}, we get 
\begin{equation}\label{cohcom}
    c_V^{\coh}\circ c_W^{\coh}-c_W^{\coh}\circ c_V^{\coh}\circ \sw_{V,W} = \hbar\cdot c_{V\otimes W}^{\coh}\in\Hom^0(M_{V}\otimes_{R_G}M_{W}\otimes_{R_G}R_H,R_H\langle d_V+d_W\rangle).
\end{equation}
where $\sw_{V,W}:M_V\otimes_{R_G}M_W\isom M_W\otimes_{R_G}M_V$ is induced from the unique element in \[\Hom^0_{\Sat_{G,\hbar}}(\IC_V*\IC_W,\IC_W*\IC_V)\]  lifting the natural commutativity constraint in $\Sat_G$ obtained from the fusion procedure. \footnote{Here, when writing a map whose source is a tensor product and the map is a priori only defined on certain tensor factors, we always mean extending the map trivially to other tensor factors. We will use this convention throughout this section without mention to simplify notations.}

From now on, we work under \thref{middimassum} and \thref{minassum}, and we take $c_V^{\loc}=c_{V,\l}^{\loc}, c_W^{\loc}=c_{W,\mu}^{\loc}$ constructed in \eqref{dfcloc}. In this case, we have a natural map $b^{\coh}_{\geo}:M_V\otimes_{R_G}M_W\to M_{\triv}=R_G$ by applying $\Gamma$ to $b_{\geo}$ in \eqref{geoform}. Then the map $c_{V\otimes W}^{\coh}$ can be identified as
\begin{equation}
    c_{V\otimes W}^{\coh}=\kappa_{\l,\mu}\cdot b_{\geo}^{\coh}:M_{V}\otimes_{R_G}M_{W}\otimes_{R_G}R_H\to R_H
\end{equation}

The relation \eqref{cohcom} becomes
\begin{equation}\label{cohclif}
    c_{V,\l}^{\coh}\circ c_{W,\mu}^{\coh} - c_{W,\mu}^{\coh}\circ  c_{V,\l}^{\coh}\circ \sw_{V,W} = \kappa_{\l,\mu}\hbar \cdot b_{\geo}^{\coh}
\end{equation}

Therefore, to determine the scalar $\kappa_{\l,\mu}$, we only need to write down the maps $c_{V,\l}^{\coh}$, $c_{W,\mu}^{\coh}$, $b_{\geo}^{\coh}$, and $\sw_{V,W}$. The determination of these maps will be the subject of the rest of this section.

\subsubsection{Description of $M_V$}\label{mv}
We first describe $M_V\in \Mod(R_G\otimes_{k[\hbar]}R_G)$ defined in \eqref{MV}.

Recall that the parabolic subgroup $P_{G,\l}\sub G$ in \eqref{Vparabolic}. Similarly, we have the parabolic subgroup $P_{H,\l}\sub H$.

We define $W_{G,\l}:=W_{P_{G,\l}}\sub W_G$ and $W_{H,\l}:=W_{P_{H,\l}}\sub W_H$ which are the Weyl groups of the Levi groups of the corresponding parabolic subgroups. 

In this case, we have \begin{equation} M_V\isom \Gamma(B(\cP_{G,\l}\rtimes\Aut(D)),\uk)\langle d_{G,\l}\rangle\isom \cO(\frt)^{W_{G,\l}}[\hbar]\langle d_{G,\l}\rangle\in\Mod(R_G\otimes_{k[\hbar]} R_G)  .\end{equation} The $R_G$-bimodule structure can be seen as follows: The right $R_G$-module structure on $M_V$ is induced by the natural inclusion map $\rh_{\loc,V}:B(\cP_{G,\l}\rtimes\Aut(D))\to B(L^+G\rtimes\Aut(D)) $ which is the natural inclusion \begin{equation} i_{G,\l}:\cO(\frt)^{W_G}[\hbar]\to \cO(\frt)^{W_{G,\l}}[\hbar]. \end{equation} The left $R_G$-module structure on $M_V$ is induced by the map $\lh_{\loc,V}:B(\cP_{G,\l}\rtimes\Aut(D))\to B(L^+G\rtimes\Aut(D))$ which is identified with the map\footnote{This is because on homotopy types $\lh_{\loc,V}$ can be identified with $B(P_{G,\l}\times \Gm)\to B(G\times\Gm)$ such that $(p,a)\mapsto (p\l(a^{-1}),a)$, which is an easy consequence of \eqref{dfccordiag}.} \begin{equation}\label{twistincl} \tili_{G,\l}: \cO(\frt)^{W_G}[\hbar]\to\cO(\frt)^{W_{G,\l}}[\hbar] \end{equation} induced from the pull-back along $\frt\times \A^1\to \frt\times\A^1$ given by $(X,a)\mapsto (X-a\cdot d\l,a)$.

\subsubsection{Description of $c_{V,\l}^{\coh}$}\label{cv}

Now we describe the map $c_{V,\l}^{\coh}:M_V\otimes_{R_G}R_H\to R_H$.

Let $r:\cO(\frt)[\hbar]\to\cO(\frt_H)[\hbar]$ be the map induced by the natural inclusion. Then it is easy to see from definition of $c_{V,\l}^{\loc}$ in \eqref{dfcloc} that $c_{V,\l}^{\coh}$ can be identified with 
\begin{equation}
\begin{split}
    &\cO(\frt)^{W_{G,\l}}[\hbar]\otimes_{\cO(\frt)^{W_G}[\hbar]}\cO(\frt_H)^{W_H}[\hbar] \langle d_{G,\l}\rangle \\ \xrightarrow{r\otimes i_{H,\l}} &\cO(\frt_H)^{W_{H,\l}}[\hbar]\langle d_{G,\l}\rangle \\ \xrightarrow{\lh^{+,Y}_{\loc,!}} &\cO(\frt_H)^{W_H}[\hbar]\langle d_{G,\l}-2d_{H,\l}+2\d_{\l,Y}\rangle
    \end{split}
\end{equation}
where the map $\lh^{+,Y}_{\loc}$ is the top arrow in \eqref{derivedrelativehecke}, and by $\lh^{+,Y}_{\loc,!}$ we mean integration along the derived fundamental class $[\lh^{+,Y}_{\loc}]$. The map $r:\cO(\frt)^{W_{G,\l}}[\hbar]\to \cO(\frt_H)^{W_{H,\l}}[\hbar]$ is induced from the natural inclusion $\frt_H\to \frt$, and the map $i_{H,\l}:\cO(\frt_H)^{W_H}[\hbar]\to \cO(\frt_H)^{W_{H,\l}}[\hbar]$ is defined in the same way as $i_{G,\l}$.

Furthermore, consider $\ch(D_{\l,Y})\in \ZZ_{\geq 0}\cdot X^*(T_H\times\Gm)$ which is the character of the defect module $D_{\l,Y}$ defined in \thref{defect}. We can take its top Chern class \begin{equation}\label{defectchern}
    c_{\top}(\ch(D_{\l,Y}))\in \cO(\frt_H)[\hbar]
\end{equation}  Let us also consider the map \begin{equation}\label{parabolicpush} p_{H,\l}:\cO(\frt_H)^{W_{H,\l}}\to \cO(\frt_H)^{W_{H}}\langle -2d_{H,\l}\rangle \end{equation} given by integration along $BP_{H,\l}\to BH$. It is easy to see that the map 
\begin{equation}
    \lh^{+,Y}_{\loc,!}:\cO(\frt_H)^{W_{H,\l}}[\hbar]\to \cO(\frt_H)^{W_H}\langle-2d_{H,\l}+2\d_{\l,Y}\rangle.
\end{equation}
can be identified with the composition
\begin{equation}
    \cO(\frt_H)^{W_{H,\l}}[\hbar]\xrightarrow{\cdot c_{\top}(\ch(D_{\l,Y}))}\cO(\frt_H)^{W_{H,\l}}[\hbar]\langle 2\d_{\l,Y}\rangle \xrightarrow{p_{H,\l}} \cO(\frt_H)^{W_H}[\hbar]\langle-2d_{H,\l} +2\d_{\l,Y}\rangle.
\end{equation}

\begin{remark}\thlabel{parabolicpushdes}
    The map $p_{H,\l}$ is the unique graded $\cO(\frt_H)^{W_{H}}$-linear map sending $c_{\top}(\ch(\frh/\Lie(P_{H,\l})))\in \cO(\frt_H)^{W_{H,\l}}\langle 2d_{H,\l}\rangle$ to $|W_H/W_{H,\l}|\in \ZZ \sub \cO(\frt_H)^{W_H}$.\footnote{Since we are regarding $\frg$ as a \emph{right} $G$-module, our $\ch(\frh/\Lie(P_{H,\l}))$ is different from the usual convention by a minus sign.} Here, the $\cO(\frt_H)^{W_H}$-linear structure on $\cO(\frt_H)^{W_{H,\lambda}}$ is given by $\tili_{H,\lambda}$, which is defined in the same way as $\tili_{G,\lambda}$.
\end{remark}

\subsubsection{Description of $b_{\geo}^{\coh}$}\label{geocoh}
Now we describe the map $b_{\geo}^{\coh}:M_V\otimes_{R_G}M_W\to R_G$.

Choose $w\in W_G$ such that $w(\mu)=-\l$. Consider the map $a_{G,\l,\mu}:\cO(\frt)^{W_{G,\l}}[\hbar]\to \cO(\frt)^{W_{G,\mu}}[\hbar]$ induced by pull-back along the map $\frt\times\A\to\frt\times\A^1$ given by $(X,a)\mapsto (wX-a\cdot d\l,a)$. Then the map $b_{\geo}^{\coh}$ can be identifies with 
\begin{equation}
    \cO(\frt)^{W_{G,\l}}[\hbar]\langle d_{G,\l}\rangle\otimes_{\cO(\frt)^{W_G}[\hbar]}\cO(\frt)^{W_{G,\mu}}[\hbar]\langle d_{G,\l}\rangle\xrightarrow{\id\otimes a_{G,\l,\mu}}\cO(\frt)^{W_{G,\l}}[\hbar]\langle 2d_{G,\l}\rangle \xrightarrow{p_{G,\l}} \cO(\frt)^{W_{G}}[\hbar]
\end{equation}
where the map $p_{G,\l}$ is defined similarly as \eqref{parabolicpush}.

\subsubsection{Description of $\sw_{V,W}$}\label{sw}
Now we describe the map $\sw_{V,W}:M_V\otimes_{R_G} M_W\isom M_W\otimes_{R_G}M_V$. 

By \cite[\S6.2, Lemma\,13]{BF}, we know that $\sw_{V,W}$ is the unique graded map lifting $\overline{M}_V\otimes_{\overline{R}_G}\overline{M}_W\isom \overline{M}_W\otimes_{\overline{R}_G}\overline{M}_V $ given by the natural commutativity constraint in $\Mod(\overline{R}_G)$ (i.e. $x\otimes y\mapsto (-1)^{|x||y|}y\otimes x$)

\begin{remark}
    Although the above gives a characterization of the map $\sw_{V, V^*}$, we do not know a general way to write it down for all minuscule $V$.
\end{remark}

\section{Higher period integrals and derivatives of \texorpdfstring{$L$}{L}-functions}\label{higherperiodintegrals}
In this section, we combine the results in previous sections to get a higher period integral formula (\thref{higherformulageneral}).
\begin{itemize}
    \item In \autoref{geometricisotypicpart}, we recall the geometric isotypic part of the $X$-period, which has been considered in many previous works. 
    \item In \autoref{heckeactionsection}, we discuss the Hecke action on the geometric isotypic part.
    \item In \autoref{middimcase}, we restrict to the Poisson-pure case and get the main result \thref{higherformulageneral}.
\end{itemize}

\subsection{Geometric isotypic part of period integral}\label{geometricisotypicpart}
For any affine smooth $G$-variety $X$, we have the period sheaf \[\cP_X\in\Shv(\Bun_G)\] constructed in  \autoref{periodsheaf}. We would like to construct some ``higher period integrals", which can be related to higher derivatives of $L$-functions after restricting the ``integrals" to the ``$\pi$-isotypic part" for every (everywhere unramified) cuspidal automorphic representation $\pi\in\Irr(G(\A_F))$. For this purpose, we need to make sense of the ``isotypic part of the period" and reasonably define higher integrals.

\subsubsection{Geometric period integral}
We now come to the definition of geometric period integral, which is a geometric enhancement of the period integral.

\begin{defn}\thlabel{geoperiod}
Consider $\pi:\Bun_G^X\to \Bun_G$. We define the \emph{geometric $X$-period integral} to be the functor $\int_X:\Shv(\Bun_G)\to \Vect$ such that $\int_X=\Gamma_c\circ \pi^*$.
\end{defn}

\begin{remark}
    Here we are using $!$-push for morphisms locally of finite type between Artin stacks locally of finite type. We did not include this in our $\Shv$ due to laziness in writing the article. However, by \thref{openpresen}, our sheaf theory is the same as the usual sheaf theory on Artin stacks locally of finite type. Therefore, we can use the six-functor formalism developed in \cite{liu2017enhancedadicformalismperverse}.
\end{remark}

\begin{remark}
The functor $\int_X$ can be understood using the period sheaf as follows: Consider the functor $\Gamma_c\circ\D_{\Bun_G}^*:\Shv(\Bun_G\times \Bun_G)\to \Vect$, where $\D_{\Bun_G}:\Bun_G\to\Bun_G\times\Bun_G$ is the diagonal map. By base change, we have $\int_X=\Gamma_c\circ\D_{\Bun_G}^*(\cP_X\boxtimes-)$. By \cite[Corollary\,3.3.7]{arinkin2022dualityautomorphicsheavesnilpotent}, we know that $\Gamma_c\circ\D_{\Bun_G}^*|_{\Shv_{\Nilp}(\Bun_G)^{\otimes 2}}=\ev_{\Mir}$ is the counit map for the Miraculous duality on $\Shv_{\Nilp}(\Bun_G)$. By \cite[Proposition\,3.4.6]{arinkin2022dualityautomorphicsheavesnilpotent}, we know $\int_X|_{\Shv_{\Nilp}(\Bun_G)}=\ev_{\Mir}(\cP_X^r\otimes-)$, where $\cP_X^r=\mathrm{P}_{\Nilp}*\cP_X$ is the image of $\cP_X$ under the right adjoint (to the forgetful functor) $\mathrm{P}_{\Nilp}:\Shv(\Bun_G)\to\Shv_{\Nilp}(\Bun_G)$ which is given by the Beilinson's spectral projector. Note that it is conjectured in \cite[\S12.4]{BZSV} that the period sheaf after spectral projection $\cP_X^r$ should correspond to the $L$-sheaf defined in \textit{loc.cit.}. This gives a way to understand $\int_X|_{\Shv_{\Nilp}(\Bun_G)}$ in terms of miraculous duality and the global conjecture made in \cite{BZSV}.
\end{remark}

\begin{remark}
    Note that the functor $\int_X|_{\Shv_{\Nilp}(\Bun_G)}:\Shv_{\Nilp}(\Bun_G)\to\Vect$ preserves compact objects. This observation will be used in \cite{trace} to relate fake special cycle classes and isotypic part of special cycle classes on Shtukas via the functoriality of categorical trace.
\end{remark}

\subsubsection{Hecke eigensheaves}

We propose a definition of higher integrals using the period sheaf and the Hecke eigensheaves. For this purpose, we recall the notion of Hecke eigensheaf following \cite[\S13]{arinkin2022stacklocalsystemsrestricted}:

\begin{defn}\thlabel{heckeeigen}
    For $\s\in\Loc_{\Gc}^{\res}(k)$ (i.e. a map $i_{\s}:\pt\to\Loc_{\Gc}^{\res}$), a Hecke eigensheaf for $\s$ is an object in \[\Hecke_{\s}:= \Shv_{\Nilp}(\Bun_G)\otimes_{\QCoh(\Loc_{\Gc}^{\res})}\Vect\] where $\QCoh(\Loc_{\Gc}^{\res})$ acts on $\Shv_{\Nilp}(\Bun_G)$ via the spectral action \cite[Theorem\,0.7.4]{arinkin2022stacklocalsystemsrestricted}, and the action of $\QCoh(\Loc_{\Gc}^{\res})$ on $\Vect$ is given by $i_{\s}^*:\QCoh(\Loc_{\Gc}^{\res})\to\Vect$.
\end{defn}

We summarize some useful properties relating to Hecke eigensheaves:
\begin{prop}\thlabel{eigensheafproperties}
The following holds:
    \begin{enumerate}
        \item The map \[\id\otimes i_{\s,*}:\Hecke_{\s}=\Shv_{\Nilp}(\Bun_G)\otimes_{\QCoh(\Loc_{\Gc}^{\res})}\Vect\to \Shv_{\Nilp}(\Bun_G)\] admits left adjoint $\id\otimes i_{\s}^*$;
        \item \label{eigenprojector}\cite[\S13.1.10]{arinkin2022stacklocalsystemsrestricted} The map \[\Hecke_{\s}\xrightarrow{\id\otimes i_{\s,*}}\Shv_{\Nilp}(\Bun_G)\sub\Shv(\Bun_G)\] admits left adjoint $\rP_{\s}:\Shv(\Bun_G)\to\Hecke_{\s}$.
    \end{enumerate}
\end{prop}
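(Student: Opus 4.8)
\textbf{Proof proposal for \thref{eigensheafproperties}.}

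The plan is to deduce both statements from the general formalism of tensoring stable presentable categories over a symmetric monoidal category, applied to the action of $\QCoh(\Loc_{\Gc}^{\res})$ on $\Shv_{\Nilp}(\Bun_G)$ via the spectral action. For part (1), the key input is that $i_{\s}:\pt\to\Loc_{\Gc}^{\res}$ is a map of (nice, e.g. quasi-compact with affine diagonal, ind-algebraic) stacks for which $i_{\s,*}:\Vect\to\QCoh(\Loc_{\Gc}^{\res})$ admits a continuous left adjoint $i_{\s}^*$. First I would recall that $\Hecke_{\s}=\Shv_{\Nilp}(\Bun_G)\otimes_{\QCoh(\Loc_{\Gc}^{\res})}\Vect$ is the base change, and that since $\Shv_{\Nilp}(\Bun_G)$ is dualizable as a $\QCoh(\Loc_{\Gc}^{\res})$-module (this follows from the spectral action being a module structure over a rigid monoidal category together with dualizability of $\Shv_{\Nilp}(\Bun_G)$ in $\LinCat_k$), the functor $\mathrm{id}\otimes(-):\mathrm{Mod}(\QCoh(\Loc_{\Gc}^{\res}))\to\mathrm{Mod}(\QCoh(\Loc_{\Gc}^{\res}))$ preserves adjunctions between colimit-preserving functors. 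Concretely, $\mathrm{id}\otimes i_{\s}^*$ is left adjoint to $\mathrm{id}\otimes i_{\s,*}$ because the adjunction $(i_{\s}^*,i_{\s,*})$ is an adjunction of $\QCoh(\Loc_{\Gc}^{\res})$-linear functors (the unit and counit are $\QCoh$-linear natural transformations), and tensoring a linear adjunction with a fixed dualizable module is again an adjunction; the triangle identities are inherited. This gives (1).

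For part (2), I would note that the functor in question factors as
\[
\Hecke_{\s}\xrightarrow{\mathrm{id}\otimes i_{\s,*}}\Shv_{\Nilp}(\Bun_G)\hookrightarrow\Shv(\Bun_G).
\]
The inclusion $\Shv_{\Nilp}(\Bun_G)\hookrightarrow\Shv(\Bun_G)$ admits the left adjoint $\mathrm{P}_{\Nilp}*(-)$ — this is Beilinson's spectral projector, recalled in \autoref{supergeometriclanglands}, and (by the Gaitsgory–Raskin theorem cited there) it is in fact the right adjoint as well, but for our purposes only the existence of a left adjoint $L:\Shv(\Bun_G)\to\Shv_{\Nilp}(\Bun_G)$ matters, and it exists because $\Shv_{\Nilp}(\Bun_G)$ is a colocalization — equivalently, because the inclusion preserves colimits (it does, being fully faithful with both adjoints). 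Then $\mathrm{id}\otimes i_{\s,*}$ has the left adjoint $\mathrm{id}\otimes i_{\s}^*$ from part (1), so the composite has left adjoint $(\mathrm{id}\otimes i_{\s}^*)\circ L$. Setting $\rP_{\s}:=(\mathrm{id}\otimes i_{\s}^*)\circ\mathrm{P}_{\Nilp}*(-)$ (or more precisely with the genuine left adjoint $L$ in place of $\mathrm{P}_{\Nilp}*$) gives (2), and this matches the description attributed to \cite[\S13.1.10]{arinkin2022stacklocalsystemsrestricted}.

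The main obstacle, and the place where I would spend the most care, is verifying the dualizability/linear-adjunction compatibility that makes "tensor a linear adjunction with a dualizable module" produce an honest adjunction — i.e.\ checking that the relevant functors are all colimit-preserving and $\QCoh(\Loc_{\Gc}^{\res})$-linear, and that the base-change categories are computed correctly (this uses that $\Loc_{\Gc}^{\res}$ is the "good" restricted version for which the spectral action of \cite[Theorem\,0.7.4]{arinkin2022stacklocalsystemsrestricted} is available, and that $\QCoh$ of it is rigid/dualizable in the needed sense). Once that bookkeeping is in place, both statements are formal. A secondary point to check is that $i_{\s}^*$ is continuous — this is where one uses that $\s$ is a \emph{discrete} (finite-type) point of $\Loc_{\Gc}^{\res}$, or at least that $i_{\s}$ is a map for which $*$-pullback has the expected properties; since the statement only concerns $\s\in\Loc_{\Gc}^{\res}(k)$ this is harmless.
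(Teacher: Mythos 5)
The paper itself offers no proof of this proposition: part (1) is stated without argument and part (2) is imported wholesale from \cite[\S13.1.10]{arinkin2022stacklocalsystemsrestricted}, so your write-up is being measured against that reference rather than against an argument in the text. Your treatment of part (1) is essentially the standard one and is fine: the adjunction $(i_{\s}^*,i_{\s,*})$ is internal to $\QCoh(\Loc_{\Gc}^{\res})$-modules (using rigidity of $\QCoh(\Loc_{\Gc}^{\res})$ and continuity of $i_{\s,*}$), and applying $\Shv_{\Nilp}(\Bun_G)\otimes_{\QCoh(\Loc_{\Gc}^{\res})}(-)$ preserves it. You do not actually need dualizability of $\Shv_{\Nilp}(\Bun_G)$ as a module for this; the $(\infty,2)$-functoriality of the relative tensor product already preserves adjunctions, so invoking dualizability only adds an unnecessary dependency. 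Also, the continuity worry is misplaced on $i_{\s}^*$ (which is a monoidal pullback, automatically continuous); the functor whose continuity and linearity need checking is $i_{\s,*}$.

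Part (2) has a genuine gap. You factor the left adjoint as $(\id\otimes i_{\s}^*)\circ L$, where $L$ is a purported \emph{left} adjoint to the inclusion $\iota:\Shv_{\Nilp}(\Bun_G)\hookrightarrow\Shv(\Bun_G)$, and you justify the existence of $L$ by saying the inclusion preserves colimits and that $\Shv_{\Nilp}$ is a colocalization. That is the wrong direction of the adjoint functor theorem: a colimit-preserving fully faithful inclusion of presentable categories admits a \emph{right} adjoint; to get a left adjoint you would need $\iota$ to preserve limits (i.e.\ $\Shv_{\Nilp}$ to be a reflective subcategory closed under limits in $\Shv$), which is neither claimed in the paper nor expected — singular-support conditions are stable under colimits, not under infinite limits. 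Moreover, your identification of $L$ with $\mathrm{P}_{\Nilp}*(-)$ directly contradicts \autoref{supergeometriclanglands}, where $\mathrm{P}_{\Nilp}*(-)$ is stated to be the \emph{right} adjoint of $\iota$ (the AGKRRV conjecture proved by Gaitsgory--Raskin); there is no reason for this adjunction to be ambidextrous. Consequently the composite-of-left-adjoints reduction does not go through: the existence of $\rP_{\s}$ is exactly the nontrivial content of \cite[\S13.1.10]{arinkin2022stacklocalsystemsrestricted} and cannot be deduced from part (1) together with general nonsense about $\iota$. If you want to prove part (2) rather than cite it, you must either show directly that the composite $\iota\circ(\id\otimes i_{\s,*})$ preserves limits (so the adjoint functor theorem applies to the composite, without a left adjoint for $\iota$ alone), or reproduce the construction of loc.\,cit., which exploits the specific structure of objects in the image of $\id\otimes i_{\s,*}$.
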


Given \itemref{eigensheafproperties}{eigenprojector}, we make the following definition:
\begin{defn}\thlabel{whiteigen}
    For $\s\in\Loc_{\Gc}^{\res}(k)$, we define the \emph{Whittaker normalized} Hecke eigensheaf of $\s$ to be \[\LL_{\s}^{\Whit}:=\rP_{\s}(\cP_{\Whit}^{\norm})\in\Hecke_{\s},\] where $\cP_{\Whit}^{\norm}$ is the normalized Whittaker period sheaf defined in  \autoref{supergeometriclanglands}.
\end{defn}

In concrete terms, giving a Hecke eigensheaf $\LL_{\s}\in\Shv(\Bun_G)$ is equivalent to giving the object $\LL_{\s}\in\Shv(\Bun_G)$ together with compatible isomorphisms \[\IC_{c^*V}\boxstar (\LL_{\s}\boxtimes\uk_{C})\cong\rh_{\fone,\glob,!}(\lh_{\glob}^* \LL_{\s}\otimes \IC_V)\isom V_{\s}\boxtimes  \LL_{\s}\in\Shv(C\times\Bun_G)\] for $V\in\Rep(\Gc)$. Here the two maps $\lh_{\glob},\rh_{\fone,\glob}$ are defined in \autoref{modulispaces}, and we recall that they form the diagram \[\begin{tikzcd}
    & \hk^{\glob}_{G,\fone} \ar[dl, "\lh_{\glob}"'] \ar[dr, "\rh_{\fone,\glob}"] &\\
    \Bun_G&&C\times \Bun_G
\end{tikzcd}.\] The sheaf $V_{\s}\in\Lisse(C)$ is the value of $\s:\Rep(\Gc)\to \QLisse(C)$ at $V\in\Rep(\Gc)$. Here, $\Lisse(C)$ (resp. $\QLisse(C)$) is the category of lisse sheaves (resp. left completion of ind-lisse sheaves) on $C$ defined in \cite[\S1.2]{arinkin2022stacklocalsystemsrestricted}.
In particular, we know that for each $V\in\Rep(\Gc)$ \begin{equation}
    \IC_{c^*V}\boxstar \LL_{\s}\isom  \Gamma(C,V_{\s}) \otimes \LL_{\s}
\end{equation}
where the action $\boxstar$ is the action of $\Sat_{G,\hbar}$ on $\Shv(\Bun_G)$ defined in  \autoref{shvandcat}.

\subsubsection{Geometric isotypic part}

\begin{defn}\thlabel{geoisotypic}
We define \emph{geometric $\s$-isotypic $X$-period integral} to be the vector space \[\int_X\LL_{\s}\in\Vect.\]
\end{defn}

\subsection{Hecke actions and cocommutator relations}\label{heckeactionsection}
In this section, we study the Hecke action on $\int_X\LL_{\s}$ coming from special cohomological correspondences on $\cP_X$. In the Language of \cite{BZSV}, these Hecke actions come from the action of the \emph{RTF-algebra} on the period sheaf. For a more detailed discussion on this perspective, we refer to \cite[\S16.3]{BZSV}.

\subsubsection{Hecke actions}
Suppose we are given a cohomological correspondence \[c_V^{\glob}\in\Cor_{\hk^{\glob}_{G,\fone},\IC_V}(\cP_X,\cP_X\langle d_V-2\rangle )=\Hom^0(\IC_V\boxstar \cP_X\langle -d_V+2\rangle,\cP_X),\] we can define the Hecke action map as the following composition
\begin{equation}\label{heckeactioneigen}
    \begin{split}
        a_{V,\s}:\Gamma(C,V_{\s})\langle-d_V+2\rangle\otimes\int_X\LL_{\s}&=\Gamma(C,V_{\s})\langle-d_V+2\rangle\otimes \Gamma_c(\Bun_G,\cP_X\otimes \LL_{\s}) \\
        &\isom \Gamma_c(\Bun_G, \cP_X\otimes (\IC_{c^*V}\boxstar \LL_{\s}))\langle-d_V+2\rangle\\
        &\isom \Gamma_c(\Bun_G, (\IC_{V}\boxstar\cP_X)\otimes \LL_{\s})\langle-d_V+2\rangle\\
        &\to \Gamma_c(\Bun_G,\cP_X\otimes\LL_{\s})\\
        &=\int_X\LL_{\s}
    \end{split}
\end{equation}

Moreover, given a local special cohomological correspondence datum $(I,V^I,\{c_{V^x}^{\loc}\}_{x\in I})$ for $I=[r]$ as in \thref{loctovarious}, we can form special global cohomological correspondences $\{c_{V^x}^{\glob}\}_{x\in I}$ as in \eqref{removeleg} and define \begin{equation}
    a_{V^I,\s}=a_{V^1,\s}\circ\cdots\circ (\underbrace{\id\otimes \cdots\otimes\id}_{(r-1)~ \mathrm{times}}\otimes a_{V^r,\s}):\bigotimes_{x\in I}\Gamma(C,V^{x}_{\s})\langle -d_I+2r\rangle\otimes \int_X\LL_{\s}\to \int_X\LL_{\s}.
\end{equation}

\subsubsection{Commutator relations}
Now consider the special cohomological correspondences $c_V^{\loc},c_W^{\loc},c_{V\otimes W}^{\loc}$ in \thref{commsetup}. Note that we have a natural map induced by the cup product \begin{equation}
\begin{split}
    \cup:\Gamma(C,V_{\s})\otimes\Gamma(C,W_{\s})\to\Gamma(C,(V\otimes W)_{\s})
    \end{split}.
\end{equation}
We have the following corollary of \thref{geokolycor}:
\begin{cor}\thlabel{cocommutatorrelation}
    Under \thref{commsetup}, assume \thref{geokolycor} is true. We have the following identity of maps \[a_{V\boxtimes W,\s}-a_{W\boxtimes V,\s}=a_{V\otimes W,\s}\circ(\cup \otimes \id):\Gamma(C,V_{\s})\otimes \Gamma(C,W_{\s})\langle -d_V-d_W+4\rangle \otimes\int_X\LL_{\s}\to  \int_X\LL_{\s}.\]
\end{cor}

The proof is straightforward, and we omit it.

\subsection{Middle-dimensional case}\label{middimcase}
In this section, we keep working under \thref{commsetup} and assuming further \thref{middimassum}, which gives us a $G$-invariant bilinear form $b: V\otimes W\to \triv$. We assume this bilinear form is non-degenerate.

\subsubsection{Clifford relations}
Consider the natural map \begin{equation}
\begin{split}
    \ev_{b,\s}:&\Gamma(C,V_{\s})\otimes\Gamma(C,W_{\s}) \langle 2\rangle \\
    \to &\Gamma(C,(V\otimes W)_{\s})\langle 2\rangle \\
    \to &\Gamma(C,\uk)\langle 2\rangle \\
    \isom & k
    \end{split}.
\end{equation} Here, the first map is the cup product, the second map is induced by $b:V\otimes W\to \triv$, the third map is the natural isomorphism given by the fundamental class of $C$.

We have the following reformulation of \thref{cocommutatorrelation}:

\begin{cor}\thlabel{eigenclifrelation}
Under \thref{commsetup} and \thref{middimassum}, assume \thref{geokolycor} is true. We have the following identity of maps \[a_{V\boxtimes W,\s}-a_{W\boxtimes V,\s}= \ev_{b,\s}\otimes\id:\Gamma(C,V_{\s})\otimes \Gamma(C,W_{\s})\langle 2\rangle\otimes\int_X\LL_{\s}\to  \int_X\LL_{\s} .\]
\end{cor}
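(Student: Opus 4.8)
The plan is to deduce \thref{eigenclifrelation} from \thref{cocommutatorrelation} by a purely local-to-global identification of the two coevaluation maps. The key point is that under \thref{middimassum} the local special cohomological correspondence $c_{V\otimes W}^{\loc}$ factors as $V\otimes W \xrightarrow{b} \triv \xrightarrow{c_{\triv}^{\loc}} \PL_{X,\hbar}$, so that the global correspondence $c_{V\otimes W,\fone}$ — and hence the Hecke coaction $a_{V\otimes W,\s}^{\co}$ — factors through the Hecke coaction associated to the \emph{trivial} representation, which is essentially the identity twisted by $\Gamma(C,\uk)$. Concretely, I would first unwind the definition of $a_{V\otimes W,\s}^{\co}$ in \eqref{heckecoaction} and observe that the morphism of kernels $b:\IC_{V\otimes W}\to\IC_{\triv}$ (supported on the appropriate minuscule cell in the Poisson-pure case, or just coming from $b$ as a map of $\Gc$-representations in general) induces a commuting square relating $a_{V\otimes W,\s}^{\co}$ to $a_{\triv,\s}^{\co}$ via the spectral map $V\otimes W \xrightarrow{b} \triv$ on the eigenvalue side, i.e. the induced map $\Gamma(C,(V\otimes W)_\s) \to \Gamma(C,\triv_\s)=\Gamma(C,\uk)$.

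Second, I would note that $a_{\triv,\s}^{\co}$ is the tautological map $\int_X\LL_\s \to \Gamma(C,\uk)\otimes \int_X\LL_\s$ given by the canonical unit class $1\in H^0(C,\uk)$, since the correspondence $c_{\triv}^{\loc}$ is the canonical element $c_{\triv}^{\loc}\in\Hom^0(\triv,\PL_{X,\hbar})$ and its global avatar is (up to the fundamental class $[C]$ absorbed in \eqref{removeleg}) the trivial modification. Combining the previous step, $a_{V\otimes W,\s}^{\co}$ equals the composite $\int_X\LL_\s \xrightarrow{1\otimes\id} \Gamma(C,\uk)\otimes\int_X\LL_\s \to \Gamma(C,(V\otimes W)_\s)\langle d_V+d_W-2\rangle \otimes \int_X\LL_\s$, where the second arrow is induced by $b^{\co}:\triv\to V\otimes W$ on the spectral side composed with the natural map $\Gamma(C,\uk)\to\Gamma(C,(V\otimes W)_\s)\langle d_V+d_W-2\rangle$. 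Tracing through the definitions of $\coev_{V\boxtimes W,\s}$ in \autoref{heckecoactionsection} and $\coev_{b,\s}$ in \autoref{middimcase}, one checks directly that $(\coev_{V\boxtimes W,\s}\otimes\id)\circ a_{V\otimes W,\s}^{\co} = \coev_{b,\s}\otimes\id$: both sides are obtained by feeding the unit class through $b^{\co}$ on the eigenvalue side and then applying the Gysin/restriction map along the diagonal $\D_C:C\to C^2$, and the factor $\langle 2\rangle = \langle d_V+d_W\rangle$ matches since $d_V+d_W=2$ under \thref{middimassum}.

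Finally, substituting this identity into the conclusion of \thref{cocommutatorrelation} gives exactly $a_{V\boxtimes W,\s}^{\co} - a_{W\boxtimes V,\s}^{\co} = \coev_{b,\s}\otimes\id$, which is the assertion of \thref{eigenclifrelation}. The main obstacle I anticipate is the bookkeeping in the second step: carefully identifying the global Hecke coaction attached to $c_{V\otimes W}^{\loc} = c_{\triv}^{\loc}\circ b$ with the composite described above, i.e. checking that the local-to-global construction of \autoref{conscc} together with the excision/fusion manipulations of \autoref{commutatorrelations} are compatible with precomposition by the map of kernels $b$, and that the normalization conventions (the $[C^I]$ and $\taut_{C^I}$ in \eqref{removeleg}, the shearing shifts, the identification $\Gamma(C,\D_C^!(V_\s\boxtimes W_\s))\langle 2\rangle\cong\Gamma(C,(V\otimes W)_\s)$) all line up without spurious signs or Tate twists. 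Once this compatibility is granted — which is a formal consequence of the functoriality of cohomological correspondences recorded in \autoref{cc} — the rest is a routine diagram chase, so the proof itself can be given in a sentence or two referring back to \thref{cocommutatorrelation} and the factorization \eqref{pos=symp}.
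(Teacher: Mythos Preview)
Your proposal is correct and matches the paper's approach. The paper presents \thref{eigenclifrelation} explicitly as a ``reformulation of \thref{cocommutatorrelation}'' under \thref{middimassum} and gives no further argument; your unwinding of $a_{V\otimes W,\s}^{\co}$ via the factorization \eqref{pos=symp}, together with the identification $(\coev_{V\boxtimes W,\s}\otimes\id)\circ a_{V\otimes W,\s}^{\co}=\coev_{b,\s}\otimes\id$, is exactly the content of that reformulation.
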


\subsubsection{Clifford algebra}\label{clifaction}
Continue with the setting in the previous section. We now construct an action of a Clifford algebra on the geometric isotypic part $\int_{X}\LL_{\s}$.

We first construct a symplectic $\Gc$-representation $(K,\om_K)$ as follows: When $V=W$, it is easy to see that $b$ is a symplectic form, we can take $(K,\om_K):=(V,b)$; when $V$ and $W$ are not necessarily isomorphic, we can take $(K,\om_K):=(V\oplus W,\om_{\can})$ where $\om_{\can}$ is the canonical symplectic form constructed in \eqref{symcanb}. Here we use $b$ to identify $V\cong W^*$.

Define $M:=\Gamma(C,K_{\r})\langle 1\rangle$. It is equipped with a symplectic pairing $\om_M$ coming from the symplectic pairing $\om_K$ and cup product. From the map \eqref{heckeactioneigen} (take $I$=[1]), we get an action of the free tensor algebra $M^{\otimes}$ on $\int_X\LL_{\s}$ denoted by \begin{equation}\label{tensoractioneigen}
    a_{M,\s}:M^{\otimes}\otimes \int_X\LL_{\s}\to \int_X\LL_{\s}.
\end{equation} 

Suppose we are further under \thref{minassum}, we can also use $b_{\geo}$ in \eqref{geoform} instead of $b=b_{\l,\mu}$ in \eqref{Xform}. The same construction as above gives us a symplectic form $\om_{K,\geo}$ on $K$ and a symplectic form $\om_{M,\geo}$ on $M$. We have $\om_M=\kappa_{\l,\mu}\cdot\om_{M,\geo}$.
We make the following assumption for simplicity:
\begin{assumption}\thlabel{concentrationassum}
$M\in\Vect^{\he}$.
\end{assumption}

Then we get the following reformulation of \thref{eigenclifrelation}:

\begin{cor}\thlabel{eigenkolyvagin}
    Suppose we are in \thref{commsetup} and further assume \thref{middimassum}, \thref{minassum}, and \thref{concentrationassum}. Suppose \thref{geokolycor} is true, then the action \eqref{tensoractioneigen} factors through the Clifford algebra $\Cl(M)=\Cl(M,\om_M)$ introduced in  \autoref{linearalgebrasetup}, hence gives us an action map \begin{equation}\label{cliffordactioneigen}
    a_{M,\s}:\Cl(M)\otimes \int_X\LL_{\s}\to \int_X\LL_{\s}.\end{equation}
\end{cor}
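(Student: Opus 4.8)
The plan is to deduce \thref{eigenkolyvagin} directly from the CoClifford relation \thref{eigenclifrelation}, using only elementary linear algebra in $\Vect^{\he}$ together with the duality formalism of \autoref{superlinear}. The point is that \thref{eigenclifrelation} is an identity of operators on $\int_X\LL_\s$ that, upon dualizing, translates exactly into the defining relation of the Clifford algebra. So the proof is essentially a bookkeeping exercise tracking the various identifications (the isomorphism $\om_K:V\isom W^*$, the $\langle 1\rangle$-shift in $M=\Gamma(C,K_\s)\langle 1\rangle$, and the sign conventions in \eqref{tensorb}, \eqref{symcanb}).

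First I would unwind \thref{eigenclifrelation}. It says
\[
a^{\co}_{V\boxtimes W,\s}-a^{\co}_{W\boxtimes V,\s}=\coev_{b,\s}\otimes\id:\int_X\LL_\s\to \Gamma(C,V_\s)\otimes\Gamma(C,W_\s)\langle 2\rangle\otimes\int_X\LL_\s.
\]
Under \thref{findimstalk} and \thref{concentrationassum} all the objects in sight are in $\Vect^{\he}$ and dualizable, so I can apply $(-)^*$ throughout. Dualizing $a^{\co}_{V\boxtimes W,\s}=(\id\otimes a^{\co}_{V,\s})\circ a^{\co}_{W,\s}$ and using the formula for duals of compositions and tensor products in \autoref{superlinear}, the composite $a_{V\boxtimes W,\s}:=a^{\co,*}_{V\boxtimes W,\s}$ becomes $a_{W,\s}\circ(\id\otimes a_{V,\s})$ up to the canonical braiding, i.e.\ in terms of the total action map $a_{M,\s}$ of \eqref{tensoractioneigen} it is $m\mapsto (a_{M,\s}(m_2)\circ a_{M,\s}(m_1))$ for $m=m_1\otimes m_2\in \Gamma(C,V^*_\s)\otimes\Gamma(C,W^*_\s)\langle 1\rangle^{\otimes 2}\subset M^{\otimes 2}$; symmetrically $a_{W\boxtimes V,\s}$ gives $a_{M,\s}(m_1)\circ a_{M,\s}(m_2)$. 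The dual of $\coev_{b,\s}$ is a map $M^{\otimes 2}\to k$, and tracing through the construction of $\coev_{b,\s}$ — the unit $b^{\co}:\triv\to V\otimes W$, the purity iso $\D^!_C$, cup product — one identifies it precisely with the bilinear pairing $\om_M$ on $M$ (this is where the $\langle 1\rangle$-shift is essential: it turns the cup-product pairing on $\Gamma(C,K_\s)$, which would have the wrong symmetry, into a symmetric pairing on the odd space $M$, cf.\ \eqref{symcanb} and the fact that $M$ has parity $(0|2n)$ as in \thref{symp}). Hence dualizing \thref{eigenclifrelation} yields, for all $m_1,m_2\in M$,
\[
a_{M,\s}(m_1)\circ a_{M,\s}(m_2)+a_{M,\s}(m_2)\circ a_{M,\s}(m_1)=\om_M(m_1,m_2)\cdot\id,
\]
which is exactly the defining relation of $\Cl(M)=\Cl(M,\om_M)$ from \thref{clif}. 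Therefore the algebra map $M^{\otimes}\to\End((\int_X\LL_\s)^*)$ of \eqref{tensoractioneigen} kills the two-sided ideal $I$ and factors through $\Cl(M)$, giving \eqref{cliffordactioneigen}.

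The main obstacle is not conceptual but the careful matching of signs and identifications: verifying that $(\coev_{b,\s})^*$ is literally $\om_M$ and not $-\om_M$ or $\om_M$ composed with a braiding, and verifying that dualizing the composition $a^{\co}_{V\boxtimes W,\s}$ reverses the order of the factors in exactly the way needed so that the two terms $a^{\co}_{V\boxtimes W,\s}$, $a^{\co}_{W\boxtimes V,\s}$ become the two terms of the Clifford anticommutator rather than, say, a commutator. This requires being scrupulous about: (i) the right-vs-left module conventions of \autoref{lie}; (ii) the Koszul signs hidden in $\om_2$ versus the tensor-dual pairing, exactly the discrepancy flagged in the \textbf{Warning} after \thref{pairingconvention}; (iii) the identification of $\Gamma(C,K_\s)\langle 1\rangle$ with its dual via $\om_M$, compatibly with the isomorphism $V_\s\cong W_\s^*$ induced by $b$. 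Once these are pinned down — all of which is routine given the explicit conventions already laid out in \autoref{superlinear} — the factorization is immediate. I would also remark that \thref{geokolycor} (equivalently \thref{geokolycorthm} under \thref{techassum}) is the only substantive input; everything after that is formal.
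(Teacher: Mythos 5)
Your proposal is correct and takes essentially the same route as the paper: the paper states \thref{eigenkolyvagin} as an immediate reformulation of \thref{eigenclifrelation} with no written proof, and the intended argument is exactly your formal dualization — the dual of $\coev_{b,\s}$ is $\om_M$, and the difference of the two coaction terms (which involves the odd swap, hence a Koszul sign) dualizes to the anticommutator $a(m_1)a(m_2)+a(m_2)a(m_1)=\om_M(m_1,m_2)\cdot\id$, i.e.\ the defining relation of $\Cl(M,\om_M)$ from \thref{clif}. Your identification of where the sign bookkeeping matters (the $\langle 1\rangle$-shift making $M$ odd with a symmetric pairing, and the braiding hidden in comparing $a^{\co}_{V\boxtimes W,\s}$ with $a^{\co}_{W\boxtimes V,\s}$) is precisely the content that makes the factorization work.
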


\begin{remark}
    \thref{concentrationassum} made in \thref{eigenkolyvagin} can be weakened to only require that $\Gamma(C,V_{\s})$ and $\Gamma(C,W_{\s})$ are concentrated in a single degree respectively without difficulty.
 \end{remark}

\subsubsection{Producing Kolyvagin system}
Now we would like to apply the machinery in  \autoref{linearalgebra}. We make the following assumption:
\begin{assumption}\thlabel{findimstalk}
    The geometric $\s$-isotypic $X$-period $\int_X\LL_{\s}\in\Vect$ is a perfect complex.
\end{assumption}

We need Frobenius automorphisms on $M$ and $\int_X\LL_{\s}$. We make the following setup:
\begin{setting}\thlabel{heckeeigenweil}
    We choose $\s \in\Loc_{\Gc}^{\arith}(k)$ (i.e. a Weil $\Gc$-local system on $C$) and a Weil Hecke eigensheaf $\LL_{\s}\in\Hecke_{\s}^{\Frob^*}$, where $\Hecke_{\s}$ is the category of Hecke eigensheaves defined in \thref{heckeeigen}.
\end{setting}

Note that for each $\s\in\Loc_{\Gc}^{\arith}(k)$, the Whittaker normalized Hecke eigensheaf $\LL_{\s}^{\Whit}$ defined in \thref{whiteigen} gives a particular choice of Weil Hecke eigensheaf.

Given a Weil Hecke eigensheaf $\LL_{\s}$, continuing with the assumptions in last section, we get a triple $(M,\om, F)$ as in \thref{Fsymp} where $F$ is given by the Frobenius endomorphism of $M=\Gamma(C, K_{\s})\langle 1\rangle$, and $\int_X\LL_{\s}$ is an $F$-equivariant $\Cl(M)$-module. We can construct the Kolyvagin system \begin{equation}\label{eigenkoly}
    \{z_{\LL_{\s},r}\in M^{*\otimes r}\}_{r\in\Z_{\geq 0}}
\end{equation}
following \thref{kolydefrep}. Then \thref{kolylfun} gives us the following result, which can be regarded as a higher period formula:
\begin{thm}\thlabel{higherformulageneral}
    Keep the same assumptions as \thref{eigenkolyvagin}. The following identity holds for any $r\geq 0$:
    \begin{equation}
        \om_{r,\geo}(z_{\LL_{\s},r},z_{\LL_{\s},r})=\b_{\l,\mu,X}(\log q)^{-r}\left(\frac{d}{ds}\right)^{r}\Big|_{s=0}(q^{(g-1)\dim(K)s}L(K_{\s},s+1/2))
    \end{equation}
    where $\b_{\l,\mu,X}:=\kappa_{\l,\mu}^{-r}\l(\int_X\LL_{\s})(-1)^{r/2}$. The bilinear form $\om_{r,\geo}:(M^{\otimes r})^{\otimes 2}\to k$ is induced from $\om_{M,\geo}:M^{\otimes 2}\to k$ as in \eqref{tensorbformula}, the factor $\l(\int_X\LL_{\s})$ is defined in \eqref{kolynormrep}.
\end{thm}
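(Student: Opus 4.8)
The statement follows by assembling the pieces that have already been set up. The plan is to first observe that, under the running hypotheses (\thref{commsetup}, \thref{middimassum}, \thref{minassum}, \thref{findimstalk}, \thref{concentrationassum}, together with \thref{geokolycor}, which is known in the $\GL_n$ case via \thref{geokolycorthm}), \thref{eigenkolyvagin} provides the algebra action $a_{M,\s}\colon\Cl(M,\om_M)\otimes(\int_X\LL_\s)^*\to(\int_X\LL_\s)^*$, i.e.\ a $\Cl(M,\om_M)$-module structure on $T:=(\int_X\LL_\s)^*$. Since $M=\Gamma(C,K_\s)\langle 1\rangle$ with $K$ symplectic and the twist $\langle 1\rangle$ reverses parity, $(M,\om_M)$ is an odd symplectic space as in \thref{symp}; the Frobenius action on $M$ furnishes $F\in\Or(M)$, putting us in \thref{Fsymp}, and the Weil structure on $\LL_\s$ from \thref{heckeeigenweil} makes $T$ an object of $\Mod(\Cl(M))^{\om,F}$ in the sense of \thref{kolyrep}. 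The Kolyvagin system $\{z_{\LL_\s,r}\}$ of \eqref{eigenkoly} is then by definition the one attached to this module via \thref{kolydefrep}.

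\textbf{Main steps.} First I would record that the two symplectic forms on $M$ differ by the scalar of \eqref{geometricscalar}: $\om_M=\kappa_{\l,\mu}\cdot\om_{M,\geo}$, hence $\Cl(M,\om_M)=\Cl(M,\kappa_{\l,\mu}\om_{M,\geo})$, and that rescaling the symplectic form by $\kappa_{\l,\mu}$ rescales the induced form $\om_r$ on $M^{*\otimes r}$ by $\kappa_{\l,\mu}^{r}$ — this is where the factor $\kappa_{\l,\mu}^{-r}$ in $\b_{\l,\mu,X}$ originates, after we pass from $\om_r$ (built from $\om_M$) to $\om_{r,\geo}$ (built from $\om_{M,\geo}$). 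Second, I would invoke \thref{kolyrep=koly}: the hypothesis \thref{findimstalk} together with Morita equivalence (Clifford module $S$ satisfies $\Cl(M)\cong\End(S)$) shows $T$ is a finite direct sum of shifts of $S$, so $V_T$ is finite-dimensional and $\tr_{\Koly}(F_T)\in\Koly_F$; the Kolyvagin system attached to the module $T$ coincides with the one attached to the Kolyvagin polarization $(M,\om_M,\tr_{\Koly}(F_T),\om_\top^{1/2})$, and by definition $\l((\int_X\LL_\s)^*)=\l(\tr_{\Koly}(F_T))$ (see \eqref{kolynormrep}). Third, I apply \thref{kolylfun} to this Kolyvagin polarization: with $\dim M=(0|2n)$ and $n=\tfrac12\dim M=\tfrac12\dim\Gamma(C,K_\s)$, the identity \eqref{kolylfunformula} reads
\[
\om_r(z_{\LL_\s,r},z_{\LL_\s,r})=\l(\tr_{\Koly}(F_T))\,\e_{n,r}\,(\ln q)^{-r}\Big(\tfrac{d}{ds}\Big)^r\Big|_{s=0}\big(q^{ns}L(M,F,s)\big),
\]
where $\e_{n,r}=(-1)^{r(r-1)/2+n}$ and $L(M,F,s)=\det(1-q^{-s}F)^{-1}$. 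Fourth, I would match the analytic side: since $M=\Gamma(C,K_\s)\langle 1\rangle$, the shift $\langle 1\rangle$ contributes a Tate twist by $\tfrac12$ (i.e.\ an overall shift of $s$ by $\tfrac12$) and a parity flip (a sign $(-1)$ in the superdeterminant, absorbed into $\e_{n,r}$ versus $(-1)^{r/2}$), so $L(M,F,s)$ becomes $L(K_\s,s+\tfrac12)$ up to the standard functional-equation normalization, and $q^{ns}$ becomes $q^{(g-1)\dim(K)\,s}$ because $n=\tfrac12\dim\Gamma(C,K_\s)=\tfrac12\bigl((2g-2)\dim K + \text{Euler char corrections}\bigr)$; here one uses that $K_\s$ is a rank-$\dim K$ local system on $C$ so $\dim\Gamma(C,K_\s)=(2g-2)\dim K$ by Euler–Poincaré (the $H^0,H^2$ contributions vanish for $\s$ without trivial constituents, which holds in the strongly-tempered irreducible setup), giving $n=(g-1)\dim K$. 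Finally, converting $\om_r$ to $\om_{r,\geo}$ via the rescaling from the first step and collecting the signs $\e_{n,r}$ against $(-1)^{r/2}$ yields exactly $\b_{\l,\mu,X}=\kappa_{\l,\mu}^{-r}\,\l((\int_X\LL_\s)^*)\,(-1)^{r/2}$.

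\textbf{Expected obstacle.} The genuine content is entirely in \thref{kolylfun}, which is already proved; the remaining work here is bookkeeping of normalizations. The step I expect to require the most care is the precise reconciliation of the sign and twist conventions — tracking how the shift $\langle 1\rangle$ in $M=\Gamma(C,K_\s)\langle 1\rangle$ interacts with the superdeterminant in \itemref{lfuncrootnumber}{rootnumber}, with the parity of the Clifford module (\thref{whyparity}), and with the factor $(-1)^{r(r-1)/2}$ built into $\om_r$ (\thref{pairingconvention}, and the warning following it) — so that the accumulated sign is exactly $(-1)^{r/2}$ and not, say, $(-1)^{r(r-1)/2+n}$ in disguise. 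A secondary point is verifying the Euler-characteristic computation $n=(g-1)\dim(K)$, which needs the vanishing of $H^0(C,K_\s)$ and $H^2(C,K_\s)$; this is immediate when $K_\s$ has no trivial subquotient, which is guaranteed in the irreducible Rankin–Selberg situation but should be stated. Once these normalizations are pinned down, the theorem drops out of \thref{kolylfun} and \thref{kolyrep=koly}.
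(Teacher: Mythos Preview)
Your proposal is correct and follows the same route as the paper: apply \thref{kolylfun} to the $F$-equivariant $\Cl(M)$-module $(\int_X\LL_\s)^*$ produced by \thref{eigenkolyvagin}, after identifying $n=(g-1)\dim(K)$, $L(M,F,s)=L(K_\s,s+1/2)$, and $\e(M,F)=1$. The paper's own proof is extremely terse (three sentences asserting exactly these identifications), whereas you spell out the intermediate steps through \thref{kolyrep=koly} and flag the sign bookkeeping; note that the vanishing of $H^0,H^2$ you worry about is precisely \thref{concentrationassum}, and the sign reconciliation $\e_{n,r}=(-1)^{r/2}$ goes through because $\dim(K)$ is even (symplectic) so $n=(g-1)\dim(K)$ is even, while $\e(M,F)=1$ forces $r$ even and then $r(r-1)/2\equiv r/2\pmod 2$.
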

\begin{proof}
    One just note that we have $\dim(M)=\dim\Gamma(C,K_{\s})\langle 1\rangle=(0|2(g-1)\dim(K))$, \[L(K_{\s},s+1/2) = L(M,F,s),\] and $z_{\LL_{\s},r}$ is non-zero only if $r$ is even. Then the formula follows from \thref{kolylfun}.
\end{proof}

\section{Example: higher Rankin--Selberg convolutions}\label{higherrs}
In this section, we consider the example $G=\GL_n\times \GL_{n-1}$ and $X=\GL_n$, where $G$ acts on $X$ via \[x\cdot(g_n,g_{n-1})=g_{n-1}^{-1}xg_n\] for $(g_n,g_{n-1})\in\GL_n\times\GL_{n-1}$ and $x\in X=\GL_n$, where we regard $\GL_{n-1}$ as a subgroup of $\GL_n$ via $g_{n-1}\mapsto \diag(g_{n-1},1)$. In this case, we have $\Gc=\GL_n\times \GL_{n-1}$ and $\Mc=T^*(\std_n\boxtimes\std_{n-1})$, where the $\Ggr$-action on $\Mc$ is the scaling action (i.e. weight one on both $\std_n\boxtimes\std_{n-1}$ and $(\std_n\boxtimes\std_{n-1})^*$). 
\begin{itemize}
    \item In \autoref{localinput}, we discuss the local aspect of this example.
    \item  In \autoref{globalinput}, we discuss the global aspect of this example.
    \item  In \autoref{higherformulaglnsection}, we combine the input and get the higher Rankin--Selberg integral formula \thref{higherformulagln}.
\end{itemize}

\subsection{Local input}\label{localinput}
We first discuss the local aspect of this example. The key result is \thref{geometricscalargln}.
\subsubsection{Construction of local special cohomological correspondences}\label{consloccohcorgln}
\thref{bzsvloc} in this case is proved in \cite{BFGT}. In particular, it tells us that \thref{middimassum} holds for $V:=\std_n\boxtimes\std_{n-1}$, $W:=\std_n^*\boxtimes\std_{n-1}^*$, and we are going to introduce the local special cohomological correspondences $c_V^{\loc}$ and $c_W^{\loc}$ used in \textit{loc.cit}.
We directly apply the construction in \autoref{dfc}. Using the notations in \textit{loc.cit}, we have $H=\GL_{n-1}$ regarded as a subgroup of $G=\GL_n\times\GL_{n-1}$ via diagonal embedding. Under the standard identification $X_*(T)=\ZZ^{n}\oplus\ZZ^{n-1}$. We have \[\l_V=((1,0,\dots,0),(1,0,\dots,0))\in X_*(T)\] and \[\l_W=((0,\dots,0,-1),(0,\dots,0,-1))\in X_*(T).\] Note also that $X_*(T_H)=\ZZ^{n-1}$. We take \[\l=\l_{\std_{n-1}}:=(1,0,\dots,0)\in X_*(T_H)^+,\] and  $\mu=-\l$. Then we have $\l^+=\l_V$ and $\mu^+=\l_W$. The construction \eqref{dfcloc} gives us \begin{equation}\label{dfclocgln}
    c_V^{\loc}:=c_{V,\l}^{\loc}\in\Cor_{\hk^{\Xp,\loc}_{G},\IC_V}(\d^X_{\loc},\d^X_{\loc}\langle 1\rangle), c_W^{\loc}:=c_{W,-\l}^{\loc}\in \Cor_{\hk^{\Xp,\loc}_{G},\IC_W}(\d^X_{\loc},\d^X_{\loc}\langle 1\rangle).
\end{equation}

\subsubsection{Determine the number $\kappa_{\l,-\l}$ in \eqref{geometricscalar}}\label{geometricscalarglnsection}

Now we can apply the method in  \autoref{verclifrel} to determine the number $\kappa_{\l,-\l}$ in \eqref{geometricscalar}. The result is the following:
\begin{prop}\thlabel{geometricscalargln}
    We have $\kappa_{\l,-\l}=(-1)^{n-1}$. In other word, we have \[c_V^{\loc}\circ c_W^{\loc} - c_V^{\loc}\circ c_W^{\loc}=(-1)^{n-1} \hbar\cdot b_{\geo}\in\Hom^0(\IC_{V\otimes W}*\d^X_{\loc},\d^X_{\loc}\langle 2\rangle).\] Here $b_{\geo}:V\otimes W\to \triv$ is defined in \eqref{geoform}.
\end{prop}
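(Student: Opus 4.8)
The plan is to reduce the identity to the explicit computation recipe of \autoref{verclifrel}: by \eqref{cohclif}, applying $\Gamma$ turns the claimed local Clifford relation into the cohomological identity
\[
c_{V,\l}^{\coh}\circ c_{W,-\l}^{\coh} - c_{W,-\l}^{\coh}\circ c_{V,\l}^{\coh}\circ \sw_{V,W} = \kappa_{\l,-\l}\hbar\cdot b_{\geo}^{\coh}
\]
inside $\Hom^0(M_V\otimes_{R_G}M_W\otimes_{R_G}R_H, R_H\langle 2\rangle)$, and it suffices to extract the scalar. Since the Soergel-type functor \eqref{Xsoergel} is fully faithful here by the main result of \cite{BFGT}, this cohomological identity actually determines (and is equivalent to) the sheaf-level one, so no extra input beyond the computation is needed. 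First I would spell out all four ingredients explicitly for $G=\GL_n\times\GL_{n-1}$, $H=\GL_{n-1}$: the bimodules $M_V$, $M_W$ via \autoref{mv}, the maps $c_{V,\l}^{\coh},c_{W,-\l}^{\coh}$ via \autoref{cv}, the geometric pairing $b_{\geo}^{\coh}$ via \autoref{geocoh}, and the swap $\sw_{V,W}$ via \autoref{sw}.

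The concrete setup is as follows. Here $V=\std_n\boxtimes\std_{n-1}$ and $W=\std_n^*\boxtimes\std_{n-1}^*$ are minuscule; since the defect vanishes in the homogeneous case $X=G/H=\GL_{n-1}\backslash(\GL_n\times\GL_{n-1})$, we have $d_{V,\l}=d_W=1$. With $\l=\l_{\std_{n-1}}=(1,0,\dots,0)\in X_*(T_H)$, the parabolic $P_{H,\l}\subset\GL_{n-1}$ is the standard maximal parabolic with Levi $\GL_1\times\GL_{n-2}$, and $W_{H,\l}=S_1\times S_{n-2}$, $W_{G,\l}$ the analogous product inside $S_n\times S_{n-1}$. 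Using coordinates $\frt=\{(x_1,\dots,x_n)\}\oplus\{(y_1,\dots,y_{n-1})\}$ and $\frt_H=\{(y_1,\dots,y_{n-1})\}$ embedded via $y\mapsto((y_1,\dots,y_{n-1},0),(y_1,\dots,y_{n-1}))$, the maps $r$, $\tili_{H,\l}$ (shifting $y_1\mapsto y_1-a$), the Chern-class multiplication \eqref{defectchern} (which is trivial, defect zero), and the parabolic pushforward $p_{H,\l}$ of \autoref{parabolicpushdes} all become standard symmetric-function operations. The upshot is that $c_{V,\l}^{\coh}$ and $c_{W,-\l}^{\coh}$ are "$\mathrm{BGG}$-type" operators: divided-difference-like maps associated to the one extra simple reflection $s_1\in W_H$, twisted by the $\hbar$-shift of the local coordinate. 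The commutator of two such twisted operators is the classical computation that produces an $\hbar$ times a residue/pairing term; one then checks this residue term is precisely $\pm b_{\geo}^{\coh}$, with the sign governed by $p_{G,\l}$ sending the top Chern class of $\frg/\Lie(P_{G,\l})$ to $|W_G/W_{G,\l}|$ and by the Koszul signs in $\sw_{V,W}$ acting on the odd-shifted factors.

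The main obstacle I expect is bookkeeping the sign $(-1)^n$. There are several independent sources of signs that must be tracked consistently: (i) the grading/parity conventions from the shears $\langle 1\rangle$ in the definitions of $M_V$, $b_{\geo}$ and in $d_{V,\l}=1$, since $V,W$ sit in odd degree and the Koszul rule in \autoref{sw} introduces $(-1)^{|x||y|}$; (ii) the normalization of $p_{G,\l}$ versus $p_{H,\l}$ and the minus sign in $\ch(\frh/\Lie(P_{H,\l}))$ flagged in the footnote of \autoref{parabolicpushdes}, coming from our convention that $\frg$ is a \emph{right} $G$-module; (iii) the choice of Weyl element $w$ with $w(\mu)=-\l$ entering $a_{G,\l,\mu}$ in \autoref{geocoh}, which for $\mu=-\l$ can be taken trivial but still interacts with the longest-element normalization of $b_{\geo}$; and (iv) the length $d_{G,\l_V}=\langle 2\r_G,\l_V\rangle$ appearing in the twists. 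My plan is to do the computation first in the rank-one toy case ($n=2$, so $G=\GL_2\times\GL_1$, $H=\GL_1$), where everything is a two-variable polynomial identity and the sign $(-1)^2=1$ can be verified by hand, then reduce the general $n$ to an essentially one-variable computation localized at the simple reflection $s_1$ — the remaining $\GL_{n-2}$-symmetric and $\GL_n$-factor directions contribute only through $p_{G,\l}$/$p_{H,\l}$, and the binomial coefficient $|W_G/W_{G,\l}|/|W_H/W_{H,\l}|=n$ cancels against the corresponding factor in $b_{\geo}^{\coh}$, leaving exactly the sign $(-1)^n$ from the Koszul rule applied $n$-fold through the product structure of $\ch(\frg/\Lie(P_{G,\l}))$. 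Finally I would cross-check the answer against \thref{bzsvloc} (i.e. \cite{BFGT}): the symplectic form on $\Mc=T^*(\std_n\boxtimes\std_{n-1})$ is canonical, so $\kappa_{\l,-\l}$ must be a root of unity compatible with the Poisson bracket on $\PL_X\isom\cO(\Mc^{\shear})$, and $(-1)^n$ is the only value consistent with the standard symplectic pairing on a cotangent bundle under our sign conventions.
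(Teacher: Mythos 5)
Your overall framework is the same as the paper's: apply $\Gamma$ to reduce to the cohomological identity \eqref{cohclif} and extract $\kappa_{\l,-\l}$ by explicit computation with the descriptions of $M_V$, $c_{V,\l}^{\coh}$, $b_{\geo}^{\coh}$, $\sw_{V,W}$ from \autoref{verclifrel}. (Full faithfulness of \eqref{Xsoergel} is not needed for this direction: the sheaf-level relation exists by \thref{middimassum}, and applying the functor $\Gamma$ to it already determines the scalar once $\hbar\cdot b_{\geo}^{\coh}\neq 0$.) However, your plan for actually executing the computation has two concrete problems. First, the swap map: the elements of $M_V\cong k[x,\ovc_1,\dots,\ovc_{n-1},y,\ovd_1,\dots,\ovd_{n-2},\hbar]\langle 2n-3\rangle$ that enter the computation are \emph{even} generators of $H^\bullet(BT)$; the odd parity sits only in the overall shift, so there is no ``$(-1)^{|x||y|}$ Koszul sign on the odd-shifted factors'' to track. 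What you actually need is the statement that the unique graded $\hbar$-lift of the commutativity constraint acts \emph{without $\hbar$-corrections} on the monomials $x^iy^s\otimes x^jy^t$ (the paper's \thref{swapgln}); this is a genuine lemma, since a priori the lift could mix in lower filtration pieces times powers of $\hbar$, and your proposal does not address it. Second, your sign mechanism is not the one that occurs: the $(-1)^n$ does not arise from ``the Koszul rule applied $n$-fold'' nor from a cancellation of $|W_G/W_{G,\l}|/|W_H/W_{H,\l}|=n$ against a factor of $b_{\geo}^{\coh}$ (no such index ratio appears). It comes from the asymmetric normalizations of the parabolic pushforwards: $p_{H,\l}$ for the \emph{positive} coweight $\l$ sends $z^{n-2}\mapsto(-1)^n$ while the pushforward in $c_{W,-\l}^{\coh}$ sends $z^{n-2}\mapsto 1$, the discrepancy being the top Chern class of $\frh/\Lie(P_{H,\l})$ versus its opposite under the right-module convention of \thref{parabolicpushdes}, together with the normalization $x^{n-1}y^{n-2}\mapsto -1$ of $b_{\geo}^{\coh}$.

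The efficient way to finish, which your proposal misses, is not a general commutator-of-divided-difference-operators identity but a single test vector: evaluate all three maps on $x^{n-1}\otimes y^{n-2}\otimes 1$. One finds $c_V^{\coh}\circ c_W^{\coh}$ gives $(-1)^n e_1$, the swapped composite gives $(-1)^n(e_1+\hbar)$ (the $\hbar$ produced by the twist $\tili_{H,\l}$, i.e. $x\mapsto x-\hbar$, not by a residue of a BGG operator), and $\hbar\, b_{\geo}^{\coh}$ gives $-\hbar$; comparison yields $\kappa_{\l,-\l}=(-1)^n$ immediately. Finally, the proposed ``cross-check against \thref{bzsvloc}'' cannot pin down the answer: $\kappa_{\l,-\l}$ is precisely the convention-dependent ratio between the Plancherel-algebra pairing and $b_{\geo}$, and nothing forces it to be a root of unity a priori, so that argument is not probative.
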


\begin{remark}
    The number $\kappa_{\l,-\l}$ can also be read off from the proof of \thref{bzsvloc} in this case given in \cite{BFGT}. Since we are using different conventions on Hecke actions, to avoid confusion, we prefer to provide a direct approach to determine this number.
\end{remark}

\begin{proof}[Proof of \thref{geometricscalargln}]
Using the notation in \autoref{verclifrel}, we have $\cO(\frt)=k[x_1,\dots,x_n,y_1,\dots,y_{n-1}]$, $\cO(\frt_H)=k[z_1,\dots,z_{n-1}]$, such that the restriction along $\frt_H\to\frt$ is given by $x_i\mapsto z_i$, $y_i\mapsto z_i$.

Let \begin{equation} c_i:=\sum_{J\sub [n],|J|=i}\prod_{j\in J}x_j \end{equation} 
\begin{equation}d_i:=\sum_{J\sub [n-1],|J|=i}\prod_{j\in J}y_j\end{equation} 
\begin{equation}e_i:=\sum_{J\sub [n-1],|J|=i}\prod_{j\in J}z_j\end{equation} 
\begin{equation} x:=x_1 \end{equation} 
\begin{equation} y:=y_1\end{equation} 
\begin{equation} \ovc_i:=\sum_{J\sub [n]-\{1\},|J|=i}\prod_{j\in J}x_j \end{equation} 
\begin{equation} \ovd_i:=\sum_{J\sub [n-1]-\{1\},|J|=i}\prod_{j\in J}y_j \end{equation}
\begin{equation} \ove_i:=\sum_{J\sub [n-1]-\{1\},|J|=i}\prod_{j\in J}z_j \end{equation}

We have \begin{equation} R_G=\cO(\frt)^{W_G}[\hbar]=k[c_1,\dots,c_n,d_1,\dots,d_{n-1},\hbar]\end{equation} \begin{equation}
    R_H=\cO(\frt_H)^{W_H}[\hbar]=k[e_1,\cdots,e_{n-1},\hbar]
\end{equation} and \begin{equation} M_V\isom M_W=k[x,\ovc_1,\dots,\ovc_{n-1},y,\ovd_1,\dots,\ovd_{n-2},\hbar]\langle 2n-3\rangle.\end{equation}

We first describe the $(R_G,R_G)$-bimodule structure on $M_V$ following  \autoref{mv}. The right $R_G$-module structure on $M_V$ is given by the ring map
\begin{equation}\label{glnmvleft}
    \begin{split}
    i_{G,\l}:k[c_1,\dots,c_n,d_1,\cdots,d_{n-1},\hbar]&\to k[x,\ovc_1,\dots,\ovc_{n-1},y,\ovd_1,\dots,\ovd_{n-2},\hbar] \\
        c_i&\mapsto x\ovc_{i-1}+\ovc_{i} \\
        d_i&\mapsto y\ovd_{i-1}+\ovd_{i} \\
        \hbar&\mapsto \hbar
    \end{split}.
\end{equation}
The left $R_G$-module structure on $M_V$ is given by the ring map
\begin{equation}\label{glnmvright}
    \begin{split}
    \tili_{G,\l}:k[c_1,\dots,c_n,d_1,\dots,d_{n-1},\hbar]&\to k[x,\ovc_1,\dots,\ovc_{n-1},y,\ovd_1,\dots,\ovd_{n-2},\hbar] \\
        c_i&\mapsto (x-\hbar)\ovc_{i-1}+\ovc_{i} \\
        d_i&\mapsto (y-\hbar)\ovd_{i-1}+\ovd_{i} \\
        \hbar&\mapsto \hbar
    \end{split}.
\end{equation}

We can similarly describe the $(R_G,R_G)$-bimodule structure on $M_W$: the right $R_G$-module structure on $M_W$ in still given by the same formula \eqref{glnmvleft}, while the left $R_G$-module structure on $M_W$ is given by the ring map
\begin{equation}\label{glnmvright-}
    \begin{split}
    \tili_{G,-\l}:k[c_1,\dots,c_n,d_1,\dots,d_{n-1},\hbar]&\to k[x,\ovc_1,\dots,\ovc_{n-1},y,\ovd_1,\dots,\ovd_{n-2},\hbar] \\
        c_i&\mapsto (x+\hbar)\ovc_{i-1}+\ovc_{i} \\
        d_i&\mapsto (y+\hbar)\ovd_{i-1}+\ovd_{i} \\
        \hbar&\mapsto \hbar
    \end{split}.
\end{equation}

Following \autoref{cv}, the map $c_V^{\coh}:M_V\langle -1\rangle \otimes_{R_G}R_H\to R_H$ can be identified with the composition
\begin{equation}\label{glncv}
\begin{split}
    c_V^{\coh}:&k[x,\ovc_1,\dots,\ovc_{n-1},y,\ovd_1,\dots,\ovd_{n-2},\hbar]\otimes_{R_G} k[e_1,\dots,e_{n-1},\hbar]\langle 2n-4\rangle\\
    \to & k[z,\ove_1,\dots,\ove_{n-2},\hbar]\langle 2n-4\rangle \\
    \to & k[e_1,\dots,e_{n-2},\hbar]
    \end{split}
\end{equation}
The first map in \eqref{glncv} is $k[x,\ovc_1,\dots,\ovc_{n-1},y,\ovd_1,\dots,\ovd_{n-2},\hbar]$-linear on the left and $k[e_1,\dots,e_{n-1},\hbar]$-linear on the right (both consider the most natural module structure). The second map in \eqref{glncv} is the unique $k[e_1,\dots,e_{n-1},\hbar]$-linear map satisfies $z^{n-2}\mapsto (-1)^n$ (see \thref{parabolicpushdes}). Here, the linearity is defined via $\tili_{H,\lambda}$.

Similarly, the map $c_W^{\coh}:M_W\langle -1\rangle \otimes_{R_G}R_H\to R_H$ can be identified with the composition
\begin{equation}\label{glncv-}
\begin{split}
    c_W^{\coh}:&k[x,\ovc_1,\dots,\ovc_{n-1},y,\ovd_1,\dots,\ovd_{n-2},\hbar]\otimes_{R_G} k[e_1,\dots,e_{n-1},\hbar]\langle 2n-4\rangle\\
    \to & k[z,\ove_1,\dots,\ove_{n-2},\hbar]\langle 2n-4\rangle \\
    \to & k[e_1,\dots,e_{n-2},\hbar]
    \end{split}
\end{equation}
The first map in \eqref{glncv-} is $k[x,\ovc_1,\dots,\ovc_{n-1},y,\ovd_1,\dots,\ovd_{n-2},\hbar]$-linear on the left and $k[e_1,\dots,e_{n-1},\hbar]$-linear on the right (both consider the most natural module structure). The second map in \eqref{glncv-} is the unique $k[e_1,\dots,e_{n-1},\hbar]$-linear map satisfies $z^{n-2}\mapsto 1$. Here, the linearity is defined via $\tili_{H,-\lambda}$.

Following \autoref{geocoh}, the map $b_{\geo}^{\coh}$ can be identified with the composition
\begin{equation}\label{glngeo}
    \begin{split}
        b_{\geo}^{\coh}:&k[x,\ovc_1,\dots,\ovc_{n-1},y,\ovd_1,\dots,\ovd_{n-2},\hbar]\otimes_{R_G}k[x,\ovc_1,\dots,\ovc_{n-1},y,\ovd_1,\dots,\ovd_{n-2},\hbar]\langle 4n-8 \rangle \\
        \to& k[x,\ovc_1,\dots,\ovc_{n-1},y,\ovd_1,\dots,\ovd_{n-2},\hbar]\langle 4n-8 \rangle \\
        \to& k[c_1,\dots,c_n,d_1,\dots,d_{n-1},\hbar]
    \end{split}
\end{equation}
The first map in \eqref{glngeo} is the unique $k[x,\ovc_1,\dots,\ovc_{n-1},y,\ovd_1,\dots,\ovd_{n-2},\hbar]$-bilinear map where the left module structure on the second line comes from the twisted ring map $x\mapsto x-\hbar$, $y\mapsto y-\hbar$ (and no twist on other generators). The second map in \eqref{glngeo} is the unique $k[x,\ovc_1,\dots,\ovc_{n-1},y,\ovd_1,\dots,\ovd_{n-2},\hbar]$-linear map such that $x^{n-1}y^{n-2}\mapsto -1$. Here, the linearity is defined via $\tili_{G,\lambda}$.

The only thing needs extra work is the description of $\sw_{V,W}$, which is given by the following lemma:
\begin{lemma}\thlabel{swapgln}
    The map $\sw_{V,W}:M_V\otimes_{R_G} M_W\to M_W\otimes_{R_G} M_V$ satisfies \[\begin{split}
        x^iy^s\otimes x^jy^t \mapsto x^jy^t\otimes x^iy^s
    \end{split}\] for $0\leq i,j\leq n-1$, $0\leq s,t\leq n-2$ such that $0\leq i+j\leq n-1$, $0\leq s+t\leq n-2$.
\end{lemma}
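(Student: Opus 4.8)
\textbf{Proof proposal for \thref{swapgln}.}

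The plan is to compute $\sw_{V,W}$ explicitly using the characterization recalled in \autoref{sw}: by \cite[\S6.2, Lemma\,13]{BF}, $\sw_{V,W}$ is the \emph{unique} graded $R_G\otimes_{k[\hbar]}R_G$-linear isomorphism $M_V\otimes_{R_G}M_W\isom M_W\otimes_{R_G}M_V$ whose reduction modulo $\hbar$ agrees with the natural commutativity constraint $\overline{M}_V\otimes_{\overline{R}_G}\overline{M}_W\isom\overline{M}_W\otimes_{\overline{R}_G}\overline{M}_V$ in $\Mod(\overline{R}_G)$, i.e. $\overline{x}\otimes\overline{y}\mapsto(-1)^{|\overline{x}||\overline{y}|}\overline{y}\otimes\overline{x}$. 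Since $V$ and $W$ are minuscule, $M_V$ and $M_W$ are even (concentrated in the parity $d_{G,\lambda_V}\equiv 0$), so the Koszul sign is trivial and the reduction mod $\hbar$ is literally the swap $\overline{x}\otimes\overline{y}\mapsto\overline{y}\otimes\overline{x}$. Therefore it suffices to exhibit \emph{some} graded $R_G$-bilinear map sending $x^iy^s\otimes x^jy^t\mapsto x^jy^t\otimes x^iy^s$ on the spanning set described in the statement, check it is a well-defined isomorphism, and invoke uniqueness.

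First I would set up $M_V\otimes_{R_G}M_W$ and $M_W\otimes_{R_G}M_V$ as explicit modules. Using the presentations from \autoref{geometricscalarglnsection}, both $M_V$ and $M_W$ equal $k[x,\ovc_1,\dots,\ovc_{n-1},y,\ovd_1,\dots,\ovd_{n-2},\hbar]$ as left $R_G$-modules via \eqref{glnmvleft}, while the right $R_G$-structures are twisted by $\tili_{G,\lambda}$ in \eqref{glnmvright} (for $M_V$, twist $x\mapsto x-\hbar$, $y\mapsto y-\hbar$) and $\tili_{G,-\lambda}$ in \eqref{glnmvright-} (for $M_W$, twist $x\mapsto x+\hbar$, $y\mapsto y+\hbar$). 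The key structural point is that the relations $c_i\mapsto x\ovc_{i-1}+\ovc_i$ (and $d_i\mapsto y\ovd_{i-1}+\ovd_i$), together with their twisted counterparts, let one eliminate the generators $\ovc_1,\dots,\ovc_{n-1},\ovd_1,\dots,\ovd_{n-2}$: over the base ring $R_G$, each module is freely generated by the monomials $x^iy^s$ with $0\le i\le n-1$, $0\le s\le n-2$ (this is the standard fact that $\cO(\frt)^{W_{G,\lambda}}$ is free over $\cO(\frt)^{W_G}$ with the evident monomial basis coming from $G/P_{G,\lambda}\cong \PP^{n-1}\times\PP^{n-2}$). Hence both tensor products $M_V\otimes_{R_G}M_W$ and $M_W\otimes_{R_G}M_V$ are free $R_G$-modules with basis $\{x^iy^s\otimes x^jy^t\}$ ranging over $0\le i,j\le n-1$, $0\le s,t\le n-2$, and — after using the relations once more to reduce $i+j$ and $s+t$ — over the smaller indexing set in the statement with coefficients. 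I would then \emph{define} $\sw_{V,W}$ to be the $R_G$-linear map that is the identity-swap on this basis, verify it is degree-preserving (both sides carry the same grading shift $\langle 2n-3\rangle$ on each factor, so this is immediate), and observe that it is manifestly an isomorphism with inverse the analogous swap.

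The main obstacle — and the only place real care is needed — is checking that this basis-level swap is actually \emph{well-defined} as a map out of the tensor product over $R_G$, i.e. that it respects the twisted module relations on both sides simultaneously. Concretely, the relation $c_i\otimes 1 = 1\otimes c_i$ inside $M_V\otimes_{R_G}M_W$ unwinds, via \eqref{glnmvleft} and \eqref{glnmvright-}, to a relation between monomials $x^iy^s\otimes x^jy^t$ with coefficients that are polynomials in $\hbar$; one must verify the swapped relation holds inside $M_W\otimes_{R_G}M_V$, where the roles of the $\hbar$-twists are interchanged. Because passing from $M_V$ to $M_W$ exchanges the sign of the $\hbar$-twist, and the swap also interchanges the two tensor slots, these two sign-reversals cancel, so the relations do correspond; but making this precise is a bookkeeping computation in the $\hbar$-graded polynomial rings that I would carry out by induction on $i$ (eliminating $\ovc_i$ in terms of $x$ and the $c_j$). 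Once well-definedness and the grading are confirmed, uniqueness in \cite[\S6.2, Lemma\,13]{BF} combined with the trivial-Koszul-sign observation above forces this map to be $\sw_{V,W}$, giving exactly the formula claimed. As a sanity check I would also confirm compatibility with the $\GL_n$-direction and $\GL_{n-1}$-direction separately, since the whole situation factors as a product $\PP^{n-1}\times\PP^{n-2}$ and the swap should respect this factorization.
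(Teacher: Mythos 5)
Your overall strategy — invoke the characterization of $\sw_{V,W}$ from \autoref{sw} as the unique graded lift of the mod-$\hbar$ commutativity constraint, exhibit the basis-level swap as a candidate, and check it is a well-defined graded bimodule map — is exactly what the paper intends; its own proof is one sentence deferring the "elementary arguments" to the reader, so your proposal is a legitimate attempt to supply them, and the freeness of $M_V$, $M_W$ over $R_G$ on the monomials $x^iy^s$ together with the well-definedness check against the twisted relations \eqref{glnmvright} and \eqref{glnmvright-} is the right skeleton.

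However, the one concrete assertion you make about the sign is wrong, and the sign is the entire nontrivial content of the lemma. You claim that $M_V$ and $M_W$ are even because "$d_{G,\lambda_V}\equiv 0$", so that the Koszul sign in the mod-$\hbar$ swap is trivial. For $V=\std_n\boxtimes\std_{n-1}$ one has $d_{G,\lambda_V}=\langle 2\rho_G,\lambda_V\rangle=(n-1)+(n-2)=2n-3$, which is odd for every $n$; indeed the paper records $M_V\isom M_W=k[x,\ovc_1,\dots,\hbar]\langle 2n-3\rangle$, an odd shift, so the generators $x^iy^s$ are \emph{odd} elements of $M_V$ in the paper's super conventions, and the naive Koszul swap on pure generators is $a\otimes b\mapsto -b\otimes a$, not $+b\otimes a$. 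Your justification for the absence of a sign therefore collapses, and you must actually do the parity bookkeeping: either track how the two odd shifts $\langle 2n-3\rangle$ interact when forming $M_V\otimes_{R_G}M_W$ and identifying it with $(M_W\otimes_{R_G}M_V)$ (the two odd parities cancel in the identification of the shifted tensor products), or observe that the formula is applied in \eqref{gln2} to the even modules $M_V\langle-1\rangle$ and $M_W\langle-1\rangle$, where the generators are genuinely even and the sign-free formula is the correct reading. Without this repair your argument would, if taken literally, produce the formula with an extra factor of $-1$, which would propagate to $\kappa_{\lambda,-\lambda}$ in \thref{geometricscalargln} and hence to the constant $\beta_\sigma$ in \thref{higherformulagln}. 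Separately, your well-definedness step should also verify that the swap intertwines the \emph{outer} right module structures, which are twisted by $+\hbar$ on one side and $-\hbar$ on the other; the cancellation you gesture at is plausible but is precisely the induction you defer, and it should be written out rather than asserted.
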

\begin{proof}
    This follows from the characterization of $\sw_{V, W}$ given in \autoref{sw} via elementary arguments. We leave it to the reader. 
\end{proof}

Now we can easily determine the scalar $\kappa_{\l,-\l}$. Consider $x^{n-1}\otimes y^{n-2}\otimes 1\in M_V\langle-1\rangle\otimes_{R_G} M_W\langle-1\rangle\otimes_{R_G} R_H $. We have
\begin{equation}\label{gln1}
    \begin{split}
        c_{V}^{\coh}\circ c_{W}^{\coh}(x^{n-1}\otimes y^{n-2}\otimes 1)&= c_{V}^{\coh}(x^{n-1}\otimes 1) \\
        &= (-1)^{n}(e_1+(n-1)\hbar)
    \end{split}
\end{equation}
\begin{equation}\label{gln2}
    \begin{split}
        c_{W}^{\coh}\circ c_{V}^{\coh}\circ\sw_{V,W}(x^{n-1}\otimes y^{n-2}\otimes 1)&= c_{W}^{\coh}\circ c_{V}^{\coh}(y^{n-2}\otimes x^{n-1}\otimes 1) \\
        &= c_{W}^{\coh}((-1)^{n}y^{n-2}\otimes (e_1+(n-1)\hbar)) \\
        &= (-1)^{n}(e_1+(n-2)\hbar)
    \end{split}
\end{equation}
\begin{equation}\label{gln3}
    \begin{split}
        \kappa_{\l,-\l}\hbar \cdot b_{\geo}^{\coh}(x^{n-1}\otimes y^{n-2}\otimes 1)=-\kappa_{\l,-\l}\hbar
    \end{split}
\end{equation}
Comparing \eqref{gln1}~\eqref{gln2}~\eqref{gln3} with \eqref{cohclif}, we get $\kappa_{\l,-\l}=(-1)^{n-1}$. This concludes the proof.

\end{proof}

\subsection{Global input}\label{globalinput}
Now we turn to the global aspect of the example. The main result in this section is \thref{higherrsbzsvglobal}.

We write $\Bun_n:=\Bun_{\GL_n}$, and we have a connected components decomposition $\Bun_n=\coprod_{d\in \ZZ}\Bun_n^d$ according to the degree. Then we have $\Bun_G^X=\Bun_{n-1}$ and the period sheaf $\cP_X=\pi_!\uk_{\Bun_{n-1}}$ where \begin{equation}\label{glnpi}
    \pi:\Bun_{n-1}\to\Bun_{n-1}\times\Bun_n
\end{equation}
is given by 
\[\pi(\cE)=(\cE,\cE\oplus\cO)\] for $\cE\in\Bun_{n-1}$, where $\cO$ is the structure sheaf on the curve $C$.

\subsubsection{Hecke eigensheaves for $\GL_n$}
In this section, we briefly recall some properties of the (Weil) Hecke eigensheaf $\LL_{\s}^{\FGV}$ for each $\s\in\Loc_{\GL_n}^{\res,\irr}(k)$ (i.e. $\s$ is a \emph{geometrically irreducible} rank $n$ local system on $C$) constructed in \cite{frenkel2002geometric}. Our normalization will be different from the normalization in \textit{loc.cit}. Since the numerical statement \thref{higherformulageneral} is very sensitive to the Weil structure on the Hecke eigensheaf, we add a superscript $\FGV$ in the notation $\LL_{\s}^{\FGV}$ to distinguish it from an arbitrary Hecke eigensheaf.

Fix a line bundle $\cL\in\Pic$. Define $\Bun_n'^{\cL}$ to be the moduli space whose $S$ points is given by \begin{equation}
    \Bun_n'^{\cL}(S)=\{\Om^{(n-1)/2}\otimes\cL\sub\cE|\cE\in\Bun_n(S)\}.
\end{equation}
where we always use $\sub$ to denote inclusion of \emph{coherent sheaves}.
Denote the moduli space of rank 0 coherent sheaves on $C$ by $\Coh_0$. Consider the moduli space $\Bun_{N,\rc(\Om)\otimes\cL}$ defined by \begin{equation}
    \Bun_{N,\rc(\Om)\otimes\cL}(S)=\{\cE_1\sub\cE_2\sub\cdots\sub\cE_n|\textup{$\cE_i\in\Bun_i(S)$, $\cE_i/\cE_{i-1}\isom \Om^{(n-1)/2-i+1}\otimes\cL$, $i\in[n]$}\}.
\end{equation}
Consider the moduli space \begin{equation}
    \rm{Q}_n^{\cL}:=\{\cE_1\sub\cdots\sub\cE_n\sub\cE_{n+1}|(\cE_1\sub\cdots\sub\cE_n)\in\Bun_{N,\rc(\Om)\otimes\cL}(S), \cE_{n+1}\in\Bun_n(S), \cE_{n+1}/\cE_n\in\Coh_0(S)\}
\end{equation}

We have similar degree decomposition $\Coh_0=\coprod_{n\in\ZZ}\Coh_0^d$ and $\rm{Q}_n^{\cL}=\coprod_{n\in\ZZ}\rm{Q}_n^{\cL,d}$, where $\rm{Q}_n^{\cL,d}$ is defined by $\deg(\cE_{n+1})=d$.

There is a natural decomposition $\LL_{\s}^{\FGV}=\bigoplus_{d\in\ZZ}\LL_{\s}^{\FGV,d}$ such that $\LL_{\s}^{\FGV,d}$ is supported on $\Bun_n^d$.

Recall the Laumon sheaf $\cL_{\s}=\bigoplus_{d\in\ZZ_{\geq 0}}\cL_{\s}^d\in\Shv(\Coh_0)$ whose construction is given in \cite[\S2.1]{frenkel2002geometric} (our $\cL_{\s}^d$ is the same as $\cL_{E}^d$ for $E=\s$ in \textit{loc. cit.}). The sheaf $\cL_{\s}^d$ is a shift of an irreducible perverse sheaf and lies in (naive) cohomological degree 0 on the semisimple locus of $\Coh_0$.

For each $d\in\ZZ$, consider diagram
\begin{equation}\label{fgvdiag1}
\begin{tikzcd}
    \rm{Q}^{\cL,d}_n \ar[r, "v_n^{\cL,d}"] \ar[d, "\ev_n^{\cL,d}\times \a_n^{\cL,d}"'] & \Bun_n'^{\cL,d} \ar[r,"\rho_n^{\cL,d}"] & \Bun_n^{\cL,d} \\
    \A^1\times \Coh_0^{d-n\deg(\cL)}&&
    \end{tikzcd}
\end{equation}
where the maps in \eqref{fgvdiag1} are defined by
\[v_n^{\cL,d}(\cE_1\sub\cdots\sub\cE_n\sub\cE_{n+1})=(\Om^{(n-1)/2}\otimes\cL=\cE_1\sub\cE_{n+1})\in\Bun_n'^{\cL,d}\] \[\ev_n^{\cL,d}(\cE_1\sub\cdots\sub\cE_n\sub\cE_{n+1})=q(\cE_1\otimes\cL^{-1}\sub\cdots\sub\cE_n\otimes\cL^{-1})\in\A^1\] \[\a_n^{\cL,d}(\cE_1\sub\cdots\sub\cE_n\sub\cE_{n+1})=\cE_{n+1}/\cE_n\in\Coh_0^{d-n\deg(\cL)}\] for $(\cE_1\sub\cdots\sub\cE_n\sub\cE_{n+1})\in \rm{Q}_n^{\cL,d} $ where $q:\Bun_{N,\rc(\Om)}\to\A^1$ is the map in \eqref{bunndiagram}, and \[\rho_n^{\cL,d}(\Om^{(n-1)/2}\otimes\cL\sub\cE)=\cE\] for $(\Om^{(n-1)/2}\otimes\cL\sub\cE)\in\Bun_n'^{\cL,d}$.

Define \begin{equation}\cW_{\s}^{\cL,d}:=\ev_n^{\cL,d,*}\AS\otimes \a_n^{\cL,d,*}\cL_{\s}\langle \dim(\rm{Q}^{\cL,d}_n) \rangle\end{equation} where $\AS$ is the Artin-Schreier sheaf in \eqref{assheaf} and \begin{equation}
    \dim(\rm{Q}^{\cL,d}_n)=\dim(\Bun_{N,\rc(\Om)})+(d-n\deg(\cL))n=(1-g)(\sum_{i\in[n-1]}i^2)+(d-n\deg(\cL))n
\end{equation} for $d-n\deg(\cL)\geq 0$.

The main input is the following:
\begin{prop}\cite[\S7.9,Corollary\,9.3,\S8.7]{frenkel2002geometric}\thlabel{fgvinput}
\begin{enumerate}
    \item \label{fgvcharacterize} There exists a unique (perverse) Hecke eigensheaf $\LL_{\s}^{\FGV}\in\Hecke_{\s}$ satisfying
    \[\rho_n^{\cO,d,*}\LL_{\s}^{\FGV}\isom v_{n,!}^{\cO,d}\cW_{\s}^{\cO,d}\langle -d+n^2(g-1)\rangle ,\] where $d-n^2(g-1)=\dim(\Bun_n'^{\cO,d})-\dim(\Bun_n)$ ($\dim$ is understood as expected dimension).
    \item \label{fgvcleanext} For every $d\in\ZZ$, $\LL_{\s}^{\FGV,d}:=\LL_{\s}^{\FGV}|_{\Bun_n^d}$ is a clean extension from a quasi-compact open substack of $\Bun_n^d$.
    \item \label{fgvselfdual} We have \[\DD^{\ver}(\LL_{\s}^{\FGV})\isom \LL_{\s^*}^{\FGV}\] where $\DD^{\ver}:\Shv(\Bun_G)\to\Shv(\Bun_G)$ is the Verdier duality functor.
\end{enumerate}
\end{prop}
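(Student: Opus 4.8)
The plan is to obtain \thref{fgvinput} from the construction and main results of \cite{frenkel2002geometric}; the only genuine work is translating between the shift conventions of \emph{loc.cit.} and the normalization fixed in \autoref{superlinear} and \autoref{icsheaf}, and recording why the FGV sheaf genuinely defines an object of $\Hecke_{\s}$ in the sense of \thref{heckeeigen}.

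First I would recall the FGV construction: for $\s\in\Loc_{\GL_n}^{\res,\irr}(k)$ one forms the Whittaker sheaf $\cW_{\s}^{\cL,d}$ on the space at the top of \eqref{fgvdiag1} out of the Laumon sheaf $\cL_{\s}$ and the Artin--Schreier sheaf $\AS$, pushes it forward along $v_n^{\cL,d}$, and invokes the FGV vanishing theorem to see that the result descends to a perverse sheaf on $\Bun_n^d$ whose pullback along $\rho_n^{\cL,d}$ is $v_{n,!}^{\cL,d}\cW_{\s}^{\cL,d}$, independently of $\cL$ once the Hecke property is in place. Taking $\cL=\cO$ gives \itemref{fgvinput}{fgvcharacterize}; here I would only check that the degree-dependent shift $\langle -d+n^2(g-1)\rangle$ is exactly the one forced by the expected-dimension count $\dim(\Bun_n'^{\cO,d})-\dim(\Bun_n)=d-n^2(g-1)$, together with the $(1/2)$-Tate twists packaged in $\langle 1\rangle=\Pi[1](1/2)$ which are pure bookkeeping (FGV use the naive normalization). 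The eigensheaf property is the main theorem of \cite{frenkel2002geometric}; uniqueness of a perverse Hecke eigensheaf satisfying the displayed restriction formula follows because $\rho_n^{\cO,d}$ is smooth surjective over a dense quasi-compact open $U_d\subset\Bun_n^d$, so the formula pins the sheaf down on $U_d$, and \itemref{fgvinput}{fgvcleanext} (perversity plus cleanness) then forces the extension to $\Bun_n^d$ to be unique. Finally I would note that $\LL_{\s}^{\FGV}$ has nilpotent singular support --- this is known for FGV sheaves, e.g. via their description through Hecke operators --- so it indeed lies in $\Hecke_{\s}$.

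For \itemref{fgvinput}{fgvcleanext} there is nothing to add beyond citing the cleanness statement of \cite{frenkel2002geometric}, itself a consequence of the vanishing theorem; one only records that cleanness survives the shift and twist imposed in \itemref{fgvinput}{fgvcharacterize}. For \itemref{fgvinput}{fgvselfdual} the inputs are $\DD^{\ver}(\cL_{\s})\isom\cL_{\s^*}$ for Laumon sheaves and the (twisted) self-duality of $\AS$. Applying $\DD^{\ver}$ to the clean presentation $\rho_n^{\cO,d,*}\LL_{\s}^{\FGV}\isom v_{n,!}^{\cO,d}\cW_{\s}^{\cO,d}\langle\cdots\rangle$, commuting it past $\rho_n^{\cO,d,*}$ and $v_{n,!}^{\cO,d}$ using cleanness, and reindexing $d\mapsto -d$ (which the symmetric shift $\langle -d+n^2(g-1)\rangle$ absorbs), one recovers the presentation of the $\s^*$-eigensheaf built from the inverse character; here I would invoke the classical independence of the $\GL_n$ automorphic sheaf on the additive character $\psi$ to identify the $\psi^{-1}$-construction with the $\psi$-construction, yielding $\LL_{\s^*}^{\FGV}$ with no residual twist.

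The only real obstacle is the bookkeeping: tracking the exact powers of $[1]$ and $(1/2)$ through the maps in \eqref{fgvdiag1}, reconciling FGV's conventions with \autoref{icsheaf}, and checking that the interaction of $\DD^{\ver}$ with the Whittaker normalization (including the $\psi\leftrightarrow\psi^{-1}$ swap) leaves no leftover shift. None of this is conceptually difficult; I would guard against sign and shift errors by testing the $n=1$ case against geometric class field theory (\autoref{gcft}) and then propagating the normalization inductively in $n$.
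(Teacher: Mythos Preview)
The paper gives no proof here: the proposition is stated with the citation \cite[\S7.9, Corollary\,9.3, \S8.7]{frenkel2002geometric} in its header and is used as a black box thereafter. Your proposal is therefore not competing with a proof but elaborating on a citation, and your outline of how to extract parts (1)--(3) from FGV together with the normalization bookkeeping is essentially the right thing to do.

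One small correction: in your treatment of \itemref{fgvinput}{fgvselfdual} the ``reindexing $d\mapsto -d$'' is spurious. Verdier duality on $\Bun_n^d$ stays on $\Bun_n^d$, and the shift $\langle -d+n^2(g-1)\rangle$ is certainly not invariant under $d\mapsto -d$. What actually happens is that $\rho_n^{\cO,d}$ is smooth of relative dimension $d-n^2(g-1)$, so $\DD^{\ver}\circ\rho_n^{\cO,d,*}\cong \rho_n^{\cO,d,*}\circ\DD^{\ver}\langle 2(d-n^2(g-1))\rangle$; this extra shift is exactly what is needed to convert the $\langle -d+n^2(g-1)\rangle$ on one side into the $\langle -d+n^2(g-1)\rangle$ required for the $\s^*$-presentation on the same $\Bun_n^d$, once you use cleanness to identify $v_{n,!}$ with $v_{n,*}$ and the self-duality of $\cW_{\s}$ up to $\s\leftrightarrow\s^*$, $\psi\leftrightarrow\psi^{-1}$. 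No degree swap enters.
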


\begin{remark}
    When $n=1$, one can easily check $\LL_{\s}^{\FGV}\isom\LL_{\s}$ where $\LL_{\s}$ is defined in  \autoref{gcft}.
\end{remark}

We have the following corollary of \thref{fgvinput}:
\begin{cor}\thlabel{fgvtwist}
For any $\cL\in\Pic(\F_q)$, we have
    \[\rho_n^{\cL,d,*}\LL_{\s}^{\FGV}\isom v_{n,!}^{\cL,d}\cW_{\s}^{\cL,d}\langle -(d-n\deg(\cL))+n^2(g-1)\rangle\otimes (\LL_{\det\s})_{\cL}\langle -(g-1)\rangle \] where $(\LL_{\det\s})_{\cL}$ is the stalk at $\cL\in\Pic(\F_q)$ of the Hecke eigensheaf $\LL_{\det\s}$ with eigenvalue $\det\s$ introduced in \autoref{gcft}.
\end{cor}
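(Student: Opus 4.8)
\textbf{Proof proposal for \thref{fgvtwist}.}

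The plan is to reduce the statement with an arbitrary line bundle $\cL$ to the case $\cL=\cO$ treated in \itemref{fgvinput}{fgvcharacterize}, by comparing the moduli spaces $\rm{Q}^{\cL,d}_n$, $\Bun_n'^{\cL,d}$ for different $\cL$ via twisting by $\cL$, and then keeping track of the discrepancy in the Whittaker input sheaf $\cW^{\cL,d}_{\s}$ produced by this twist. First I would set up the twisting isomorphisms: tensoring every term of a full flag by a fixed $\cL\in\Pic(\F_q)$ (so $\cL$ comes from an $\F_q$-point, hence carries a canonical Weil structure) gives isomorphisms $t_{\cL}:\rm{Q}^{\cL,d}_n\isom \rm{Q}^{\cO,d-n\deg(\cL)}_n$ and $t_{\cL}:\Bun_n'^{\cL,d}\isom \Bun_n'^{\cO,d-n\deg(\cL)}$ intertwining the maps $v_n$, $\rho_n$ and $\a_n$ up to the obvious shift of the degree index on $\Coh_0$, and intertwining $\rho^{\cL,d}_n$ with $\rho^{\cO,d-n\deg(\cL)}_n$ followed by the automorphism $(\bullet)\otimes\cL$ of $\Bun_n$. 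Under $t_{\cL}$ one has $v_{n,!}^{\cL,d}\cW^{\cL,d}_{\s}\isom v_{n,!}^{\cO,d-n\deg(\cL)}\cW^{\cO,d-n\deg(\cL)}_{\s}$ because both $\ev_n$ and $\a_n$ are manifestly compatible with the twist (the evaluation map only sees $\cE_i\otimes\cL^{-1}$), so the discrepancy is entirely concentrated in the behaviour of $\LL^{\FGV}_{\s}$ under $(\bullet)\otimes\cL:\Bun_n\to\Bun_n$.

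The second step is to identify $((\bullet)\otimes\cL)^*\LL_{\s}^{\FGV}$. Since $(\bullet)\otimes\cL$ is an automorphism of $\Bun_n$ commuting with the Hecke operators, its pullback of a Hecke eigensheaf for $\s$ is again a Hecke eigensheaf for $\s$; by the uniqueness in \itemref{fgvinput}{fgvcharacterize} it must be $\LL^{\FGV}_{\s}$ up to a one–dimensional (Weil) vector space, i.e.\ $((\bullet)\otimes\cL)^*\LL^{\FGV}_{\s}\isom \LL^{\FGV}_{\s}\otimes W_{\cL}$ for some $W_{\cL}\in\Vect^{\om}$ with $\dim W_{\cL}=1$. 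To pin down $W_{\cL}$ I would use the $n=1$ case, where $\LL^{\FGV}_{\s}=\LL_{\s}$ on $\Pic$ and the automorphism $(\bullet)\otimes\cL$ is translation by $\cL$; the defining property $\rh^*\LL_{\s}\isom\s\boxtimes\LL_{\s}$ of the geometric class field theory sheaf (recalled in \autoref{gcft}) gives exactly $(\text{transl}_{\cL})^*\LL_{\s}\isom\LL_{\s}\otimes (\LL_{\s})_{\cL}$, the stalk at $\cL$. For general $n$, restricting along $\rho^{\cL,d}_n$ (which is surjective on components and, by \itemref{fgvinput}{fgvcleanext}, detects $\LL^{\FGV}_{\s}$ on a dense open) and applying the determinant Hecke eigenvalue — the rank–$n$ Hecke operator at a point $c$ acts on $\LL^{\FGV}_{\s}$ by $\s_c$, and its ``top'' component (full modification) multiplies the bundle by $\cO(c)$ while acting by $\det\s_c=(\det\s)_c$ — forces $W_{\cL}$ to be the stalk $(\LL_{\det\s})_{\cL}$. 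The normalization shift $\langle -(g-1)\rangle$ is then exactly the one in the definition of $\LL_{\s'}$ in \autoref{gcft} (via $\AJ^*_d\LL_{\s'}\isom \s'^{\boxtimes d}\langle g-1\rangle$), so it appears attached to $(\LL_{\det\s})_{\cL}$ as stated.

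Finally I would assemble: pulling $\LL^{\FGV}_{\s}$ back along $\rho^{\cL,d}_n = ((\bullet)\otimes\cL)\circ \rho'$, applying the $\cL=\cO$ formula of \itemref{fgvinput}{fgvcharacterize} at degree $d-n\deg(\cL)$, transporting through $t_{\cL}$, and inserting $W_{\cL}=(\LL_{\det\s})_{\cL}\langle -(g-1)\rangle$, yields
\[
\rho_n^{\cL,d,*}\LL_{\s}^{\FGV}\isom v_{n,!}^{\cL,d}\cW_{\s}^{\cL,d}\langle -(d-n\deg(\cL))+n^2(g-1)\rangle\otimes (\LL_{\det\s})_{\cL}\langle -(g-1)\rangle,
\]
which is the claim. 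The main obstacle is the second step — getting the Weil (Frobenius-weight) normalization of the one–dimensional twist $W_{\cL}$ exactly right, including the Tate twist hidden in $\langle -(g-1)\rangle$; everything else is bookkeeping with the twisting isomorphisms and dimension counts. A clean way around the delicate part is to reduce it, via restriction to the open substack where the Hecke eigenproperty is literally an isomorphism, to the determinant (i.e.\ $\GL_1$) computation, where the answer is forced by the defining properties of $\LL_{\det\s}$ in \autoref{gcft}.
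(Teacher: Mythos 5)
Your proposal is correct and follows essentially the same route as the paper: twist the flag/modification moduli by $\cL$ to reduce to the $\cL=\cO$ case of \itemref{fgvinput}{fgvcharacterize}, observe that the kernel sheaf satisfies $m^{\rm{Q},*}_{\cL^{-1}}\cW^{\cO,d-n\deg(\cL)}_{\s}\isom\cW^{\cL,d}_{\s}$, and identify $m_{\cL}^*\LL^{\FGV}_{\s}\isom\LL^{\FGV}_{\s}\otimes(\LL_{\det\s})_{\cL}\langle-(g-1)\rangle$ from the Hecke eigenproperty. The paper simply asserts this last isomorphism directly from the eigenproperty, whereas you derive it via uniqueness plus the determinant eigenvalue, but the content is the same.
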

\begin{proof}[Proof of \thref{fgvtwist}]
    Consider $m_{\cL}:\Bun_n\to\Bun_n$ defined by \begin{equation}
        m_{\cL}(\cE)=\cE\otimes\cL\in\Bun_n
    \end{equation} for $\cE\in\Bun_n$. From the Hecke-eigen property of $\LL_{\s}^{\FGV}$, we know \begin{equation}\label{lbtwist}
        m_{\cL}^*\LL_{\s}^{\FGV}\isom \LL_{\s}^{\FGV}\otimes (\LL_{\det\s})_{\cL}\langle -(g-1)\rangle.
    \end{equation}
    Consider the diagram
    \begin{equation}
        \begin{tikzcd}
            \rm{Q}^{\cL,d}_n \ar[r, "m^{\rm{Q}}_{\cL^{-1}}","\sim"'] \ar[d, "v_n^{\cL,d}"']  & \rm{Q}^{\cO,d-n\deg(\cL)}_n \ar[d, "v_n^{\cO,d}"] \\
            \Bun_n'^{\cL,d} \ar[d, "\rho_n^{\cL,d}"'] \ar[r, "m'_{\cL^{-1}}","\sim"'] & \Bun'^{\cO,d-n\deg(\cL)}_n \ar[d, "\rho_n^{\cO,d}"] \\
            \Bun_n^d \ar[r, "m_{\cL^{-1}}","\sim"'] & \Bun_n^{d-n\deg(\cL)}
        \end{tikzcd}
    \end{equation}
    where \begin{equation}
        m^{\rm{Q}}_{\cL^{-1}}(\cE_1\sub\cdots\sub\cE_n\sub\cE_{n+1})=(\cE_1\otimes\cL^{-1}\sub\cdots\sub\cE_n\otimes\cL^{-1}\sub\cE_{n+1}\otimes\cL^{-1})\in \rm{Q}^{\cO,d-n\deg(\cL)}_n
    \end{equation} for $(\cE_1\sub\cdots\sub\cE_n\sub\cE_{n+1})\in \rm{Q}^{\cL,d}_n $, and \begin{equation}
        m'_{\cL^{-1}}(\Om^{(n-1)/2}\otimes\cL\sub\cE)=(\Om^{(n-1)/2}\sub\cE\otimes\cL^{-1}) \in \Bun'^{\cO,d-n\deg(\cL)}_n
    \end{equation} for $(\Om^{(n-1)/2}\otimes\cL\sub\cE)\in \Bun_n'^{\cL,d}$.
    It is easy to see that all horizontal maps are isomorphisms. Then our desired isomorphism follows from \itemref{fgvinput}{fgvcharacterize}, \eqref{lbtwist} and
    \[m^{\rm{Q},*}_{\cL^{-1}}\cW_{\s}^{\cO,d-n\deg(\cL)}\isom \cW_{\s}^{\cL,d}.\]
    \end{proof}

\begin{remark}
    Our normalization is different from \cite{frenkel2002geometric} as follows: One gets the sheaf $\Aut_E$ for $E=\s$ in \textit{loc.cit} when taking $\cL=\Om^{(n-1)/2}$ instead of $\cL=\cO$ in \itemref{fgvinput}{fgvcharacterize}. Moreover, \thref{fgvtwist} tells us that $\LL_{\s}^{\FGV}\isom \Aut_E\otimes(\LL_{\det E})_{\Om^{(n-1)/2}}\langle -(g-1)\rangle$ for $E=\s$.
\end{remark}

\subsubsection{Computing $\int_X\LL_{\s}^{\FGV}$}
From now on, we fix $\s=(\s_n,\s_{n-1})\in\Loc_{GL_n}^{\res,\irr}(k)\times\Loc_{GL_{n-1}}^{\res,\irr}(k)$. Consider the diagram
\begin{equation}\label{fgvdiag2}
    \begin{tikzcd}
        &\rm{Q}^{\Om^{-n/2},d}_{n-1} \ar[r, "s_{\rm{Q}}^d"] \ar[dl, "v_{n-1}^{\Om^{-n/2},d}"'] \ar[d, "p_{n-1}^{\Om^{-n/2},d}"] & \rm{Q}^{\Om^{-(n-1)/2},d}_{n} \ar[d, "v_n^{\Om^{-(n-1)/2},d}"] \\
        \Bun_{n-1}'^{\Om^{-n/2},d} \ar[r, "\rho_{n-1}^{\Om^{-n/2},d}"'] & \Bun_{n-1}^d \ar[r, "s^d"] \ar[dr, "r^d"'] & \Bun_{n}'^{\Om^{-(n-1)/2},d} \ar[d, "\rho_n^{\Om^{-(n-1)/2},d}"]\arrow[ul, phantom, "\lrcorner", shift right =2, very near end]\\
        && \Bun_n^d
    \end{tikzcd}
\end{equation}
where the right column is the top row of \eqref{fgvdiag1} (taking $\cL=\Om^{-(n-1)/2}$) and the left triangle is also the top row of \eqref{fgvdiag1} for $n-1$ (taking $\cL=\Om^{-n/2}$). The map $r^d$ is defined by \[r^d(\cE)=\cE\oplus\cO\in\Bun_n^d\] for $\cE\in\Bun_{n-1}^d$. The map $s^d$ is defined by \[s^d(\cE)=(\Om^{(n-1)/2}\otimes\Om^{-(n-1)/2}=\cO\sub \cE)\in \Bun_{n}'^{\Om^{-(n-1)/2},d} \] for $\cE\in\Bun_{n-1}^d$. The map $s^d_{\rm{Q}}$ is defined by \[s^d_{\rm{Q}}(\cE_1\sub\cdots\sub\cE_{n-1}\sub\cE_{n})=(\cO\sub\cE_1\oplus\cO\sub\cdots\sub\cE_{n-1}\oplus\cO\sub\cE_n\oplus\cO)\in\rm{Q}^{\Om^{-(n-1)/2},d}_{n} \] for $(\cE_1\sub\cdots\sub\cE_{n-1}\sub\cE_{n})\in\rm{Q}^{\Om^{-n/2},d}_{n-1}$. 

Note that the square in \eqref{fgvdiag2} is Cartesian.
Define $d_0:=d-(n-1)\deg(\Om^{-n/2})=d+n(n-1)(g-1)$. The geometric $\s$-isotypic $X$-period defined in \thref{geoisotypic} in this example is \begin{equation}
    \int_X\LL_{\s}^{\FGV}=\bigoplus_{d\in\ZZ}\int_{X,d}\LL_{\s}^{\FGV}
\end{equation}
where \begin{equation}\int_{X,d}\LL_{\s}^{\FGV}:=\Gamma(\Bun_{n-1}^d,\pi^*\LL_{\s}^{\FGV}|_{\Bun_{n-1}^d}),\end{equation} and the main result in this section is the following (essentially proved in \cite{lysenko2002local}):

\begin{thm}\thlabel{higherrsbzsvglobal}
    For each $d\in\ZZ$, there is a canonical isomorphism \[\int_{X,d}\LL_{\s}^{\FGV}\isom \Sym^{d_0}(\Gamma(C, \s_{n-1}\otimes\s_{n})\langle 1\rangle) \otimes (\LL_{\det\s_{n-1}})_{\Om^{-n/2}}\otimes (\LL_{\det\s_n})_{\Om^{-(n-1)/2}}\langle(n^2-2)(g-1) \rangle.\]
\end{thm}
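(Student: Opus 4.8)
The strategy is to compute $\pi^*\LL_{\s}^{\FGV}$ directly using the Frenkel--Gaitsgory--Vilonen description of the Hecke eigensheaf, recalled in \thref{fgvinput} and \thref{fgvtwist}, and then to apply a vanishing-cohomology calculation essentially due to Lysenko. First I would unwind the map $\pi:\Bun_{n-1}\to\Bun_{n-1}\times\Bun_n$ in \eqref{glnpi}. Since $\cP_X=\pi_!\uk_{\Bun_{n-1}}$ with $\pi(\cE)=(\cE,\cE\oplus\cO)$, the pull-back $\pi^*\LL_{\s}^{\FGV}$ (as a sheaf on $\Bun_{n-1}$, ignoring the first factor $\LL_{\s_{n-1}}^{\FGV}$ which will be fed in at the end) is the $*$-restriction of $\LL_{\s_n}^{\FGV}$ along the map $r^d:\Bun_{n-1}^d\to\Bun_n^d$, $\cE\mapsto\cE\oplus\cO$. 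The key geometric observation is that $r^d$ factors through $s^d:\Bun_{n-1}^d\to\Bun_n'^{\Om^{-(n-1)/2},d}$, namely $\cE\mapsto(\cO=\Om^{(n-1)/2}\otimes\Om^{-(n-1)/2}\hookrightarrow\cE\oplus\cO)$, followed by $\rho_n^{\Om^{-(n-1)/2},d}$; this is exactly the factorization displayed in the diagram \eqref{fgvdiag2}. Therefore $\pi^*\LL_{\s_n}^{\FGV}|_{\Bun_{n-1}^d}\isom s^{d,*}\rho_n^{\Om^{-(n-1)/2},d,*}\LL_{\s_n}^{\FGV}$, and by \thref{fgvtwist} (applied with $\cL=\Om^{-(n-1)/2}$) this equals $s^{d,*}v_{n,!}^{\Om^{-(n-1)/2},d}\cW_{\s_n}^{\Om^{-(n-1)/2},d}$ up to the explicit shift $\langle -(d-(n-1)\deg\Om^{-(n-1)/2})+n^2(g-1)\rangle$ and the Hecke-character twist $(\LL_{\det\s_n})_{\Om^{-(n-1)/2}}\langle-(g-1)\rangle$.

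\textbf{Second step: base change.} Since the square in \eqref{fgvdiag2} is Cartesian, proper base change gives $s^{d,*}v_{n,!}^{\Om^{-(n-1)/2},d}\cW_{\s_n}^{\Om^{-(n-1)/2},d}\isom v_{n-1,!}^{\Om^{-n/2},d}\,p_{n-1}^{\Om^{-n/2},d,*}s_{\rm Q}^{d,*}\cW_{\s_n}^{\Om^{-(n-1)/2},d}$ up to tracking the shift hidden in $\cW_{\s_n}^{\Om^{-(n-1)/2},d}$ versus $\cW$ on the $(n-1)$-side. Here one must identify $s_{\rm Q}^{d,*}\cW_{\s_n}^{\Om^{-(n-1)/2},d}$ on $\rm Q_{n-1}^{\Om^{-n/2},d}$: the Artin--Schreier factor pulls back to the Artin--Schreier factor coming from $\ev_{n-1}^{\Om^{-n/2},d}$ (because adding the trivial line $\cO$ does not change the value of $q$), while the Laumon-sheaf factor $\a_n^{\Om^{-(n-1)/2},d,*}\cL_{\s_n}$ pulls back along $\a_{n-1}^{\Om^{-n/2},d}$-composed-with-$s_{\rm Q}^d$; one computes that $\cE_n\oplus\cO$ modulo $\cE_{n-1}\oplus\cO$ is the same torsion sheaf $\cE_n/\cE_{n-1}$, so this is $\a_{n-1}^{\Om^{-n/2},d,*}\cL_{\s_n}$. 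Combining, $\pi^*\LL_{\s}^{\FGV}|_{\Bun_{n-1}^d}$ becomes, up to an explicit shift and the two Hecke-character stalks $(\LL_{\det\s_{n-1}})_{\Om^{-n/2}}$ and $(\LL_{\det\s_n})_{\Om^{-(n-1)/2}}$, the expression $v_{n-1,!}^{\Om^{-n/2},d}\big(\ev_{n-1}^{\Om^{-n/2},d,*}\AS\otimes\a_{n-1}^{\Om^{-n/2},d,*}(\cL_{\s_{n-1}}\otimes\cL_{\s_n})\big)$ --- that is, precisely the geometric object whose cohomology computes the unramified Rankin--Selberg integral at moving points for the pair $(\s_{n-1},\s_n)$.

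\textbf{Third step: the Lysenko computation.} Taking $\Gamma(\Bun_{n-1}^d,-)$, the factor $v_{n-1,!}^{\Om^{-n/2},d}$ disappears and we are left with the cohomology of $\rm Q_{n-1}^{\Om^{-n/2},d}$ with coefficients in $\ev^*\AS\otimes\a^*(\cL_{\s_{n-1}}\otimes\cL_{\s_n})$. This is exactly the local-global Rankin--Selberg cohomology computed in \cite{lysenko2002local}; its outcome, after the clean-extension input \itemref{fgvinput}{fgvcleanext} (which guarantees there is no boundary contribution and the cohomology is genuinely finite-dimensional), is the symmetric power $\Sym^{d_0}$ of $\Gamma(C,\s_{n-1}\otimes\s_n)\langle1\rangle$, where $d_0=d+n(n-1)(g-1)$ is the torsion-degree appearing on $\rm Q_{n-1}^{\Om^{-n/2},d}$ as in the definition of $d_0$ just above the statement. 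The ``$\Sym$'' arises because $\cL_{\s_{n-1}}\otimes\cL_{\s_n}$ restricted to $\Coh_0^{d_0}$ is, on the locus of reduced divisors, the symmetric-power local system, and the Artin--Schreier twist together with the resolution $v_{n-1}$ precisely kills all terms except the top symmetric power. One then reassembles: bringing back the left factor $\LL_{\s_{n-1}}^{\FGV}$ (so that $\int_X$ pairs $\pi^*\LL_{\s_{n-1}}^{\FGV}\otimes\pi^*\LL_{\s_n}^{\FGV}$) and carefully adding all the shifts --- the two from \thref{fgvtwist} on the $n$- and $(n-1)$-sides, the shift in the definition of $\cW$, the relative-dimension shift from base change, and the two $\langle-(g-1)\rangle$ from the Hecke characters --- should collapse to the single shift $\langle(n^2-2)(g-1)\rangle$ stated in \thref{higherrsbzsvglobal}.

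\textbf{Main obstacle.} The conceptual content is light; the real work, and the step most likely to consume effort, is the bookkeeping of shifts and Tate twists. Our normalizations of $\IC_V$, of $\LL_{\s}^{\FGV}$, of $\cP_X$, and of $\dim\Bun_{N,\rc(\Om)}$ all differ from \cite{frenkel2002geometric} and \cite{lysenko2002local} by explicit but nontrivial amounts, so matching the final shift to $(n^2-2)(g-1)$ requires assembling roughly five separate shift contributions and checking they telescope. A secondary (but genuine) point is justifying that Lysenko's computation, stated for his normalization, produces exactly $\Sym^{d_0}$ with the $\langle1\rangle$-twist per factor as claimed, rather than some other exterior/tensor constituent; here one leans on the irreducibility of $\cL_{\s_i}^d$ on the semisimple locus and the clean-extension property \itemref{fgvinput}{fgvcleanext} to argue that only the ``generic'' (reduced-divisor) contribution survives, which is the symmetric-power one. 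I expect no essential difficulty beyond these two, since the Cartesian square and the factorization $r^d=\rho_n\circ s^d$ are elementary and already recorded in \eqref{fgvdiag2}.
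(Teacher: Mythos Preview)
Your overall architecture matches the paper's proof exactly: factor $r^d=\rho_n^{\Om^{-(n-1)/2},d}\circ s^d$, apply \thref{fgvtwist} on the $\GL_n$-side, base-change along the Cartesian square in \eqref{fgvdiag2}, then feed in $\LL_{\s_{n-1}}^{\FGV}$ via \thref{fgvtwist} on the $\GL_{n-1}$-side, and finally invoke Lysenko. The shift bookkeeping you flag is indeed the only tedious part.

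There is, however, a genuine gap in your step~2. The expression you arrive at---a single $v_{n-1,!}$ applied to $\ev_{n-1}^*\AS\otimes\a_{n-1}^*(\cL_{\s_{n-1}}\otimes\cL_{\s_n})$ on one copy of $\rm{Q}_{n-1}^{\Om^{-n/2},d}$---is not what the manipulation actually produces, and it is not what Lysenko's Main Local Theorem computes. The problem is your treatment of $\LL_{\s_{n-1}}^{\FGV}$. After projection formula you have $p_{n-1}^{\Om^{-n/2},d,*}\LL_{\s_{n-1}}^{\FGV}$ on $\rm{Q}_{n-1}$, but $p_{n-1}=\rho_{n-1}\circ v_{n-1}$, so \thref{fgvtwist} gives $v_{n-1}^*v_{n-1,!}\cW_{\s_{n-1}}$, \emph{not} $\cW_{\s_{n-1}}$ and certainly not $\a_{n-1}^*\cL_{\s_{n-1}}$ without an Artin--Schreier factor. (Note also that if both $\cW_{\s_{n-1}}$ and $s_{\rm{Q}}^*\cW_{\s_n}$ carried the same $\ev^*\AS$, tensoring them would yield $\AS^{\otimes 2}$, not a single $\AS$.) The correct move, which the paper makes, is a second projection formula along $v_{n-1}$: this passes to $\Bun_{n-1}'^{\Om^{-n/2},d}$ and yields the tensor product $v_{n-1,!}\cW_{\s_{n-1}}\otimes v_{n-1,!}s_{\rm{Q}}^*\cW_{\s_n}$ of \emph{two} pushforwards. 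That expression---cohomology over $\Bun_{n-1}'$ of a product of two Whittaker-type sheaves, equivalently over the fiber product $\rm{Q}_{n-1}\times_{\Bun_{n-1}'}\rm{Q}_{n-1}$---is precisely the input to Lysenko's Main Local Theorem, which then outputs $\Gamma(C^{(d_0)},(\s_{n-1}\otimes\s_n)^{(d_0)})\langle d_0\rangle$. Your single-copy expression does not match this input, so your invocation of Lysenko in step~3 does not apply as stated.
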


\begin{proof}[Proof of \thref{higherrsbzsvglobal}]

\rotatebox{90}{%
\begin{minipage}{\textheight}
    \begin{align}
        &\int_{X,d}\LL_{\s}^{\FGV}\\
        =&\Gamma_c(\Bun_{n-1}, \LL_{\s_{n-1}}^{\FGV}\otimes r^{d,*}\LL_{\s_n}^{\FGV} ) \\
        \isom&\Gamma_c(\Bun_{n-1}, \LL_{\s_{n-1}}^{\FGV} \otimes s^{d,*} \rho_n^{\Om^{-(n-1)/2},d,*} \LL_{\s_n}^{\FGV}  ) \\
        \isom&\Gamma_c(\Bun_{n-1}, \LL_{\s_{n-1}}^{\FGV} \otimes s^{d,*} v_{n,!}^{\Om^{-(n-1)/2},d}\cW_{\s_n}^{\Om^{-(n-1)/2},d}\otimes (\LL_{\det\s_n})_{\Om^{-(n-1)/2}}\langle -d+(n-1)(g-1)\rangle) \\
        \label{stepbc}\isom&\Gamma_c(\Bun_{n-1}, \LL_{\s_{n-1}}^{\FGV} \otimes p_{n-1,!}^{\Om^{-n/2},d}s_{\rm{Q}}^{d,*}\cW_{\s_n}^{\Om^{-(n-1)/2},d}\otimes (\LL_{\det\s_n})_{\Om^{-(n-1)/2}}\langle -d+(n-1)(g-1)\rangle)\\
        \isom&\Gamma_c(\Bun_{n-1}, p_{n-1,!}^{\Om^{-n/2},d}(p_{n-1}^{\Om^{-n/2},d,*}\LL_{\s_{n-1}}^{\FGV} \otimes s_{\rm{Q}}^{d,*}\cW_{\s_n}^{\Om^{-(n-1)/2},d}\allowbreak \otimes (\LL_{\det\s_n})_{\Om^{-(n-1)/2}}\langle -d+(n-1)(g-1)\rangle))\\
        \isom&\Gamma_c(\Bun_{n-1}, p_{n-1,!}^{\Om^{-n/2},d}(v_{n-1}^{\Om^{-n/2},d,*}v_{n-1,!}^{\Om^{-n/2},d}\cW_{\s_{n-1}}^{\Om^{-n/2},d}\otimes (\LL_{\det\s_{n-1}})_{\Om^{-n/2}} \otimes s_{\rm{Q}}^{d,*}\cW_{\s_n}^{\Om^{-(n-1)/2},d}\otimes (\LL_{\det\s_n})_{\Om^{-(n-1)/2}}\langle -2d-(g-1)\rangle)) \\
        \isom&   \Gamma_c( \Bun_{n-1}'^{\Om^{-n/2},d}, v_{n-1,!}^{\Om^{-n/2},d}(v_{n-1}^{\Om^{-n/2},d,*}v_{n-1,!}^{\Om^{-n/2},d}\cW_{\s_{n-1}}^{\Om^{-n/2},d}\otimes (\LL_{\det\s_{n-1}})_{\Om^{-n/2}} \otimes s_{\rm{Q}}^{d,*}\cW_{\s_n}^{\Om^{-(n-1)/2},d}\otimes (\LL_{\det\s_n})_{\Om^{-(n-1)/2}}\langle -2d-(g-1)\rangle))   \\
        \isom& \Gamma_c( \Bun_{n-1}'^{\Om^{-n/2},d}, v_{n-1,!}^{\Om^{-n/2},d}\cW_{\s_{n-1}}^{\Om^{-n/2},d}\otimes (\LL_{\det\s_{n-1}})_{\Om^{-n/2}} \otimes v_{n-1,!}^{\Om^{-n/2},d}s_{\rm{Q}}^{d,*}\cW_{\s_n}^{\Om^{-(n-1)/2},d}\otimes (\LL_{\det\s_n})_{\Om^{-(n-1)/2}}\langle -2d-(g-1)\rangle))\\
        \label{lysenkomain}  \isom&\Gamma(C^{(d_0)}, (\s_{n-1}\otimes\s_n)^{(d_0)} )\langle d_0\rangle \otimes (\LL_{\det\s_{n-1}})_{\Om^{-n/2}}\otimes (\LL_{\det\s_n})_{\Om^{-(n-1)/2}}\langle(n^2-2)(g-1) \rangle \\
        \isom&\Sym^{d_0}(\Gamma(C, \s_{n-1}\otimes\s_{n})\langle 1\rangle) \otimes (\LL_{\det\s_{n-1}})_{\Om^{-n/2}}\otimes (\LL_{\det\s_n})_{\Om^{-(n-1)/2}}\langle(n^2-2)(g-1) \rangle
    \end{align}
    \end{minipage}%
    }
    \newpage

    We make some remarks on this seemingly burdensome computation. The step \eqref{stepbc} uses proper base change for the Cartesian square in \eqref{fgvdiag2}. The step \eqref{lysenkomain} is a direct consequence of \cite[\S2.1\,Main Local Theorem]{lysenko2002local}. All other steps are straightforward given \thref{fgvinput} and \thref{fgvtwist}.

\end{proof}

    \begin{remark}
        For the convenience of the reader, we compare our notations with notations used in \cite{lysenko2002local}. Our moduli space $\rm{Q}^{\Om^{-n/2},d}_{n-1}$ is isomorphic to $\leftindex_{n-1}\cQ_{d_0}$ in \textit{loc.cit} via tensoring $\Om^{n-1}$.\footnote{Without tensoring, our moduli $\rm{Q}^{\Om^{(n-2)/2},d+(n-1)(n-2)(g-1)}_{n-1}$ is literaly the same as $\leftindex_{n-1}\cQ_{d}$ in \textit{loc.cit}} Under this isomorphism, forgetting the parity, our sheaf $\cW_{\s_{n-1}}^{\Om^{-n/2},d}$ can be identified with $\leftindex_{n-1}\cF_{\s_{n-1}}^{d_0}$ in \textit{loc.cit}, and $s_{\rm{Q}}^{d,*}\cW_{\s_n}^{\Om^{-(n-1)/2},d}$ can be identified with $\leftindex_{n-1}\cF_{\s_{n}}^{d_0}\langle d_0 - (n-1)^2(g-1) \rangle$ in \textit{loc.cit}.
    \end{remark}

    \begin{remark}
        When $n=2$, the identity \eqref{lysenkomain} is obvious, and we recommend the reader work through this case to see what is happening here.
    \end{remark}

    \subsection{Obtaining higher period formulas}\label{higherformulaglnsection}
    Now we combine all the input from \autoref{localinput} and \autoref{globalinput} together, and prove the higher Rankin--Selberg period integral formula \thref{higherformulagln}(\thref{higherformulaglnfirst}).
    
    Recall we use $g$ to denote the genus of the curve $C$. Since there is no geometrically irreducible local system of rank $\geq2$ on $C$ when $g=1$, we can assume $g\neq 1$.
    Since \thref{bzsvloc} is true in this case, we know that \thref{genassum} and \thref{techassum} are true. Then \thref{geokolycorthm} implies that \thref{geokolycor} is true for the local special cohomological correspondences $c_V^{\loc}$ and $c_W^{\loc}$ constructed in \eqref{dfclocgln}.
    
    Now, we can apply the construction in \autoref{clifaction}.
    We take $K=V\oplus V^*$, where $V=\std_n\boxtimes\std_{n-1}$. Let $\l=\l_{\std_{n-1}}\in X_*(T_{\GL_{n-1}})$. Then we have two non-degenerate bilinear forms $b=b_{\l,-\l}:V\otimes V^*\to\triv$ and $b_{\geo}:V\otimes V^*\to \triv$ defined in \eqref{Xform} and \eqref{geoform} respectively. These two bilinear forms give two symplectic forms $\om_M:M^{\otimes 2}\to k$ and $\om_{M,\geo}:M^{\otimes 2}\to k$ as in \autoref{clifaction}. Then \thref{geometricscalargln} implies $\om_M=(-1)^{n-1}\om_{M,\geo}$.
    
    Using the local special cohomological correspondences $c_V^{\loc}$ and $c_{V^*}^{\loc}$, \thref{eigenkolyvagin} makes $\int_X\LL_{\s}^{\FGV}$ an $F$-equivariant $\Cl(M)$-module introduced in \thref{kolyrep}. To completely determine this module, we only need to compute the invariant $\l(\int_X\LL_{\s}^{\FGV})$ defined in \eqref{kolynormrep}. 

    Note that $\int_X\LL_{\s}^{\FGV}=\bigoplus_{d\in\ZZ}\int_{X,d}\LL_{\s}^{\FGV}$ is naturally $\ZZ$-graded on which $\Gamma(C,V_{\s})\langle 1\rangle$ acts by degree 1 endomorphisms and $\Gamma(C,V^*_{\s})\langle1\rangle$ acts by degree -1 endomorphisms. In particular, any non-zero vector in the one-dimensional subspace $\int_{X,2n(n-1)(g-1)}\LL_{\s}^{\FGV}\sub \int_X \LL_{\s}^{\FGV} $ will be a highest vector in the sense of \thref{kolynormstandard}. Then \thref{kolynormstandard} implies that \begin{equation}
    \begin{split}
        \l(\int_X\LL_{\s}^{\FGV})&=(F|_{\int_{X,2n(n-1)(g-1)}\LL_{\s}^{\FGV}})^2\e(\s_{n}\otimes\s_{n-1})^{-1}\\&=q^{-n^2(g-1)}\chi^{-n}_{\det\s_{n-1}}(\Om)\chi^{-n+1}_{\det\s_{n}}(\Om)\e(\s_{n}\otimes\s_{n-1})
        \end{split}
    \end{equation}
    The Hecke characters $\chi_{\det\s_{n-1}}$, $\chi_{\det\s_n}$ are introduced in \autoref{cft}.

    Form the Kolyvagin system \begin{equation}
    \{z_{\LL_{\s}^{\FGV},r}\in M^{*\otimes r}\}_{r\in\Z_{\geq 0}}
\end{equation} as in \eqref{eigenkoly}. We can now apply \thref{higherformulageneral} and get the following:
\begin{thm}\thlabel{higherformulagln}
    Under the setting above, the following identity holds for any $r\geq 0$:
    \begin{equation}
        \om_{r,\geo}(z_{\LL_{\s}^{\FGV},r},z_{\LL_{\s}^{\FGV},r})=\b_{\s}(\log q)^{-r}\left(\frac{d}{ds}\right)^{r}\Big|_{s=0}(q^{2n(n-1)(g-1)s}L(\s_{n}\otimes\s_{n-1},s+1/2)L(\s_{n}^*\otimes\s_{n-1}^*,s+1/2))
    \end{equation}
    where the bilinear forms $\om_{r,\geo}$ are induced from $\om_{\geo}$, and \[\b_{\s}:=(-1)^{r/2}q^{-n^2(g-1)}\chi^{-n}_{\det\s_{n-1}}(\Om)\chi^{-n+1}_{\det\s_n}(\Om)\e(\s_{n}\otimes\s_{n-1}).\] Here the Hecke characters $\chi_{\det\s_{n-1}}$, $\chi_{\det\s_n}$ are introduced in  \autoref{cft}.
\end{thm}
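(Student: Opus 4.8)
\textbf{Proof proposal for \thref{higherformulagln}.}
The plan is to derive \thref{higherformulagln} from the general higher period formula \thref{higherformulageneral} by verifying all the hypotheses of \thref{eigenkolyvagin} (hence of \thref{higherformulageneral}) in the case $G=\GL_n\times\GL_{n-1}$, $X=\GL_n$, and then substituting the explicit values of the various invariants computed in \autoref{localinput} and \autoref{globalinput}. First I would record that \thref{bzsvloc} holds here by \cite{BFGT}, so that \thref{genassum} is automatic; together with the assumption $g\neq1$ this gives \thref{techassum}, and then \thref{geokolycorthm} yields \thref{geokolycor} for the local special cohomological correspondences $c_V^{\loc}$, $c_W^{\loc}$ of \eqref{dfclocgln}. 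Next I would check the hypotheses of \thref{eigenkolyvagin}: \thref{middimassum} and \thref{minassum} hold for $V=\std_n\boxtimes\std_{n-1}$ and $W=V^*$ (minuscule, with $d_V+d_W=2$, as explained in \autoref{consloccohcorgln}); \thref{findimstalk} and \thref{concentrationassum} follow from \thref{higherrsbzsvglobal}, which shows $\int_X\LL_{\s}^{\FGV}$ is a finite direct sum of shifts of symmetric powers of $\Gamma(C,(\s_{n-1}\otimes\s_n))\langle1\rangle$, in particular perfect and with $M=\Gamma(C,K_{\s})\langle1\rangle$ concentrated in a single degree (here I would note $\e(M,F)=1$ and $\dim M=(0\mid 2(g-1)\dim K)$ as in the proof of \thref{higherformulageneral}).

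With the framework in place, the second block of steps is to pin down the two scalars entering $\b_{\s}$. The geometric-versus-$X$ discrepancy $\kappa_{\l,-\l}$ is exactly \thref{geometricscalargln}: $\kappa_{\l,-\l}=(-1)^n$, so that $\om_M=(-1)^n\om_{M,\geo}$ and $\om_{r}=(-1)^{nr}\om_{r,\geo}$, which I would absorb into the comparison of the two sides. The module invariant $\l((\int_X\LL_{\s}^{\FGV})^*)$ is computed via \thref{kolynormstandard}: using the $\ZZ$-grading on $(\int_X\LL_{\s}^{\FGV})^*=\bigoplus_d(\int_{X,d}\LL_{\s}^{\FGV})^*$ in which $\Gamma(C,V_{\s})\langle1\rangle$ lowers degree and $\Gamma(C,V^*_{\s})\langle1\rangle$ raises it, any nonzero vector in the one-dimensional bottom piece $(\int_{X,(n-1)\deg(\Om^{-n/2})}\LL_{\s}^{\FGV})^*$ is an $F_T$-eigen highest-weight vector in the sense of \thref{identifystandard}; its $F$-eigenvalue is read off from the explicit description in \thref{higherrsbzsvglobal} via the Weil structure on the Hecke eigensheaves and the twisting behaviour in \thref{fgvtwist}, giving $\l((\int_X\LL_{\s}^{\FGV})^*)=q^{-n^2(g-1)}\chi^{-n}_{\det\s_n}(\Om)\chi^{-n+1}_{\det\s_{n-1}}(\Om)$ (using $\e(\s_n^*\times\s_{n-1}^*)=1$). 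I would also record the identification of $L$-functions: $L(K_{\s},s)=L(\s_n\times\s_{n-1},s)L(\s_n^*\times\s_{n-1}^*,s)$ and $\dim K=2n(n-1)$, so the exponent $q^{(g-1)\dim(K)s}=q^{2n(n-1)(g-1)s}$ matches the statement.

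Finally I would assemble: \thref{higherformulageneral} gives $\om_{r,\geo}(z_{\LL_{\s}^{\FGV},r},z_{\LL_{\s}^{\FGV},r})=\b_{\l,\mu,X}(\ln q)^{-r}(d/ds)^r|_{s=0}(q^{2n(n-1)(g-1)s}L(\s_n\times\s_{n-1},s+1/2)L(\s_n^*\times\s_{n-1}^*,s+1/2))$ with $\b_{\l,\mu,X}=\kappa_{\l,-\l}^{-r}\l((\int_X\LL_{\s}^{\FGV})^*)(-1)^{r/2}$; substituting $\kappa_{\l,-\l}=(-1)^n$ and the value of $\l((\int_X\LL_{\s}^{\FGV})^*)$ above, and noting $(-1)^{-nr}=(-1)^{nr}$, yields precisely $\b_{\s}=(-1)^{r/2}q^{-n^2(g-1)}\chi^{-n}_{\det\s_{n-1}}(\Om)\chi^{-n+1}_{\det\s_n}(\Om)$ after the harmless relabelling $\chi_{\det\s_n}\leftrightarrow\chi_{\det\s_{n-1}}$ bookkeeping (one should check the two characters land on the correct half-spin twists $\Om^{-(n-1)/2}$ and $\Om^{-n/2}$; this is where I would be most careful). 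The main obstacle is precisely this last point together with the eigenvalue computation in the previous paragraph: tracking all the Tate twists, shifts, and parity factors through \thref{higherrsbzsvglobal}, \thref{fgvtwist}, and the normalization conventions of \autoref{superlinear} and \autoref{supergeometriclanglands} so that the Frobenius eigenvalue on the bottom graded piece comes out as the stated power of $q$ times the stated Hecke-character values — everything else is a direct citation of results already established in the excerpt.
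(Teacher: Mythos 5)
Your proposal matches the paper's own proof essentially step for step: establish \thref{geokolycor} via \thref{bzsvloc}, \thref{techassum} and \thref{geokolycorthm}, invoke \thref{eigenkolyvagin} with the inputs from \thref{higherrsbzsvglobal}, compute $\kappa_{\l,-\l}=(-1)^n$ from \thref{geometricscalargln} and $\l((\int_X\LL_{\s}^{\FGV})^*)$ from the highest-weight vector in the bottom graded piece via \thref{kolynormstandard}, and then substitute into \thref{higherformulageneral} (the factor $\kappa^{-r}=(-1)^{nr}$ being $1$ since both sides vanish for odd $r$). The swap of $\chi_{\det\s_n}$ and $\chi_{\det\s_{n-1}}$ that you flag is present in the paper as well — the proof and the theorem statement display the two characters in opposite positions — so your caution there is warranted rather than a gap in your argument.
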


\begin{remark}
    \thref{fundtoglob} gives us a simple description of the special cohomological correspondences $c_V,c_{V^*}$ on the period sheaf $\cP_X$. This allows us to formulate \thref{higherformulagln} completely in terms of global constructions.
\end{remark}



\bibliographystyle{amsalpha}
\bibliography{Bibliography}

\end{document}